\documentclass[12pt]{article}
\usepackage[top=1in, bottom=1in, left=1in, right=1in]{geometry}
\usepackage{graphicx} 
\usepackage{comment}
\usepackage{geometry}
\usepackage{subcaption}
\usepackage{tkz-tab}
\usepackage{tikz}
\usepackage{cellspace} 
\usepackage{amsmath}
\usepackage{amssymb}
\usepackage[title]{appendix}
\usepackage[colorlinks=true, allcolors=blue]{hyperref}
\DeclareMathOperator{\R}{\mathbb{R}}
\DeclareMathOperator{\C}{\mathbb{C}}
\DeclareMathOperator{\F}{\mathcal{F}}
\DeclareMathOperator{\Sw}{\mathcal{S}}

\DeclareMathOperator{\FLs}{(-\Delta)^s}

\usepackage{amsthm}
\newtheorem{theorem}{Theorem}[section]
\newtheorem{lemma}[theorem]{Lemma}
\newtheorem{remark}[theorem]{Remark}
\newtheorem{proposition}[theorem]{Proposition}
\newtheorem{corollary}[theorem]{Corollary}
\theoremstyle{definition}
\newtheorem{definition}{Definition}[section]

\newcommand\restr[2]{{
  \left.\kern-\nulldelimiterspace 
  #1 
  \littletaller 
  \right|_{#2} 
  }}

\newcommand{\littletaller}{\mathchoice{\vphantom{\big|}}{}{}{}}

\begin{document}

\title{{Limiting Absorption Principle and Radiation Condition for the Fractional Helmholtz Equation}}
\author{Dana Zilberberg\footnote{Department of Mathematics, Rutgers University, New Brunswick, NJ, USA (dana.zilberberg@gmail.com)} \ \  Fioralba Cakoni \footnote{Department of Mathematics, Rutgers University, New Brunswick, NJ, USA (fc292@math.rutgers.edu)} \ \ and Michael S. Vogelius \footnote{Department of Mathematics, Rutgers University, New Brunswick, NJ, USA (vogelius@math.rutgers.edu)}}
\maketitle
\normalsize
\fontsize{12}{18}\selectfont
\begin{abstract}
  We investigate elliptic fractional equations in the whole space ${\mathbb R}^n$, $n=1,2,3$, involving zero-order perturbations of the fractional Laplacian $(-\Delta)^s$, $0<s<1$. Our main objective is to determine  appropriate radiation conditions at infinity that ensure existence and uniqueness of solutions to the fractional type Helmholtz equation. Extending classical scattering theory for the Helmholtz equation, we introduce and analyze suitable Sommerfeld-type radiation conditions for fractional orders. A central contribution is the explicit computation of the outgoing free-space Green’s function for the operator $(-\Delta)^s-k^{2s}$, for all $0<s<1$, any dimension $n=1,2,3$, and $k>0$,  obtained via contour integration and a limiting absorption principle. We show that its asymptotic behavior at infinity coincides with a rescaled version of the classical Helmholtz fundamental solution, thereby justifying the standard Sommerfeld radiation condition for compactly supported sources. In addition, using resolvent estimates and a limiting absorption framework, we establish existence and uniqueness of outgoing solutions for compactly supported data, and for weighted sources when $s\geq 1/2$. We further derive a convolution representation of the solution in terms of the outgoing fundamental solution. For inhomogeneous media with compactly supported perturbations, we reformulate the problem as a Lippmann–Schwinger integral equation of Fredholm type and prove unique solvability away from a discrete set of frequencies. Our analysis provides a rigorous foundation for scattering theory of fractional Helmholtz operators and offers a framework suitable for numerical implementation of these nonlocal wave propagation models. 
\end{abstract}
\noindent{\bf Key words:}  Fractional Helmholtz, fundamental solution, scattering theory for inhomogeneous media, limiting absorption principle, Lippmann-Schwinger equation. \\
\noindent{\bf AMS Subject Classifications:} 35R11, 35R30, 35J25, 35P25, 35P05

\section{Introduction}
Fractional partial differential equations (PDEs) generalize classical models by incorporating derivatives of non-integer order. In this work we consider the case of the fractional Laplacian of order $0<s<1$ in  ${\mathbb R}^n$, $n=1,2,3$. The corresponding operators are intrinsically nonlocal and admit several equivalent formulations; see, for instance, \cite{garofalo2017fractional}  for a basic selfcontained review of the subject. A fundamental characterization is provided by the Caffarelli-Silvestre extension \cite{CS-extesion}, which realizes the fractional Laplacian as the Dirichlet-to-Neumann map for a degenerate elliptic operator in one higher dimension.  More recently,  the attention has turned to inverse problems for fractional PDEs -- see for instance the monograph \cite{bill}. For elliptic fractional problems the recovery of a potential  term in the fractional Schr\"odinger equation from knowledge of the  Dirichlet-to-Neuman operator was initiated in \cite{inverse}. Owing to the strong unique continuation property associated with nonlocal operators, such inverse  problems often yield stronger uniqueness and stability results than their local counterparts (see \cite{inverse} and the references therein for state-of-the-art results). Most results for models with fractional derivatives in the spatial variable in Euclidean setting address interior problems, in which Dirichlet or Neumann data for the fractional Laplacian are prescribed on the exterior of a bounded domain. 

\noindent
In this paper we study elliptic fractional equations formulated  in the whole space ${\mathbb R}^n$, and the central issue that we address, for a particular class of elliptic fractional equations, is the formulation of the appropriate condition at infinity that guarantees uniqueness  and existence of a solution.  Our operators are zero order perturbations of the s'th power of the Laplacian in ${\mathbb R}^n$, for $s\in (0,1)$. The analysis of the forward problem for this model, serves as the starting point for properly defining the field at infinity (referred to as  scattering data). This data is essential for the solution of the corresponding inverse problem,   as demonstrated in the recent papers \cite{inverse-scat2,inverse-scat} for the fractional Schr\"odinger equation in ${\mathbb R}^n$, for restricted choices of $s$.
\noindent
We first consider the fractional Helmholtz equation with a source $f$
\begin{align}\label{fractional_Helmholtz}
    \FLs u - k^{2s} u = f \mbox{ in } \R^n
\end{align}
where $s\in (0,1), k\in (0,\infty)$, and the fractional Laplacian, $\FLs $, is a self adjoint operator with domain of definition $H^{2s}(\R^n)$(see for example \cite{di2012hitchhikers}, \cite{garofalo2017fractional}). Our main questions of interest are:  what is the range of  $\FLs - k^{2s}$ and under what condition at infinity is the solution $u$ to (\ref{fractional_Helmholtz}) unique. When $s=1$, the equation is the well-known Helmholtz equation whose unique solvability is classically obtained by adding a radiation condition, known as the Sommerfeld radiation condition (see e.g \cite{colton-book})
\begin{align}\label{def:SRC}
   r^{\frac{n-1}{2}}\left( \frac{\partial u}{\partial r} -ik u \right) \xrightarrow[r\to \infty]{}0,
\end{align}
uniformly in $\hat x = \frac{x}{|x|} \in \mathbb{S}^{n-1}$  which equivalently can be written in the integral form as
\begin{equation}
\lim\limits_{r\to \infty}\int_{|x|=r}\left|\frac{\partial u}{\partial r} -ik u \right|^2 \,ds \to 0. \nonumber
\end{equation}
The case when $s=1/2$ has been studied by Umeda in \cite{umeda1995radiation,umeda2003generalized}  where it is shown  that, $\sqrt{-\Delta}u-ku=f$ for $f\in L^{2,\delta}({\mathbb R}^n)$, $1/2<\delta<1$, has a unique solution $u\in L^{2,-\delta}({\mathbb R}^n)$,  provided one adds the radiation condition 
\begin{align}\label{gen}
  \int\limits_{{\mathbb R}^n\setminus B_R}  \frac{|\nabla u -ik\hat x u|^2}{(1+|x|^2)^{1-\delta}}dx <+\infty  \qquad \hbox{ with } \hat x = \frac{x}{|x|} .
\end{align}
The above radiation condition, known in the literature as the  Agmon--H\"ormander  or  Ikebe--Sait\={o} condition, in our paper  is  referred to  as the generalized Sommerfeld radiation condition, (as opposed to the standard Sommerfeld radiation condition (\ref{def:SRC})).  In fact, in the appendix we show that for a solution to the Helmholtz equation $\Delta u+k^2u=0$ outside a big ball, the standard Sommerfeld Radiation Condition (\ref{def:SRC}) is equivalent to the generalized Sommerfeld Radiation Condition.  The uniqueness result for $s=1/2$  is suggested by the fact that the free space Green's function, corresponding to the operator  $\sqrt{-\Delta}  - k$, as computed  in \cite{john1,umeda1995radiation}, exhibits the same asymptotic behavior at infinity as the fundamental solution of the classical Helmholtz operator. A goal of this paper is to extend these results to  any power $s\in (0,1)$.  Towards this goal, a main contribution is the explicit computation of what we refer to as the outgoing fundamental solution for the operator $\FLs u - k^{2s} u$ for any $s\in (0,\,1)$ and for $n=1,2,3$.

\noindent
Motivated by the scattering theory for inhomogeneous media in the case of Helmholtz equation (see e.g \cite{colton-book}), we also study the unique solvability of  the inhomogeneous fractional Helmholtz equation
\begin{align}\label{inhomogeneous_fractional_Helmholtz}
    \FLs u^{scat} - k^{2s}(1+q) u^{scat} = k^{2s}qu^{inc} \mbox{ in } \R^n
\end{align}
with a $L^\infty$- perturbation  $q$ of compact support, and a given probing wave  $u^{inc}$ which solves $\FLs u^{inc} - k^{2s} u^{inc}=0$ \footnote{Such a wave also satisfies $(\Delta+k^2)u^{inc}=0$ if $u^{inc} \in H^{2s,-\delta}(\R^n)$, as seen from Proposition \ref{from_fractional_Helmholtz_to_Helmholtz}}. In this case the solution is also uniquely characterized  by adding an appropriate radiation condition at infinity.  Note that  (\ref{inhomogeneous_fractional_Helmholtz}) is a particular case of 
\begin{equation} 
\label{inhomogeneous_fractional_Helmholtz2}
 \FLs u- k^{2s}(1+q) u = f \mbox{ in } \R^n
\end{equation}
with $f:=k^{2s}qu^{inc}$ (which has compact support).
\noindent
The fractional Helmholtz equation is used  to describe wave propagation in complex, attenuating media, or media with nonlocal properties  that cannot be accurately represented by the classical Helmholtz equation, such as wave propagation in lossy media, and  in complex geological formations, particularly in the context of nonlocal elasticity. It provides a framework for understanding and simulating wave propagation in fractal or inhomogeneous materials where the standard integer-order derivatives are insufficient.  Special choices of the perturbations of the fractional Helmholtz operator include the relativistic Schr\"odinger operator and the anomalous transport operator. The particular case of $s=1/2$ arises in quantum optics of a single photon interacting with a system of two-level atoms as discussed in \cite{john3,john4,john2,john1}.

\noindent 
The paper is organized as follows. Section 2 is devoted to computations of the outgoing free-space Green's function 
\(G_{n,s}^{k}(x)\) of the fractional Helmholtz 
operator \(\FLs - k^{2s}\), for every \(0<s<1\),$k>0$ and in dimension $n=1,2,3$. 
To the best of our knowledge, such computations are not available in the literature, 
except in the special case of \(s=\tfrac{1}{2}\). For \(k>0\), the outgoing and incoming 
free space Green's functions \(G_{n,s}^k(x)\)  (otherwise referred to as the outgoing and incoming fundamental solutions) are obtained based on the limiting absorption principle, that is
\[
    (G_{n,s}^{k})^{\pm}(x) = \lim_{\epsilon \to 0^+}  G_{n,s}^{k_\epsilon}(x) \qquad \mbox{where\;\;  $k^{2s}_\epsilon=k^{2s}\pm i \epsilon$}.
\] 
The calculations for complex parameters are carried out using contour integration  of Fourier-type integrals in the complex plane. Our focus here is the outgoing Green's function \( (G_{n,s}^{k})^+(x)\), which as \(|x|\to \infty\) for all $0<s<1$, turns  out  to behave asymptotically  like a rescaled version of the fundamental solution of the  classical Helmholtz operator  in the  respective dimension. This strongly indicates that the standard Sommerfeld Radiation Condition is the correct condition to impose at infinity in order to guaranty the uniqueness of the solution of (\ref{fractional_Helmholtz}), at least if $f$ has compact support. This leads us to another objective of our paper, namely to rigorously  establish existence and uniqueness of solutions to $(-\Delta)^{s}u-k^{2s}u=f$, which satisfy  a Sommerfeld radiation condition. We accomplish this objective in Section 3 for $f$ with compact support or subject to  decay assumptions at infinity. Specifically we study the resolvent operator
\[
   R_0(z) = (\FLs - z)^{-1}, \qquad z \in \mathbb{C}\setminus \mathbb{R}^+,
\]
employing a limiting absorption principle for real \(k\), in the spirit of the 
seminal work of Agmon~\cite{agmon1975spectral} for the Laplace operator, later extended  to the fractional Laplace operator
by Ben-Artzi, Devinatz and Nemirovski in ~\cite{ben1987limiting, ben1997remarks}.  In this approach the resolvent is extended by
$$R_0^{\pm}(z):=\lim_{\epsilon \to 0^+}R_0(z\pm i \epsilon)~,~~z \in (0,\infty)$$
where the $+$ sign corresponds to the outgoing solutions. We then show  that $R_0^{+}(z)f$, for any $0<s<1$, satisfies the Sommerfeld Radiation Condition  (\ref{def:SRC}) and is the only solution to (\ref{fractional_Helmholtz}) with this property, provided $f$ is compactly supported. In contrast for $f\in L^{2,\delta}({\mathbb R}^n)$ we prove a similar result using the Generalized Sommerfeld Radiation Condition (\ref{gen}), but for only $s\geq 1/2$. Furthermore, we derive an equivalent volume integral representation of this outgoing solution expressed as a convolution of the outgoing fundamental solution with the source term $f$.   In the first part of Section \ref{4} we show that the inhomogeneous Helmholtz problem (\ref{inhomogeneous_fractional_Helmholtz}) with the standard radiation condition (\ref{def:SRC}) is equivalent to a Lippmann--Schwinger volume  integral equation over a bounded region $D$ containing the support of $q$, a problem of Fredholm type. The kernel of the volume integral is exactly the outgoing Green's function that we previously calculated. This formulation allows us to conclude that (\ref{inhomogeneous_fractional_Helmholtz}) with the condition (\ref{def:SRC}) has a unique solution for all $k\in (0,\infty) \setminus \Lambda$, where $\Lambda$ is a discrete (possibly empty) set that can only accumulate at $\infty$.  Due to the fairly explicit form of the Green's function, and the boundedness of $D$, the Lippmann--Schwinger integral equation provides a suitable framework for the numerical solution of this non-local problem in the entire space. In the second part of Section 4 we finally extend our analysis to the solution of (\ref{inhomogeneous_fractional_Helmholtz2}), by considering the resolvent
\[
   R_k(z) = (\FLs - k^{2s}q -z)^{-1}, \qquad z \in \mathbb{C}\setminus \mathbb{R}^+,
\]
and its extension $R_k^{\pm}(z):=\lim_{\epsilon \to 0^+}R_k(z\pm i \epsilon)$. The extension is possible for every $z>0$ except for a discrete set (possibly empty) that can only accumulate  at $\infty$. These exceptional values would correspond to embedded eigenvalues of the operator $\FLs - k^{2s}q$. 

We note that for the sake of readability, some calculations and proofs of certain technical lemmas are deferred to the Appendix.
\section{The Outgoing Fundamental Solution of \texorpdfstring{$ \FLs  - k^{2s}$}{TEXT}}\label{fund}
The  outgoing  fundamental solution $G_{n,s}^{k}$ of the fractional Helmholtz operator for $k>0$ is defined as that solution of 
 \begin{equation*}
( \FLs  - k^{2s}) G_{n,s}^{k}(x) =\delta(x), \qquad  x\in {\mathbb R}^n,  \, n=1,2,3, \quad  \mbox{and}\quad s\in (0,\,1), 
\end{equation*}
which is obtained from the  limiting absorption principle, and where $\delta(\cdot)$ denotes the Dirac distribution. More specifically we carry out the calculations for complex values $k^{2s}_\epsilon:= k^{2s} + i\epsilon$ with $\epsilon>0$ by solving 
\begin{equation}\label{lap}
\left( \FLs  - (k^{2s}+i\epsilon)\right) G_{n,s}^{k_\epsilon}(x) =\delta(x)~,
\end{equation}
which is uniquely solvable. Then, for real $k>0$, the outgoing fundamental solution is obtained by taking the limit 
 $$G_{n,s}^{k}(x)= \lim_{\epsilon \to 0^+} G_{n,s}^{k_\epsilon}(x), \qquad x\in \R^n.$$
 We remark that taking $\epsilon <0$ in (\ref{lap}) and letting it go to zero will provide the incoming fundamental solution. Although the incoming fundamental solution can be obtained immediately from our calculations, throughout  this paper we will provide only the formulas for the outgoing fundamental solution. Note that throughout this paper,  we use interchangeably the terminology ``outgoing fundamental solution" and ``outgoing free space Green's function".
 
 \noindent
 \begin{remark}\label{example}
 {\em In the limiting absorption approach we perturb $k^{2s}$ by $i\epsilon$, as opposed to  perturbing $k$ by $i\epsilon$. The reason for this becomes apparent later on when we connect the extended resolvent to the convolution with the fundamental solution in Section~\ref{3}. The calculations below of the fundamental solutions are valid as long as $k_\epsilon$ lies in the first quadrant; this is clearly the case for 
 $$k_\epsilon =(k^{2s}+i\epsilon)^{1/2s}=(k^{4s}+\epsilon ^2)^{1/4s}\exp\left({\frac{i\arctan\frac{\epsilon}{k^{2s}}}{2s}}\right),$$ provided  $\arctan\frac{\epsilon}{k^{2s}}$ is in $(0, s\pi)$, which is guaranteed for $\epsilon>0$ sufficiently small.}
\end{remark}

\subsection{Computations in 1D for \texorpdfstring{$0<s<1$}{TEXT}}
For $\epsilon>0$, $k>0$ and $k^{2s}_\epsilon := k^{2s}+i\epsilon$, after taking the Fourier transform of \ref{lap},  we obtain for $x\neq 0$
$$ G_{1,s}^{k}(x) = \lim_{\epsilon \to 0^+} \frac{1}{2\pi} \int_{\R} \frac{e^{i\xi |x|}}{|\xi|^{2s}-k_\epsilon^{2s}}d\xi=\lim_{\epsilon \to 0^+}\frac{1}{2\pi} \int_0^\infty \frac{e^{i\xi |x|}}{\xi^{2s}-k_\epsilon^{2s}} d\xi+ \frac{1}{2\pi}\int_0^\infty \frac{e^{-i\xi |x|}}{\xi^{2s}-k_\epsilon^{2s}}d\xi$$
\begin{figure}
    \centering
    \includegraphics[width=0.25\linewidth]{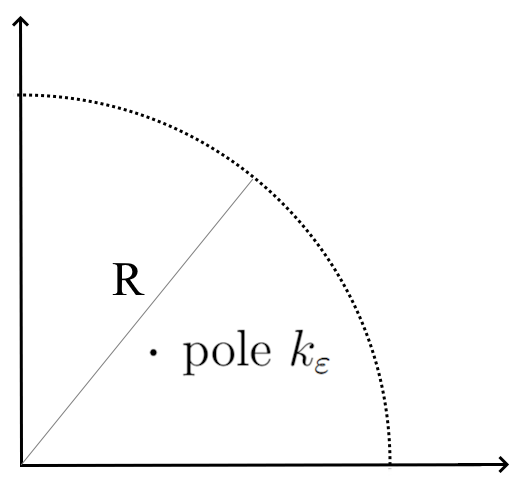}
   \caption{Contour integral in the complex plane}
\label{fig:contour}
\end{figure}
\noindent
For the first integral, we choose the contour in the upper half plane as in figure \ref{fig:contour}.  The branch cut for $\xi^{2s}$ is the negative real axis, i.e. we work with the natural logarithm. We compute each part of the contour integral :
\begin{itemize}
   \item On the quarter circle, we have $\xi = R e^{i \theta}$ for $\theta \in (0,\pi/2)$. We will prove that the integral on the quarter circle 
   \begin{align*}
       I_R :=\int_0^{\pi/2} \frac{e^{i R e^{i\theta}|x|}}{R^{2s} e^{i2s\theta}- k_\epsilon^{2s}} R e^{i\theta}id\theta
   \end{align*}
    goes to $0$ as $R\to \infty$. For $R$ large enough and using that $\sin(\theta) \geq \frac{2}{\pi} \theta$ for $\theta \in (0, \frac{\pi}{2})$, we obtain
     \begin{align*}
         |I_R| &\leq 2 R^{1-2s}\int_0^{\pi/2} e^{- R |x|\sin(\theta)} d\theta  \\
         &\leq 2 R^{1-2s}\int_0^{\pi/2} e^{- \frac{2}{\pi} R |x|\theta} d\theta \\
         &= \pi  R^{-2s} |x|^{-1}\left( 1 - e^{-R |x|} \right) \xrightarrow[ R \to \infty ]{} 0 ~. 
     \end{align*}

\item On the vertical segment $\xi = iy$ for $y \in [0, \infty)$ we have 
\begin{align*}
     \int_\infty^0 \frac{e^{i|x|iy}}{(y^{2s}e^{i\pi s}-k_\epsilon^{2s})} idy = -\int_0^\infty\frac{e^{-|x|y}}{(y^{2s}e^{i\pi s}-k_\epsilon^{2s})} idy
\end{align*}
\item Finally,  we compute the residue 
\begin{align*}
    2\pi i \lim_{z\to k_\epsilon}\frac{ e^{i|x|z}(z-k_\epsilon)}{z^{2s}-k_\epsilon^{2s}}= 2\pi i e^{i|x| k_\epsilon} \lim_{h\to 0} \frac{h}{2sk_\epsilon^{(2s-1)}h} = \frac{i\pi e^{i|x|k_\epsilon}}{s k_\epsilon^{2s-1}}.
\end{align*}
\end{itemize}
Combining the above, we conclude that the first integral is given by
\begin{align}\label{int_1_1D}
    \int_0^\infty \frac{e^{i|x|\xi}}{(\xi^{2s}-k_\epsilon^{2s})}d\xi =   \int_0^\infty\frac{e^{-|x|y}}{(y^{2s}e^{i\pi s}-k_\epsilon^{2s})} idy +  \frac{i\pi e^{i|x|k_\epsilon}}{s k_\epsilon^{2s-1}}
\end{align}
For the second integral, we follow the same procedure by taking a contour in the lower half plane. Similarly, we are left with the segment along the imaginary axis, which yields
\begin{align}\label{int_2_1D}
    \int_0^\infty \frac{e^{-i |x|\xi}}{(\xi^{2s}-k_\epsilon^{2s})}d\xi = - \int_{-\infty}^0 \frac{e^{-i|x|(iy)}}{(|y|^{2s}e^{-i\pi s}-k_\epsilon^{2s})} idy = - \int_0^{\infty} \frac{e^{-|x|y}}{(y^{2s}e^{-i\pi s}-k_\epsilon^{2s})} idy
\end{align}
Summation of the two integrals (\ref{int_1_1D}) and (\ref{int_2_1D}), and division by $2\pi$ yield
\begin{align*}
    G_{1,s}^{k}(x) &= \lim_{\epsilon\to 0}G_{1,s}^{k_\epsilon}(x)\\
    &=    \frac{i e^{i|x|k}}{2s k^{2s-1}}   +\frac{1}{2\pi}\int_0^\infty \frac{e^{-|x|y}}{(y^{2s}e^{i\pi s}-k^{2s})} idy - \frac{1}{2\pi}\int_0^{\infty} \frac{e^{-|x|y}}{(y^{2s}e^{-i\pi s}-k^{2s})} idy\\
    &= \frac{i e^{i|x|k}}{2s k^{2s-1}}   +\frac{1}{2\pi}\int_0^\infty ie^{-|x|y}\,2i\, \Im\left[\frac{1}{y^{2s}e^{i\pi s}-k^{2s}}\right]dy \\
     &= \frac{i e^{i|x|k}}{2s k^{2s-1}}   -\frac{1}{\pi}\int_0^\infty \Im\left[\frac{e^{-|x|y}}{y^{2s}e^{i\pi s}-k^{2s}}\right]dy
\end{align*}
Finally making a change of variable in the last integral we obtain the following fundamental solution in one dimension 
\begin{align}\label{Fundamental_sol_1D}
    G_{1,s}^{k}(x) =\frac{i e^{i|x|k}}{2s k^{2s-1}}   -\frac{1}{\pi |x|^{1-2s}}\int_0^\infty \Im\left[\frac{e^{-y}}{y^{2s}e^{i\pi s}-k^{2s}|x|^{2s}}\right]dy
\end{align}
We will later view this kernel as the sum of two parts, the first of which is a multiple of the fundamental solution of the Helmholtz equation and the second of which has a faster decay as $|x|\to +\infty$. We introduce the following notation for the Helmholtz part of the kernel for general $k_\epsilon^{2s}:=k^{2s}+i\epsilon$ for $\epsilon\geq 0$ 
\begin{align}\label{def:G_helm1}
    G_{1, helm}^{k_\epsilon}(x) = \frac{i e^{i|x|k_\epsilon}}{2s k_\epsilon^{2s-1}},
\end{align}
and denote by $J_1^{s,k_\epsilon}$ the integral term  
\begin{align}\label{def:J1}
    J_1^{s,k_\epsilon}(x) := \frac{i}{2\pi|x|^{1-2s}}\int_0^\infty e^{-y}\left[ \frac{1}{y^{2s}e^{i\pi s}-k_\epsilon^{2s}|x|^{2s}} - \frac{1}{y^{2s}e^{-i\pi s}-k_\epsilon^{2s}|x|^{2s}}\right] dy.
\end{align}
\subsection{Computations in 2D for \texorpdfstring{$0< s\leq 1/2$}{TEXT}}
In the two-dimensional case, we need to distinguish between the cases $0< s\leq 1/2$ and $1/2< s< 1$. We start with the more delicate case $0< s\leq 1/2$, where  a different analysis is needed, depending on whether  $\frac{1}{2s}$ is a natural number or not.
\subsubsection{The case of \texorpdfstring{$\frac{1}{2s} \notin \mathbb{N}$}{TEXT}}
For any $m\in \mathbb{N}$ we have
\begin{align}\label{identity_m}
    &\frac{|\xi|^{2sm}-k_\epsilon^{2sm}}{|\xi|^{2sm}} + \frac{k_\epsilon^{2sm}}{|\xi|^{2sm}} = 1 \nonumber\\
    \quad \implies& \frac{(|\xi|^{2s}-k_\epsilon^{2s}) \sum_{j=0}^{m-1} |\xi|^{2s(m-1-j)}k_\epsilon^{2sj}}{|\xi|^{2sm}(|\xi|^{2s}-k_\epsilon^{2s})} + \frac{k_\epsilon^{2sm}}{|\xi|^{2sm}(|\xi|^{2s}-k_\epsilon^{2s})} = \frac{1}{|\xi|^{2s}-k_\epsilon^{2s}}\nonumber\\
    \implies &\sum_{j=0}^{m-1} \frac{k_\epsilon^{2sj}}{|\xi|^{2s(j+1)}} + \frac{k_\epsilon^{2sm}}{|\xi|^{2sm}(|\xi|^{2s}-k_\epsilon^{2s})} = \frac{1}{|\xi|^{2s}-k_\epsilon^{2s}}
\end{align}
where we used Newton's formula in the second line. Therefore we obtain
\begin{align*}
  \frac{1}{|\xi|^{2s}-k_\epsilon^{2s}}&= \frac{s^{-1}k_\epsilon^{2-2s}}{|\xi|^2-k_\epsilon^2}+  \sum_{j=0}^{m-1} \frac{ k_\epsilon^{2sj}}{|\xi|^{2s(j+1)}} + \frac{k_\epsilon^{2sm}}{|\xi|^{2sm}(|\xi|^{2s}-k_\epsilon^{2s})} - \frac{s^{-1}k_\epsilon^{2-2s}}{|\xi|^2-k_\epsilon^2}   \\
  &=\frac{s^{-1}k_\epsilon^{2-2s}}{|\xi|^2-k_\epsilon^2}+  \sum_{j=0}^{m-1} \frac{k_\epsilon^{2sj}}{|\xi|^{2s(j+1)}} + \frac{k_\epsilon^{2sm}(|\xi|^2-k_\epsilon^{2}) - s^{-1}k_\epsilon^{2-2s}|\xi|^{2sm}(|\xi|^{2s}-k_\epsilon^{2s})}{|\xi|^{2sm}(|\xi|^{2s}-k_\epsilon^{2s})(|\xi|^2-k_\epsilon^2)}
\end{align*}
Define the following function corresponding to the last term in the previous formula
\begin{align}\label{Fm}
  F_m(|\xi|, k_\epsilon):=  \frac{k_\epsilon^{2sm}(|\xi|^2-k_\epsilon^{2}) - s^{-1}k_\epsilon^{2-2s}|\xi|^{2sm}(|\xi|^{2s}-k_\epsilon^{2s})}{|\xi|^{2sm}(|\xi|^{2s}-k_\epsilon^{2s})(|\xi|^2-k_\epsilon^2)}
\end{align}
We observe that the expression $F_m(|\xi|, k_\epsilon)$ has a singularity of order 2 at $k_\epsilon$ that is removable. The terms in the sum correspond to the Fourier transforms of the free space Green's functions for  $(-\Delta)^{s(j+1)}$. These Green's functions are in dimension 2 given by $\frac{c_{2,j}}{|x|^{2-2s(j+1)}}$, for exponents $2s (j+1)\leq 2sm < 2$. The coefficients $c_{2,j}$ are the constants of the Riesz Potential which  in dimension $n$ are given by 
\begin{align}\label{def:c_n_s_j}
    c_{n,j} := \frac{\Gamma(\frac{n}{2} - s(j+1))}{4^{s(j+1)}\pi^{n/2} \Gamma(s(j+1))}~.
\end{align}
see for example definition 2.3 in \cite{garofalo2017fractional}.
Therefore, taking the inverse Fourier transform, we obtain from the first term the fundamental solution of Helmholtz equation, from the second term a sum of fundamental solutions for the fractional Laplacian of various orders and from the last term an integral,  which we write in radial coordinates
\begin{align}\label{fundamental_solution_2D_not_integer}
   G_{2,s}^{k_\epsilon}(x) &= s^{-1}k^{2-2s} \frac{i}{4} H_0^{(1)}(k_\epsilon|x|)+  \sum_{j=0}^{m-1} \frac{c_{2,j} k_\epsilon^{2sj}}{|x|^{2-2s(j+1)}} + \frac{1}{4\pi^2} \int_{\R^2} e^{i\xi\cdot x}F_m(|\xi|, k_\epsilon)d\xi\nonumber\\
    &=s^{-1}k_\epsilon^{2-2s} \frac{i}{4} H_0^{(1)}(k_\epsilon|x|)+  \sum_{j=0}^{m-1} \frac{c_{2,j} k_\epsilon^{2sj}}{|x|^{2-2s(j+1)}} + \frac{1}{2\pi} \int_0^\infty J_0(r|x|)rF_m(r,k_\epsilon)dr,
\end{align}
where $J_0$ is the Bessel function of the first kind  of order zero, which is given by
\begin{align*}
    J_0(r) =\frac{1}{2\pi} \int_0^{2\pi} e^{ir \cos(\theta)} d\theta,
\end{align*}
and $H_0^{(1)}(z)$ is the Hankel function of order zero of the first kind given by $H_0^{(1)}(z):=J_0(z)+iY_0(z)$.  Note that the Bessel function $J_0$ is analytic in ${\mathbb C}$, whereas the  Neumann function $Y_0(z)$ (a Bessel function of the second kind)  has  a logarithmic singularity at $z=0$ and a branch cut at $(-\infty, 0]$. Thus, $H_0^{(1)}(z)$ is well-defined and analytic in ${\mathbb C}\setminus(-\infty, 0]$.
 
\noindent
We choose $m \in \mathbb{N}$ so that $r^{\frac12}F_m(r,k_\epsilon)$ is an $L^2$ function of $r$ which implies that the  last integral  in (\ref{fundamental_solution_2D_not_integer}) makes sense in $L^2$. Indeed, $r^{\frac12}F_m(r,k_\epsilon) \sim_{r\to 0} r^{\frac12-2sm}$, hence it is $L^2$-integrable at $r=0$ if $4sm<2$, that is 
    $$ \frac{1}{2s}>m ~.$$
Furthermore, $F_m(r,k_\epsilon) \sim_{r\to \infty} r^{-\min(2, 2s(m+1))}$. From the previous condition the minimum should always be $2s(m+1)$. Therefore $r^{\frac12}F_m(r,k_\epsilon)$ decays like $r^{\frac12-2s(m+1)}$ which is $L^2$ integrable at infinity  provided 
    $$
    2s(m+1)>1 \iff m> \frac{1}{2s} - 1 ~.
    $$
If we select $m = \lfloor \frac{1}{2s}\rfloor$, then both conditions are satisfied, except integrability at zero when $m= \frac{1}{2s}$, which is excluded by our assumption. Thus, for all $0<s \leq 1/2$ such that $\frac{1}{2s}\notin \mathbb{N}$, we obtain that the fundamental solution is given by  (\ref{fundamental_solution_2D_not_integer}) with $m = \lfloor \frac{1}{2s}\rfloor$.  For this $m (=\lfloor \frac{1}{2s}\rfloor)$,  we define
\begin{align}\label{def:J2}
    J_2^{s,k_\epsilon} (x) =  \frac{1}{2\pi} \int_0^\infty J_0(r|x|)rF_m(r,k_\epsilon)dr.
\end{align}
Now, passage to the limit $\epsilon=0^+$ yields the fundamental solution for $k>0$ 
\begin{align}\label{fundamental_solution_2D_not_integer_k>0}
G_{2,s}^{k}(x)=\lim\limits_{\epsilon\to 0^+} G_{2,s}^{k_\epsilon}(x)=s^{-1}k^{2-2s} \frac{i}{4} H_0^{(1)}(k|x|)+  \sum_{j=0}^{m-1} \frac{c_{2,j} k^{2sj}}{|x|^{2-2s(j+1)}} + J_2^{s,k} (x). 
\end{align}
When $m := \lfloor \frac{1}{2s}\rfloor = \frac{1}{2s}$, we need further modifications to take care of the singularity. 

\subsubsection{\texorpdfstring{The case of $\frac{1}{2s} = m \in \mathbb{N}$}{TEXT}}\label{s2.2.2}
It follows that is $2sm = 1$. We modify $F_m$ with a corrector function in the following way 
$$ F_m(|\xi|, k_\epsilon)= F_m(|\xi|,k_\epsilon) + \frac{k_\epsilon^{2-2s}}{|\xi|(|\xi|+k_\epsilon)}- \frac{k_\epsilon^{2-2s}}{|\xi|(|\xi|+k_\epsilon)}=\tilde{F}_m(|\xi|,k_\epsilon)- \frac{k_\epsilon^{2-2s}}{|\xi|(|\xi|+k_\epsilon)}$$
where we define 
     \begin{align*}
     \tilde{F}_m(|\xi|, k_\epsilon) &:=  F_m(|\xi|,k_\epsilon)+ \frac{k_\epsilon^{2-2s}}{|\xi|(|\xi|+k_\epsilon)} \\
     &= \frac{k_\epsilon(|\xi|^2-k_\epsilon^{2}) - s^{-1}k_\epsilon^{2-2s}|\xi|(|\xi|^{2s}-k_\epsilon^{2s}) + k_\epsilon^{2-2s}(|\xi|^{2s}-k_\epsilon^{2s})(|\xi|-k_\epsilon)}{|\xi|(|\xi|^{2s}-k_\epsilon^{2s})(|\xi|^2-k_\epsilon^2)}. 
\end{align*}
Notice that for $s=1/2$, $\tilde F_m=\tilde F_1$ equals $0$.
Now the leading singularity around zero is  'transferred' to the added term, which we simplify by  introducing the Struve function of the second kind of order zero  
\begin{align*}
K_0(z) = \frac{2}{\pi} \int_0^\infty \frac{J_0(t)}{t+z}dt 
\end{align*}
see for example \cite{watson1922treatise}.
Note that $K_0(z)$ is analytic for $z\in \mathbb{C} \setminus (-\infty,0]$. Evaluated at $z= k_\epsilon |x|$ and after a change of variable we get
\begin{align*}
    K_0(k_\epsilon|x|) = \frac{2}{\pi}\int_0^\infty \frac{J_0(|x|r)}{r+k_\epsilon} dr.
\end{align*}
Taking the inverse Fourier transform of the additional term we have
$$\frac{1}{4\pi^2}\int_{\R^2} \frac{e^{i\xi \cdot x}k_\epsilon^{2-2s}}{|\xi|(|\xi|+k_\epsilon)}d\xi = \frac{k_\epsilon^{2-2s}}{2\pi} \int_0^\infty \frac{J_0(|x|r)}{r+k_\epsilon}dr= \frac{k_\epsilon^{2-2s}}{4} K_0(k_\epsilon |x|).$$
Finally, we note that $\tilde F_m$ for $m>1$ is an $L^2$ function, by first observing that the singularity around zero is square integrable  
\begin{align*}
    \tilde F_m (|\xi|, k_\epsilon) \sim_{|\xi|\to 0} \frac{-k_\epsilon^{3-2s}|\xi|^{2s}}{k_\epsilon^{2+2s}|\xi|}  = \frac{-k_\epsilon^{1-4s}}{|\xi|^{1-2s}}~, \qquad 1-2s <1
\end{align*}
and secondly by observing that the decay at infinity is sufficiently fast 
\begin{align*}
    \tilde F_m (|\xi|, k_\epsilon) \sim_{|\xi|\to \infty} \frac{k_\epsilon|\xi|^2}{|\xi|^{1+2s+2}}  = \frac{k_\epsilon}{|\xi|^{1+2s}}~, \qquad 1+2s >1 ~. 
\end{align*}
We obtain that the fundamental solution when $\frac{1}{2s}=m \in \mathbb{N}$ is 
\begin{align}\label{fundamental_solution_2D_integer}
   G_{2,s}^{k_\epsilon}(x) 
    &=s^{-1}k_\epsilon^{2-2s} \frac{i}{4} H_0^{(1)}(k_\epsilon|x|)+  \sum_{j=0}^{m-1} \frac{c_{2,j} k_\epsilon^{2sj}}{|x|^{2-2s(j+1)}} + J_{2}^{s,k_\epsilon}(x), 
\end{align}
where  we define  $J_2^{s,k_\epsilon}$ by 
\begin{align}\label{def:J2-int}
    J_2^{s,k_\epsilon}(x)=\frac{1}{2\pi} \int_0^\infty J_0(r|x|)r\tilde F_m(r,k_\epsilon)dr  - \frac{k_\epsilon^{2-2s}}{4} K_0(k_\epsilon |x|).
\end{align}
For $k>0$ the outgoing limiting expression of the fundamental solution  as $\epsilon\to 0^+$  is 
\begin{equation}
G_{2,s}^{k}(x)=s^{-1}k^{2-2s} \frac{i}{4} H_0^{(1)}(k|x|)+  \sum_{j=0}^{m-1} \frac{c_{2,j} k^{2sj}}{|x|^{2-2s(j+1)}} + J_{2}^{s,k}(x) ~.
\end{equation}
Note that the case of $s=1/2$ corresponds to $m=1$ and our expression for the outgoing fundamental solution coincides with the one obtained in \cite{john1,umeda1995radiation}

\subsection{Computations in 2D for \texorpdfstring{$1/2<s<1$}{TEXT}}
In the case when $s>1/2$ the calculations can be carried out directly using
$$G_{2,s}^{k_\epsilon}(x) = \frac{1}{(2\pi)^2} \int_{\R^2} \frac{e^{i\xi \cdot x}}{|\xi|^{2s}-k_\epsilon^{2s}}d\xi.$$
In other words the calculations for $0<s\leq 1/2$ remain valid without the formula involving $m$, which can be viewed as taking  $m=0$ with convention that $\sum_{j=0}^{-1} = 0$. Hence we have the following expression for the fundamental solution 
\begin{align}\label{def:fundamental_sol_2D_s_3/4-ep}
    G_{2,s}^{k_\epsilon}(x):= s^{-1}k_\epsilon^{2-2s} \frac{i}{4} H_0^{(1)}(k_\epsilon|x|)+  \frac{1}{2\pi} \int_0^\infty J_0(r|x|)rF_0(r,k_\epsilon)dr
\end{align}
with the outgoing limit as $\epsilon \to 0^+$ for $k>0$ given by
\begin{align}\label{def:fundamental_sol_2D_s_3/4}
    G_{2,s}^k(x):= s^{-1}k^{2-2s} \frac{i}{4} H_0^{(1)}(k|x|)+J_{2}^{s,k}(x)~,
\end{align}
where
\begin{align}
J_{2}^{s,k}(x)=  \frac{1}{2\pi} \int_0^\infty J_0(r|x|)rF_0(r,k)dr
\end{align}
and where $F_0(r,k)$  is given by (\ref{Fm}) with $m=0$, i.e.,
\begin{align*}
  F_0(|\xi|, k):=  \frac{1}{|\xi|^{2s}-k^{2s}}- s^{-1}k^{2-2s}\frac{1}{|\xi|^2-k^2}
\end{align*}
\subsection{Computations in 3D for \texorpdfstring{$0<s\leq 1/2$}{TEXT}}
Similarly as in the two-dimensional case for $0<s\leq 1/2$, we make use of the identity (\ref{identity_m}) with $m := \lfloor \frac{1}{2s}\rfloor$. It remains to compute  
\begin{align*}
  I_\epsilon = \frac{1}{(2\pi)^3}\int_{\R^3} \frac{e^{i \xi \cdot x}}{|\xi|^{2sm}(|\xi|^{2s}-k_\epsilon^{2s})} d\xi
\end{align*}
To this end, we  rewrite $I_\epsilon$ using spherical coordinates : 
\begin{align*}
   I_\epsilon &= \frac{1}{(2\pi)^3}\int_{\R^3} \frac{e^{i\xi \cdot x}}{|\xi|^{2sm}(|\xi|^{2s}-k_\epsilon^{2s})} d\xi= \frac{1}{(2\pi)^3}\int_0^\infty \int_0^\pi \int_0^{2\pi} \frac{r^2 e^{i r |x| cos(\phi)}}{r^{2sm}(r^{2s}-k_\epsilon^{2s})} \sin(\phi) d\theta d\phi dr \\
    &= \frac{1}{(2\pi)^3} \int_0^\infty  2\pi \left[ \frac{e^{ir|x|} - e^{-ir|x|}}{ir|x|}\right] \frac{r^2}{r^{2sm}(r^{2s}-k_\epsilon^{2s})} dr \\
    &=\frac{1}{i(2\pi)^2|x|}\int_0^\infty  \frac{re^{ir|x|}}{r^{2sm}(r^{2s}-k_\epsilon^{2s})} dr- \frac{1}{i(2\pi)^2|x|}\int_0^\infty  \frac{re^{-ir|x|}}{r^{2sm}(r^{2s}-k_\epsilon^{2s})} dr.
\end{align*}
For simplicity, we let 
$$I^1_\epsilon := \int_0^\infty  \frac{te^{it|x|}}{t^{2sm}(t^{2s}-k_\epsilon^{2s})} dt \qquad \mbox{and} \qquad   I^2_\epsilon := \int_0^\infty  \frac{te^{-it|x|}}{t^{2sm}(t^{2s}-k_\epsilon^{2s})} dt.$$
To compute $I^1_\epsilon$, we use the same contour integral as in Figure  \ref{fig:contour}. In that case 
\begin{itemize}
    \item The term $\frac{1}{t^{2sm-1}}$ in the integral represents a removable singularity.
    \item On the quarter circle, we have $t = R e^{i\theta}, \theta\in (0,\pi/2)$, we will show that  
    \begin{align*}
       I_{\epsilon,R}^1 &:= \int_0^{\pi/2} \frac{e^{i|x|R e^{i\theta}} }{(R^{2s}e^{i2s\theta} - k_\epsilon^{2s})}  R^{2-2sm} e^{i (2-2sm)\theta} id\theta
    \end{align*}
    goes to $0$ as $R\to \infty$. By a similar argument as in dimension 1, we have
    \begin{align*}
        |I_{\epsilon,R}^1| &\leq 2 R^{2-2s(m+1)} \int_0^{\pi/2} e^{-R |x| \sin(\theta)}d\theta \\
        &\leq  2 R^{2-2s(m+1)} \int_0^{\pi/2} e^{-\frac{2}{\pi}R|x| \theta}d\theta  \\
        &= \pi R^{1-2s(m+1)} |x|^{-1}\left(1- e^{-R |x|} \right) \xrightarrow[R \to \infty]{} 0
    \end{align*}
    since $1< 2s(m+1)$. 
\item The vertical segment along the imaginary axis gives rise to the following integral
\begin{align}\label{vertical3D-2}
    \int_\infty^0 \frac{e^{-|x|y}}{y^{2s} e^{i\pi s} - k_\epsilon^{2s}} y^{1-2sm} e^{i\pi\frac{1-2sm}{2}} idy = \int_0^\infty \frac{e^{-|x|y}}{y^{2s} e^{i\pi s} - k_\epsilon^{2s}} y^{1-2sm} e^{-i\pi sm} dy.
\end{align}
\item Finally we need to  compute the residue at $k_\epsilon$. To this end, letting  $z = k_\epsilon+h$ for some $h\in \C$ small, we have the expansion 
$z^{2s} = (k_\epsilon + h)^{2s} = k_\epsilon^{2s} + 2s k_\epsilon^{2s-1}h + o(h)$. Therefore
\begin{align} \label{residue3D-2}
    \lim_{z\to k_\epsilon} \frac{e^{i|x|z }(z-k_\epsilon)z^{1-2sm}}{(z^{2s}-k_\epsilon^{2s})} = \lim_{h\to 0} \frac{e^{i|x|(k_\epsilon+h) }h (k_\epsilon+h)^{1-2sm}}{2s k_\epsilon^{2s-1}h} = \frac{e^{ik_\epsilon |x|}k_\epsilon^{1 - 2sm}}{2sk_\epsilon^{2s-1}} 
\end{align}
\end{itemize}
Summing up $2\pi i$ times (\ref{residue3D-2}) and (\ref{vertical3D-2}) we obtain 
\begin{align}\label{int_1_3D-2}
    I^1_\epsilon = \frac{i \pi e^{i|x|k_\epsilon}}{sk_\epsilon^{2s(m+1)-2}} -\int_0^\infty \frac{e^{-|x|y}}{y^{2s} e^{i\pi s} - k_\epsilon^{2s}} y^{1-2sm} e^{-i\pi sm} dy.
\end{align}
Note that $y^{1-2sm}$ is integrable near $0$ since $m = \lfloor\frac{1}{2s}\rfloor \le \frac{1}{2s} \implies 1-2sm\ge  0 $. 

\noindent
For the second integral $I^2_\epsilon$ we take a similar contour in the lower half plane.  The integral on the quarter circle will vanish (as $R\rightarrow \infty$) and we are left with the vertical segment along the negative imaginary axis 
\begin{align}
   I^2_\epsilon &=  - \int_{-\infty}^0 \frac{iye^{y|x|}}{(|y|^{2s}e^{-i\pi s}-k_\epsilon^{2s})|y|^{2sm}e^{-i\pi sm}} i dy =  \int_{\infty}^0 \frac{-iye^{-y|x|}}{(y^{2s}e^{-i\pi s}-k_\epsilon^{2s})y^{2sm}e^{-i\pi sm}} i dy\nonumber\\
    &=-\int_0^\infty \frac{e^{-y|x|}}{(y^{2s}e^{-i\pi s}-k_\epsilon^{2s})}y^{1-2sm} e^{i\pi sm} dy.\label{int_2_3D}
\end{align}
Finally, adding the above expressions for  $I_1^\epsilon$ and $I_2^\epsilon$ we get
\begin{align*}
    &\frac{1}{i(2\pi)^2|x|}(I_1^\epsilon -I_2^\epsilon) \\
   &\qquad  = \frac{1}{i(2\pi)^2|x|}\left(\frac{i \pi e^{i|x|k_\epsilon}}{sk_\epsilon^{2s(m+1)-2}} +\int_0^\infty e^{-|x|y} y^{1-2sm} \left[ \frac{e^{i\pi sm}}{y^{2s} e^{-i\pi s} -k_\epsilon^{2s}} - \frac{e^{-i\pi sm}}{y^{2s}e^{i\pi s}-k_\epsilon^{2s}}\right]dy\right)\\
     &\qquad = \frac{ e^{i|x|k_\epsilon}}{4\pi sk_\epsilon^{2s(m+1)-2}|x|} + \frac{1}{i(2\pi)^2|x|}\int_0^\infty e^{-|x|y} y^{1-2sm} \left[ \frac{e^{i\pi sm}}{y^{2s} e^{-i\pi s} -k_\epsilon^{2s}} - \frac{e^{-i\pi sm}}{y^{2s}e^{i\pi s}-k_\epsilon^{2s}}\right]dy
\end{align*}
We see from this calculation that the fundamental solution for $k_\epsilon = k+ i\epsilon$ is given by 
\begin{align}
     G_{3,s}^{k_\epsilon}(x) =&\sum_{j=0}^{m-1}\frac{c_{3,j}k_\epsilon^{2sj}}{|x|^{3-2s(j+1)}} +\frac{k_\epsilon^{2-2s}}{s} \frac{ e^{i|x|k_\epsilon}}{4\pi |x|} \nonumber\\
     &+ \frac{k_\epsilon^{2sm}}{4i\pi^2 |x|}\int_0^\infty e^{-|x|y} y^{1-2sm} \left[ \frac{e^{i\pi sm}}{y^{2s} e^{-i\pi s} -k_\epsilon^{2s}} - \frac{e^{-i\pi sm}}{y^{2s} e^{i\pi s} -k_\epsilon^{2s}}\right]dy, \label{green}
\end{align}
with $m= \lfloor \frac{1}{2s}\rfloor$, and $c_{3,j}$ given by (\ref{def:c_n_s_j}). As before we can decompose this into three  parts 
\begin{align}\label{def:G3helm_J3}
G_{3,s}^{k_\epsilon}(x)= G_{3,helm}^{k_\epsilon}(x) + \sum_{j=0}^{m-1}\frac{c_{3,j}k_\epsilon^{2sj}}{|x|^{3-2s(j+1)}} + J_{3}^{s,k_\epsilon}(x)
\end{align}
where 
\begin{align*}
    G_{3,helm}^{k_\epsilon}(x) &:=\frac{k_\epsilon^{2-2s}}{s} \frac{ e^{i|x|k_\epsilon}}{4\pi |x|} \qquad \mbox{ and }
 \end{align*}
 \begin{align}   
 &J_{3}^{s,k_\epsilon}(x):=  \frac{k_\epsilon^{2sm}}{4i\pi^2 |x|}\int_0^\infty e^{-|x|y} y^{1-2sm} \left[ \frac{e^{i\pi sm}}{y^{2s} e^{-i\pi s} -k_\epsilon^{2s}} - \frac{e^{-i\pi sm}}{y^{2s} e^{i\pi s} -k_\epsilon^{2s}}\right]dy\nonumber\\
    &=\frac{k_\epsilon^{2sm}}{4i\pi^2 |x|^{3-2s(m+1)}}\int_0^\infty e^{-y} y^{1-2sm} \left[ \frac{e^{i\pi sm}}{y^{2s} e^{-i\pi s} -k_\epsilon^{2s}|x|^{2s}} - \frac{e^{-i\pi sm}}{y^{2s} e^{i\pi s} -k_\epsilon^{2s}|x|^{2s}}\right]dy. \label{def:J3-l1}
\end{align}
Note that we performed a change of variable in the last integral in (\ref{green}) to see the dependence in $|x|$  more clearly. Taking the limit as $\epsilon \to 0^+$, we obtain the outgoing fundamental solution for real $k>0$ 
\begin{align}\label{3D}
     G_{3,s}^k(x) =&\sum_{j=0}^{m-1}\frac{c_{3,j}k^{2sj}}{|x|^{3-2s(j+1)}} +\frac{k^{2-2s}}{s} \frac{ e^{i|x|k}}{4\pi |x|} +J_{3}^{s,k}(x), \qquad m=\left\lfloor \frac{1}{2s}\right\rfloor.
\end{align}
\subsection{Computations in 3D for \texorpdfstring{$1/2<s<1$}{TEXT}}
When $s>1/2$, just as in the two dimensional case, we do not need to introduce $m = \lfloor \frac{1}{2s}\rfloor$ and the term involving a sum. The contour integral stays the same, and the final expression corresponds to formally  taking $m = 0$ (with the convention that $\sum_0^{-1}=0$)  in the result from the previous section. We have
\begin{align*}\label{3D_s_greater_than_half}
     G_{3,s}^{k_\epsilon}(x) &= \frac{1}{sk_\epsilon^{2s-2}} \frac{ e^{i|x|k_\epsilon}}{4\pi |x|} + \frac{1}{4\pi^2 |x|^{3-2s}}\int_0^\infty e^{-y} y\, \left[ \frac{1}{y^{2s} e^{-i\pi s} -k_\epsilon^{2s}|x|^{2s}} -\frac{1}{y^{2s} e^{i\pi s} -k_\epsilon^{2s}|x|^{2s}}\right]dy.
\end{align*}
Observe that we can define $J_3^{s,k_\epsilon}$ as in (\ref{def:J3-l1}) by taking $m=0$. The fundamental solution for real $k>0$ is then given by
\begin{align}
G_{3,s}^{k}(x) &= \frac{1}{sk^{2s-2}} \frac{ e^{i|x|k}}{4\pi |x|} + J_3^{s,k}(x).
\end{align}
\subsection{Asymptotics at infinity of the outgoing fundamental solution}
We summarize in the table below the expressions of  the outgoing  fundamental solution  $G_{n,s}^{k}$ for $k>0$ computed in the  above sections.
 \begin{table}[h]
    \centering
    \begin{tabular}{Sc|Sc|Sc}
         &$0<s\leq 1/2$  &$1/2<s<1$\\
         \hline
         $n=1$& \multicolumn{2}{c}{$G_{1,helm}^k + J_1^{s,k}$}\\
         \hline
         $n=2$& {$G_{2,helm}^k + J_2^{s,k} + \sum_{j=0}^{\lfloor \frac{1}{2s}\rfloor-1}\frac{c_{2,j}k^{2sj}}{|x|^{2-2s(j+1)}}$} &  $G_{2,helm}^k + J_2^{s,k} $\\
         \hline
         $n=3$& $G_{3,helm}^k + J_3^{s,k} + \sum_{j=0}^{\lfloor \frac{1}{2s}\rfloor-1}\frac{c_{3,j}k^{2sj}}{|x|^{3-2s(j+1)}}$ & $G_{3,helm}^k + J_3^{s,k}$
    \end{tabular}
    \caption{Fundamental Solution of the Fractional Helmholtz Equation}
    \label{tab:fund_sol}
\end{table}

\noindent
In this table, $G_{n,helm}^k$, $n=1,2,3$,  are functions of $|x|$  given by
$$G_{1,helm}^k(|x|):=\frac{i}{2sk^{2s-1}}e^{i|x|k},\; G_{2,helm}^k(|x|):=\frac{i}{4sk^{2s-2}}H_0^{(1)}(k|x|), \;G_{3,helm}^k(|x|):=\frac{1}{sk^{2s-2}}\frac{e^{i|x|k}}{4\pi|x|}$$
which up to the $s$ and $k$-dependent constant,  coincide with the fundamental solution  of the Helmholtz equation.  Furthermore,  $J_1^{s,k}$ is given by (\ref{def:J1}). $J_2^{s,k}$ is given by (\ref{def:J2}) with $m=\lfloor \frac{1}{2s}\rfloor$ when $0<s\leq 1/2$ and $\frac{1}{2s}\notin{\mathbb N}$ , and given by (\ref{def:J2}) with m=0 when $1/2<s<1$. When $0<s\leq 1/2$ and $\frac{1}{2s}\in{\mathbb N}$ then $J_2^{s,k}$ is given by (\ref{def:J2-int}). Finally $J_3^{s,k}$ is given by (\ref{def:J3-l1}) with  $m=\lfloor \frac{1}{2s}\rfloor$ when $0<s\leq 1/2$  and with $m=0$ when  $1/2<s<1$.  Separation of the terms $|x|^{-n+2s(j+1)}$ from $J_n^{s,k}$ for $0<s\le 1/2$, $n=2,3$, is convenient for the analysis in the appendix \ref{appendixB}.

\noindent
From classical scattering theory we know that  the behavior at infinity of the Helmholtz part, $G_{n,helm}^k(|x|)$, is such  that
$$ |x|^{\frac{n-1}{2}}\left( \frac{\partial u}{\partial |x|} -ik u \right) \xrightarrow[|x|\to \infty]{}0, \qquad \mbox{uniformly in } \hat x=x/|x|.$$
\noindent The above behavior occurs due to cancelation, and the minus sign distinguishes the outgoing solution. All the additional terms  that appear in the  fundamental solution $G_{n,s}^{k}$ of the fractional Helmholtz along with their derivatives decay faster than $1/|x|^{\frac{n-1}{2}}$ as $|x|\to \infty$.  To see this, we refer the reader to  Appendix \ref{appendixB} which presents the asymptotic behavior of the fundamental solution $G_{n,s}^{k}$ at infinity, as well as at zero. Summarizing we have proven:
\begin{theorem}\label{som}
The outgoing fundamental solution $G_{n,s}^{k}$ of the fractional Helmholtz operator $(-\Delta)^s - k^{2s}$ for $k>0$ and $0<s<1$ in $\mathbb{R}^n$, $n=1,2,3$, behaves asymptotically as $|x|\to \infty$ like the outgoing fundamental solution of the Helmholtz operator $\Delta + k^2$. In particular, it satisfies the Sommerfeld radiation condition  (\ref{def:SRC}).
\end{theorem}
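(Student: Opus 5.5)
We use the decomposition of Table \ref{tab:fund_sol}, $G_{n,s}^k=G_{n,helm}^k+\sum_j c_{n,j}k^{2sj}|x|^{-n+2s(j+1)}+J_n^{s,k}$. The plan is to show that everything except $G_{n,helm}^k$ decays, together with its radial derivative, strictly faster than $|x|^{-(n-1)/2}$. Then $|x|^{\frac{n-1}{2}}(\partial_r-ik)$ applied to these terms tends to zero. The Helmholtz part satisfies (\ref{def:SRC}) by classical theory: directly for $n=1,3$, where $(\partial_r-ik)e^{ikr}=0$ and $(\partial_r-ik)(e^{ikr}/r)=-e^{ikr}/r^2$, and via the standard asymptotics $H_0^{(1)}(kr)=\sqrt{2/(\pi kr)}e^{i(kr-\pi/4)}(1+O(1/r))$, with the analogous expansion for the derivative, when $n=2$. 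Since all terms are radial, uniformity in $\hat x$ is automatic.

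\emph{Riesz terms.} For $n=2,3$ and $0<s\le 1/2$, the term $|x|^{-n+2s(j+1)}$ with $j\le m-1$ has exponent $n-2s(j+1)\ge n-2sm\ge n-1>\frac{n-1}{2}$. Its radial derivative decays one order faster, so these terms are harmless.

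\emph{The terms $J_n^{s,k}$ for $n=1,3$.} I would use the explicit Laplace-type representations (\ref{def:J1}) and (\ref{def:J3-l1}). Set $\rho=k^{2s}|x|^{2s}\to\infty$. The bracket is
\[
\frac{1}{y^{2s}e^{\pm i\pi s}-\rho}=-\frac{1}{\rho}-\frac{y^{2s}e^{\pm i\pi s}}{\rho^2}+O(y^{4s}\rho^{-3}),
\]
where $s<1$ keeps the denominator away from zero, uniformly in $y\ge0$. The $-1/\rho$ terms cancel in the difference, leaving $O(\rho^{-2})=O(|x|^{-4s})$. Hence $J_1^{s,k}=O(|x|^{-1-2s})$, which gives the required $o(1)$. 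Likewise $J_3^{s,k}=O(|x|^{-3+2s(m+1)-4s})=O(|x|^{-3-2s+2sm})$, and $2sm\le1$ gives $O(|x|^{-2-2s})=o(|x|^{-1})$. For the derivatives, I would differentiate under the integral in the unscaled forms, where $\partial_{|x|}$ brings down a factor $-y$. Repeating the same expansion with the extra power of $y$ again gives decay faster than $|x|^{-(n-1)/2}$.

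\emph{The main obstacle is $n=2$.} Here $J_2^{s,k}$ is a Hankel transform $\int_0^\infty J_0(r|x|)rF_m(r,k)\,dr$ of a function that is only in a weighted $L^2$ class, plus possibly the Struve term $K_0$. My first approach would be to redo the 2D computation via contour deformation, writing $J_0=\frac12(H_0^{(1)}+H_0^{(2)})$ and rotating the $H_0^{(1)}$ part to the positive imaginary axis and the $H_0^{(2)}$ part to the negative one, as in 1D/3D. The residue at $r=k$ is removable by construction of $F_m$, so one obtains a representation $\int_0^\infty K_0$-type$(y|x|)\,\Im[\cdots]\,dy$ with exponential damping. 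The same large-$\rho$ expansion then yields $J_2^{s,k}=O(|x|^{-2-2s+2sm})$ or better, together with the derivative bound.

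Alternatively, I would use the large-argument expansion of $K_0(z)$, namely $K_0(z)-Y_0(z)\sim\frac{2}{\pi z}$, to handle the corrector. The remaining piece $\int J_0(r|x|)r\tilde F_m\,dr$ would be treated by splitting smooth/singular parts of $\tilde F_m$ at $r=0$ (homogeneous behaviour $r^{-(1-2s)}$ giving $|x|^{-1-2s}$ via the known Hankel transform of powers) and integrating by parts for the smooth remainder. In every case the decay exceeds $|x|^{-1/2}$, which completes the proof. The details are those collected in Appendix \ref{appendixB}.
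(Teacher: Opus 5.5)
Your overall plan matches the paper's. You split off $G_{n,helm}^k$, note that the Riesz terms decay like $|x|^{-(n-2s(j+1))}$ with $n-2s(j+1)\ge n-1>\frac{n-1}{2}$, and show that $J_n^{s,k}$ and $\partial_rJ_n^{s,k}$ are $o(|x|^{-\frac{n-1}{2}})$. For $n=1,3$ your $1/\rho$-expansion of the bracket is equivalent to the paper's use of the lower bound (\ref{Lower_bound_P}) on $P$.

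One step is wrong, though harmlessly. The $-1/\rho$ terms cancel only when the two fractions carry no extra phase, i.e.\ for $n=1$ and whenever $m=0$. In (\ref{def:J3-l1}) with $0<s\le\frac12$ (so $m\ge1$) they are multiplied by $e^{\pm i\pi sm}$, and
\[
\frac{e^{i\pi sm}}{y^{2s}e^{-i\pi s}-\rho}-\frac{e^{-i\pi sm}}{y^{2s}e^{i\pi s}-\rho}=-\frac{2i\sin(\pi sm)}{\rho}+O\big(y^{2s}\rho^{-2}\big),\qquad 0<\pi sm\le\frac{\pi}{2}.
\]
Hence $J_3^{s,k}=O(|x|^{-3+2sm})$, which is exactly the bound of Lemma \ref{3D-infinity}, not $O(|x|^{-3-2s+2sm})$. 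The same happens in your rotated 2D representation: the leading term is proportional to $\sin(\pi sm)$, so $J_2^{s,k}=O(|x|^{-2+2sm})$, which is worse than you claim, not better. In both cases the leading coefficient is exactly $-c_{n,m-1}k^{2s(m-1)}$, i.e.\ it cancels the last Riesz term. Since $2sm\le1$, these bounds are still $O(|x|^{-2})$ for $n=3$ and $O(|x|^{-1})$ for $n=2$, and similarly for the radial derivatives. So the conclusion survives; only the stated rates need correcting.

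For $n=2$, your fallback (b) is close to what the paper does, though the paper never splits $\tilde F_m$ at the origin. It uses the single integration by parts $|x_iJ_2^{s,k}(x)|\le\|\partial_iF\|_{L^1(\R^2)}$, after checking that $\nabla F_m$ (resp.\ $\nabla\tilde F_m$) is integrable near $0$, near $|\xi|=k$ and at infinity; this gives $O(|x|^{-1})$. For the derivative it writes $\partial_rJ_2^{s,k}=-|x|^{-1}\mathcal{F}^{-1}\big[|\xi|\partial_{|\xi|}F+F\big]$ and applies the same bound to $|\xi|\partial_{|\xi|}F$. The corrector is handled by the large-argument asymptotics of the Struve-type $K_0$ and its derivative. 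Two slips in (b): it is the paper's $K_0$ itself that behaves like $\frac{2}{\pi z}$, not $K_0-Y_0$; and as sketched, (b) never addresses $\partial_rJ_2^{s,k}$, which (\ref{def:SRC}) needs.

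Your route (a) is genuinely different and works. Writing $J_0=\frac12(H_0^{(1)}+H_0^{(2)})$ and rotating to $\pm i\R^+$ expresses $J_2^{s,k}$ as an integral of $\mathcal{K}_0(y|x|)\,y^{1-2sm}\,\Im[\cdots]$, where $\mathcal{K}_0$ is the modified Bessel (Macdonald) function, not the paper's $K_0$. This is the exact 2D analogue of (\ref{def:J1}) and (\ref{def:J3-l1}). What it buys:
\begin{itemize}
\item a uniform treatment of $\frac{1}{2s}\in\mathbb N$ and $\frac{1}{2s}\notin\mathbb N$, with no corrector, since $yF_m(\pm iy)=O(y^{1-2sm})$ stays integrable at $2sm=1$;
\item sharp leading-order asymptotics;
\item derivative bounds, by differentiating $\mathcal{K}_0(y|x|)$ under the integral.
\end{itemize}
The cost is justifying the deformation of a conditionally convergent Hankel integral. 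The arc contributions vanish because $rF_m(r)=O(|r|^{1-2s(m+1)})$ with $2s(m+1)>1$. The logarithmic singularity of $H_0^{(1,2)}$ at $0$ is integrable against $r^{1-2sm}$. And $F_m$ has no singularity in the closed first and fourth quadrants other than the removable one at $r=k$. The paper's Fourier-side bound is softer and cruder, but the same estimate also yields the near-origin bound used in Lemma \ref{2D-sg}.
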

\noindent
Note that  $G_{n,s}^{k}$ also satisfies (\ref{gen}). Indeed, from the equivalence proof  in Appendix \ref{equiv} (cf. Remark \ref{equivrem} in Section \ref{uniqsec}) the Helmholtz part satisfies this, and the remaining terms also do since they decay sufficiently fast. 
\section{Extended Resolvent of the Fractional Laplacian and the Radiation Condition at Infinity}\label{3}
We start by introducing some notation that will be used throughout the paper.  We define the following weighted Sobolev spaces for $\delta  \in \R$ and $s\in (0,1)$: 
\begin{align}
    L^{2,\delta}(\R^n) &:= \left\{ f : \R^n \to \R \mbox{ such that } \|f\|_{L^{2,\delta}} < \infty \right\}\\
    H^{2s, \delta}(\R^n) &:=  \left\{ f : \R^n \to \R \mbox{ such that } (\|f\|_{L^{2,\delta}}^2+\| \FLs f\|_{L^{2,\delta}}^2)^{1/2} < \infty \right\},
\end{align}
equipped with their respective weighted norm
\begin{align*}
    \|f\|_{L^{2,\delta}(\R^n)} &:=\left( \int_{\R^n} (1+|x|^2)^{\delta} |f(x)|^2 dx\right)^{1/2}\\
    \|f\|_{H^{2s,\delta}(\R^n)} &:=(\|f\|_{L^{2,\delta}}^2+\| \FLs f\|_{L^{2,\delta}}^2)^{1/2} ~. 
\end{align*}
Note that $L^{2,\delta}(\R^n)$ is the dual of $L^{2,-\delta}(\R^n)$. The upper-half complex plane is denoted by $\C^+ := \{z \in \C | \; \Im(z)>0\}$.  We use the Japanese bracket  notation $\left<x\right>^{\alpha}:= (1+|x|^2)^{\alpha/2}$, $\alpha \in \R$. 
\subsection{Properties of the resolvent in weighted Sobolev spaces}
Consider the self-adjoint operator ${\mathcal H}_0:=\FLs$ with domain $H^{2s}(\R^n)$ and denote its resolvent operator by $R_0(z)$ : 
\begin{align*}
    R_0(z) := (\FLs - z)^{-1}
    \end{align*}
which is defined for all $z\in \C \setminus \R^+$ since ${\mathcal H}_0$ has continuous spectrum $\R^+ := [0, \infty)$.  We first notice  that $x\mapsto x^s$ is a $C^2(0,\infty)$ function that satisfies Assumption 1.1 in \cite{ben1997remarks} (i.e. continuously differentiable with a positive locally H{\"o}lder continuous derivative). Therefore Theorem 2A and Corollary 2.1 in \cite{ben1997remarks} provide  a limiting absorption principle for the resolvent of ${\mathcal H}_0$ which we state in Theorem \ref{reso} for the reader's convenience. One has
$$H^{2s,-\delta}(\R^n) = \overline{D({\mathcal H}_0)}^{\|\cdot\|_{L^{2,-\delta}} + \|{\mathcal H}_0 \cdot\|_{L^{2,-\delta}}} = \{ f\in L^{2,-\delta}(\R^n),\;  \FLs f \in L^{2,-\delta}(\R^n)\}$$ 
which is the completion of  the domain of definition $D({\mathcal H}_0)$ of ${\mathcal H}_0$ with respect to its graph norm in $L^{2,-\delta}$, i.e.  $\|\cdot\|_{L^{2,-\delta}} + \|{\mathcal H}_0 \cdot\|_{L^{2,-\delta}}$.
\begin{theorem}[Ben-Artzi, Nemirovski \cite{ben1997remarks}]\label{th:ben-artzi,Nemirovski}\label{reso}
    For every $\delta>1/2$, the operator valued function $$
    z \longrightarrow R_0(z), \quad z\in \C^+$$
    can be extended continuously to $(0, \infty)$ in the uniform operator topology of ${\mathcal B}(L^{2,\delta}, H^{2s,-\delta})$ and the limiting operator valued function $R_0^+(\lambda) = \lim_{\epsilon \to 0^+} R_0(\lambda + i \epsilon)$ is locally H{\"o}lder  continuous in ${\mathcal B}(L^{2,\delta}, H^{2s,-\delta})$.
\end{theorem}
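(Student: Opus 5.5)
The proof reduces to a one-dimensional Cauchy-type integral through a spectral (Fourier) representation. It rests on two ingredients: Hölder continuity of sphere traces of Fourier transforms of weighted functions, and the Plemelj--Privalov theorem for Cauchy integrals with Hölder densities.

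\emph{Step 1 (reduction to the $L^{2,-\delta}$ norm).} Since $\FLs R_0(z)=I+zR_0(z)$ and $L^{2,\delta}\subset L^{2,-\delta}$, we have
\[
\|R_0(z)f\|_{H^{2s,-\delta}}\le C\,(1+|z|)\,\|R_0(z)f\|_{L^{2,-\delta}}+\|f\|_{L^{2,-\delta}}.
\]
The same identity transfers Hölder continuity in $z$ from the $L^{2,-\delta}$ norm to the graph norm. It therefore suffices to show that $z\mapsto R_0(z)$ extends locally Hölder continuously from $\C^+$ to $(0,\infty)$ in ${\mathcal B}(L^{2,\delta},L^{2,-\delta})$. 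By duality this is equivalent to controlling the sesquilinear form $\langle R_0(z)f,g\rangle$ for $f,g\in L^{2,\delta}$.

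\emph{Step 2 (spectral representation).} Set $\hat f=\F f$, so that $f\in L^{2,\delta}$ means $\hat f\in H^{\delta}(\R^n)$. By Plancherel and polar coordinates,
\[
\langle R_0(z)f,g\rangle=\int_0^\infty \frac{A_{f,g}(\rho)}{\rho^{2s}-z}\,d\rho,\qquad A_{f,g}(\rho)=\rho^{n-1}\int_{\mathbb S^{n-1}}\hat f(\rho\omega)\overline{\hat g(\rho\omega)}\,d\omega .
\]
The key lemma is the trace estimate: for $\delta>1/2$, the map $\rho\mapsto \hat f(\rho\,\cdot)\in L^2(\mathbb S^{n-1})$ is Hölder continuous of some order $\gamma\in(0,\delta-1/2)$, uniformly for $\rho$ in compact subsets of $(0,\infty)$, with constant bounded by $C\|f\|_{L^{2,\delta}}$. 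This follows from the one-dimensional Sobolev embedding $H^{\delta}\hookrightarrow C^{0,\gamma}$ in the radial variable, after localizing $\hat f$ to a shell $a<|\xi|<b$. Consequently $A_{f,g}$ is locally $\gamma$-Hölder on $(0,\infty)$ with norm at most $C\|f\|_{L^{2,\delta}}\|g\|_{L^{2,\delta}}$. In addition, $\int_0^\infty|A_{f,g}|\,d\rho\le\|f\|_{L^2}\|g\|_{L^2}$.

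\emph{Step 3 (Plemelj--Privalov).} Substitute $\mu=\rho^{2s}$. Since $\rho\mapsto\rho^{2s}$ is a $C^2$ diffeomorphism of $(0,\infty)$ with positive derivative, the form becomes $\int_0^\infty \frac{B(\mu)}{\mu-z}\,d\mu$, where
\[
B(\mu)=\tfrac{1}{2s}\,\mu^{\frac{1}{2s}-1}A\bigl(\mu^{1/2s}\bigr),
\]
and $B$ is again locally Hölder, with the same bilinear bound. Fix a compact $K\subset(0,\infty)$ and a cutoff $\chi$ equal to $1$ on a neighborhood of $K$ and supported in a compact subinterval of $(0,\infty)$. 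On $(1-\chi)B$ the kernel $(\mu-z)^{-1}$ stays bounded and Lipschitz in $z$ for $\Re z\in K$, and $\|(1-\chi)B\|_{L^1}\le\|f\|\|g\|$, so this piece is trivially Hölder continuous up to the axis. For $\chi B$, a compactly supported Hölder density, the Privalov theorem gives that the Cauchy integral extends $\gamma$-Hölder continuously to the real axis, with boundary value
\[
\mathrm{p.v.}\!\int\frac{\chi B(\mu)}{\mu-\lambda}\,d\mu+i\pi\,\chi B(\lambda).
\]
The bound on this boundary value is linear in the Hölder norm of $\chi B$, hence at most $C_K\|f\|_{L^{2,\delta}}\|g\|_{L^{2,\delta}}$. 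This yields the continuous extension $R_0^+(\lambda)$ and its local Hölder continuity in operator norm.

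\emph{Main obstacle.} I expect the main difficulty to be the uniform trace lemma of Step 2: proving Hölder continuity in $\rho$ of $\hat f|_{\rho\mathbb S^{n-1}}$ with a constant depending only on $\|f\|_{L^{2,\delta}}$, using no more than $\delta>1/2$. I would first prove it for $f\in\Sw$, writing $\hat f(\rho\omega)-\hat f(\rho'\omega)$ as an integral of a radial derivative and interpolating between $\delta=0$ and $\delta=1$, and then extend by density. Points near $\mu=0$ are excluded from the argument, which is why the result holds only on $(0,\infty)$.
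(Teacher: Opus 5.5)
Your proof is correct. The paper gives no argument of its own here: it checks that $\lambda\mapsto\lambda^s$ has a positive, locally H\"older continuous derivative on $(0,\infty)$ (Assumption 1.1 of Ben-Artzi--Nemirovski). It then invokes their Theorem 2A and Corollary 2.1, which carry Agmon's limiting absorption principle for $-\Delta$ over to operators $\phi(-\Delta)$.

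What you have written is essentially a self-contained unpacking of that abstract result for $\phi(\lambda)=\lambda^s$. Your Step 2 (Agmon's trace lemma, i.e.\ local H\"older continuity of the spectral density of $-\Delta$ as a form on $L^{2,\delta}$) is the input from the Laplacian. Your change of variables $\mu=\rho^{2s}$ in Step 3 is exactly where the paper's hypothesis enters: the Jacobian $\frac{1}{2s}\mu^{\frac{1}{2s}-1}=\bigl(2\rho\,\phi'(\rho^2)\bigr)^{-1}$ must be positive and locally H\"older for the Privalov step to apply.

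The citation buys brevity and generality: it covers any admissible $\phi$, and its perturbed version (Theorem 4A) is what the paper needs in Section 4. Your route buys transparency, with an explicit H\"older exponent and a visible reason why $\lambda=0$ is excluded.

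Two small points of precision:
\begin{itemize}
\item Choose $\gamma<\min(1,\delta-\tfrac12)$, since Privalov's theorem needs $\gamma<1$ and $\delta$ may be large.
\item The interpolation you mention in your last paragraph should be applied to the map $f\mapsto\hat f(\rho\,\cdot)$ from $L^{2,\delta}$ into $H^{\delta}\bigl((a,b);L^2(\mathbb S^{n-1})\bigr)$, followed by the one-dimensional embedding into $C^{0,\gamma}$. Interpolating the trace difference directly does not work, because it has no bound at the endpoint $\delta=0$.
\end{itemize}
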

\noindent
In our notation $\lambda:=k^{2s}$ for $k>0$. Next we prove that the limiting absorption principle is consistent with the differential equation, that is, the  resolvent will satisfy the fractional Helmholtz equation. 
\begin{lemma}
    Let $\delta>1/2$, $k\in (0,\infty)$ and let $f\in L^{2,\delta}(\R^n)$. Then $u_0^+:= R_0^+(k^{2s})f\in H^{2s,-\delta}({\mathbb R}^n)$ defined in Theorem \ref{th:ben-artzi,Nemirovski} satisfies 
    \begin{align*}
        \FLs u_0^+ - k^{2s} u_0^+ = f \;\; \mbox{ in } L^{2,-\delta}(\R^n).
    \end{align*}
\end{lemma}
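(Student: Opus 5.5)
Our plan is to pass to the limit $\epsilon\to 0^+$ in the identity that holds trivially for complex spectral parameter. For $\epsilon>0$ set $z_\epsilon:=k^{2s}+i\epsilon\in\C^+$ and $u_\epsilon:=R_0(z_\epsilon)f$. Since $f\in L^{2,\delta}(\R^n)\subset L^2(\R^n)$ (because $\delta>0$), $u_\epsilon$ lies in $D({\mathcal H}_0)=H^{2s}(\R^n)$. By definition of the resolvent it satisfies
\begin{align*}
\FLs u_\epsilon - z_\epsilon u_\epsilon = f \quad \mbox{in } L^2(\R^n),
\end{align*}
and hence also in $L^{2,-\delta}(\R^n)$, since $L^2\subset L^{2,-\delta}$ continuously.

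Next we use Theorem \ref{reso}. The map $z\mapsto R_0(z)$ extends continuously to $k^{2s}$ in the norm of ${\mathcal B}(L^{2,\delta},H^{2s,-\delta})$, so $u_\epsilon\to u_0^+=R_0^+(k^{2s})f$ in $H^{2s,-\delta}(\R^n)$. By the definition of the graph norm, this gives both $u_\epsilon\to u_0^+$ in $L^{2,-\delta}$ and $\FLs u_\epsilon\to w$ in $L^{2,-\delta}$ for some $w$. The term $z_\epsilon u_\epsilon$ converges to $k^{2s}u_0^+$ in $L^{2,-\delta}$, because $|z_\epsilon-k^{2s}|=\epsilon$ and $\|u_\epsilon\|_{L^{2,-\delta}}$ stays bounded. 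Passing to the limit in the identity above then yields $w-k^{2s}u_0^+=f$ in $L^{2,-\delta}$.

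The main point to verify is that $w=\FLs u_0^+$, i.e.\ that the fractional Laplacian on $H^{2s,-\delta}$ is the closed extension of ${\mathcal H}_0$ that agrees with the distributional $\FLs$. We handle this by duality against Schwartz functions. For $\varphi\in\Sw(\R^n)$ we have $\langle \FLs u_\epsilon,\varphi\rangle=\langle u_\epsilon,\FLs\varphi\rangle$, by self-adjointness, or via Plancherel with symbol $|\xi|^{2s}$. Moreover $\FLs\varphi$ is smooth and decays like $|x|^{-n-2s}$, so it lies in $L^{2,\delta}$ provided $\delta<n/2+2s$; for such $\delta$ both pairings pass to the limit, giving $\langle w,\varphi\rangle=\langle u_0^+,\FLs\varphi\rangle$. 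Thus $w=\FLs u_0^+$ in the distributional sense, and in fact this is how $\FLs$ is understood on the completed space $H^{2s,-\delta}$. If $\delta$ is large we may first note that the conclusion for some $\delta'\in(1/2,\delta]$ suffices, since $L^{2,\delta}\subset L^{2,\delta'}$ and the limits coincide by uniqueness of limits in the weaker space. We then conclude $\FLs u_0^+-k^{2s}u_0^+=f$ in $L^{2,-\delta}$, with $\FLs u_0^+\in L^{2,-\delta}$ by membership of $u_0^+$ in $H^{2s,-\delta}$.
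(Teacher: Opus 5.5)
Your proof is correct and follows essentially the same route as the paper: set $u_\epsilon=R_0(k^{2s}+i\epsilon)f\in H^{2s}(\R^n)$, use the convergence $u_\epsilon\to u_0^+$ in $H^{2s,-\delta}$ from Theorem \ref{reso}, and pass to the limit in the equation. The paper does the limit by pairing against $\phi\in L^{2,\delta}$ and uses $\epsilon\|u_\epsilon\|_{L^{2,-\delta}}\to 0$. Your extra step identifying the limit $w$ of $\FLs u_\epsilon$ with the distributional $\FLs u_0^+$ is a sound addition that the paper skips: it regards $H^{2s,-\delta}$ as the graph-norm completion of $D({\mathcal H}_0)$, so $\|\FLs(u_\epsilon-u_0^+)\|_{L^{2,-\delta}}\to 0$ is immediate there.
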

\begin{proof}
We know that $u_\epsilon := R_0^+(k^{2s} + i\epsilon)f$  as $\epsilon\to 0^+$ approaches  $u_0^+:= R_0^+(k^{2s})f$ in $H^{2s,-\delta}$ for any $\delta>1/2$. Furthermore $u_\epsilon \in H^{2s}(\R^n)$ for all $\epsilon >0$ and satisfies the fractional Helmholtz equation in the $L^{2,-\delta}$ sense
\begin{align*}
    \left< (-\Delta)^su_\epsilon -(k^{2s}+i\epsilon)u_\epsilon, \phi \right>_{L^{2,-\delta}, L^{2,\delta}} = \left<f,\phi\right>_{L^{2,-\delta}, L^{2,\delta}}.
\end{align*}
Passing to the limit in the above equation  we obtain
\begin{align*}
&\left|\left<f,\phi\right>_{L^{2,-\delta}, L^{2,\delta}}- \left< (-\Delta)^su_0^+ -k^{2s}u_0^+, \phi \right>_{L^{2,-\delta}, L^{2,\delta}}\right| \\
   &=\left|\left< (-\Delta)^s(u_\epsilon - u_0^+) -k^{2s}(u_\epsilon - u_0^+) +i\epsilon u_\epsilon, \phi \right>_{L^{2,-\delta}, L^{2,\delta}}\right| \\
   &\leq (\| \FLs (u_\epsilon - u_0^+)\|_{L^{2,-\delta}} + |k|^{2s} \|u_\epsilon - u_0^+\|_{L^{2,-\delta}} + \epsilon \|u_\epsilon\|_{L^{2,-\delta}} )\|\phi\|_{L^{2,\delta}} \xrightarrow[\epsilon \to 0]{} 0. 
\end{align*}
\end{proof}     
\noindent
Next our goal is to show that the solution provided by the resolvent operator indeed satisfies  the generalized Sommerfeld radiation condition. To this end we need the following technical lemma.
\begin{lemma}\label{lemma:decomposition_resolvent}
 Let  $s\in (0,1)$ and $k>0$. Then the resolvent admits a decomposition 
    \begin{align}\label{formula:resolvent_decomposution}
        R_0^+(k^{2s}) = \Gamma_0^+(k^2)(k^{2-2s}I + A(k)) + B_1(k) + B_2(k),
    \end{align}
 where $\Gamma_0^+(k^2) := (-\Delta -k^2)^{-1} \in {\mathcal B}(L^{2,\delta}, H^{2,-\delta})$ for $1/2<\delta $ is obtained by the limiting absorption principle,  $A(k)\in {\mathcal  B}(L^{2,\delta}, L^{2,\delta})$, $B_1(k)\in {\mathcal B}(L^{2,\delta}, H^{2s,\delta})$ for any $\delta\in{\mathbb R}$, and $B_2(k) \in {\mathcal B}(L^{2,\delta}, H^{2})$  for any $\delta\geq 0$. 
\end{lemma}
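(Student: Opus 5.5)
The plan is to work entirely on the Fourier side. For $\epsilon>0$ the resolvent $R_0(k^{2s}+i\epsilon)$ is the Fourier multiplier $(|\xi|^{2s}-k_\epsilon^{2s})^{-1}$. We split this symbol into a Helmholtz part and remainders, using partitions of unity in $\xi$, exactly as in the construction of $F_m$ in Section \ref{fund}. Fix a smooth cutoff $\chi(\xi)$ that equals $1$ near the sphere $|\xi|=k$ and is supported in an annulus $k/2<|\xi|<2k$. Near the sphere, the Taylor expansion $|\xi|^{2s}-k^{2s} = s k^{2s-2}(|\xi|^2-k^2)\,h(|\xi|)$ holds, with $h$ smooth, positive and $h(k)=1$. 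This gives
\[
\frac{\chi(\xi)}{|\xi|^{2s}-k_\epsilon^{2s}} = \frac{1}{|\xi|^2-k_\epsilon^2}\Big(s^{-1}k^{2-2s}\chi(\xi) + a_\epsilon(\xi)\Big),
\]
where $a_\epsilon(\xi)=\chi(\xi)\big[(|\xi|^2-k_\epsilon^2)/(|\xi|^{2s}-k_\epsilon^{2s}) - s^{-1}k_\epsilon^{2-2s}\big]$, up to an $O(\epsilon)$ correction. The bracket is a smooth (removable-singularity) function that converges as $\epsilon\to0$. It also vanishes at $|\xi|=k$.

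The terms are then identified as follows. Write $s^{-1}k^{2-2s}\chi = s^{-1}k^{2-2s} - s^{-1}k^{2-2s}(1-\chi)$. The constant part gives $\Gamma_0^+(k^2)\,k^{2-2s}I$; the factor $s^{-1}$ is absorbed into $A$, or the normalization of $A$ is adjusted, since only the structure matters. Next, $A(k)$ is the multiplier $a_0(\xi)-s^{-1}k^{2-2s}(1-\chi)$, where $a_0$ is a smooth compactly supported symbol. A Fourier multiplier with a symbol in $C_c^\infty$ plus a constant times $(1-\chi)$ is bounded on $L^{2,\delta}$ for every real $\delta$. This is a standard fact: weights $\langle x\rangle^\delta$ correspond to $\langle D_\xi\rangle^\delta$ on the Fourier side, and smooth symbols with bounded derivatives, i.e.\ $S^0$ symbols, preserve $L^{2,\delta}$. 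The term $(1-\chi)$ does not actually match $\Gamma_0^+(k^2)$ away from the sphere. So instead I would keep the term $-s^{-1}k^{2-2s}(1-\chi)/(|\xi|^2-k^2)$ separate; its symbol is smooth and decays like $|\xi|^{-2}$. The multiplier $(1-\chi(\xi))/(|\xi|^{2s}-k^{2s})$ is similarly smooth, and it is equivalent to $|\xi|^{-2s}$ at infinity and to a constant near the origin.

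For $B_1$ and $B_2$ I would introduce a second cutoff $\psi$ near $\xi=0$. The piece supported at large $|\xi|$ and away from $0$, with symbol $(1-\chi)(1-\psi)/(|\xi|^{2s}-k^{2s})$, is an $S^{-2s}$ symbol. It therefore maps $L^{2,\delta}\to H^{2s,\delta}$ for all $\delta$, and this gives $B_1(k)$. The piece $\psi/(|\xi|^{2s}-k^{2s})$ is smooth away from $0$ with compact support; the Helmholtz-remainder piece $(1-\chi)/(|\xi|^2-k^2)$ is also harmless. I would put these into $B_2(k)$. For $\delta\ge0$ we have $L^{2,\delta}\subset L^2$, and bounded multipliers that decay like $\langle\xi\rangle^{-2}$ map $L^2\to H^2$. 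The low-frequency piece has a bounded compactly supported symbol, so it maps into $H^2$ (indeed $H^\infty$) from $L^2$. This is why only $\delta\ge0$ is claimed for $B_2$: the symbol $\psi(\xi)(|\xi|^{2s}-k^{2s})^{-1}$ has a $|\xi|^{2s}$ cusp at $0$, so it is not smooth there and does not preserve weighted spaces with $\delta>0$ in the image. Only the $L^2$-based target $H^2$ is available.

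Finally, we pass to $\epsilon\to0^+$. All pieces except the one carrying $(|\xi|^2-k_\epsilon^2)^{-1}$ converge in operator norm, because their symbols and derivatives converge uniformly. The Helmholtz piece converges to $\Gamma_0^+(k^2)$ in ${\mathcal B}(L^{2,\delta},H^{2,-\delta})$ by Agmon's classical limiting absorption principle, which is the $s=1$ case of Theorem \ref{reso}. Since $A$ maps $L^{2,\delta}$ to itself, the composition also converges. Uniqueness of the limit in Theorem \ref{reso} then identifies the sum with $R_0^+(k^{2s})$. The main obstacle is the bookkeeping of the symbol classes. One must check that $a_\epsilon$ remains smooth uniformly in $\epsilon$, which needs care because the zero sets of $|\xi|^{2s}-k_\epsilon^{2s}$ and $|\xi|^2-k_\epsilon^2$ coincide only to first order. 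One must also justify boundedness of smooth multipliers on $L^{2,\delta}$ for negative $\delta$, which I would do by duality.
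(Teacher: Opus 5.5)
Your proposal is correct and follows essentially the same route as the paper. It uses a cutoff around $|\xi|=k$ on which $(|\xi|^{2s}-z^{2s})^{-1}$ becomes $(|\xi|^2-z^2)^{-1}$ times a smooth compactly supported multiplier, and a second cutoff at $\xi=0$ that splits the remainder into an $S^{-2s}$ piece $B_1$ and a low-frequency piece $B_2$ bounded from $L^2$ to $H^2$; it then passes to the limit using Agmon's principle for $\Gamma_0$.

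The only difference is bookkeeping. The paper gets the near-sphere factor from the global identity involving $M(\xi;z)$ and Lemma \ref{boundM}. You localize $(|\xi|^2-z^2)/(|\xi|^{2s}-z^{2s})$ directly, and its uniform smoothness in $\epsilon$ follows because it is the divided difference of $t\mapsto t^{1/s}$ at $|\xi|^{2s}$ and $z^{2s}$. Your remainder $(1-\chi)/(|\xi|^{2s}-z^{2s})$ coincides with the paper's $B(\xi;z)$ up to a smooth $S^{-2}$ Helmholtz tail.
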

 \begin{proof} We adapt the argument used by Umeda in \cite{umeda1995radiation} for the case $s=1/2$. However, the factorization of the resolvent of the negative Laplace operator as a product of resolvents of the square root of the negative Laplace operator used in \cite{umeda1995radiation}  is not available in the general case $0 < s < 1$, and additional terms must be analyzed. 
 \vspace*{-0.3cm}
    \begin{figure}[hh]
\begin{center}
\begin{tabular}{ccc}
\hspace*{-0.7cm}\includegraphics[width=5cm,height=3cm]{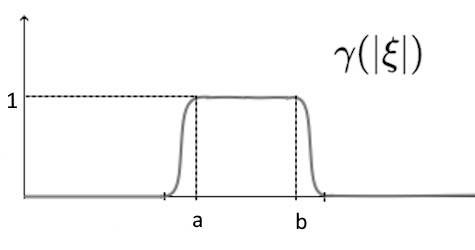} &
\hspace*{-0.6cm}\includegraphics[width=5cm,height=3cm]{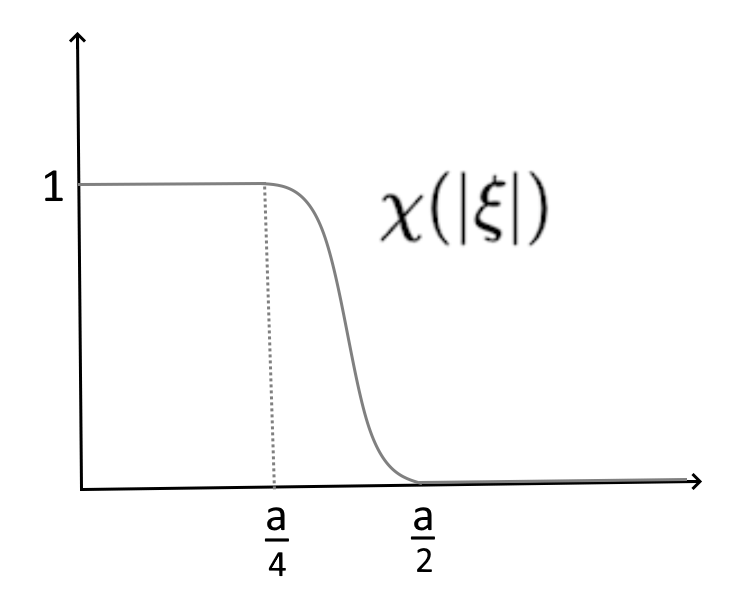} &
\includegraphics[width=5cm,height=4cm]{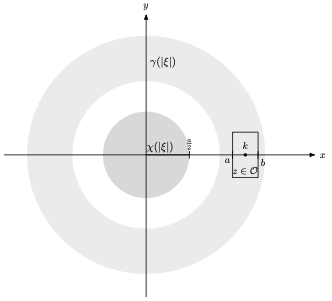} 
\end{tabular}
\end{center}
\vspace*{-0.6cm}
\caption{\small Sketch  of the profile of the  cut-off functions and the estimate regions of ${\mathbb C}$}
\label{cut_off_chi}
\end{figure}   
 We first note that for $z \in \C^+$ 
\begin{align}\label{identity1}
        |\xi|^2 -z^2 = (|\xi|^{2s}-z^{2s}) \left( |\xi|^{2-2s} + z^{2-2s} + \frac{z^{2s}|\xi|^{2-2s}-z^{2-2s}|\xi|^{2s}}{|\xi|^{2s}-z^{2s}}\right).
        \end{align}
      \noindent  
We denote the last term in this factorization, referred to as the Fourier multiplier, by
    \begin{align}\label{Mxi_def}
        M(\xi;z):= \frac{z^{2s}|\xi|^{2-2s}-z^{2-2s}|\xi|^{2s}}{|\xi|^{2s}-z^{2s}}.
        \end{align}
For the reader's convenience, the properties of $M(\xi;z)$ are collected in Appendix \ref{appendixA}, Lemma \ref{boundM}, where it is shown in particular that the singularity at $z$ is removable to order one. For a fixed $k$ take  $a,b \in \R^+$ such that  $k \in (a,b)$. We consider two radial cut-off $C_c^\infty$ functions, namely  $0\le \gamma(|\xi|)$ with compact support taking the value $1$ in $(a,b)$,  and  $0\le \chi(|\xi|)$ such that $\chi(|\xi|) = 0$ outside $B_{a/2}(0)$ and $\chi(|\xi|)=1$
on $B_{a/4}(0)$. Furthermore, we consider a rectangular region ${\mathcal O}\subset{\mathbb C}$ symmetric in the real axis, containing $k$, and included in the region where $\gamma(|\xi|)=1$ (see Figure  \ref{cut_off_chi}). In the following calculations we keep $z\in {\mathcal O} $. From (\ref{identity1}) we obtain the following decomposition of the resolvent of the fractional Helmholtz operator in the Fourier domain
        \begin{align*}
         \frac{1}{|\xi|^{2s}-z^{2s}} &= \frac{1}{|\xi|^2 - z^2} \left( z^{2-2s} + \underbrace{\gamma(|\xi|) |\xi|^{2-2s} + \gamma(|\xi|)M(\xi;z) }_{A(\xi;z)}\right)\\
         & +  \underbrace{\frac{(1- \gamma(|\xi|))|\xi|^{2-2s} + (1- \gamma(|\xi|))M(\xi;z)}{|\xi|^2 - z^2} }_{B(\xi;z)}.
    \end{align*}
  \noindent
We define the following operators 
    \begin{align*}
        A(z) &:= \F^{-1} A(\xi;z) \F := \F^{-1}\left(\gamma(|\xi|) |\xi|^{2-2s} + \gamma(|\xi|)M(\xi;z)\right)\F\\
        B(z) &:= \F^{-1} B(\xi;z) \F := \F^{-1} \frac{(1- \gamma(|\xi|))|\xi|^{2-2s} + (1- \gamma(|\xi|))M(\xi;z)}{|\xi|^2 - z^2} \F.
    \end{align*}
Note that since $z\in {\mathcal O}$, which is a compact  set in ${\mathbb C}$, the following estimates on the symbols of  all the pseudo-differential operators are uniform with respect to $z$. Since  $A(\xi;z)$ is a smooth compactly supported symbol, according to  \cite[Lemma 3.1]{umeda2003generalized}, (see also {\cite[Chapter~18.1]{HormanderIII}}), it defines a linear operator valued function  $A(z)$ which is  continuous on $z\in  {\mathcal O}$  with values in ${\mathcal B}(L^{2,\delta}, L^{2, \delta})$ for any $\delta\in {\mathbb R}$.  For the reader's convenience we formulate this result in Lemma \ref{lemma_31_Umeda}  in Appendix \ref{appendixA}. Note that  continuity in $z$ follows from the continuous dependence of the norm of the operator given by (\ref{norm}) (see the calculations for  $M(\xi;z)$  in Appendix \ref{appendixA}).

\noindent
 Next we decompose  the symbol $B(\xi;z)$ 
    \begin{align*}
         B(\xi;z)  &=\underbrace{\frac{(1-\gamma(|\xi|))}{|\xi|^2-z^2}(1-\chi(|\xi|))(|\xi|^{2-2s}+M(\xi;z))}_{B_1(\xi;z)} + \underbrace{\frac{(1-\gamma(|\xi|))}{|\xi|^2-z^2}}_{B_{2,1(\xi;z)}}\underbrace{\chi(|\xi|)(|\xi|^{2-2s}+M(\xi;z))}_{B_{2,2}(\xi;z)}
    \end{align*}
and investigate all three operators 
    \begin{align*}
        B_1(z):= \F^{-1} B_1(\xi;z) \F, \qquad   B_{2,1}(z):= \F^{-1} B_{2,1}(\xi;z) \F, \qquad  B_{2,2}(z):= \F^{-1} B_{2,2}(\xi;z) \F,
    \end{align*}
by analyzing their symbols. To this end  $B_1(\xi;z)$ is a smooth symbol, more specifically  it has no singularity at $z$ because of the term $(1-\gamma)$ and it has no differentiability issues at $0$ because of $(1-\chi)$. Moreover, when differentiating  $B_1(\xi;z)$  the decay at infinity is not affected by the term containing the  derivatives of the compactly supported cut-off functions. Therefore the rate of decay of the derivatives as $|\xi|$ tends to  $\infty$ follows from the following calculations
    \begin{align*}
       \left| \left(\frac{\partial}{\partial \xi} \right)^{\alpha} B_1(\xi;z) \right|&\leq C (1-\gamma(|\xi|))(1-\chi(|\xi|))\sum_{\beta \leq \alpha} \left|\frac{\partial^\beta}{\partial\xi^\beta} (|\xi|^2-z^2)^{-1}\right| \left| \frac{\partial^{\alpha-\beta}}{\partial\xi^{\alpha-\beta}} (|\xi|^{2-2s}+M(\xi;z))\right|\\
        &\leq  C \sum_{\beta \leq \alpha} \left<\xi\right>^{-2-|\beta|} \left<\xi\right>^{2-2s-|\alpha|+|\beta|}\leq C \left<\xi\right>^{-2s-|\alpha|}
    \end{align*}
for $|\xi|$ sufficiently large. The constant $C$ is also independent of $z\in {\mathcal O}$. We deduce that for a fixed $z \in {\mathcal O}$, $B_1(z) \in {\mathcal B}(L^{2,\delta}, H^{2s,\delta})$ for any $\delta\in {\mathbb R}$ by  Remark \ref{hoer}  in Appendix \ref{appendixA},  and in particular $B_1(z) \in  {\mathcal B}(L^{2,\delta}, H^{1,\delta})$  for $1/2\leq s<1$. Furthermore, $B_1(z)$ is a continuous operator-valued function of $z \in {\mathcal O}$. Similar analysis applied  to $B_{2,1}(\xi;z)$ yields that $B_{2,1}(z) \in {\mathcal B}(L^{2,\delta}, H^{2,\delta})$  for any $\delta\in {\mathbb R}$, and all $0<s<1$,  with continuity in $z\in {\mathcal O}$.

\noindent
Finally, using Plancherel theorem directly on $B_{2,2}(z):=\F^{-1} B_{2,2}(\xi;z) \F$ we have that
$$\|B_{2,2}(z)\phi\|_{L^{2,-\delta}}\leq \|B_{2,2}(z)\phi\|_{L^2}\leq C(z)\|\phi\|_{L^2}\leq C(z)\|\phi\|_{L^{2,\delta}}, \qquad \delta\geq 0,$$
where $C(z)$ depends continuously  on  $z\in {\mathcal O}$. Thus, we have that $B_{2,2}(z)\in {\mathcal B}(L^{2}, L^2)$ (and hence $B_{2,2}(z)\in {\mathcal B}(L^{2,\delta}, L^{2,-\delta})$  for $\delta \geq 0$). Combining this with   $B_{2,1}(z)$ we obtain that $B_2(z)= (B_{2,1} \circ B_{2,2})(z)$ is a continuous function of $z\in {\mathcal O}$ with operator-values in ${\mathcal B}(L^{2}, H^{2})$ (and hence in ${\mathcal B}(L^{2,\delta}, H^{2})$ for any $\delta\geq 0$). 

\noindent
Combining everything, we  obtain 
    \begin{align}\label{formula:resolvent_decomposition_complex}
        R_0(z^{2s}) = \Gamma_0(z^2) (z^{2-2s}I+A(z)) + B_1(z) + B_2(z).
    \end{align}
 \noindent 
 The first term is the resolvent of the Helmholtz operator studied by Agmon in \cite{agmon1975spectral}, where the limiting absorption principle justifies extending the resolvent $\Gamma_0(z)$  to $(0,\infty)$  by  $\Gamma_0^+\in {\mathcal B}(L^{2,\delta}, H^{2,-\delta})$ for $\delta>1/2$. Thus, passing to the limit in (\ref{formula:resolvent_decomposition_complex}) with $z^{2s} = k^{2s} + i \epsilon$  as $\epsilon \to 0^+$  and noting that  the operators $A,B_1,B_2$ are  continuous in $z\in \mathcal O$, we obtain the desired result. 
\end{proof}

\begin{definition}[GSRC] We say that $u\in L^{2,-\delta}({\mathbb R}^n)\cap H^1_{loc}({\mathbb R}^n)$ satisfies the Generalized Sommerfeld Radiation Condition for $k>0$ if
\begin{equation}\label{def:generalized_SRC-I}
\int\limits_{{\mathbb R}^n}  \frac{|\nabla u -ik\hat x u|^2}{(1+|x|^2)^{1-\delta}}\,dx <+\infty, \qquad  \mbox{ for some $1/2<\delta<1$}.
\end{equation}
\end{definition}
\noindent
The above decomposition of the resolvent enables us to prove that the resolvent of the fractional Helmholtz operator, satisfies the generalized Sommerfeld radiation condition, provided $s\geq 1/2$. 

\begin{lemma}\label{lemma:Resolvent_SRC_s_geq_half} Let $1/2<\delta<1$ and $s\geq 1/2$. Then,  for $f\in L^{2,\delta}(\R^n)$  the solution of $\FLs u - k^{2s} u = f$ given by the extended resolvent $u_0^+:= R_0^+(k^{2s})f\in H^{2s,-\delta}(\R^n)$ satisfies the generalized Sommerfeld radiation condition (\ref{def:generalized_SRC-I}).
 \end{lemma}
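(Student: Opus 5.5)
The plan is to read the radiation condition off the decomposition of the extended resolvent in Lemma \ref{lemma:decomposition_resolvent}, treating each piece separately. For $f\in L^{2,\delta}(\R^n)$ with $1/2<\delta<1$, formula (\ref{formula:resolvent_decomposution}) gives
\begin{align*}
u_0^+ = R_0^+(k^{2s})f = \Gamma_0^+(k^2) g + B_1(k) f + B_2(k) f, \qquad g := k^{2-2s} f + A(k) f .
\end{align*}
Since $A(k)\in{\mathcal B}(L^{2,\delta},L^{2,\delta})$, we have $g\in L^{2,\delta}(\R^n)$. The condition (\ref{def:generalized_SRC-I}) involves the quantity $|\nabla u - ik\hat x u|^2$. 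By the triangle inequality this is at most a constant times the sum of the corresponding quantities for the three pieces. So it suffices to check the weighted integral for each of $v_1:=\Gamma_0^+(k^2)g$, $v_2:=B_1(k)f$ and $v_3:=B_2(k)f$.

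\emph{The Helmholtz part $v_1$.} Here I would invoke the classical theory of the Laplacian (Agmon \cite{agmon1975spectral}, and the Ikebe--Sait\={o} / Agmon--H\"ormander radiation estimates). For $g\in L^{2,\delta}$ with $1/2<\delta<1$, the outgoing solution $\Gamma_0^+(k^2)g\in H^{2,-\delta}$ satisfies
\begin{align*}
\int_{\R^n} (1+|x|^2)^{\delta-1}\,|\nabla v_1 - ik\hat x v_1|^2\,dx<\infty .
\end{align*}
This is precisely the statement used by Umeda in \cite{umeda1995radiation,umeda2003generalized} for $s=1/2$. The only check is that the cited hypotheses match our range of $\delta$. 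If needed, one replaces $\delta$ by some $\delta'\in(1/2,\delta]$, since $L^{2,\delta}\subset L^{2,\delta'}$. Membership $v_1\in H^1_{loc}$ follows from $v_1\in H^{2,-\delta}$.

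\emph{The remainder parts.} For $v_3$, Lemma \ref{lemma:decomposition_resolvent} gives $v_3\in H^2(\R^n)$, so $v_3,\nabla v_3\in L^2$. Since $(1+|x|^2)^{\delta-1}\le 1$, the weighted integral is bounded by $2\|\nabla v_3\|_{L^2}^2+2k^2\|v_3\|_{L^2}^2<\infty$.

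For $v_2$ we know $v_2\in H^{2s,\delta}$, and this is where $s\geq 1/2$ is used. I will use the fact, noted in the proof of Lemma \ref{lemma:decomposition_resolvent}, that $B_1(k)\in{\mathcal B}(L^{2,\delta},H^{1,\delta})$ when $s\geq 1/2$. Indeed, its symbol is of order $-2s\le -1$, so $\partial_j B_1(k)$ has a symbol of order $1-2s\le 0$ and is bounded on $L^{2,\delta}$ by Lemma \ref{lemma_31_Umeda} and Remark \ref{hoer}. Hence $v_2,\nabla v_2\in L^{2,\delta}$. Since $(1+|x|^2)^{\delta-1}\le(1+|x|^2)^{\delta}$, the weighted integral for $v_2$ is finite. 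For $s<1/2$ only $2s<1$ derivatives are controlled, which is why the lemma is restricted to $s\ge 1/2$.

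Summing the three bounds gives (\ref{def:generalized_SRC-I}) for $u_0^+$. Membership of $u_0^+$ in $L^{2,-\delta}$ holds because each piece lies in $L^{2,-\delta}$. Local $H^1$ regularity holds for $v_1$ and $v_3$ as shown, and for $v_2$ by the $H^{1,\delta}$ bound.

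I expect the main obstacle to be the Helmholtz step. One has to quote the radiation estimate for $\Gamma_0^+$ in exactly the weighted form (\ref{def:generalized_SRC-I}), valid for all $g\in L^{2,\delta}$ and not only for compactly supported data. My first attempt would be to take it from Ikebe--Sait\={o} as used by Umeda. Failing that, I would derive it by density from compactly supported $g$, where it follows from the explicit fundamental solution and the Sommerfeld asymptotics. This requires a uniform bound of the weighted radiation integral by $C\|g\|_{L^{2,\delta}}^2$, which is the genuinely nontrivial part of that literature.
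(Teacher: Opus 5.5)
Your proposal is correct and follows the paper's proof essentially step for step. You split $u_0^+$ via the decomposition of Lemma~\ref{lemma:decomposition_resolvent} and quote the Ikebe--Sait\={o}/Umeda radiation estimate for the $\Gamma_0^+(k^2)\bigl(k^{2-2s}I+A(k)\bigr)f$ term. You then dispose of $B_1(k)f\in H^{1,\delta}$ (this is where $s\ge 1/2$ enters) and $B_2(k)f\in H^{2}$ using $L^{2,\delta}\subset L^2\subset L^{2,\delta-1}$, exactly as the paper does.
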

\begin{proof}
We use the decomposition (\ref{formula:resolvent_decomposution}) and the fact that  $f\in L^{2,\delta}(\R^n)$.   As a consequence of \cite[Theorem 6.1]{umeda2003generalized}, a result  first due to Ikebe-Saito  in \cite{MR312066} (see also\cite{agmon1975spectral}), the first term in (\ref{formula:resolvent_decomposution})
satisfies the generalized Sommerfeld Radiation Condition 
    \begin{align*}
       \left(\frac{\partial}{\partial x_j}-ik \hat x_j\right) \Gamma_0^+(k) (z^{2-2s} I+ A(k))f  \in L^{2,\delta-1} \quad \mbox{ for all } 1\leq j\leq n ~.
    \end{align*}
Here we use that $F:=(k^{2-2s} I+ A(k))f \in   L^{2,\delta}(\R^n)$  and  $ U_0=\Gamma_0^+(k) F$ is an outgoing solution of $-\Delta U-k^2U=F$. Concerning  the other terms, we observe that 
    \begin{align*}
         \left(\frac{\partial}{\partial x_j}-ik \hat x_j\right) B_1(k)f  \in L^{2,\delta} \subset L^{2,\delta-1} \quad \mbox{ for all } 1\leq j\leq n\\
         \left(\frac{\partial}{\partial x_j}-ik \hat x_j\right) B_2(k)f  \in H^{1} \subset L^{2,\delta-1} \quad \mbox{ for all } 1\leq j\leq n
    \end{align*}
where we used the inclusions $L^{2,\delta} \subset L^2 \subset L^{2,\delta-1}$ for $\frac{1}{2}<\delta<1$.
\end{proof}
\noindent
Note that for $0<s<1/2$ the solution $u^+_0$ is only in $H^{2s,-\delta}(\R^n)$,  hence we cannot directly conclude that its gradient is an $L^2$-function.
\subsection{Uniqueness given by the generalized Sommerfeld radiation condition}
\label{uniqsec}
We now want to show that the generalized Sommerfeld Radiation Condition uniquely  characterizes the outgoing solutions to the fractional Helmholtz equation. We know this to be true for solutions to the Helmholtz equation thanks to the results of Ikebe and Saito in \cite{MR312066}, therefore one approach is to show that if $u\in H^{2s,-\delta}$ for $1/2<\delta<1$  is a solution to the homogeneous fractional Helmholtz equation, then $u$ is a solution to the homogeneous Helmholtz equation. This is our approach in what follows. 

\noindent
The classical Schwarz space is too small to be stable under the action of $\FLs$. Therefore we define the larger space $S_\alpha$, $\alpha \in \R$ to be  
\begin{align*}
    \mathcal{S}_\alpha(\R^n) :=\left\{ \phi \in C^\infty(\R^n) | \sup_{x\in \R^n} (1+|x|^\alpha)|\phi^{(\beta)}(x)|<\infty \mbox{ for all multi-index } \beta \right\} 
\end{align*}
Note that $\mathcal{S}_\alpha (\R^n)\subset \mathcal{S}_{\alpha'}(\R^n)$ for all $0\le \alpha' \le \alpha$.
The action of the fractional Laplacian on this  Schwarz space  can be understood by the following lemma.
\begin{lemma}\label{spaces}
    Let $\phi \in \mathcal{S}(\R^n)$.  Then $\FLs \phi \in \mathcal{S}_{n+2s}(\R^n)$ for $0<s<1$. 
\end{lemma}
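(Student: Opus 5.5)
We need to show that $\FLs\phi$ is smooth, that every derivative of it is bounded, and that every derivative decays like $|x|^{-n-2s}$. The plan is to use two complementary representations of $\FLs\phi$: the Fourier side for smoothness and boundedness, and the singular-integral side for decay at infinity. In both, derivatives commute with $\FLs$, so it suffices to treat $\psi:=\phi^{(\beta)}\in\mathcal S(\R^n)$ in place of $\phi$. Indeed, $\partial^\beta\FLs\phi=\F^{-1}\big((i\xi)^\beta|\xi|^{2s}\hat\phi\big)=\FLs\partial^\beta\phi$.

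\textbf{Smoothness and boundedness.} On the Fourier side, $|\xi|^{2s}\xi^\beta\hat\phi(\xi)$ lies in $L^1(\R^n)$ for every $\beta$, because $\hat\phi$ is Schwartz and $|\xi|^{2s}$ is locally bounded with polynomial growth. Hence $\FLs\phi\in C^\infty$, and
\[
\sup_x|\partial^\beta\FLs\phi(x)|\le (2\pi)^{-n}\||\xi|^{2s+|\beta|}\hat\phi\|_{L^1}<\infty .
\]
This gives the factor $1$ in the weight $(1+|x|^{n+2s})$.

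\textbf{Decay.} Use the singular integral representation
\[
\FLs\psi(x)=C_{n,s}\,\mathrm{P.V.}\!\int_{\R^n}\frac{\psi(x)-\psi(y)}{|x-y|^{n+2s}}\,dy,
\]
which is equivalent to the Fourier definition for Schwartz functions (see \cite{di2012hitchhikers}). Fix $|x|\ge 2$ and split $\R^n$ into three regions.

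\emph{Region $|y-x|<|x|/2$.} Symmetrize to get the second-difference form
\[
\tfrac12\int\frac{2\psi(x)-\psi(x+h)-\psi(x-h)}{|h|^{n+2s}}\,dh .
\]
The numerator is bounded by $|h|^2\sup_{|z-x|\le|x|/2}|D^2\psi(z)|\le C_N|h|^2\langle x\rangle^{-N}$ for any $N$. Integrating over $|h|<|x|/2$ then gives at most $C\langle x\rangle^{-N}|x|^{2-2s}$, which is rapidly decaying.

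\emph{The $\psi(x)$ term on $|y-x|\ge|x|/2$.} This contributes $|\psi(x)|\int_{|h|\ge|x|/2}|h|^{-n-2s}\,dh\le C|\psi(x)||x|^{-2s}$, again rapidly decaying.

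\emph{The $\psi(y)$ term on $|y-x|\ge|x|/2$.} This is the main contribution, $\int_{|y-x|\ge|x|/2}\frac{|\psi(y)|}{|x-y|^{n+2s}}\,dy$. Split it further:
\begin{itemize}
\item for $|y|\le|x|/4$, we have $|x-y|\ge \tfrac34|x|$, so this piece is at most $C|x|^{-n-2s}\|\psi\|_{L^1}$;
\item for $|y|>|x|/4$, use $|\psi(y)|\le C_N\langle x\rangle^{-N}\langle y\rangle^{-n-1}$ together with $|x-y|^{-n-2s}\le C|x|^{-n-2s}$, so this piece is rapidly decaying.
\end{itemize}
Combining, $|\partial^\beta\FLs\phi(x)|\le C_\beta|x|^{-n-2s}$ for $|x|\ge2$.

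Together with the uniform bound from the first step, this yields $\sup_x(1+|x|^{n+2s})|\partial^\beta\FLs\phi(x)|<\infty$ for every $\beta$, i.e. $\FLs\phi\in\mathcal S_{n+2s}(\R^n)$.

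The main obstacle is making the near-field region rigorous uniformly in $x$. The second-difference form avoids the principal value, but the Taylor bound must use the sup of $D^2\psi$ over the ball $|z-x|\le|x|/2$, not at $x$ alone. This is where the Schwartz decay of $\psi$ on that ball is needed.
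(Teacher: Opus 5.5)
Your proof is correct and follows essentially the same route as the paper's (Bucur's argument): the second-difference form with a Taylor bound over $B_{|x|/2}$, plus the far-field bounds $C|\psi(x)||x|^{-2s}$ and $C\|\psi\|_{L^1}|x|^{-n-2s}$; your reduction to $\psi=\partial^\beta\phi$ and the Fourier-side bound for smoothness and boundedness make explicit what the paper leaves implicit. The extra split at $|y|=|x|/4$ is unnecessary, since $|x-y|\ge |x|/2$ already gives $|x-y|^{-n-2s}\le 2^{n+2s}|x|^{-n-2s}$ on that whole region.
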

\begin{proof}
  A proof can be found in the paper of Bucur \cite{bucur2015some}. We include it here for the reader's convenience. Clearly $\sup_{|x|\leq 1} (1+|x|^{n+2s}) |\FLs \phi(x) |<\infty$. Next we study the decay   $\FLs\phi$ as $|x|\to +\infty$.   To this end
    \begin{align*}
       | \FLs \phi (x) | &= \left|\frac{1}{2}\int_{\R^n} \frac{2\phi(x) - \phi(x-y) -\phi(x+y)}{|y|^{n+2s}} dy\right| \\
       &\leq  \frac{1}{2} \int_{B_{|x|/2}(0)}\frac{|2\phi(x) - \phi(x-y) -\phi(x+y)|}{|y|^{n+2s}} dy \\
       &+ \frac{1}{2} \int_{\R^n \setminus B_{|x|/2}(0)} \frac{|\phi(x) -\phi(x-y)|}{|y|^{n+2s}} dy + \frac{1}{2} \int_{\R^n \setminus B_{|x|/2}(0)} \frac{|\phi(x) -\phi(x+y)|}{|y|^{n+2s}} dy 
    \end{align*}
    In the first integral, we use $ \phi(x-y) = \phi(x) - y \cdot \nabla \phi (x) + y^T\cdot R_2 \phi (x,y) \cdot y$, where 
    \begin{align*}
        R_2\phi(x,y) = \int_0^1 (1-t) D^2\phi(x-ty)dt
    \end{align*}
    and a similar formula for $\phi(x+y)$. Hence we  obtain
    \begin{align*}
        &\frac{1}{2} \int\limits_{B_{|x|/2}(0)}\frac{|2\phi(x) - \phi(x-y) -\phi(x+y)|}{|y|^{n+2s}} dy \leq C(n) \int\limits_{B_{|x|/2}(0)}\frac{\sup_{z \in B_{|x|/2}(x)}|D^2\phi(z)||y|^2}{|y|^{n+2s}} dy\\
        &\qquad \leq  C(n) \sup_{z \in B_{|x|/2}(x)}|D^2\phi(z)| \int_0^{|x|/2}r^{1-2s}dr \leq  C(n)\left|D^2\phi\left(x +t\frac{|x|}{2}e_\theta\right)\right| |x|^{2-2s}\\
        &\qquad \leq  C(n)\sup_{z\in \R^n}(1+|z|^{n+2})|D^2\phi(z)| |x|^{-n-2s},    
    \end{align*}
where we use that the supremum is attained at $x+t\frac{|x|}{2}e_\theta$ for some $0\leq t\leq 1$, $e_\theta \in S_1$.  For the remaining integrals we have 
    \begin{align*}
        &\int_{\R^n\setminus B_{|x|/2}(0)} \frac{|\phi(x) - \phi(x-y)|}{|y|^{n+2s}} dy \leq C(n,s) (|\phi(x)| |x|^{-2s} + \|\phi\|_{L^1(\R^n)} |x|^{-n-2s})\\
        &\qquad \qquad \leq C(n,s) \left(\sup_{z \in \R^n} (1 + |z|^n)|\phi(z)| + \|\phi\|_{L^1(\R^n)}\right) |x|^{-n-2s}
    \end{align*}
    and a similar formula for $\phi(x) -\phi(x+y)$. 
 Combining the above we obtain the desired estimate $|\FLs\phi(x)| \leq C(\phi, n,s) |x|^{-n-2s} $.
\end{proof}
\begin{remark}\label{Salpha_L2delta} {\em We have  the following inclusion $\Sw_{\alpha} \subset L^{2,\delta}({\mathbb R}^n)$ provided $\alpha > \frac{n+2\delta}{2}$. Indeed, we observe that
    \begin{align*}
        \int_{\R^n} (1+|x|^{2})^{\delta} |\phi(x)|^2 dx \leq \int_{\R^n} \frac{(1+|x|^2)^{\delta}}{(1+|x|^\alpha)^2}dx \left(\sup_{z \in \R^n} (1+|z|^\alpha)|\phi(z)|\right)^2 
        \end{align*}
 is finite if and only if $2\alpha - 2\delta >n$ i.e. $\alpha > \frac{n+2\delta}{2}$.}
\end{remark}
\begin{proposition}\label{from_fractional_Helmholtz_to_Helmholtz}
    Let $1/2<\delta<1$ and $ s\in (0,1)$. If $u\in H^{2s,-\delta}({\mathbb R}^n)$ is a solution of the homogeneous fractional Helmholtz equation in the sense  
    \begin{align}\label{fls_L2delta}
       \left< \FLs u -k^{2s}u, \phi\right>_{L^{2,-\delta}, L^{2,\delta}} = 0 \mbox{  for all } \phi \in L^{2,\delta}(\R^n),
    \end{align}
    then $u \in \mathcal{S}'(\R^n)$ (the space of tempered distributions) is a solution of the homogeneous Helmholtz equation in the sense of distributions:  
    \begin{align*}
        \left<(-\Delta-k^2)u, \phi \right>_{\mathcal{S}', \mathcal{S}} = 0 \mbox{ for all  } \phi \in \mathcal{S}(\R^n).
    \end{align*}
 Here $\left<\cdot, \cdot\right>$ denotes the duality between indicated spaces.    
\end{proposition}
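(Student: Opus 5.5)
The idea is to use the factorization $|\xi|^2-k^2 = m(\xi)\,(|\xi|^{2s}-k^{2s})$, where
\[
m(\xi):=\frac{|\xi|^2-k^2}{|\xi|^{2s}-k^{2s}} .
\]
The function $m$ is smooth away from $\xi=0$; at $|\xi|=k$ the singularity is removable, since both numerator and denominator have simple zeros there and $m(k)=s^{-1}k^{2-2s}$. At the origin $m$ is only H\"older, behaving like $k^{2-2s}+O(|\xi|^{2s})$. Rather than working with $m$ directly, I will test $(-\Delta-k^2)u$ against $\phi\in\mathcal{S}$ and move the operators onto $\phi$.

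First, $u\in L^{2,-\delta}\subset\mathcal{S}'$, since $\mathcal{S}\subset L^{2,\delta}$, so the distributional statement makes sense. For $\phi\in\mathcal{S}$ we have, in the sense of distributions,
\[
\langle(-\Delta-k^2)u,\phi\rangle=\langle u,(-\Delta-k^2)\phi\rangle .
\]
Set $\psi:=\mathcal{F}^{-1}\bigl(m\,\hat\phi\bigr)$, so that formally $(-\Delta-k^2)\phi=((-\Delta)^s-k^{2s})\psi$. The aim is then to show two things:
\begin{enumerate}
\item[(a)] $\psi\in H^{2s,\delta}$, i.e.\ $\psi$ and $(-\Delta)^s\psi$ both lie in $L^{2,\delta}$;
\item[(b)] $\langle u,((-\Delta)^s-k^{2s})\psi\rangle=\langle ((-\Delta)^s-k^{2s})u,\psi\rangle_{L^{2,-\delta},L^{2,\delta}}$.
\end{enumerate}
Given (a) and (b), the hypothesis (\ref{fls_L2delta}) applied with the test function $\psi$ (which is real or complex as needed) yields zero. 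For (b), the pairing identity holds for $u\in D(\mathcal{H}_0)$, using the symmetry of $(-\Delta)^s$. It then extends to $u\in H^{2s,-\delta}$ by density of $D(\mathcal{H}_0)$ in the graph norm, which is exactly the characterization of $H^{2s,-\delta}$ recalled before Theorem~\ref{reso}, combined with continuity of both sides in $u$ once $\psi$ and $(-\Delta)^s\psi$ lie in $L^{2,\delta}$.

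The main work is (a). Split $m=\chi m+(1-\chi)m$, with $\chi$ the cutoff near $0$ used in Lemma~\ref{lemma:decomposition_resolvent}. The symbol $(1-\chi)m$ is smooth with derivative bounds $|\partial^\alpha((1-\chi)m)|\le C\langle\xi\rangle^{2-2s-|\alpha|}$, so its contribution is in $\mathcal{S}$. Near the origin write $\chi m=\chi\,(k^{2-2s}+\tilde m)$. The piece $\tilde m$ is a combination of the functions $|\xi|^{2s},|\xi|^{2},|\xi|^{4s},\dots$ times smooth factors; more robustly, $\tilde m$ is smooth except at $0$ and homogeneous-type with $|\partial^\alpha\tilde m|\le C|\xi|^{2s-|\alpha|}$. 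Consequently $\mathcal{F}^{-1}(\chi\tilde m\hat\phi)$ is smooth and decays like $|x|^{-n-2s}$, by the same argument that shows $(-\Delta)^s\phi\in\mathcal{S}_{n+2s}$ in Lemma~\ref{spaces}. The same decay holds for $(-\Delta)^s\psi$, whose symbol $|\xi|^{2s}m$ has the same type of singularity at $0$. By Remark~\ref{Salpha_L2delta} we get $\mathcal{S}_{n+2s}\subset L^{2,\delta}$ provided $n+2s>(n+2\delta)/2$, i.e.\ $\delta<n/2+2s$. This holds for $\delta<1$ whenever $n\ge2$, and for $n=1$ whenever $s>1/4$.

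The hard step is this decay estimate, and in particular the case $n=1$ with small $s$, where $|x|^{-1-2s}$ may fail to lie in $L^{2,\delta}$. If that occurs, I would argue instead that weighted $L^{2,\delta}$ membership of $\psi$ is equivalent to $\hat\psi\in H^\delta$ in $\xi$. So it suffices that $\chi\tilde m\hat\phi$ lie in the Sobolev space $H^{\delta}(\mathbb{R}^n_\xi)$ with $\delta<1$. Since $|\xi|^{2s}\chi$ lies in $H^{\sigma}_{loc}$ for all $\sigma<n/2+2s$, this again requires $\delta<n/2+2s$. If it still fails for $n=1$, $s\le1/4$, I would fall back on exploiting the equation further: one can iterate the factorization, or use a Littlewood--Paley argument. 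First, $\hat u$ is supported on the sphere $|\xi|=k$ away from $0$; this follows because $(|\xi|^{2s}-k^{2s})\hat u=0$ as distributions tested against $C_c^\infty(\mathbb{R}^n\setminus\{0\})$, using functions in $\mathcal{S}$ with Fourier support away from $0$, for which $\psi\in\mathcal{S}$. One then only needs to test against $\phi$ with $\hat\phi$ vanishing near $0$, handling the component near the origin separately by this support argument.
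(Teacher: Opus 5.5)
For $n=2,3$, and for $n=1$ whenever $\delta<\tfrac12+2s$ (in particular for $s>\tfrac14$), your argument is the paper's. Your $m$ equals $|\xi|^{2-2s}+k^{2-2s}+M(\xi;k)$, so $\psi=[(-\Delta)^{1-s}+k^{2-2s}]\phi+M\phi$ is exactly the paper's test function, and your step (b) supplies the transfer of $(-\Delta)^s$ across the pairing that the paper uses without comment.

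The gap is the remaining case $n=1$, $\tfrac12+2s\le\delta<1$, which occurs when $s<\tfrac14$. The paper handles all of $n=1$ by a different route: it quotes the one-dimensional classification of Guan--Murugan--Wei (solutions are $a(x)\sin kx+b(x)\cos kx$ with polynomials $a,b$) and then uses $u\in L^{2,-\delta}$, $\delta<1$, to force $a,b$ to be constant. Your fallback does not reach this case, for two reasons.

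First, when $\int\phi\neq0$ there is no $\psi\in L^{2,\delta}$ at all with $((-\Delta)^s-k^{2s})\psi=(-\Delta-k^2)\phi$. Near $\xi=0$ such a $\psi$ must have $\hat\psi=m\hat\phi$. Its term $k^{2-4s}\hat\phi(0)|\xi|^{2s}$ produces a tail $C\hat\phi(0)|x|^{-1-2s}$, which is not in $L^{2,\delta}(\mathbb{R})$ for $\delta\ge\tfrac12+2s$. Iterating the factorization or a Littlewood--Paley splitting cannot change this.

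Second, the support argument with $\hat\phi$ vanishing near $0$ only gives $\operatorname{supp}\hat u\subset\{0,\pm k\}$, hence $u=c+ae^{ikx}+be^{-ikx}$ once $\delta<1$ is used. The constant $c$ is invisible to that argument, since it lies in $L^{2,-\delta}(\mathbb{R})$ for $\delta>\tfrac12$. Excluding it requires the equation at frequency zero, i.e.\ test functions with $\int\phi\neq0$, which is exactly what (b) cannot handle. So ``handling the component near the origin separately by this support argument'' is circular.

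Nor is this a removable technicality. Read $H^{2s,-\delta}$ literally as the graph-norm completion of $H^{2s}$. Then for $n=1$, $\tfrac12+2s<\delta<1$, the operator $(-\Delta)^s$ is not even closable in $L^{2,-\delta}(\mathbb{R})$. Take $\hat\eta\in C_c^\infty$ with $\eta(0)=1$, and $g:=\mathcal{F}^{-1}(|\xi|^{-2s}\hat\eta)$, which lies in $H^{2s}$ because $4s<1$. Then $v_j:=j^{2s}\bigl[g(\cdot/j)-g(0)\eta(\cdot/j^2)\bigr]$ satisfies $\|v_j\|_{L^{2,-\delta}}^2\lesssim j^{4s+1-2\delta}+j^{4s+2-4\delta}\to0$. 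At the same time $(-\Delta)^s v_j=\eta(\cdot/j)-g(0)j^{-2s}\bigl((-\Delta)^s\eta\bigr)(\cdot/j^2)\to1$ in $L^{2,-\delta}$. With $w_j:=\eta(\cdot/j)+k^{2s}v_j$ one gets $w_j\to1$ and $(-\Delta)^s w_j\to k^{2s}$. So $u\equiv1$, with $k^{2s}$ as its ``fractional Laplacian'', belongs to the completion and satisfies the hypothesis, although $(-\Delta-k^2)1=-k^2\neq0$.

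To close the case you have to give $(-\Delta)^s u$ its distributional meaning, $\langle(-\Delta)^s u,\phi\rangle:=\int u\,(-\Delta)^s\phi\,dx$ for $u\in L^1(\langle x\rangle^{-1-2s}dx)$. This is what the classification quoted by the paper presupposes, and it follows from $u\in L^{2,-\delta}$ exactly when $\delta<\tfrac12+2s$. With that meaning your support argument does finish. Take $\hat\phi\in C_c^\infty$ supported in $|\xi|<k/2$ with $\hat\phi(0)=1$; the exponentials contribute nothing, and $0=\int u\,\bigl((-\Delta)^s\phi-k^{2s}\phi\bigr)\,dx=-k^{2s}c$. Hence $c=0$ and $u=ae^{ikx}+be^{-ikx}$.
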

\begin{proof}
Let us first consider  the case when  $n>1$.  We use the following factorization of the operator $-\Delta - k^2$ : 
\begin{align*}
    -\Delta - k^{2} &= \F^{-1} (|\xi|^2 - k^2) \F \\
    &=\F^{-1} \left((|\xi|^{2s} - k^{2s})(|\xi|^{2-2s}+k^{2-2s}) + k^{2s}|\xi|^{2-2s} - k^{2-2s} |\xi|^{2s} \right)\F\\
    &=(\FLs -k^{2s}) \left(((-\Delta)^{1-s} +k^{2-2s}) +   \F^{-1} \frac{k^{2s}|\xi|^{2-2s}-k^{2-2s}|\xi|^{2s}}{|\xi|^{2s}-k^{2s}}\F\right) \\
    &=(\FLs -k^{2s}) \left(((-\Delta)^{1-s} +k^{2-2s}) +  \F^{-1}M(\xi;k)\F(\xi)\right),
\end{align*}
where $M(\xi;k)$ is the Fourier multiplier  defined by (\ref{Mxi_def}) and we define the operator $M$ by $M := \mathcal{F}^{-1} M(\xi;k) \mathcal{F}$. For $\phi \in \Sw(\R^n)$, Lemma \ref{boundM} item (iii) in Appendix \ref{appendixA}, implies  $M\phi \in L^{2,\delta}(\R^n)$. By Lemma \ref{spaces} and Remark \ref{Salpha_L2delta} we have that $(-\Delta)^{1-s} \phi \in \Sw_{n+2-2s}(\R^n) \subset L^{2,\delta}(\R^n)$,   for $\phi \in \Sw(\R^n)$, and the inclusion holds since $\delta<1$ and  $1 < \frac{n}{2} + 2-2s $ for $n\geq 2$. Hence, we have that for any $\phi \in \Sw$, $[(-\Delta)^{1-s}+k^{2-2s}]\phi  + M\phi\in L^{2,\delta}(\R^n)$, and  furthermore for $u\in H^{2s, -\delta}(\R^n)$ satisfying (\ref{fls_L2delta}), we obtain
\begin{align*}
    0 = \left< \FLs u -k^{2s} u , [(-\Delta)^{1-s} +k^{2-2s}]\phi + M\phi \right>_{L^{2,-\delta}, L^{2,\delta}} =\left<  u , [(-\Delta) -k^{2}]\phi \right>_{\Sw', \Sw},
\end{align*}
concluding that $u$ is a solution to the Helmholtz equation in the sense of distributions. \\
For $n=1$, we use a similar argument as in \cite[Lemma 2.3]{guan2023helmholtz} where the authors show that any solution $u$ to the fractional Helmholtz equation must have the form 
\begin{align*}
    u(x) = a(x) \sin(kx) + b(x) \cos(kx)
\end{align*}
where $a(x)$ and $b(x)$ are polynomials. By assumption, $u \in L^{2,-\delta}({\mathbb R}^n)$ for $\delta<1$, therefore we see that if $a(x)$ and $b(x)$ are non-constant, there needs to be a cancellation for the $L^{2,-\delta}$-norm of $u$ to be finite
\begin{align*}
    \|u\|_{L^{2,-\delta}} = \int_{\R}(1+|x|^2)^{-\delta}\left[a(x)^2 \sin(kx)^2 +b(x)^2\cos(kx)^2) +2a(x) \sin(kx)b(x)\cos(kx)\right].
\end{align*}
Because of the sin function  in the last term, the even powers will have no contributions in the last term, consequently cannot cancel the leading powers in $a(x)^2$ and $b(x)^2$ which are always even. Thus $a(x)$ and $b(x)$ must be constant and $u(x) = a \sin(kx) + b\cos(kx) $ is a solution of the homogeneous Helmholtz equation. This completes the proof.
\end{proof}
\begin{remark}
\label{equivrem}
{\em If $u\in H^1_{loc}({\mathbb R}^n)$ is a solution to $\Delta u+k^2u=0$ in the exterior of a bounded region $D$, then the Generalized Sommerfeld Radiation Condition (GSRC)
\begin{align}
\int\limits_{{\mathbb R}^n}  \frac{|\nabla u -ik\hat x u|^2}{(1+|x|^2)^{1-\delta}}dx <+\infty, \qquad 1/2<\delta<1, \label{GSRC} \tag{GSRC}
\end{align}
and the classical Sommerfeld Radiation Condition (SRC) 
\begin{align*}
   r^{\frac{n-1}{2}}\left( \frac{\partial u}{\partial r} -ik u \right) \xrightarrow[r\to \infty]{}0,\qquad \mbox{uniformly in $\hat x = \frac{x}{|x|}$}
\end{align*}
are equivalent. While this may be a well known fact, we have not been able to find a proof in the literature. We include  a short proof of this equivalence in Appendix \ref{equiv}. For $1/2\le s<1$ Theorem \ref{prop:representation_formula_s_geq_half} and Theorem \ref{prop:representation_formula_s_less_half} in the following section show that this equivalence extends to functions $u \in H^{2s,-\delta}(\mathbb{R}^n)$ which satisfy $(-\Delta)^su-k^{2s}u=0$ in the exterior of a bounded region.}
\end{remark}
\begin{corollary}\label{uniqueness}
   Let $0<s<1$ and $1/2<\delta<1$. If $u\in H^{2s,-\delta}({\mathbb R}^n)$ is a solution to $\left< \FLs u -k^{2s}u, \phi\right>_{L^{2,-\delta}, L^{2,\delta}} = 0$ for all $\phi \in L^{2,\delta}({\mathbb R}^n)$ then $u\in H^1_{loc}({\mathbb R}^n)$. Furthermore if $u$ satisfies either (GSRC) or (SRC), then $u\equiv 0$ in $\R^n$.  
\end{corollary}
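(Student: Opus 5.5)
By Proposition \ref{from_fractional_Helmholtz_to_Helmholtz}, $u$ is a tempered distribution solving $(-\Delta-k^2)u=0$ in $\mathcal{S}'(\R^n)$. The plan is to pass this through interior elliptic regularity to get $u\in H^1_{loc}$, indeed $u\in C^\infty$. Then we apply the classical uniqueness theory for the Helmholtz equation, using Remark \ref{equivrem} to treat (GSRC) and (SRC) together.

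\emph{Regularity.} Since $u\in L^{2,-\delta}(\R^n)\subset L^2_{loc}(\R^n)$ and $\Delta u=-k^2u$ in the distributional sense, Weyl's lemma for $\Delta+k^2$ applies. Equivalently, one can bootstrap with interior elliptic estimates: if $u\in H^{j}_{loc}$ then $\Delta u\in H^{j}_{loc}$, hence $u\in H^{j+2}_{loc}$. This gives $u\in C^\infty(\R^n)$, and in particular $u\in H^1_{loc}(\R^n)$, so both radiation conditions make sense. The same conclusion also follows directly on the Fourier side: $(|\xi|^2-k^2)\hat u=0$ forces $\operatorname{supp}\hat u\subset\{|\xi|=k\}$, which shows $u$ is smooth.

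\emph{Uniqueness.} The function $u$ is now a global smooth solution of $\Delta u+k^2u=0$ in all of $\R^n$, and it lies in $L^{2,-\delta}$. By Remark \ref{equivrem}, with $D$ any ball, (SRC) and (GSRC) are equivalent for $u$, so it suffices to treat (SRC). Here I would use the classical argument. Green's theorem on $B_R$ gives $\Im\int_{|x|=R}\bar u\,\partial_r u\,ds=0$, since there are no sources. Expanding $\int_{|x|=R}|\partial_r u-iku|^2ds\to0$ then yields $\int_{|x|=R}|u|^2ds\to0$. Rellich's lemma gives $u\equiv0$ outside a large ball, and analyticity (unique continuation) gives $u\equiv0$ in $\R^n$. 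For $n=1$ the argument is explicit: the proof of Proposition \ref{from_fractional_Helmholtz_to_Helmholtz} showed $u=a\sin(kx)+b\cos(kx)$. The outgoing condition $u'-iku\to0$ as $x\to+\infty$ and the analogous condition as $x\to-\infty$ force the $e^{-ikx}$ component to vanish on the right and the $e^{ikx}$ component to vanish on the left, so $u\equiv0$. Alternatively, under (GSRC) one may cite Ikebe--Saito \cite{MR312066} directly: a solution $u\in L^{2,-\delta}$ of the homogeneous Helmholtz equation with $(\nabla-ik\hat x)u\in L^{2,\delta-1}$ vanishes identically.

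The main obstacle is conceptual rather than computational. It is the legitimacy of the transfer from the fractional equation to the Helmholtz equation, and that step is already handled by Proposition \ref{from_fractional_Helmholtz_to_Helmholtz}. What remains is to check that the distributional solution is regular enough that Rellich's lemma and the equivalence in Remark \ref{equivrem}, which are stated for $H^1_{loc}$ solutions, apply. That check is settled by the elliptic regularity step above.
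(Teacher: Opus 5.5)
Your proposal is correct and follows essentially the same route as the paper. You transfer to a distributional solution of the Helmholtz equation via Proposition \ref{from_fractional_Helmholtz_to_Helmholtz}, use hypoellipticity (justified by Weyl's lemma, bootstrapping, or the Fourier-support argument) to get an entire smooth solution in $H^1_{loc}$, and then invoke classical uniqueness under (SRC) or (GSRC), spelling out what the paper leaves to its final sentence: the Rellich argument, the reduction of (GSRC) to (SRC) via the appendix equivalence or Ikebe--Saito, and the $n=1$ case.
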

\begin{proof}
From Proposition \ref{from_fractional_Helmholtz_to_Helmholtz} we know that $u$ is a distributional solution of the Helmholtz equation inside any open ball $B_R$. It is well-known that the homogenous Helmholtz equation is hypoelliptic, i.e. a distributional solution the Helmholtz equation in any open set is $C^\infty$  (see e.g \cite[Theorem 8.12]{rudin}). Thus $u$ is an entire solution to the homogeneous Helmholtz equation, and in particular is in $H^1_{loc}({\mathbb R}^n)$. It then  follows that $u=0$ if it satisfies either (GSRC) or (SCR).
\end{proof}
\subsection{Volume Integral Representation of the Resolvent}\label{volume}
Our goal in this section is to relate the outgoing fundamental solution $G_{n,s}^k(x-y)$ (see Table \ref{tab:fund_sol} for a summary) to the extended resolvent. For this purpose, for $f \in C_c^\infty(\R^n)$, $s\in (0,1)$ and $k\in (0,\infty)$ we define the volume integral operator $\mathcal{G}_{n,s}^k$ as the convolution of $f$  with $G_{n,s}^k(x-y)$, that is   
 \begin{align}\label{vol}
        (\mathcal{G}_{n,s}^k) f(x) &:= \int_{\R^n} G_{n,s}^k(x-y) f(y) dy.
    \end{align}
Obviously $\mathcal{G}_{n,s}^k f$ solves $ \FLs u - k^{2s} u = f$. Given the structure of the fundamental solution of the fractional Helmholtz operator, it is convenient to also define   the volume integral operator with kernel being the fundamental solution of the Helmholtz operator
\begin{equation}\label{volh}
        (\mathcal{G}_{n,helm}^k)f(x) = \int_{\R^n} G_{n,helm}^k(x-y) f(y) dy 
\end{equation}        
 and the remainder       
 \begin{align}\label{volr}
 (\mathcal{J}_n^{s,k})f(x) &= \int_{\R^n}\left[G_{n,s}^k-G_{n,helm}^k\right](x-y) f(y) dy ~.
    \end{align} 

\begin{remark}\label{remark:bound_Gk_constant_suppf}
 {\em While the operator $\mathcal{G}_{n,s}^k$ is extendable as a bounded linear operator from $L^{2,\delta}(\R^n)$ to $L^{2,-\delta}(\R^n)$, it is not easy to see this from direct estimates. From Proposition \ref{prop:Gconvf_L2delta} in Appendix \ref{appendix:estimate_L2delta} we see that for a compactly supported $f$ the following estimate holds
 $$  \|\mathcal{G}_{n,s}^k f\|_{L^{2,-\delta}(\R^n)} \leq C(n,s,k,\mbox{Supp}(f))\|f\|_{L^{2,\delta}(\R^n)}$$
where the constant  depends on the size of the $\mbox{Supp}(f)$. Hence we cannot from this estimate extend $\mathcal{G}_{n,s}^k$ as an operator from $L^{2,\delta}(\R^n)$ to $L^{2,-\delta}(\R^n)$. A way to extend $\mathcal{G}_{n,s}^k$ is to consider the extended resolvent operator $R_0^+(k^{2s}) \in {\mathcal B}(L^{2,\delta}, L^{2,-\delta})$ noting that  $\mathcal{G}_{n,s}^{k_\epsilon}$ coincides with $R_0^+(k_\epsilon^{2s})$ and then take the limit as $\epsilon\to 0$.}
\end{remark}

\begin{lemma}\label{lemma:Gks=Rk}
    For all $k\in (0,\infty)$, $s\in (0,1)$, $n\in \{1,2,3\}$ and $f\in L^{2,\delta}(\R^n)$ with $\delta>1/2$, the operator $\mathcal{G}_{n,s}^k$ coincides with the extended resolvent in the following sense 
    \begin{align*}
        \mathcal{G}_{n,s}^kf = R_0^+(k^{2s})f, \qquad \mbox{\em a.e. in ${\R^n}$}.
    \end{align*}
\end{lemma}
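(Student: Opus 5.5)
The proof proceeds in two stages: first establish the identity for $\epsilon>0$ and smooth compactly supported $f$, then pass to the limit $\epsilon\to 0^+$, and finally extend to all $f\in L^{2,\delta}(\R^n)$ by density.

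\textbf{Step 1: the identity for $\epsilon>0$.} Fix $f\in C_c^\infty(\R^n)$ and $\epsilon>0$, and write $k_\epsilon^{2s}=k^{2s}+i\epsilon$. Then $R_0(k_\epsilon^{2s})f=\F^{-1}\big[(|\xi|^{2s}-k_\epsilon^{2s})^{-1}\hat f\big]$. The multiplier $(|\xi|^{2s}-k_\epsilon^{2s})^{-1}$ is bounded, and its inverse Fourier transform is, by construction, exactly $G_{n,s}^{k_\epsilon}$ from Section~\ref{fund}. That transform is in $L^1_{loc}$, decays like $e^{-\Im k_\epsilon|x|}$ plus algebraically decaying terms, and is therefore a tempered distribution. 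Hence
\[
R_0(k_\epsilon^{2s})f=G_{n,s}^{k_\epsilon}*f=\mathcal{G}_{n,s}^{k_\epsilon}f .
\]

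\textbf{Step 2: limit $\epsilon\to0^+$ for $f\in C_c^\infty(\R^n)$.} By Theorem~\ref{reso}, $R_0(k_\epsilon^{2s})f\to R_0^+(k^{2s})f$ in $L^{2,-\delta}$. On the other side, I will show that $G_{n,s}^{k_\epsilon}(x-y)\to G_{n,s}^k(x-y)$ pointwise for $x\neq y$. I will also establish a domination $|G_{n,s}^{k_\epsilon}(z)|\le C\,g(z)$, uniform for small $\epsilon$, with $g$ locally integrable and bounded by $C\langle z\rangle^{-(n-1)/2}$ at infinity.

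Obtaining this domination from the explicit formulas is the main obstacle. The Helmholtz parts are harmless, since $|e^{ik_\epsilon|z|}|\le1$ and Hankel bounds hold uniformly for $k_\epsilon$ in a compact set of the first quadrant. The Riesz terms do not depend on $\epsilon$ beyond their constants. For the remainders $J_n^{s,k_\epsilon}$, I would go back to the integral representations: $(\ref{def:J1})$, $(\ref{def:J3-l1})$, and the Bessel integrals of $F_m$ and $\tilde F_m$. Denominators such as $y^{2s}e^{\pm i\pi s}-k_\epsilon^{2s}|x|^{2s}$ stay uniformly away from zero, because $\arg k_\epsilon^{2s}$ is small. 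For the 2D Bessel integrals I would instead use the estimates of Appendix~\ref{appendixB}, whose constants depend continuously on $k_\epsilon$, together with the fact that $F_m(r,k_\epsilon)\to F_m(r,k)$ in $L^2(r\,dr)$.

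With the domination in hand, dominated convergence gives $\mathcal{G}_{n,s}^{k_\epsilon}f(x)\to\mathcal{G}_{n,s}^kf(x)$ for every $x$, since $f$ is bounded with compact support. Because $L^{2,-\delta}$ convergence implies a.e.\ convergence along a subsequence, the two limits agree a.e.

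\textbf{Step 3: density.} For general $f\in L^{2,\delta}(\R^n)$, approximate $f$ by $f_j\in C_c^\infty(\R^n)$ in $L^{2,\delta}$. Then $R_0^+(k^{2s})f_j\to R_0^+(k^{2s})f$ in $L^{2,-\delta}$ by Theorem~\ref{reso}. I read $\mathcal{G}_{n,s}^kf$ for non-compactly supported $f$ as this extension, as in Remark~\ref{remark:bound_Gk_constant_suppf}. If a pointwise convolution meaning is wanted instead, I would first treat compactly supported $f\in L^{2,\delta}$. In that case Proposition~\ref{prop:Gconvf_L2delta} gives continuity of $\mathcal{G}_{n,s}^k$ from $L^{2}(K)$ to $L^{2,-\delta}$ for each compact $K$, so the identity passes from $C_c^\infty(K)$ to $L^2(K)$. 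The general case then follows by truncation, with a.e.\ convergence wherever the convolution integral converges absolutely.
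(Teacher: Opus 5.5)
Your proof is correct, but it handles the limit $\epsilon\to0^+$ differently from the paper.

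\textbf{The paper's route.} The paper works with an arbitrary $f\in L^{2,\delta}(\R^n)$ from the start. It gets $\mathcal{G}_{n,s}^{k_\epsilon}f=R_0(k_\epsilon^{2s})f$ from the $L^2$ bound of Proposition~\ref{prop:Gepsconvf_L2delta} together with uniqueness for $\epsilon>0$. It then invokes Proposition~\ref{prop-conv}:
\begin{itemize}
\item the Helmholtz part is treated by a weighted Cauchy--Schwarz inequality and dominated convergence in $y$;
\item the remaining parts are treated by explicit $O(\epsilon)$ bounds on kernel differences, or $L^1$ convergence of $\partial_rF_m$ in 2D, fed into Young or Hardy--Littlewood--Sobolev.
\end{itemize}

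\textbf{Your route.} You prove the identity for $f\in C_c^\infty$ using an $\epsilon$-uniform domination of the kernel, so plain dominated convergence gives convergence at every $x$. You then extend by density and truncation via Proposition~\ref{prop:Gconvf_L2delta}. Your approach needs only qualitative uniform bounds. The key one in 2D is that $\|\partial_rF_m(\cdot,k_\epsilon)\|_{L^1(\R^2)}$ stays bounded. This holds because the singularity of $F_m(w,\kappa)$ along $w=\kappa$ is removable jointly in $(w,\kappa)$: the residues of the two terms coincide. The paper's approach yields rates and avoids a density step.

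\textbf{Two remarks on your Step~3.}
\begin{itemize}
\item Reading $\mathcal{G}_{n,s}^kf$ as the continuous extension of $R_0^+(k^{2s})$ makes the statement a tautology. It also would not justify the pointwise integral in (\ref{Formula}) used later. The truncation version is the one that carries content.
\item In the truncation version, you should state explicitly that $\int|G_{n,s}^k(x-y)|\,|f(y)|\,dy<\infty$ for a.e.\ $x$. This follows from your own domination. The part $|x-y|<1$ is an $L^1*L^2$ convolution, hence finite a.e. The part $|x-y|\ge1$ is finite for every $x$ by Cauchy--Schwarz, since $\int_{|x-y|\ge1}|x-y|^{-(n-1)}\langle y\rangle^{-2\delta}\,dy<\infty$ exactly when $\delta>1/2$.
\end{itemize}

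With that observation, your uniform domination lets you apply dominated convergence directly to every $f\in L^{2,\delta}$, once the $\epsilon>0$ identity is extended to $L^2$ via Proposition~\ref{prop:Gepsconvf_L2delta}. That bypasses both the density argument and Proposition~\ref{prop-conv}.
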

\begin{proof}
Let $k_\epsilon^{2s} = k^{2s} + i\epsilon$ with $\epsilon >0$ and $f\in L^{2,\delta}(\R^n)$. We have shown in  Appendix \ref{appendix:estimate_L2delta}  that for $k_\epsilon \in \C^+$ with $\epsilon>0$, $\mathcal{G}_{n,s}^{k_\epsilon}f$ is in $L^{2,-\delta}(\R^n)$  for every $f\in L^{2,\delta}(\R^n)$. We also know that it satisfies the same equation as the resolvent $R_0(k_\epsilon^{2s})f$, thus $\mathcal{G}_{n,s}^{k_\epsilon}f = R_0(k_\epsilon^{2s})f$  for any $f$.  Furthermore, $R_0(k_\epsilon^{2s})f \xrightarrow{\epsilon \to 0} R_0^+(k^{2s})f$ in $L^{2,-\delta}(\R^n)$, hence in particular it converges also almost everywhere up to a subsequence. From Remark \ref{remark:bound_Gk_constant_suppf}, we can not directly estimate the $L^{2,-\delta}$ norm of $\mathcal{G}_{n,s}^k$ but rather we can show that $\mathcal{G}_{n,s}^{k_\epsilon}f(x)$ converges almost everywhere to $\mathcal{G}_{n,s}^{k}f(x)$, which is proven in Proposition \ref{prop-conv} in Appendix \ref{5D}. Thus, we conclude that $\mathcal{G}_{n,s}^kf = R_0^+(k^{2s})f$  almost everywhere  in ${\mathbb R}^n$ for every $f\in L^{2,\delta}(\R^n)$.
\end{proof}
\noindent
The above lemma in combination with Lemma \ref{lemma:Resolvent_SRC_s_geq_half} allows us to derive an existence and uniqueness result as well as an intergal representation formula for solutions to the fractional Helmholtz equation when $s\geq 1/2$. 
\begin{theorem}\label{prop:representation_formula_s_geq_half}
    Let $1/2<\delta<1$, $1/2\leq s<1$, $n\in \{1,2,3\}$ and let $f\in L^{2,\delta}(\R^n)$. The $H^{2s,-\delta}(\R^n)$ function $u$, given by
    \begin{equation}\label{Formula}
        u(x) = \int_{\R^n} G_{n,s}^k(x-y) f(y) dy =\mathcal{G}_{n,s}^kf(x) = R_0^+(k^{2s})f(x) \quad \mbox{ for a.e. } x\in \R^n~,
    \end{equation}
    is a solution to 
    \begin{equation}\label{PDE}
    \left<(\FLs - k^{2s}) u, \phi\right>_{L^{2,-\delta}, L^{2,\delta}} = \left<f,\phi\right>_{L^{2,-\delta},L^{2,\delta} } \qquad  \mbox{ for all } \phi \in L^{2,\delta}(\R^n)~,
 \end{equation}
and $u$ satisfies the generalized Sommerfeld Radiation condition (GSRC) (\ref{def:generalized_SRC-I}). Furthermore this $u$ is the unique $H^{2s,-\delta}(\R^n)$ solution to (\ref{PDE}), which satisfies (\ref{def:generalized_SRC-I}). 
    \end{theorem}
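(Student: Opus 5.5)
The plan is to assemble the theorem from results already established, in three steps: the representation formula, the equation together with the radiation condition, and uniqueness.

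First, the identification \eqref{Formula}. Lemma \ref{lemma:Gks=Rk} gives $\mathcal{G}_{n,s}^k f = R_0^+(k^{2s})f$ a.e. for every $f\in L^{2,\delta}(\R^n)$ with $\delta>1/2$. By Theorem \ref{reso}, $R_0^+(k^{2s})f$ lies in $H^{2s,-\delta}(\R^n)$. So $u$ is a well-defined element of $H^{2s,-\delta}$, and the three expressions in \eqref{Formula} agree almost everywhere. No further work is needed here.

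Second, existence and the radiation condition. The lemma following Theorem \ref{reso} (consistency of the limiting absorption principle with the equation) shows that $u_0^+=R_0^+(k^{2s})f$ satisfies $\FLs u_0^+-k^{2s}u_0^+=f$ in $L^{2,-\delta}$. Pairing with $\phi\in L^{2,\delta}$ gives exactly \eqref{PDE}.

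Because $s\ge 1/2$ and $1/2<\delta<1$, Lemma \ref{lemma:Resolvent_SRC_s_geq_half} applies and shows that $u$ satisfies \eqref{def:generalized_SRC-I}. We also have to check the membership $u\in H^1_{loc}$ required in the definition of GSRC. We get it from the decomposition \eqref{formula:resolvent_decomposution}:
\begin{itemize}
\item $\Gamma_0^+(k^2)(k^{2-2s}I+A(k))f\in H^{2,-\delta}\subset H^1_{loc}$;
\item $B_1(k)f\in H^{2s,\delta}\subset H^{1}_{loc}$, since $2s\ge1$;
\item $B_2(k)f\in H^2$.
\end{itemize}

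Third, uniqueness. Suppose $u_1,u_2\in H^{2s,-\delta}$ both solve \eqref{PDE} and both satisfy GSRC, and set $w=u_1-u_2$. By linearity, $w$ solves the homogeneous equation in the sense of \eqref{fls_L2delta}. The weighted integrand in \eqref{def:generalized_SRC-I} is controlled by the triangle inequality, $|a-b|^2\le 2|a|^2+2|b|^2$, so $w$ also satisfies GSRC.

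There is one subtlety. The definition allows "some $\delta$", so $u_1$ and $u_2$ might satisfy GSRC with different exponents $\delta_1,\delta_2$. We handle this by taking $\delta'=\min(\delta_1,\delta_2)$. Since $(1+|x|^2)^{-(1-\delta')}\le(1+|x|^2)^{-(1-\delta_i)}$, both integrals remain finite with weight exponent $\delta'$, and hence so does the one for $w$.

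Corollary \ref{uniqueness} then gives $w\in H^1_{loc}$ and $w\equiv0$. This corollary in turn rests on Proposition \ref{from_fractional_Helmholtz_to_Helmholtz}, which reduces the problem to the classical Helmholtz equation, combined with the Ikebe--Sait\={o} uniqueness theorem.

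The main (mild) obstacle is bookkeeping rather than analysis. We must make sure that the solution in $H^{2s,-\delta}$ and the GSRC are taken with compatible exponents, and that the hypotheses of Corollary \ref{uniqueness} hold for the difference $w$. All the substantive analytic input — limiting absorption, the resolvent decomposition, and the reduction to Helmholtz — has already been proven earlier in the paper.
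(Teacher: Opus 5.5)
Your proposal is correct and follows essentially the same route as the paper. Existence, the equation and (GSRC) come from combining Lemma~\ref{lemma:Gks=Rk} with Lemma~\ref{lemma:Resolvent_SRC_s_geq_half} (and the consistency lemma), and uniqueness comes from applying Corollary~\ref{uniqueness} to the difference of two solutions. Your additional bookkeeping makes explicit two details the paper leaves implicit: checking $u\in H^1_{loc}(\mathbb{R}^n)$ via the resolvent decomposition, and reconciling possibly different (GSRC) exponents by passing to the smaller one.
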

\begin{proof}
The fact that the function given by the formula (\ref{Formula}) satisfies (\ref{PDE}) and (\ref{def:generalized_SRC-I}) follows immediately from a combination of Lemma \ref{lemma:Resolvent_SRC_s_geq_half} and Lemma \ref{lemma:Gks=Rk}. It remains to establish the uniqueness. Let $v$ be any  $H^{2s,-\delta}(\R^n)$ solution to (\ref{PDE}), which satisfies (\ref{def:generalized_SRC-I}), then
$w=\mathcal{G}_{n,s}^kf -v$ is a homogeneous $H^{2s,-\delta}(\R^n)$ solution satisfying the (GSRC) (\ref{def:generalized_SRC-I}). Therefore, by the uniqueness result  in Corollary \ref{uniqueness}, we conclude that $w=0$, {\it i.e.}, $v=\mathcal{G}_{n,s}^kf$.   
\end{proof}
\noindent
We now investigate the case of $s<1/2$, where although $\mathcal{G}_{n,s}^kf$ coincides with the resolvent almost everywhere, the latter does not a priori satisfy the (GSRC) (\ref{def:generalized_SRC-I}). We instead show directly that $\mathcal{G}_{n,s}^kf$ satisfies the Sommerfeld radiation condition  (\ref{def:SRC}),  which we manage to prove only for square integrable and compactly supported functions $f$. In fact the following results hold true for all $s\in (0,\,1)$.
\begin{lemma}\label{lemma:SRC_G_s_less_than_half} Let $0<s<1$, $\delta>1/2$, $n\in \{1,2,3\}$ and $k\in(0,\infty)$. Then $\mathcal{G}_{n,s}^kf$ defined by (\ref{vol}) satisfies the standard Sommerfeld Radiation Condition  (\ref{def:SRC}) for all $f\in L^2(\R^n)$ with compact support, as well as  the equation 
       \begin{align*}
        \left<(\FLs - k^{2s}) \mathcal{G}_{n,s}^kf, \phi\right>_{L^{2,-\delta}, L^{2,\delta}} = \left<f,\phi\right>_{L^{2,-\delta},L^{2,\delta} } & \mbox{ for all } \phi \in L^{2,\delta}(\R^n).
    \end{align*}
\end{lemma}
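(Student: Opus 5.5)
The equation part is handled by the previous lemmas. Since $f\in L^2$ with compact support, $f\in L^{2,\delta}(\R^n)$ for every $\delta$. Lemma \ref{lemma:Gks=Rk} then gives $\mathcal{G}_{n,s}^kf=R_0^+(k^{2s})f$ a.e., and the lemma following Theorem \ref{reso} shows that $R_0^+(k^{2s})f$ solves the fractional Helmholtz equation in the $L^{2,-\delta}$ duality sense. What remains is the Sommerfeld condition, which I would verify directly on the convolution. This is why compact support is needed: with $\mbox{Supp}(f)\subset B_\rho$, we only use $x$ with $|x|\ge 2\rho$, so $|x-y|\ge |x|/2\to\infty$ uniformly for $y$ in the support. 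The kernel is therefore evaluated only in its asymptotic regime and is smooth there.

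I would split $\mathcal{G}_{n,s}^k f=\mathcal{G}_{n,helm}^kf+\mathcal{J}_n^{s,k}f$ as in (\ref{volh})--(\ref{volr}), where $\mathcal{J}_n^{s,k}$ also contains the Riesz terms $c_{n,j}k^{2sj}|x|^{-n+2s(j+1)}$ when $s\le 1/2$.

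\textbf{Helmholtz part.} Up to the constant $s^{-1}k^{2-2s}$, $\mathcal{G}_{n,helm}^kf$ is the classical outgoing volume potential of a compactly supported $L^2$ density. It is a smooth solution of $\Delta u+k^2u=0$ outside $B_\rho$ and satisfies (\ref{def:SRC}) uniformly in $\hat x$. To see this, differentiate under the integral and use the uniform estimate
\[
\partial_r G_{n,helm}^k(x-y)-ikG_{n,helm}^k(x-y)=O\big(|x|^{-\frac{n+1}{2}}\big),\qquad |y|\le\rho,
\]
which follows from $\hat x\cdot\widehat{(x-y)}=1+O(|x|^{-2})$ together with the standard asymptotics of $e^{ik|x|}/|x|$, $H_0^{(1)}$ and $e^{ik|x|}$. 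Cauchy--Schwarz then gives $\|f\|_{L^1}\le C\|f\|_{L^2}$, so the bound transfers to the potential. This is the classical argument as in \cite{colton-book}.

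\textbf{Remainder.} Here I would use the asymptotics of Appendix \ref{appendixB}: $J_n^{s,k}$, the Riesz terms, and their first derivatives decay like $|x|^{-\gamma}$ for some $\gamma>\frac{n-1}{2}$. From the explicit formulas, $J_1^{s,k}\sim|x|^{-1-2s}$, $J_3^{s,k}\sim |x|^{-3-2s}$ or similar, and the Riesz terms decay like $|x|^{-n+2s(j+1)}$ with $2s(j+1)\le 2sm\le 1$, so their exponent exceeds $n-1\ge\frac{n-1}{2}$. Since $|x-y|\ge|x|/2$ on the support, we get $|\partial_r \mathcal{J}f|+k|\mathcal{J}f|\le C|x|^{-\gamma}\|f\|_{L^1}$. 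Multiplying by $|x|^{\frac{n-1}{2}}$, this tends to $0$ uniformly in $\hat x$.

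The main obstacle is the remainder in dimension $n=1$ (where the exponent must exceed $0$) and the Riesz terms near the borderline. I have to make sure that in $n=2$ with $\frac{1}{2s}=m\in\mathbb N$, the Struve correction $K_0$ in (\ref{def:J2-int}) cancels the slow $|x|^{-1}$-type tail together with the $j=m-1$ Riesz term $c_{2,m-1}k^{1-2s}|x|^{-1}$, leaving decay faster than $|x|^{-1/2}$. I would check this first from the known expansion $K_0(z)-Y_0(z)\sim \frac{2}{\pi z}$, relying on Appendix \ref{appendixB}. In every case a decay rate strictly better than $|x|^{-\frac{n-1}{2}}$ for both the function and its derivative suffices.
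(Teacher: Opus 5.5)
Your proposal is correct and follows the paper's own argument. The equation comes from Lemma~\ref{lemma:Gks=Rk} together with the lemma on $R_0^+(k^{2s})f$, and the radiation condition comes from the compact support of $f$ combined with the kernel asymptotics of Theorem~\ref{som} and Appendix~\ref{appendixB}; you are in fact more careful than the paper in handling the factor $\hat x\cdot\widehat{(x-y)}$ for the Helmholtz part.

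The ``main obstacle'' you flag is not one. When $\frac{1}{2s}\in\mathbb N$, the $j=m-1$ Riesz term $\frac{k^{1-2s}}{2\pi|x|}$ and $\frac{k^{2-2s}}{4}K_0(k|x|)$ each already decay like $|x|^{-1}$, which beats $|x|^{-1/2}$, so no cancellation is needed. Also, the expansion you quote should read $K_0(z)\sim\frac{2}{\pi z}$ for the paper's $K_0$, not $K_0-Y_0$.
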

\begin{proof}  By Lemma \ref{lemma:Gks=Rk} $\mathcal{G}_{n,s}^kf$ and  $R_0^+(k^{2s})f$ coincides, hence the $\mathcal{G}_{n,s}^kf$ satisfies the above equation since  $R_0^+(k^{2s})f$ does. The proof of the radiation condition relies on the fact that $f$ has compact support, hence the behavior as $|x|\to \infty$ is carried by the fundamental solution $G_{n,s}^k(x-y)$  which is the kernel of the integral operator $\mathcal{G}_{n,s}^k$. The proof is completed by noting that $G_{n,s}^k(x-y)$ satisfies the standard Sommerfeld radiation condition (see Theorem \ref{som} and the estimates in Appendices \ref{appendixB}).
\end{proof}
\begin{theorem}\label{prop:representation_formula_s_less_half} Let $1/2<\delta<1$, $0<s<1$, $k\in (0,\infty)$ and $n\in \{1,2,3\}$. Let $f\in L^2(\R^n)$ with compact support. The $H^{2s,-\delta}(\R^n)$ function $u$, given by
    \begin{equation}\label{Formula2}
        u(x) = \int_{\R^n} G_{n,s}^k(x-y) f(y) dy =\mathcal{G}_{n,s}^kf(x) = R_0^+(k^{2s})f(x) \quad \mbox{ for a.e. } x\in \R^n~,
    \end{equation}
    is a solution to 
    \begin{equation}\label{PDE2}
    \left<(\FLs - k^{2s}) u, \phi\right>_{L^{2,-\delta}, L^{2,\delta}} = \left<f,\phi\right>_{L^{2,-\delta},L^{2,\delta} } \qquad  \mbox{ for all } \phi \in L^{2,\delta}(\R^n)~,
 \end{equation}
and $u$ satisfies the standard Sommerfeld Radiation condition (SRC) (\ref{def:SRC}). Furthermore this $u$ is the unique $H^{2s,-\delta}(\R^n)$ solution to (\ref{PDE2}), which satisfies (\ref{def:SRC}). 
\end{theorem}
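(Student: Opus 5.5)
Existence will come from combining Lemma \ref{lemma:Gks=Rk} with Lemma \ref{lemma:SRC_G_s_less_than_half}. Since $f\in L^2(\R^n)$ has compact support, $f\in L^{2,\delta}(\R^n)$ for every $\delta$. Hence Theorem \ref{reso} gives $R_0^+(k^{2s})f\in H^{2s,-\delta}(\R^n)$, and Lemma \ref{lemma:Gks=Rk} identifies it a.e.\ with $\mathcal{G}_{n,s}^kf$. This justifies the three expressions in (\ref{Formula2}) and shows $u\in H^{2s,-\delta}$. Lemma \ref{lemma:SRC_G_s_less_than_half} then gives both the weak equation (\ref{PDE2}) and the standard Sommerfeld condition (\ref{def:SRC}). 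The latter rests on the compact support of $f$: for $|x|\to\infty$ one differentiates under the integral sign and uses the asymptotics of $G_{n,s}^k(x-y)$ from Theorem \ref{som} and Appendix \ref{appendixB}, uniformly in $y\in\mathrm{supp} f$. In the SRC we interpret $u$ as the convolution, which is smooth outside a neighbourhood of $\mathrm{supp} f$, so $\partial_r u$ makes sense there.

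For uniqueness, let $v\in H^{2s,-\delta}(\R^n)$ be another solution of (\ref{PDE2}) satisfying (\ref{def:SRC}). Then $w:=u-v\in H^{2s,-\delta}$ satisfies the homogeneous equation in the $L^{2,-\delta}$–$L^{2,\delta}$ duality. By Proposition \ref{from_fractional_Helmholtz_to_Helmholtz}, $w$ is a distributional solution of $\Delta w+k^2w=0$ in $\R^n$. By hypoellipticity (as in Corollary \ref{uniqueness}), $w$ is an entire smooth solution. By linearity $w$ satisfies (SRC), since both $u$ and $v$ do, in the uniform sense on spheres. The classical Rellich-type uniqueness argument for entire radiating Helmholtz solutions, packaged in Corollary \ref{uniqueness}, then gives $w\equiv0$.

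The main obstacle is not the uniqueness, which is immediate from Corollary \ref{uniqueness}. It is the bookkeeping needed to apply (SRC) to $v$, which a priori is only an $H^{2s,-\delta}$ function. I would handle this by noting that $v=u-w$ with $w$ smooth, so $v$ is smooth away from $\mathrm{supp} f$ and the radiation condition is meaningful there. A second delicate point is the uniformity of the SRC for the convolution. I would first bound $|\nabla_x G_{n,s}^k(x-y)-ik\widehat{x}\,G_{n,s}^k(x-y)|$ by $o(|x|^{-(n-1)/2})$ uniformly for $|y|\le \rho$. This uses $\widehat{x-y}-\hat x=O(1/|x|)$ together with the fact that the non-Helmholtz parts $J_n^{s,k}$ and the Riesz terms decay faster, as shown in Appendix \ref{appendixB}. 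I would then integrate this bound against $|f|\in L^1(B_\rho)$.
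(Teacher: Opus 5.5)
Your proposal is correct and follows the paper's proof. Existence comes from Lemma \ref{lemma:Gks=Rk} together with Lemma \ref{lemma:SRC_G_s_less_than_half}, and uniqueness comes from applying Corollary \ref{uniqueness} (Proposition \ref{from_fractional_Helmholtz_to_Helmholtz} plus hypoellipticity) to the difference $w=u-v$. Your extra remarks on why (SRC) is meaningful for $v$, and on uniformity in $y\in\mathrm{supp} f$, only spell out details the paper leaves implicit.
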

\begin{proof}
The existence part of this theorem (the first part) is already established in Lemma \ref{lemma:SRC_G_s_less_than_half}. It only remains to establish uniqueness. Let $v$ be any  $H^{2s,-\delta}(\R^n)$ solution to (\ref{PDE2}), which satisfies (\ref{def:SRC}), then
$w=\mathcal{G}_{n,s}^kf -v$ is a homogeneous $H^{2s,-\delta}(\R^n)$ solution satisfying the standard Sommerfeld Radiation Condition (SRC) (\ref{def:SRC}). Therefore, by the uniqueness result  in Corollary \ref{uniqueness}, we conclude that $w=0$, {\it i.e.}, $v=\mathcal{G}_{n,s}^kf$.   
\end{proof}

\section{The inhomogeneous fractional Helmholtz equation and the radiation condition}\label{4}
\subsection{The Lippmann-Schwinger Equation}

We turn our attention to an integral equation formulation for the  forward scattering problem, where we again assume that $q\in L^{\infty}(\R^n)$ has support in a compact region $D\subset \R^n$. Our goal is to study the well-posedeness of the direct scattering problem for the inhomogeneous fractional Helmholtz equation 
\begin{eqnarray}\label{inhomogeneous-final}
    &\FLs u^{scat} - k^{2s}(1+q) u^{scat} = k^{2s}q u^{inc}  \qquad \mbox{ in } \R^n& \label{in1}\\
    &\displaystyle{\lim\limits_{|x|\to 0} |x|^{\frac{n-1}{2}}\left( \frac{\partial u^{scat}}{\partial |x|} -ik u^{scat} \right)=0} \qquad \mbox{uniformly in $\hat x = \frac{x}{|x|}$}&\label{in2}.
\end{eqnarray}
given an incident field $u^{inc}$, which is a solution to 
$$\FLs u^{inc} - k^{2s} u^{inc}=0   \qquad \mbox{ in } \R^n. $$ 

Note that the total field $u:=u^{scat}+u^{inc}$ solves $\FLs u- k^{2s}(1+q) u =  0$ in $\R^n$. 
Introducing $u$ we may now rewrite the equation (\ref{in1}) as follows
$$\FLs u^{scat}-k^{2s}u^{scat} = k^{2s}q u \qquad \R^n~.$$
Using the volume integral representation in Section \ref{volume}, more precisely Theorem \ref{prop:representation_formula_s_less_half}, we obtain the following volume integral equation for the total field $u:=u^{scat}+u^{inc}$
 \begin{equation}\label{inth}
 u(x)=u^{inc}(x)+k^{2s}\int_{D} G_{n,s}^k(x-y)q(y) u(y) dy \qquad x\in \R^n~.
\end{equation}

\noindent
Let the bounded linear operator $T_k:L^2(D)\to L^2(D)$ be defined by 
$$T_k: w \mapsto  \int_{D} G_{n,s}^k(x-y)q(y)w(y) dy \qquad \mbox{for }x\in D.$$
Clearly $T_k$ is compact, since $T_kw\in H^{2s}(D)$. We have the following equivalence result.
\begin{theorem}\label{equiv0}
Let $0<s<1$ and $n \in \{1,2,3\}$. If $u\in H^{2s}_{loc}(\R^n)$ is a solution to (\ref{in1})-(\ref{in2})  then $u|_D$ satisfies  the Lippmann-Schwinger equation
\begin{equation}\label{LSh}
(I-k^{2s}T_k)u=u^{inc}|_{D}.
\end{equation}
Conversely if $u\in L^2(D)$ satisfies (\ref{LSh}) then $u^{scat}$  defined by  
 \begin{align}
 u^{scat}(x)=k^{2s} \int_{D} G_{n,s}^k(x-y) q(y) u(y) dy
\end{align}
is in $H^{2s,-\delta}(\R^n)$ for any $1/2<\delta<1$ and satisfies  (\ref{in1})-(\ref{in2}).
\end{theorem}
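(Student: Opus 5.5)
The forward direction reduces the problem to the free fractional Helmholtz equation with a compactly supported source and then applies the uniqueness part of Theorem~\ref{prop:representation_formula_s_less_half}. Let $u\in H^{2s}_{loc}(\R^n)$ solve (\ref{in1})--(\ref{in2}), with $u^{scat}\in H^{2s,-\delta}(\R^n)$ as required there. Set $u=u^{scat}+u^{inc}$, so that $u^{scat}$ satisfies
\[
(-\Delta)^s u^{scat}-k^{2s}u^{scat}=f:=k^{2s}qu .
\]
Since $q\in L^\infty$ is supported in the compact set $D$ and $u\in H^{2s}_{loc}\subset L^2_{loc}$, the source $f$ lies in $L^2(\R^n)$ and has compact support. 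The function $u^{scat}$ is therefore an $H^{2s,-\delta}$ solution of (\ref{PDE2}) with this $f$, and it satisfies (SRC). By the uniqueness statement of Theorem~\ref{prop:representation_formula_s_less_half},
\[
u^{scat}=\mathcal{G}_{n,s}^k f=k^{2s}\int_D G_{n,s}^k(\cdot-y)q(y)u(y)\,dy \quad\text{a.e. in } \R^n .
\]
Adding $u^{inc}$ gives (\ref{inth}), and restricting to $D$ gives (\ref{LSh}).

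For the converse, let $u\in L^2(D)$ solve (\ref{LSh}) and extend $qu$ by zero outside $D$. Then $f:=k^{2s}qu\in L^2(\R^n)$ has compact support. By Theorem~\ref{prop:representation_formula_s_less_half} and Lemma~\ref{lemma:SRC_G_s_less_than_half}, the function $u^{scat}:=\mathcal{G}_{n,s}^kf$ lies in $H^{2s,-\delta}(\R^n)$, satisfies (SRC), and solves
\[
(-\Delta)^su^{scat}-k^{2s}u^{scat}=k^{2s}qu
\]
in the $L^{2,-\delta}$--$L^{2,\delta}$ duality sense. To reach (\ref{in1}) it remains to replace $u$ on the right-hand side by $u^{scat}+u^{inc}$. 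Equation (\ref{LSh}) says exactly that $u=u^{inc}+k^{2s}T_ku=u^{inc}+u^{scat}$ a.e. on $D$. Since $q$ vanishes outside $D$, it follows that $qu=q(u^{scat}+u^{inc})$ a.e. in $\R^n$. Substituting this and moving $k^{2s}qu^{scat}$ to the left-hand side yields (\ref{in1}).

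The main obstacle is regularity and function-space bookkeeping rather than any new idea. In the forward direction we must make sure $u^{scat}$ belongs to $H^{2s,-\delta}$, so that the uniqueness class of Theorem~\ref{prop:representation_formula_s_less_half} applies. I will take this as part of the solution concept, since the incident field is assumed to lie in $H^{2s,-\delta}$ (see the footnote in the introduction). One must also check that (SRC) makes sense for $u^{scat}$. Outside a large ball containing $D$, $u^{scat}$ solves the homogeneous fractional equation, and the convolution representation shows it is smooth there. This follows because $G^k_{n,s}$ is smooth away from the origin (Appendix~\ref{appendixB}), which settles the point. Finally, in the converse direction the equation (\ref{in1}) is understood in the same weak sense as (\ref{PDE2}). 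The rearrangement above is legitimate there because $qu^{scat}\in L^2$ has compact support and hence lies in $L^{2,\delta}\subset L^{2,-\delta}$.
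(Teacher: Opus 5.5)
Your proposal is correct and follows the paper's route. The forward direction is the paper's own derivation of (\ref{inth}) from the uniqueness part of Theorem~\ref{prop:representation_formula_s_less_half}, applied to the compactly supported source $k^{2s}qu$. For the converse, the paper likewise identifies $k^{2s}\int_D G_{n,s}^k(\cdot-y)q(y)u(y)\,dy$ with $R_0^+(k^{2s})(k^{2s}qu)\in H^{2s,-\delta}(\R^n)$, which satisfies (SRC), extends $u$ to $\R^n$ via (\ref{inth}), and adds the equation for $u^{inc}$. This is equivalent to your substitution $qu=q(u^{scat}+u^{inc})$, which you justify from (\ref{LSh}) on $D$. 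Your explicit remark that $u^{scat}\in H^{2s,-\delta}$ must be part of the solution concept for the uniqueness class to apply is left implicit in the paper.
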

\begin{proof}
It remains to show only the converse.  Assume $u\in L^2(D)$  is a solution to (\ref{LSh}). Then we can extend it to the whole of $\R^n$ by (\ref{inth}).  Note that $qu$ is supported in $D$. 
Hence $u^{scat}:=u-u^{inc}$ is in $H^{2s,-\delta}(\R^n)$ satisfies the standard Sommerfeld  Radiation Condition (\ref{in2}), and by Lemma \ref{lemma:Gks=Rk} it coincides with $R^+_0(k^{2s})(qu)$, that is, it satisfies 
$$\FLs u^{scat}-k^{2s}u^{scat} = k^{2s}q u \qquad \mbox{in} \;\;L^{2,-\delta}({\mathbb R}^n).$$
Adding the equation for $u^{inc}$ we get $\FLs u- k^{2s}(1+q) u =  0$, and the proof is now complete by noticing that $u^{scat}:=u-u^{inc}\in H^{2s,-\delta}(\R^n)$ is a solution to (\ref{in1})-(\ref{in2}) thanks to  Theorem \ref{prop:representation_formula_s_less_half}.
\end{proof}
The Lippmann-Schwinger equation (\ref{LSh}) is a Fredholm equation thanks to the compactness of the operator $T_k$. Furthermore, the operator is compact for any $k \in \R^+$, and with similar arguments as in the proof of proposition \ref{prop-conv}, one can show that $T_k$ is differentiable in $k$ in the sense that $\lim_{k \to k_0} \frac{T_k - T_{k_0}}{k-k_0} \in \mathcal{B}(L^2(D))$. The estimates in Appendix \ref{appendix:estimate_L2delta}, show that for  $k^{2s}\|q\|_{\infty}$ sufficiently small, $I-k^{2s}T_k$ is invertible. By the Analytic Fredholm Theorem 8.26 in \cite{colton-book}, equation (\ref{LSh}) is uniquely solvable for all $k>0$ except for a discrete set $\Lambda$ (possibly empty) accumulating only at $+\infty$. We have thus established

\begin{theorem}
\label{LSlam}
Let $0<s<1$ and $n \in \{1,2,3\}$. Given $q\in L^\infty(\R^n)$ with compact support there exists a discrete set  $\Lambda \subset \R^+$ (possibly empty) such the forward scattering problem (\ref{in1})-(\ref{in2}) has a unique solution $u^{scat} \in H^{2s,-\delta}$ for any $k \in \R^+\setminus \Lambda $. The set $\Lambda$ depends on $q$ and $s$ and has $+\infty$ as only possible accumulation point.
\end{theorem}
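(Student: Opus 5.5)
The plan is to reduce the forward problem (\ref{in1})--(\ref{in2}) to the Lippmann--Schwinger equation (\ref{LSh}) via Theorem \ref{equiv0}, and then to apply the analytic Fredholm theorem to the family $k\mapsto I-k^{2s}T_k$ on $L^2(D)$.

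\textbf{Step 1: reduction.} By Theorem \ref{equiv0}, $u^{scat}$ solves (\ref{in1})--(\ref{in2}) if and only if $u=u^{scat}+u^{inc}$ restricted to $D$ solves (\ref{LSh}), and $u^{scat}$ is recovered from $u|_D$ through the volume potential. For existence, a solution of (\ref{LSh}) gives a solution $u^{scat}\in H^{2s,-\delta}$ by the converse part of Theorem \ref{equiv0}. For uniqueness, take the difference $w$ of two scattered fields. It solves (\ref{in1}) with $u^{inc}=0$ and satisfies (SRC). By the forward direction, $w|_D$ lies in the kernel of $I-k^{2s}T_k$; I would apply that direction with zero incident field, after checking the needed local $H^{2s}$ regularity from the representation. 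If this kernel is trivial, then $w|_D=0$, so $qw=0$, and the representation gives $w\equiv0$ on $\R^n$. So it suffices to show that $I-k^{2s}T_k$ is injective, hence (being Fredholm of index zero) invertible, for $k\notin\Lambda$.

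\textbf{Step 2: Fredholm family.} $T_k$ is compact on $L^2(D)$ because it maps into $H^{2s}(D)$, so $I-k^{2s}T_k$ is Fredholm of index zero. Uniqueness of solutions to (\ref{LSh}) is therefore equivalent to solvability for every right-hand side, and the theorem reduces to showing that the set where $I-k^{2s}T_k$ fails to be invertible is discrete.

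\textbf{Step 3: analyticity (the main obstacle).} The analytic Fredholm theorem needs $k\mapsto T_k$ to be analytic on a connected open set containing $(0,\infty)$, with invertibility at at least one point. Differentiability in $k$ on the real axis alone is not enough. I would work in the variable $\lambda=k^{2s}$ and extend $G^{k}_{n,s}$ to complex $\lambda$ in a neighbourhood of $(0,\infty)$ that crosses the real axis from $\C^+$.
\begin{itemize}
\item The Helmholtz parts $G^k_{n,helm}$ (exponentials, $H^{(1)}_0$) and the power terms $c_{n,j}k^{2sj}|x|^{-n+2s(j+1)}$ are analytic in $k$ in the right half plane.
\item The remainders $J_n^{s,k}$ are integrals over $y\in(0,\infty)$ (or $r$) of $e^{-y}$ times rational expressions in $k^{2s}$. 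These are analytic as long as the denominators $y^{2s}e^{\pm i\pi s}-k^{2s}|x|^{2s}$ do not vanish, which holds for $\arg k^{2s}$ near $0$ since $0<s<1$.
\item In 2D the $F_m$ representation has a removable singularity at $r=k$. There I would instead use the rotated-contour form, or the Struve/Bessel representation, to obtain analyticity.
\end{itemize}
With locally uniform bounds on $\sup_{x\in D}\int_D |G(x-y)|\,dy$ (integrable singularities at $0$, from Appendix \ref{appendixB}), $T_k$ becomes an analytic $\mathcal B(L^2(D))$-valued function on a complex neighbourhood $U$ of $(0,\infty)$. For small $k$, $k^{2s}\|T_k\|<1$ by the estimates of Appendix \ref{appendix:estimate_L2delta} (only $k^{2s}\|q\|_\infty$ small is needed, and the kernel bounds stay uniform as $k\to0^+$ because of the power-type singular terms), so a Neumann series gives invertibility there.

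\textbf{Step 4: conclusion.} The analytic Fredholm theorem (Theorem 8.26 of \cite{colton-book}) then shows that $(I-k^{2s}T_k)^{-1}$ exists for all $k\in U$ except a discrete set. Intersecting with $(0,\infty)$ gives $\Lambda$. It has no accumulation point in $(0,\infty)$, and none at $0$ by the small-$k$ invertibility, so its only possible accumulation point is $+\infty$. The set $\Lambda$ depends on $q$ and on $s$ through $T_k$.
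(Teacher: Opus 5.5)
Your proof follows the paper's skeleton: reduce (\ref{in1})--(\ref{in2}) to the Lippmann--Schwinger equation via Theorem~\ref{equiv0}, use compactness of $T_k$ on $L^2(D)$, get invertibility for small $k$ by a Neumann series, and conclude with the analytic Fredholm theorem. The real difference is Step~3, where your version is more complete than the paper's.

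The paper asserts only that $T_k$ is differentiable in real $k$ (by arguments like those for Proposition~\ref{prop-conv}) and then cites Theorem~8.26 of \cite{colton-book}. As you note, that theorem needs a holomorphic $\mathcal{B}(L^2(D))$-valued family on a connected open subset of $\mathbb{C}$. A family that is merely smooth in real $k$ can fail to be invertible on a whole interval. Your continuation in $\lambda=k^{2s}$ across $(0,\infty)$ supplies what is missing:
\begin{itemize}
\item the Helmholtz parts and the $c_{n,j}$-terms are holomorphic there;
\item the denominators $y^{2s}e^{\pm i\pi s}-\lambda|x|^{2s}$ in $J_1$ and $J_3$ stay bounded away from zero for $|\arg\lambda|<\pi s$;
\item the Schur bounds are locally uniform.
\end{itemize}
This is what actually makes the discreteness claim rigorous.

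In 2D you do not need a rotated contour. The removable singularity of $F_m$ (or $\tilde F_m$) at $r=k$ is jointly analytic in $(r,k)$, so $k\mapsto F_m(|\cdot|,k)$ is holomorphic into $L^2(\mathbb{R}^2)$. Young's inequality $\|J*g\|_{L^2}\le\|J\|_{L^2}\|g\|_{L^1}$, with $g=qw$ supported in $D$, transfers this to $\mathcal{B}(L^2(D))$. The term $K_0(k|x|)$ is holomorphic for $k\notin(-\infty,0]$.

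One correction in the small-$k$ step: the kernel bounds are not uniform as $k\to0^+$. For example, for $n=1$ and $s>1/2$, $\Im G^k_{1,s}(x)=\cos(k|x|)\,k^{1-2s}/(2s)$ blows up. What saves the argument is the prefactor $k^{2s}$. The scaling $k^{2s}G^k_{n,s}(x)=k^nG^1_{n,s}(kx)$ gives $k^{2s}\|T_k\|\le\|q\|_\infty\int_{B_{k\,\mathrm{diam}D}}|G^1_{n,s}(z)|\,dz\to0$, which is exactly what the Neumann series needs.
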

 
\noindent 
It is typical that integral equation formulations equivalent to PDE models of wave propagation give rise to an exceptional set of frequencies. Hence, it is highly desirable to determine whether genuinely  $\Lambda\neq \emptyset$, that is, whether there exist $k>0$ for which a nonzero outgoing scattered field arises in the absence of an incident field. This corresponds precisely to the definition of scattering poles or resonances (see \cite{john1} for  a related discussion when $s=1/2$). In the classical case of the Helmholtz equation, i.e., when $s=1$ one has  $\Lambda=\emptyset$, and the scattering poles occur only for complex values of  $k$ with negative imaginary part. To shed light on this question in the fractional setting, in the next section we analyze the extended resolvent of the operator $\FLs  - k^{2s}(1+q)$, an approach that is independent of the integral representation of the solution.

\subsection{The extended Resolvent }
In this last section, we extend our analysis from the scattering problem (\ref{in1}) to the more general setting
\begin{align}\label{inhomogeneous_fractional_Helmholtz22}
    \FLs u - k^{2s}(1+q) u= f \mbox{ in } \R^n 
\end{align}
where $f$ is now in $ L^{2,\delta}(\R^n)$ but not necessarily compactly supported. We take an approach, where the solution is obtained by means of an extended resolvent operator. The multiplication operator $u\mapsto q(\cdot) u$ is clearly a map from $L^{2, -\delta}(\R^n)$ to $L^{2, \delta}(\R^n)$, since $q$ has compact support. Thus, for $k>0$, we can define the operator ${\mathcal H}_{k}: H^{2s}(\R^n)\to L^2(\R^n)$
\begin{align*}
    {\mathcal H}_{k} := \FLs -k^{2s}q,
\end{align*}
and  its  resolvent 
\begin{align*}
    R_k(z) := (\FLs - k^{2s}q -z))^{-1} \qquad \qquad z\in {\mathbb C}^+.
\end{align*}
The limiting  absorption principle as $z=k^{2s}+i\epsilon \to k^{2s}$ will allow us to analyze the solvability of (\ref{inhomogeneous_fractional_Helmholtz22}).

\medskip 
\noindent
The extension of  the resolvent $R_k$ to the real axis is studied by  Ben-Artzi and Nemirovski \cite{ben1997remarks}, more precisely Theorem 4A  provides  a limiting absorption principle for the resolvent of ${\mathcal H}_{k}$ under some conditions on $q$, and for a family of functions of $-\Delta$. Our case  of $(-\Delta)^{s}$ corresponds to power function for $s\in (0,1)$ and hence the growth condition (4.1) in \cite{ben1997remarks} is satisfied for $\gamma = 2s$.  The other the assumption in \cite[Theorem 4A]{ben1997remarks} is  that the multiplication by $(1+|x|)^{1+\epsilon}q(x)$ is a compact operator from $H^{2s}(\R^n)$  into $L^2(\R^n)$ for some $\epsilon >0$,  which for our case of $q$ compactly supported is always satisfied.

\noindent
In the following we state the results of  \cite[Theorem 4A]{ben1997remarks}  for our particular  case of ${\mathcal H}_k := \FLs -k^{2s}q$ (see also \cite[Theorem 7.1]{umeda2003generalized}  for  $s=1/2$).
\begin{theorem}
Let $1/2<\delta<1$, $0<s<1$, $n\in \{1,2,3\}$, and suppose $q\in L^\infty(\R^n)$ with compact support in $D$. Then: 
 \begin{itemize}
     \item[(i)]The continuous spectrum  $\sigma_c({\mathcal H}_k)=[0,\infty)$ of ${\mathcal H}_{k}$   is absolutely continuous except for  possibly  a discrete set of embedded  eigenvalues $\Lambda_k:=\{\lambda_j\}_{j\in {\mathbb N}}$ which can accumulate only at $0$ or $+\infty$. 
     \item[(ii)] The resolvent $R_k(z)$ can be extended continuously to $\C^{\pm}\cup\left((0,\infty)\setminus \Lambda_k\right)$ with respect to the operator norm topology of ${\mathcal B}(L^{2,\delta}(\R^n),H^{2s, -\delta}(\R^n))$ for $\delta>1/2$. That is, for all $k^{2s} \in (0,\infty)\setminus \Lambda_k $, 
     \begin{align*}
 \mbox{the limit} \qquad   R_k^\pm(k^{2s}) = \lim_{\epsilon \to 0^+} R_k(k^{2s} \pm i \epsilon) \qquad \mbox{exists in ${\mathcal B}(L^{2,\delta}, H^{2s, -\delta})$.}
     \end{align*}
 Furthermore, the operator valued functions $$
     R_k^\pm(z) := \begin{cases}
         R_k(z) & \mbox{ for } z \in \C^{\pm}\\
         R_k^\pm(z) & \mbox{ for } z \in (0,\infty) \setminus \Lambda_k
     \end{cases}$$ are ${\mathcal B}(L^{2,\delta}(\R^n),H^{2s, -\delta}(\R^n))$ valued continuous functions.
 \end{itemize}
\end{theorem}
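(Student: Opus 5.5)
This statement is a specialization of \cite[Theorem 4A]{ben1997remarks} to ${\mathcal H}_k=\FLs-k^{2s}q$. The plan is therefore to check that theorem's hypotheses, not to reprove it, and then to transcribe its conclusions into our notation. Ben-Artzi and Nemirovski consider operators $H=\varphi(-\Delta)+V$, and we take $\varphi(t)=t^s$ and $V=-k^{2s}q$. Three conditions must be checked.

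\textbf{First, regularity of $\varphi$.} As already noted before Theorem \ref{reso}, $t\mapsto t^s$ is $C^1$ on $(0,\infty)$ with positive, locally H\"older derivative $st^{s-1}$. This is Assumption 1.1 of \cite{ben1997remarks}, so the free resolvent $R_0(z)$ satisfies the limiting absorption principle of Theorem \ref{reso} in ${\mathcal B}(L^{2,\delta},H^{2s,-\delta})$.

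\textbf{Second, the growth condition (4.1) with $\gamma=2s$.} We need $\varphi(|\xi|^2)=|\xi|^{2s}$ to be comparable to $\langle\xi\rangle^{2s}$ for large $|\xi|$. This gives $D(\varphi(-\Delta))=H^{2s}(\R^n)$ with equivalent graph norm, which identifies the spaces $H^{2s,\pm\delta}$ used in \cite{ben1997remarks} with ours.

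\textbf{Third, short-range compactness of the potential.} We need $u\mapsto(1+|x|)^{1+\epsilon}q(x)u$ to be compact from $H^{2s}(\R^n)$ to $L^2(\R^n)$. Since $\mathrm{supp}\,q\subset D$ is compact, this map equals $\tilde q\,\chi_D$, where $\tilde q=(1+|x|)^{1+\epsilon}q\in L^\infty$. The map factors as restriction $H^{2s}(\R^n)\to H^{2s}(D')\hookrightarrow L^2(D')$ for a ball $D'\supset D$, followed by multiplication by $\tilde q$. The embedding is compact by Rellich--Kondrachov because $2s>0$, so the composition is compact. Self-adjointness of ${\mathcal H}_k$ on $H^{2s}$ follows from Kato--Rellich, since $k^{2s}q$ is a bounded symmetric perturbation. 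This also shows $R_k(z)$ is defined for $z\in\C^\pm$.

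With the hypotheses verified, the transcription is as follows. Theorem 4A gives that the essential spectrum is $[0,\infty)$, because compact perturbations preserve it by Weyl's theorem. It also gives that the set of embedded eigenvalues in $(0,\infty)$ is discrete, with possible accumulation only at $0$ and $\infty$; this is part (i). For part (ii), Theorem 4A gives that $R_k(z)$ extends continuously from each half-plane to $(0,\infty)\setminus\Lambda_k$ in ${\mathcal B}(L^{2,\delta},H^{2s,-\delta})$. For the reader we would sketch why. One writes
\begin{equation*}
R_k(z)=\bigl(I-R_0(z)k^{2s}q\bigr)^{-1}R_0(z).
\end{equation*}
By Theorem \ref{reso} and the compactness above, $R_0^\pm(\lambda)k^{2s}q$ is a norm-continuous family of compact operators on $H^{2s,-\delta}$. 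Analytic Fredholm theory then gives invertibility of $I-R_0^\pm(\lambda)k^{2s}q$ except where there is a nontrivial kernel. A kernel element is an outgoing (or incoming) solution of ${\mathcal H}_kw=\lambda w$, and such a $w$ is shown to lie in $L^2$, i.e.\ it is an eigenvector. That identifies the exceptional set with $\Lambda_k$.

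The main obstacle, were one to prove rather than cite this, is that last step: showing an outgoing generalized eigenfunction is in fact an $L^2$ eigenfunction. This requires a Rellich-type decay/uniqueness argument for $\varphi(-\Delta)$, which Ben-Artzi--Nemirovski handle via the trace-lemma structure of $R_0^\pm$ on the sphere $|\xi|=\lambda^{1/2s}$. For our purposes we take it from \cite{ben1997remarks}, so the real work here is only the hypothesis check above.
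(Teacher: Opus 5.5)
Your proposal is correct and takes the same route as the paper. Both obtain the statement by invoking \cite[Theorem 4A]{ben1997remarks} with $\varphi(t)=t^s$, $V=-k^{2s}q$, checking the growth condition (4.1) with $\gamma=2s$ and the compactness of $u\mapsto(1+|x|)^{1+\epsilon}q\,u$ from $H^{2s}(\R^n)$ to $L^2(\R^n)$; the paper simply asserts the latter for compactly supported $q$, and your Rellich--Kondrachov factorization makes it explicit. In your optional sketch, note that on $(0,\infty)$ the map $\lambda\mapsto R_0^\pm(\lambda)$ is only H\"older continuous, so the relevant tool there is the pointwise Fredholm alternative rather than analytic Fredholm theory, and the discreteness of $\Lambda_k$ in (i) does not follow from that step but is a separate part of the cited result.
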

\begin{remark} {\em
 It is not guaranteed in general that for a given $q\in L^\infty(D)$, there exists $k \in \R^+$ such that $k^{2s} \not \in \Lambda_k$, since the set $ \Lambda_k$ depends on $k$, even though for  each fixed $k$ it is a most countable. This is the main weakness of the arguments that follow: in order to be able to extend the resolvent to $R^+_k(k^{2s})$, we will a priori have to make the assumption that $k^{2s}\not \in \Lambda_k$ for that particular $k$.  }
\end{remark}
\noindent
Provided $k^{2s} \not \in \Lambda_k$, $R_k^\pm(k^{2s})$ gives the outgoing  and the incoming solution of (\ref{inhomogeneous_fractional_Helmholtz22}). Our main goal is to determine a radiation condition that distinguishes the outgoing solution $R_k^+(k^{2s})$. To this end, we first  show that the extended resolvent $R_k(z)$ satisfies the fractional Helmholtz equation with potential $q$ and source $f\in L^{2,\delta}(\R^n)$ in the $L^{2,-\delta}$ sense. 
\begin{lemma}
Let $1/2<\delta<1$, $0<s<1$, $n\in \{1,2,3\}$ and let $f\in L^{2,\delta}(\R^n)$. Suppose $k^{2s} \in (0,\infty)\setminus \Lambda_k$. Then $u^+:= R_k^+(k^{2s})f$ satisfies :
    \begin{align*}
        \FLs u^+ - k^{2s}(1+q) u^+ = f \mbox{ in } L^{2,-\delta}(\R^n)
    \end{align*}
\end{lemma}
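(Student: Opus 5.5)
The plan mirrors the proof of the analogous lemma for the free resolvent $R_0^+$ in Section \ref{3}. For $\epsilon>0$ set $u_\epsilon := R_k(k^{2s}+i\epsilon)f$. Since $k^{2s}+i\epsilon\in\C^+$ lies in the resolvent set of the self-adjoint operator ${\mathcal H}_k$, and $f\in L^{2,\delta}\subset L^2$, we have $u_\epsilon\in H^{2s}(\R^n)$. Moreover $u_\epsilon$ satisfies
$$\FLs u_\epsilon - k^{2s}q u_\epsilon-(k^{2s}+i\epsilon)u_\epsilon = f$$
in $L^2(\R^n)$, and hence also when tested against any $\phi\in L^{2,\delta}(\R^n)$ through the $\left<\cdot,\cdot\right>_{L^{2,-\delta},L^{2,\delta}}$ pairing. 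Because $k^{2s}\notin\Lambda_k$, part (ii) of the preceding theorem gives $u_\epsilon\to u^+ = R_k^+(k^{2s})f$ in $H^{2s,-\delta}(\R^n)$ as $\epsilon\to0^+$. Concretely, both $\|u_\epsilon-u^+\|_{L^{2,-\delta}}\to 0$ and $\|\FLs(u_\epsilon-u^+)\|_{L^{2,-\delta}}\to0$.

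Next, fix $\phi\in L^{2,\delta}$ and subtract the desired identity from the $\epsilon$-identity. The difference is the pairing of
$$\FLs(u_\epsilon-u^+)-k^{2s}(u_\epsilon-u^+)-k^{2s}q(u_\epsilon-u^+)-i\epsilon u_\epsilon$$
with $\phi$, and we bound each term separately. The first two terms are bounded by $\|u_\epsilon-u^+\|_{H^{2s,-\delta}}\|\phi\|_{L^{2,\delta}}$ up to a constant. For the potential term, $q\in L^\infty$ has compact support in $D$, so $(1+|x|^2)^{\delta}|q|$ is bounded. This gives $\|q w\|_{L^{2,\delta}}\le C_{q,D}\|w\|_{L^{2,-\delta}}$, so that term is bounded by $C\|u_\epsilon-u^+\|_{L^{2,-\delta}}\|\phi\|_{L^{2,\delta}}$. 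Finally, $\epsilon\|u_\epsilon\|_{L^{2,-\delta}}\to0$ because $\|u_\epsilon\|_{L^{2,-\delta}}$ stays bounded by the continuity of the extended resolvent.

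All terms therefore tend to zero, which yields the identity in $L^{2,-\delta}$ as a functional on $L^{2,\delta}$. The only real point to check is the uniform boundedness of $u_\epsilon$ and the handling of the potential term. Both follow directly from the continuity statement in the theorem and from the compact support of $q$, so no step should present a real obstacle.
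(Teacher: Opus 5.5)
Your proposal is correct and follows the paper's proof essentially step by step. You take $u_\epsilon = R_k(k^{2s}+i\epsilon)f \in H^{2s}(\R^n)$, use $u_\epsilon \to u^+$ in $H^{2s,-\delta}$ from part (ii) of the preceding theorem, and bound the pairing of the residual against $\phi \in L^{2,\delta}$ term by term, with $\epsilon\|u_\epsilon\|_{L^{2,-\delta}}\to 0$. The only difference is immaterial: the paper bounds the potential term directly by $k^{2s}\|q\|_\infty\|u_\epsilon-u^+\|_{L^{2,-\delta}}\|\phi\|_{L^{2,\delta}}$, so the compact support of $q$ is not actually needed for this step.
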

\begin{proof}
We know that $u_\epsilon := R_k^+(k^{2s} + i\epsilon)f$ goes to $u^+:= R_k^+(k^{2s})f$ in $H^{2s,-\delta}(\R^n)$ for any $\delta>1/2$. Furthermore, $u_\epsilon \in H^{2s}(\R^n)$ for all $\epsilon >0$ and satisfies the fractional Helmholtz equation in the $L^{2,-\delta}$ sense, that is 
\begin{align*}
    \left< (-\Delta)^su_\epsilon - k^{2s}qu_\epsilon-(k^{2s}+i\epsilon)u_\epsilon , \phi \right>_{L^{2,-\delta}, L^{2,\delta}} = \left<f,\phi\right>_{L^{2,-\delta}, L^{2,\delta}}
\end{align*}
Passing to the limit in the above equation, we obtain
\begin{align*}
   &|\left< (-\Delta)^s(u_\epsilon - u^+) -k^{2s}(1+q)(u_\epsilon - u^+) + i\epsilon u_\epsilon, \phi \right>_{L^{2,-\delta}, L^{2,\delta}}| \\
   &\leq \left((1+k^{2s}(1+\|q\|_{\infty})) \|u_\epsilon - u^+\|_{H^{2s,-\delta}} + \epsilon \|u_\epsilon\|_{L^{2,-\delta}} \right)\|\phi\|_{L^{2,\delta}} \xrightarrow[\epsilon \to 0]{} 0,
\end{align*}
which completes the proof.
\end{proof}

\begin{lemma} \label{radh} Let  $1/2<\delta<1$ and  suppose that $k^{2s} \in (0,\infty)\setminus \Lambda_k$. If  $1/2\le s <1$, then $u^+ := R_k^+(k^{2s})f$ satisfies the generalized Sommerfeld Radiation condition (GSRC) (\ref{def:generalized_SRC-I}) for all $f\in L^{2,\delta}(\R^n)$. If $0<s<1$,  then $u^+ := R_k^+(k^{2s})f$ satisfies the standard Sommerfeld Radiation Condition (SRC) (\ref{def:SRC})  for all $f\in L^2(\R^n)$ with compact support. 
\end{lemma}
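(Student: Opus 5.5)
The natural route is to reduce to the free case through a resolvent identity, moving the potential to the right-hand side as a compactly supported source. Set $u^+ := R_k^+(k^{2s})f$. By the previous lemma, $u^+\in H^{2s,-\delta}(\R^n)$ and
\[
\FLs u^+ - k^{2s}u^+ = f + k^{2s}q u^+ \quad\mbox{in } L^{2,-\delta}(\R^n).
\]
Since $q\in L^\infty$ has compact support in $D$, we get $g:=k^{2s}qu^+\in L^{2,\delta'}(\R^n)$ for every $\delta'$. Moreover $g\in L^2(\R^n)$ with compact support, because $u^+\in L^2_{loc}$.

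The main step is the second resolvent identity in the limit:
\begin{equation*}
R_k^+(k^{2s})f = R_0^+(k^{2s})\bigl(f + k^{2s}qR_k^+(k^{2s})f\bigr).
\end{equation*}
For $z=k^{2s}+i\epsilon$, $\epsilon>0$, this holds exactly, since $(\FLs - z)R_k(z)f = f + k^{2s}qR_k(z)f$ with $R_k(z)f\in H^{2s}$. To pass to the limit $\epsilon\to0^+$:
\begin{itemize}
\item The left side converges in $H^{2s,-\delta}$ by the theorem above, using $k^{2s}\notin\Lambda_k$.
\item The factor $qR_k(z)f$ converges in $L^{2,\delta}$, because multiplication by $q$ maps $L^{2,-\delta}$ boundedly into $L^{2,\delta}$.
\item $R_0(z)$ converges to $R_0^+(k^{2s})$ in the uniform operator topology of ${\mathcal B}(L^{2,\delta},H^{2s,-\delta})$ by Theorem \ref{reso}, with norms uniformly bounded near $k^{2s}$.
\end{itemize}
Together these give the limit identity. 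This bookkeeping is what I expect to be the main obstacle. The points to watch are that the limits are taken in the same weighted spaces and that the $+$ branch is used consistently in both resolvents. Alternatively, one could argue by uniqueness: $w=u^+-R_0^+(f+g)$ solves the homogeneous free equation in $H^{2s,-\delta}$. However, Corollary \ref{uniqueness} requires a radiation condition on $w$, which we do not yet have, so the limiting-absorption argument is the safer path.

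With $u^+ = R_0^+(k^{2s})(f+g)$ established, the two claims follow from the free-space results.
\begin{itemize}
\item \emph{Case $1/2\le s<1$, $f\in L^{2,\delta}$.} Here $f+g\in L^{2,\delta}$, so Lemma \ref{lemma:Resolvent_SRC_s_geq_half} shows that $u^+$ satisfies (GSRC).
\item \emph{Case $0<s<1$, $f\in L^2$ with compact support.} Here $f+g\in L^2$ has compact support. Lemma \ref{lemma:Gks=Rk} gives $u^+=\mathcal{G}_{n,s}^k(f+g)$ a.e., and Lemma \ref{lemma:SRC_G_s_less_than_half} then shows that $u^+$ satisfies (SRC).
\end{itemize}
For the second case, the radiation behavior comes from the kernel $G_{n,s}^k$ through Theorem \ref{som}, since the source is compactly supported.
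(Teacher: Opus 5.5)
Your proposal is correct and follows essentially the paper's route: establish $R_k^+(k^{2s}) = R_0^+(k^{2s})\bigl(I + k^{2s} q R_k^+(k^{2s})\bigr)$, treat $k^{2s}qu^+$ as a compactly supported $L^{2,\delta}$ source, and then invoke Lemmas \ref{lemma:Resolvent_SRC_s_geq_half}, \ref{lemma:Gks=Rk} and \ref{lemma:SRC_G_s_less_than_half}. Your explicit passage to the limit $\epsilon\to 0^+$ from $z=k^{2s}+i\epsilon$ spells out a step the paper leaves implicit: the paper states the identity directly at the real point from $({\mathcal H}_0 - z)R_k^+(z) = I + k^{2s}qR_k^+(z)$, and justifying it there tacitly requires either that continuity argument or the radiation condition whose unavailability you correctly flag.
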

 \begin{proof}
 We proceed by writing the resolvent $R_k^+$ in terms of the extended unperturbed resolvent $R_0^+$  discussed in Section \ref{3} (see \cite{umeda1995radiation, umeda2003generalized}  for the case of $s=1/2$). Indeed, we have 
 \begin{align*}
 R_k^+(z) = R_0^+(z)(I+k^{2s}qR_k^+(z))
\mbox{ in } {\mathcal B}(L^{2,\delta},H^{2,-\delta}) \mbox{ for all } z \in \C^+\cup(0,\infty)\setminus \Lambda_k,
 \end{align*}
which follows from 
 \begin{align*}
({\mathcal H}_0 -z)R^+_k(z) = (\FLs-z-k^{2s}q))R^+_k(z) + k^{2s}q R^+_k(z) =  I+k^{2s}qR_k^+(z)  \mbox{ in }L^{2, -\delta}(\R^n)
 \end{align*}
 and the fact that multiplication by $q$ is in ${\mathcal B}(L^{2,-\delta}, L^{2,\delta})$, so that $qR_k^+(z) \in {\mathcal B}(L^{2,\delta}, L^{2,\delta})$. Then we conclude using the previous results  in Lemma \ref{lemma:Gks=Rk}, Lemma \ref{lemma:SRC_G_s_less_than_half}, and Lemma \ref{lemma:Resolvent_SRC_s_geq_half}. 
 \end{proof}
\noindent 
We are almost ready to establish a uniqueness result for the solution of (\ref{inhomogeneous_fractional_Helmholtz}). However, we need the following auxiliary result.
\begin{lemma}\label{inverse_id+zR}
  Suppose that $1/2 < \delta < 1$ and $0<s<1$. Then
   \begin{align*}
       (I + R_k^+(z)k^{2s}q)(I-R_0^+(z)k^{2s}q) = I \mbox{ in } {\mathcal B}(L^{2,-\delta}(\R^n))\\
       (I - R_0^+(z)k^{2s}q)(I+R_k^+(z)k^{2s}q) = I \mbox{ in } {\mathcal B}(L^{2,-\delta}(\R^n))
   \end{align*}
for every $z\in \C^+ \cup (0,\infty) \setminus \Lambda_k$.
\end{lemma}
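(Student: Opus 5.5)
The plan is to prove both identities first for $z\in\C^+$, using the second resolvent identity, and then pass to the limit $\operatorname{Im} z\to 0^+$ by the continuity statements of Theorem \ref{reso} and of the limiting absorption theorem for $R_k^\pm$ stated above. Write $V:=k^{2s}q$ for the multiplication operator. Since $q\in L^\infty$ has compact support, $V\in{\mathcal B}(L^{2,-\delta},L^{2,\delta})$. It follows that $R_0^+(z)V$ and $R_k^+(z)V$ both lie in ${\mathcal B}(L^{2,-\delta},H^{2s,-\delta})\subset {\mathcal B}(L^{2,-\delta})$, so both products are well defined in ${\mathcal B}(L^{2,-\delta})$.

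\emph{Step 1 ($z\in\C^+$).} Here $R_0(z)$ and $R_k(z)$ are genuine resolvents of self-adjoint operators on $L^2$ with common domain $H^{2s}$. The operator ${\mathcal H}_k={\mathcal H}_0-V$ is self-adjoint because $V$ is bounded and real. Since $({\mathcal H}_0-z)-({\mathcal H}_k-z)=-V$, the second resolvent identity gives
\[
R_k(z)-R_0(z)=R_k(z)VR_0(z)=R_0(z)VR_k(z)\quad\text{on }L^2 .
\]
Expanding the first product,
\[
(I+R_kV)(I-R_0V)=I+(R_k-R_0-R_kVR_0)V=I ,
\]
and the second product expands similarly to $I+(R_k-R_0-R_0VR_k)V=I$. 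These identities hold on $L^2\cap L^{2,-\delta}$, in particular on $L^2$. To get them in ${\mathcal B}(L^{2,-\delta})$, I would use that $V$ maps $L^{2,-\delta}$ into $L^{2,\delta}\subset L^2$. For $z\in\C^+$, $R_0(z)$ and $R_k(z)$ are bounded $L^{2,\delta}\to L^{2,-\delta}$, being restrictions of $L^2$-bounded maps. Hence every term in the expansions is applied to an $L^2$ function after the first $V$, and the identities hold on all of $L^{2,-\delta}$. Alternatively, $L^2$ is dense in $L^{2,-\delta}$ and all the operators are bounded there.

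\emph{Step 2 (limit).} Fix $\lambda\in(0,\infty)\setminus\Lambda_k$ and set $z_\epsilon=\lambda+i\epsilon$. By Theorem \ref{reso}, $R_0(z_\epsilon)\to R_0^+(\lambda)$ in ${\mathcal B}(L^{2,\delta},H^{2s,-\delta})$. By the extension theorem for ${\mathcal H}_k$, $R_k(z_\epsilon)\to R_k^+(\lambda)$ in the same norm. Composing with the fixed operator $V\in{\mathcal B}(L^{2,-\delta},L^{2,\delta})$ gives convergence of $R_0(z_\epsilon)V$ and $R_k(z_\epsilon)V$ in ${\mathcal B}(L^{2,-\delta})$. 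Products of norm-convergent families converge in norm, so the identities from Step 1 pass to the limit. This yields both identities at $z=\lambda$.

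\emph{Main obstacle.} The only delicate point is the functional-analytic bookkeeping in Step 1. One must make sure the second resolvent identity, proved on $L^2$, is legitimately read in ${\mathcal B}(L^{2,-\delta})$, and that the domains match: $R_0(z)VR_k(z)$ requires $R_k(z)$ to map into $L^{2,-\delta}$ and $V$ to bring it back to $L^{2,\delta}$. The compact support of $q$ handles this, so I expect no real difficulty. Note also that the statement implicitly requires $z\notin\Lambda_k$ on the real axis, where $R_k^+$ exists.
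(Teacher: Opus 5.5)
Your proposal is correct and is essentially the paper's argument. The paper proves $(I+R_k(z)k^{2s}q)v=R_k(z)(\mathcal{H}_0-z)v$ and $(I-R_0(z)k^{2s}q)v=R_0(z)(\mathcal{H}_k-z)v$ on $H^{2s}(\R^n)$, which is just your second resolvent identity in factored form, and then extends to $L^{2,-\delta}$ and to the real axis by the limiting absorption principle, as in your Step 2. One small slip: $(\mathcal{H}_0-z)-(\mathcal{H}_k-z)=V$, not $-V$, although the identity $R_k-R_0=R_kVR_0=R_0VR_k$ that you then use is correct.
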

\begin{proof}
The proof  follows the lines  of \cite[Lemma 7.3]{umeda2003generalized} for $s=1/2$. Let $v\in H^{2s}(\R^n) = \mbox{dom}({\mathcal H}_0)$. Then we write for $z\in {\mathcal C}^+$
    \begin{align*}
        (I + R_k(z) k^{2s}q) v = R_k(z) \left( \FLs -z -k^{2s}q +k^{2s}q\right)v=R_k(z) ({\mathcal H}_0-z)v,
    \end{align*}
and similarly    
    \begin{align*}
        (I - R_0(z) k^{2s}q) v &= R_0(z) \left( \FLs -z -k^{2s}q\right)v=R_0(z) ({\mathcal H}_k-z)v.
    \end{align*}
Thus we  obtain the desired identities by taking $v =(I-R_0^+(z)k^{2s}q)u$ and $v =(I+R_k^+(z)k^{2s}q) u $ respectively, for $u  \in H^{2s}(\R^n)$. Finally, we use that we can extend the resolvents to operators in ${\mathcal B}(L^{2,\delta}, L^{2,-\delta})$ in order to conclude.
\end{proof}
\begin{theorem}
 Let $1/2<\delta<1$, $0<s<1$ and suppose that $k^{2s} \in (0,\infty)\setminus \Lambda_k$. If $u\in H^{2s,-\delta}(\R^n)$ is a solution of the homogeneous fractional Helmholtz equation with potential $q$ in the sense
    \begin{align}\label{eq_L2delta}
       \left< \FLs u -k^{2s}(1+q)u, \phi\right>_{L^{2,-\delta}, L^{2,\delta}} = 0 \mbox{  for all } \phi \in L^{2,\delta}(\R^n)
    \end{align}
    and $u$ satisfies the standard  Sommerfeld radiation condition  (\ref{def:SRC}), then $u \equiv 0$. 
\end{theorem}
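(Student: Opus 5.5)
Write $g := k^{2s}qu$. Since $q\in L^\infty$ has compact support in $D$ and $u\in L^{2,-\delta}(\R^n)$, the function $g$ lies in $L^2(\R^n)$, has compact support, and belongs to $L^{2,\delta}(\R^n)$. Equation (\ref{eq_L2delta}) says that $\FLs u - k^{2s}u = g$ in the $L^{2,-\delta}$ sense. The plan is to show that $u = R_0^+(k^{2s})g$, and then use Lemma \ref{inverse_id+zR} to force $u=0$.

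\emph{Step 1.} Set $w := u - R_0^+(k^{2s})g$. By Theorem \ref{prop:representation_formula_s_less_half}, applied with source $g$ (which is in $L^2$ with compact support), $R_0^+(k^{2s})g = \mathcal{G}_{n,s}^k g$ lies in $H^{2s,-\delta}(\R^n)$. It solves $\FLs v - k^{2s}v = g$ and satisfies (SRC). Hence $w\in H^{2s,-\delta}(\R^n)$ solves the homogeneous equation $\langle \FLs w - k^{2s}w,\phi\rangle = 0$ for all $\phi\in L^{2,\delta}$. Since (SRC) is linear, $w$ also satisfies (SRC). Corollary \ref{uniqueness} then gives $w\equiv 0$. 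Therefore
\[
u = R_0^+(k^{2s})\,k^{2s}q\,u, \qquad\text{that is,}\qquad (I - R_0^+(k^{2s})k^{2s}q)u = 0 \ \text{ in } L^{2,-\delta}(\R^n).
\]

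\emph{Step 2.} Apply the first identity of Lemma \ref{inverse_id+zR} with $z = k^{2s}\in(0,\infty)\setminus\Lambda_k$. This is admissible by hypothesis. It gives
\[
u = (I + R_k^+(k^{2s})k^{2s}q)(I - R_0^+(k^{2s})k^{2s}q)u = 0 .
\]
Here the right-hand operator acts on $u\in L^{2,-\delta}$. This is legitimate because multiplication by $q$ maps $L^{2,-\delta}$ into $L^{2,\delta}$, and $R_0^+, R_k^+$ map $L^{2,\delta}$ into $H^{2s,-\delta}\subset L^{2,-\delta}$.

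\emph{Main obstacle.} The delicate point is Step 1: making sure the uniqueness in Corollary \ref{uniqueness} applies to $w$. This requires the identification $R_0^+(k^{2s})g = \mathcal{G}^k_{n,s}g$ (Lemma \ref{lemma:Gks=Rk}) and the fact that this convolution satisfies the standard (SRC) for compactly supported $L^2$ data (Lemma \ref{lemma:SRC_G_s_less_than_half}). This must hold for all $0<s<1$, not only $s\ge 1/2$, which is why we work with (SRC) rather than (GSRC).

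A second point to check is that Lemma \ref{inverse_id+zR} holds on all of $L^{2,-\delta}$ and not just on $H^{2s}$. The lemma asserts the identity in $\mathcal{B}(L^{2,-\delta})$. If needed, it follows by density of $H^{2s}$ in $L^{2,-\delta}$, together with continuity of both sides on $L^{2,-\delta}$, which holds because $q$ has compact support.
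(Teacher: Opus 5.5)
Your proposal is correct and follows the paper's proof: subtract $R_0^+(k^{2s})(k^{2s}qu)$, which satisfies (SRC) because $qu$ is in $L^2$ with compact support, apply Corollary \ref{uniqueness} to the difference, and then use the left-inverse identity of Lemma \ref{inverse_id+zR} to conclude $u\equiv 0$. The only cosmetic difference is that the paper cites Lemma \ref{radh} for the radiation condition, whereas you cite Theorem \ref{prop:representation_formula_s_less_half} directly, which is what Lemma \ref{radh} relies on anyway.
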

\begin{proof}
 Since $k^{2s} \in (0,\infty)\setminus \Lambda_k$ we have that by the  hypothesis on $u$ and definition of $R_0^+$ :
\begin{align*}
    (\FLs -k^{2s} )u = k^{2s} qu \mbox{ in } L^{2,-\delta}(\R^n)\\
    (\FLs -k^{2s} )R_0^+(k^{2s})(-k^{2s}qu) = -k^{2s} qu \mbox{ in } L^{2,-\delta}(\R^n)
\end{align*}
since the multiplication by $q$ is in ${\mathcal B}(L^{2,-\delta}, L^{2,\delta})$. From Lemma \ref{radh} and the fact that $qu \in L^{2,\delta}$ and has compact support  we have that $-R_0^+(k^{2s})(k^{2s}qu)$ satisfies the standard Sommerfeld radiation  condition. Adding the equations we obtain that 
\begin{align*}
    (\FLs -k^{2s})w = 0 \mbox{ in } L^{2,-\delta}(\R^n)
\end{align*}
and $w:=u-R_0^+(k^{2s})k^{2s}qu$ satisfies the standard Sommerfeld radiation  condition. Thus Corollary \ref{uniqueness} implies that 
\begin{align*}
    u-R_0^+(k^{2s})k^{2s}qu \equiv 0 \mbox{ in } \R^n~.
\end{align*}
Based on Lemma \ref{inverse_id+zR}, we can invert the operator $I- R_0^+(k^{2s})k^{2s}q$, and  hence $u\equiv 0$. 
\end{proof}
\begin{remark}
{\em Although not included here, many of the results in this Section work under more general assumption on $q$, namely under the assumption that the multiplication by $(1+|x|)^{1+\epsilon}q(x)$ is compact from $H^{2s}(\R^n)$  into $L^2(\R^n)$ for some $\epsilon >0$. In this case we must assume that $1/2\leq s<1$ and that the outgoing solution is selected by the generalized Sommerfeld Radiation condition (GSRC) (\ref{def:generalized_SRC-I}).}
\end{remark}

\noindent
To our knowledge, it is not known in general whether the set of embedded eigenvalues $\Lambda_k$ of the operator ${\mathcal H}_{k} := \FLs - k^{2s}q$ is empty, even for compactly supported $q$. We refer the reader to \cite{MR4491134} for a discussion of related questions. Theorems \ref{equiv0} and \ref{prop:representation_formula_s_less_half}  show that the (essential) exceptional set $\Lambda$ (from Theorem \ref{LSlam}) gives rise to the following implication: $k\in \Lambda \implies k^{2s} \in \Lambda_k$. More precisely, if $k$ is such that $I-k^{2s} T_k$ is non injective, then there exists a nontrivial $u\in L^2(D)$ such that $u = k^{2s} T_k u$ in $D$, and hence $u$ can be extended to a function in $H^{2s,-\delta}(\R^n)$ by $k^{2s}T_ku$. Furthermore, this extended  $u$ is a non trivial $H^{2s, -\delta}(\R^n)$ solution of $\FLs u-k^{2s}(q+1)u = 0$, therefore it is an eigenfunction of $\mathcal{H}_k$ with eigenvalue $k^{2s}$. Furthermore we see that these values correspond to embedded eigenvalues of ${\mathcal H}_{k}$ with outgoing eigenfunctions, i.e., eigenfunctions that satisfy the standard Sommerfeld radiation condition (\ref{def:SRC}). To establish the well-posedness of the scattering problem (\ref{in1}) for every $k$, it would therefore be very helpful to determine whether ${\mathcal H}_{k}$ admits embedded eigenvalues with outgoing eigenfunctions; this question remains open. To illustrate the subtlety of this issue, we refer the reader to \cite{sphere}, where it is shown that $\Lambda = \emptyset$ if $q(r)$ is a spherically symmetric function satisfying $q(r) \leq 1$.

 \newpage
\section{Appendices}

\begin{subappendices}
This section collects the proofs of some technical results that are used throughout the paper.
\subsection{Asymptotics and singularity estimates of the Fundamental solution}\label{appendixB}
The following theorem describes the behavior of the fundamental solution both at infinity and near the origin. It does not give the precise rates of decay at infinity or blow-up at zero of the fundamental solution, but rather provides bounds, which are sufficient for the purposes of this paper.\begin{theorem}\label{th:asymptotics-all} Let $k\in (0,\infty)$ and $n=1,2,3$. Then  the following estimate for the fundamental solution hold. 
In dimension $n=1$ we have
\begin{align*}
    |G_{1,s}^k(x) - G_{1,helm}^k(x)| \leq \begin{cases}
         \frac{C(s,k)}{\left<x\right>^{1+2s}} & \mbox{ for } s \in (\frac{1}{2},1)\\
        \frac{C(s,k)(-\ln(|x|)\chi_{(0,1)}(|x|)+1)}{\left<x\right>^{1+2s}} & \mbox{ for } s=\frac{1}{2}\\
         \frac{C(s,k)}{|x|^{1-2s}\left<x\right>^{4s}} & \mbox{ for } s \in (0,\frac{1}{2}).
    \end{cases} 
    \end{align*}
    where we recall that  $G_{1,helm}^k(x) = \frac{i e^{i |x| k}}{2s k^{2s-1}}$. 
In dimension $n=2$, we have
\begin{align*}
    |G_{2,s}^k(x) - G_{2, helm}^k(x)| &\leq   \frac{C(s,k)}{|x|}  \mbox{ for } s \in (\frac{1}{2}, 1)\\
     \left|G_{2,s}^k(x) - G_{2, helm}^k(x) -\sum_{j=0}^{m-1}  \frac{c_{2,j}k^{2sj}}{|x|^{2-2s(j+1)}}\right| &\leq  \frac{C(s,k)}{|x|}  \mbox{ for } s \in (0, \frac{1}{2}].
\end{align*}
where we recall that $G_{2,helm}^k (x) = \frac{ik^{2-2s}}{4s } H^{(1)}_0(k |x|)$, which has a logarithmic singularity at $0$ and decays like $|x|^{-1/2}$ at $\infty$. 
In dimension $n=3$, we have
\begin{align*}
    |G_{3,s}^k(x) - G_{3,helm}^k(x) | &\leq   \frac{C(s,k)}{|x|^{3-2s}\left<x\right>^{4s}} & \mbox{ for } s \in (\frac{1}{2}, 1)\\
    \left|G_{3,s}^k(x) - G_{3,helm}^k(x) - \sum_{j=0}^{m-1} \frac{c_{3,j}k^{2sj}}{|x|^{3-2s(j+1)}} \right|& \leq   \frac{C(s,k)}{|x|^{3-2sm}} & \mbox{ for } s \in (0, \frac{1}{2}].
\end{align*}
where we recall that $G_{3,helm}^k (x) = \frac{k^{2-2s}}{s}\frac{e^{ik |x|}}{4\pi|x|}$. 
Here $C(s,k)>0$ is a constant depending on $s$ and $k$, $\chi_{(0,1) }$ denotes the indicator function of the interval $(0,1)$, $m=\lfloor \frac{1}{2s} \rfloor$, and we use the Japanese bracket notation $\left<x\right>^\alpha:=(1+|x|^2)^{\alpha/2}$.
\end{theorem}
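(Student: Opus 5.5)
We estimate, dimension by dimension, the remainder $J_n^{s,k}$ from the explicit formulas of Section \ref{fund}. Two regimes are treated separately: $|x|\le 1$ and $|x|\ge 1$. In each case the remainder is written after the change of variables $y\mapsto y/|x|$, so that the $|x|$-dependence appears only through the parameter $\rho:=k^{2s}|x|^{2s}$ inside an integral of the form $\int_0^\infty e^{-y}y^{a}\,\Im[\cdots]\,dy$.

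\textbf{Dimensions one and three.} For $n=1$, formula (\ref{def:J1}) gives
$$J_1^{s,k}(x)=-\frac{1}{\pi|x|^{1-2s}}\,I(\rho),\qquad I(\rho):=\int_0^\infty e^{-y}\,\Im\!\left[\frac{1}{y^{2s}e^{i\pi s}-\rho}\right]dy .$$

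\emph{Large $|x|$.} Expand $\frac{1}{y^{2s}e^{i\pi s}-\rho}=-\frac1\rho-\frac{y^{2s}e^{i\pi s}}{\rho^2}+O\!\left(\frac{y^{4s}}{\rho^3}\right)$. The first term is real and contributes nothing to the imaginary part. The second gives $-\sin(\pi s)\,\Gamma(1+2s)\rho^{-2}$. Hence $I(\rho)=O(\rho^{-2})$, and $|J_1|\lesssim |x|^{-1+2s-4s}=|x|^{-1-2s}$.

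The expansion is only valid where $y^{2s}\lesssim\rho$. For $y^{2s}\gtrsim\rho$ we use a uniform bound instead. Since $s<1$, the angle $e^{i\pi s}$ keeps $y^{2s}e^{i\pi s}$ away from the positive real axis, which gives
$$|y^{2s}e^{i\pi s}-\rho|\ge c_s\,(y^{2s}+\rho).$$
The integrand is therefore bounded by $C/\rho$ on that region, and the exponential factor makes its contribution $O(e^{-c\rho^{1/2s}})$.

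\emph{Small $|x|$.} Here $\rho\to0$ and $I(\rho)\to\int_0^\infty e^{-y}y^{-2s}\sin(\pi s)\,dy$. This is finite for $s<1/2$, which yields the bound $|x|^{-(1-2s)}$. For $s\ge1/2$ the integrand is cut off near $y\sim\rho^{1/2s}\sim|x|$. Splitting the integral at $y=|x|$ gives a bounded term for $s>1/2$, cancelling the prefactor $|x|^{2s-1}$, and a $\log|x|$ term for $s=1/2$. This accounts for the three cases in the statement.

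The case $n=3$ with formula (\ref{def:J3-l1}) is treated identically. The only changes are the extra weight $y^{1-2sm}$ and the phases $e^{\pm i\pi sm}$. Large-$|x|$ decay again comes from the $\rho^{-1}$ and $\rho^{-2}$ terms. The $\rho^{-1}$ term now has a nonzero imaginary part $\sin(\pi sm)$, and for $m\ge1$ it produces precisely the $|x|^{-(3-2sm)}$ bound. For $m=0$ that term vanishes, and the next order gives the factor $\langle x\rangle^{-4s}$.

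\textbf{Dimension two.} This case is the main obstacle, since $J_2^{s,k}$ is a Hankel-type integral $\frac1{2\pi}\int_0^\infty J_0(r|x|)\,rF_m(r,k)\,dr$ rather than a Laplace-type integral. I would first try to reduce it to the one- and three-dimensional method. Rotate the $r$-contour: write $J_0=\frac12\big(H_0^{(1)}+H_0^{(2)}\big)$ and deform the $H^{(1)}$ part into the upper half-plane and the $H^{(2)}$ part into the lower half-plane. Because $F_m$ has a removable singularity at $r=k$, no residue arises. The result is an exponentially damped integral along the imaginary axis involving $K_0(|x|y)$ and $\Im F_m(iy)$. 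Since $|\Im F_m(iy)|\lesssim y^{-2s(m+1)}$ for large $y$ and like $y^{-2sm}$ near $0$, the substitution $y\mapsto y/|x|$ then gives a bound of order $|x|^{-1}$ or better for $|x|\ge1$.

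If the contour deformation runs into trouble, the fallback is to integrate by parts in $r$, using $(rJ_1(r|x|))'=r|x|J_0(r|x|)$. One integration by parts gains a factor $|x|^{-1}$, provided $r\,\partial_r F_m$ remains integrable against $J_1$. By the stated behaviour of $F_m$ at $0$ and $\infty$, this holds with $m=\lfloor 1/2s\rfloor$, and in the integer case with the $K_0$ correction, for which the known asymptotic $K_0(z)\sim\frac{2}{\pi z}$ supplies the $|x|^{-1}$ bound directly.

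For $|x|\le1$ the bound $C/|x|$ is weak. It follows from $J_2\in L^2$ together with the crude estimate $|J_0|\le1$ applied on $r\le 1/|x|$, and from the integration-by-parts estimate on $r\ge1/|x|$.
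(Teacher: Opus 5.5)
Your proposal is correct. In dimensions one and three it uses the same mechanism as the paper; in dimension two your primary route is genuinely different.

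\textbf{Dimensions one and three.} The paper does not expand in $1/\rho$. It uses the exact identity $\Im\frac{1}{w-\rho}=-\frac{\Im w}{|w-\rho|^2}$, so the imaginary part carries an explicit factor $y^{2s}\sin(\pi s)$ (in 3D, $y^{2s}\sin(\pi s(m+1))-\rho\sin(\pi sm)$). It then bounds $|y^{2s}e^{i\pi s}-\rho|^2=|x|^{4s}P(y^{2s}|x|^{-2s},k)$ from below by the minimum of the quadratic $P$. This gives the decay at infinity in one line. The small-$|x|$ cases are handled separately: rescaling plus $\cos(\pi s)<0$ for $s>1/2$, and an integration by parts producing the logarithm at $s=1/2$. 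Your sector bound $|y^{2s}e^{i\pi s}-\rho|\ge c_s(y^{2s}+\rho)$, combined with that identity, would cover both of your regimes at once (split at $y^{2s}=\rho$). The Taylor remainder and the exponentially small tail are then unnecessary.

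\textbf{Dimension two, your fallback.} The paper stays on the Fourier side: up to constants, $x_jJ_2^{s,k}=\mathcal{F}^{-1}(\partial_{\xi_j}F_m)$. Then $\nabla F_m\in L^1(\mathbb{R}^2)$ (resp.\ $\nabla\tilde F_m$, plus the Struve asymptotics) gives $|J_2^{s,k}(x)|\le C/|x|$ for all $x\neq 0$ at once. Your fallback is exactly this argument in radial form. If you integrate by parts over all of $(0,\infty)$ (the boundary term at $r=0$ vanishes), you get $|J_2^{s,k}(x)|\le \frac{1}{2\pi|x|}\int_0^\infty r|\partial_r F_m|\,dr$ for every $x$. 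So your separate small-$|x|$ argument is not needed. As written, that argument should invoke $F_m\in L^2(\mathbb{R}^2)$ rather than $J_2\in L^2$. In the integer case $F_m\notin L^2$, though the crude bound $\int_0^{1/|x|} r|F_m|\,dr\lesssim |x|^{-1}$ still works.

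\textbf{Dimension two, your primary route.} The contour rotation is valid. $F_m$ is analytic in the open right half-plane: the singularity at $r=k$ is removable, and $r^{2s}=k^{2s}$ has no other root there. The arcs vanish by a Jordan-type estimate, and Schwarz reflection gives
\[ J_2^{s,k}(x)=-\frac{1}{\pi^2}\int_0^\infty K_0(y|x|)\,y\,\Im F_m(iy)\,dy . \]
Here $K_0$ is the modified Bessel function, not the Struve-type function the paper calls $K_0$, so rename it.

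This route buys more than the paper's:
\begin{itemize}
\item It applies to $F_m$ itself even when $\frac{1}{2s}\in\mathbb{N}$, since the Hankel integral converges as an improper integral; no Struve correction is needed.
\item It gives the sharp rate $|x|^{-2+2sm}$ at infinity, and $|x|^{-2-2s}$ for $s>1/2$ using $\Im F_0(iy)=O(y^{2s})$.
\item To leading order it equals $-c_{2,m-1}k^{2s(m-1)}|x|^{-(2-2sm)}$, i.e.\ it cancels the slowest-decaying Riesz term, as it must.
\end{itemize}
The price is the complex-analytic bookkeeping (branches, arcs, reflection). The paper's $L^1$-gradient bound is a two-line argument that already suffices for the stated $C/|x|$ estimate.

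A small slip: in 1D, $I(0)=-\sin(\pi s)\Gamma(1-2s)$; you dropped the sign, which does not affect the bounds.
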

\begin{proof}
We consider the different cases in separate lemmas, more specifically the asymptotic behavior of the fundamental solution  for each dimension as  $|x|\to \infty$ is analyzed  in Lemmas \ref{1D-infinity}, \ref{2D-infinity}, \ref{3D-infinity}, and the behavior as $|x|\to 0$ is analyzed  in Lemmas \ref{1D-sg}, \ref{2D-sg}, \ref{3D-sg}. 
\end{proof}
\subsubsection{Asymptotic behavior  at infinity}
\begin{lemma}[1D - Asymptotics at infinity]\label{1D-infinity} For $k\in(0,\infty)$, the fundamental solution  for $n=1$  as $|x|\to \infty$ satisfies
    $$
   \left| G_{1,s}^k (x) - \frac{ie^{i|x|k}}{2sk^{2s-1}} \right| \leq \frac{C(s,k)}{|x|^{1+2s}}.
    $$
\end{lemma}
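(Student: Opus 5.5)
By \eqref{Fundamental_sol_1D}, the difference $G_{1,s}^k(x)-\frac{ie^{i|x|k}}{2sk^{2s-1}}$ is exactly the remainder $J_1^{s,k}(x)$ in \eqref{def:J1} with $\epsilon=0$. In its rescaled form it reads
\[
J_1^{s,k}(x)=-\frac{1}{\pi |x|^{1-2s}}\int_0^\infty \Im\left[\frac{e^{-y}}{y^{2s}e^{i\pi s}-k^{2s}|x|^{2s}}\right]dy .
\]
So the plan is to show that this integral is $O(|x|^{-4s})$ as $|x|\to\infty$. Combined with the prefactor $|x|^{-(1-2s)}$, this gives the claimed rate $|x|^{-1-2s}$.

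First, I simplify the imaginary part. Put $A:=k^{2s}|x|^{2s}>0$ and write $w=y^{2s}e^{i\pi s}$. Then
\[
\Im\frac{1}{w-A}=\frac{-\Im w}{|w-A|^2}=\frac{-y^{2s}\sin(\pi s)}{|y^{2s}e^{i\pi s}-A|^2}.
\]
Next, I bound the denominator from below uniformly. Since $0<\pi s<\pi$, the point $y^{2s}e^{i\pi s}$ lies on a ray making angle $\pi s\in(0,\pi)$ with the positive real axis, while $A$ lies on that axis. Elementary geometry (law of cosines) gives
\[
|y^{2s}e^{i\pi s}-A|^2=y^{4s}-2Ay^{2s}\cos(\pi s)+A^2\ \ge\ c_s\,(y^{4s}+A^2),\qquad c_s:=1-|\cos(\pi s)|>0 .
\]
In particular the denominator is at least $c_sA^2$.

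With this bound the integral is controlled by
\[
\frac{\sin(\pi s)}{c_s A^2}\int_0^\infty e^{-y}y^{2s}\,dy=\frac{\sin(\pi s)\,\Gamma(1+2s)}{c_s\,k^{4s}|x|^{4s}} .
\]
Multiplying by $\frac{1}{\pi|x|^{1-2s}}$ yields $|J_1^{s,k}(x)|\le C(s,k)|x|^{-1-2s}$, valid for all $x\neq0$ and a fortiori for $|x|$ large.

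The only step needing care is the uniform lower bound on the denominator. This is where $0<s<1$ is used: it keeps the ray $e^{i\pi s}\R^+$ away from $\R^+$, so $\cos(\pi s)\neq\pm1$. If one wants the sharper statement that the decay rate is exactly $|x|^{-1-2s}$, dominated convergence applied to $A^2\,\Im[\cdot]$ gives the limit $-\sin(\pi s)\Gamma(1+2s)/A^2$. For the stated bound, however, the crude estimate suffices.
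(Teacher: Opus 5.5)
Your proposal is correct and follows the paper's argument. You identify the difference with $J_1^{s,k}$, write the imaginary part as $y^{2s}\sin(\pi s)$ over the quadratic $y^{4s}-2Ay^{2s}\cos(\pi s)+A^2$, bound that quadratic below by a constant times $A^2$, and integrate $e^{-y}y^{2s}$. The only cosmetic difference is in that lower bound: the paper takes the exact minimum $A^2\sin^2(\pi s)$ of the quadratic (via its negative discriminant), whereas you use the AM--GM bound $(1-|\cos(\pi s)|)(y^{4s}+A^2)$.
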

\begin{proof}
 We only need to show the rate of decay of the term $J_1^{s,k}$ defined by (\ref{def:J1}) as $|x|\to \infty$. To this end, we explicitly observe that
\begin{align}\label{Im_computation_1D}
   |J_1^{s,k}(x)|= &\frac{1}{\pi |x|^{1-2s}} \left|\int_0^\infty \Im\left[\frac{e^{-y}}{y^{2s}e^{i\pi s}-k^{2s}|x|^{2s}}\right]dy\right| \nonumber\\
    &\leq \frac{1}{\pi |x|^{1-2s}} \int_0^\infty \frac{e^{-y} y^{2s}\sin(s\pi)}{|y^{4s} +k^{4s}|x|^{4s}-2y^{2s}\cos(s\pi)k^{2s}|x|^{2s} |} dy \nonumber\\
    &\leq \frac{1}{\pi |x|^{1+2s}} \int_0^\infty \frac{e^{-y} y^{2s}\sin(s\pi)}{P(y^{2s}/|x|^{2s}, k)} dy\nonumber\\
    &\leq \frac{1}{\pi |x|^{1+2s}} \int_0^\infty \frac{e^{-y} y^{2s}\sin(s\pi)}{k^{4s}(1-\cos(s\pi)^2)} dy,
\end{align}
where in the last step we used that the polynomial
\begin{align}\label{Poly}
    P(X,k) :=X^2 + k^{4s} - 2k^{2s}\cos(s\pi)X
\end{align} 
has discriminant $\Delta_P = 4k^{4s}(\cos(s\pi)^2 - 1) <0$, and therefore $P$ is bounded below as
\begin{align}\label{Lower_bound_P} P(X,k) \geq P(k^{2s}\cos(s\pi),k)= k^{4s}(1-\cos(s\pi)^2).\end{align} 
\end{proof}
\begin{lemma}[2D - Asymptotics at infinity]\label{2D-infinity}For $k\in(0,\infty)$, the fundamental solution  for $n=2$  as $|x|\to \infty$ satisfies      
\begin{align*}
    \left|G_{2,s}^k(x)- \frac{ik^{2-2s}}{4s } H^{(1)}_0(k |x|)\right|  &\leq \frac{C(s,k)}{|x|} \quad & \mbox{ when } s \in (\frac{1}{2},1)\\
     \left|G_{2,s}^k(x)- \frac{ik^{2-2s}}{4s } H^{(1)}_0(k |x|) - \sum_{j=0}^{m-1}\frac{c_{2,j}k^{2sj}}{|x|^{2-2s(j+1)}}\right|  &\leq \frac{C(s,k)}{|x|} \quad & \mbox{ when } s \in (0,\frac{1}{2}]
\end{align*}
\end{lemma}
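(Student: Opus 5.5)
The plan is to bound the remainder $J_2^{s,k}(x)$ for large $|x|$; the Helmholtz part and the Riesz terms are subtracted explicitly in the statement. By (\ref{def:J2}) and (\ref{def:J2-int}), $J_2^{s,k}$ is a Hankel transform of order zero,
\[
J_2^{s,k}(x)=\frac{1}{2\pi}\int_0^\infty J_0(r|x|)\,r\,F_m(r,k)\,dr ,
\]
with $m=\lfloor 1/(2s)\rfloor$ for $s\le 1/2$ and $m=0$ for $s>1/2$. When $1/(2s)\in\mathbb N$ there is the modified $\tilde F_m$ together with the extra Struve term $-\tfrac{k^{2-2s}}{4}K_0(k|x|)$. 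The classical estimate $|K_0(z)-Y_0(z)|\le C/z$ for large real $z$ does not by itself give $O(1/|x|)$, since $Y_0$ only decays like $|x|^{-1/2}$. So I would not treat $K_0$ separately. Instead I would recombine it with the integral and handle everything through one integration by parts.

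The key identity is $rJ_0(r|x|)=\frac{1}{|x|}\frac{d}{dr}\big(rJ_1(r|x|)\big)$. Integrating by parts once gives
\[
J_2^{s,k}(x)=\frac{1}{2\pi|x|}\Big[rJ_1(r|x|)F(r)\Big]_0^\infty-\frac{1}{2\pi|x|}\int_0^\infty rJ_1(r|x|)\,F'(r)\,dr ,
\]
where $F$ denotes $F_m(\cdot,k)$, or $\tilde F_m(\cdot,k)-k^{2-2s}/(r(r+k))$ in the integer case. Here $J_0$ and $J_1$ are evaluated at real argument. This is legitimate because $F_m$ has a removable singularity at $r=k$, so $F$ is smooth on $(0,\infty)$ with $F(r)=O(r^{-2sm})$ near $0$ and $O(r^{-2s(m+1)})$ at infinity. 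The boundary terms vanish: near $0$ we have $rJ_1(r|x|)\sim r^2|x|/2$, and at infinity $rJ_1(r|x|)F(r)=O(r^{1/2-2s(m+1)})$, which tends to $0$ because $2s(m+1)>1$. Splitting the remaining integral at $r=1/|x|$, it is enough to show $\int_0^\infty |rJ_1(r|x|)F'(r)|\,dr\le C$ uniformly for $|x|\ge 1$.

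For this bound I would use $|rJ_1(r|x|)|\le \min(r^2|x|,\ C r^{1/2}|x|^{-1/2})$ together with the derivative bounds $|F'(r)|\le Cr^{-2sm-1}$ near $0$ and $|F'(r)|\le Cr^{-2s(m+1)-1}$ at infinity. Away from $0$ and $\infty$, $F'$ is bounded on compact sets. The piece near $0$ contributes at most $|x|\int_0^{1/|x|}r^{1-2sm}\,dr\lesssim |x|^{2sm-1}\le 1$, since $2sm\le 1$. The tail converges because $r^{1/2-2s(m+1)-1}$ is integrable at infinity. In the integer case the corrector $-k^{2-2s}/(r(r+k))$ has derivative of order $r^{-2}$ near $0$. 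Against $r^2|x|$ this contributes $|x|\cdot(1/|x|)=O(1)$ over $(0,1/|x|)$, while over $(1/|x|,1)$ it gives $|x|^{-1/2}\int_{1/|x|}^1 r^{-3/2}\,dr=O(1)$. Hence all pieces are uniformly bounded, and $|J_2^{s,k}(x)|\le C/|x|$.

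The main obstacle is exactly this integer case near $r=0$, and in particular justifying that the whole remainder has a single smooth profile $F$ with controlled derivative. I would first verify the derivative bounds on $F_m$ near $r=k$ by writing $F_m$ as a quotient whose numerator vanishes to second order at $k$, using the Taylor expansions already used for the residues. A second point needs care: the integral defining $J_2$ is only an $L^2$ (improper) integral, so the integration by parts must be done on $[\eta,R]$ and then passed to the limit. For $s>1/2$ the same computation with $m=0$ applies.
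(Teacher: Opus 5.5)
Your argument is correct. Its core is the same as the paper's: one integration by parts in the frequency variable produces the factor $|x|^{-1}$, and what remains is an integrability statement for $F'$.

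\textbf{How the routes differ.} The paper works in Cartesian form, $|x_i J_2^{s,k}(x)|\le \|\nabla F\|_{L^1(\mathbb{R}^2)}$. For radial $F$ this is exactly your identity combined with the crude bound $|J_1|\le 1$, so it needs $rF'(r)\in L^1(0,\infty)$. That fails for $F_m$ when $2sm=1$, since then $F_m'\sim r^{-2}$ near $0$. This is why the paper passes to $\tilde F_m$, whose derivative is only $O(r^{-1})$ at the origin, and treats the Struve term separately by quoting $K_0(k|x|)=\frac{2}{\pi k|x|}+O(|x|^{-2})$. Your sharper bound $|rJ_1(r|x|)|\le \min(r^2|x|,\,Cr^{1/2}|x|^{-1/2})$ absorbs the $r^{-2}$ singularity. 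It therefore gives a single argument for all $s$ with no special-function asymptotics, which is a genuine simplification.

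\textbf{Your stated motivation is mistaken.} The estimate you have in mind is $|\mathbf{H}_0(z)-Y_0(z)|\le C/z$, and $\mathbf{H}_0-Y_0$ is precisely the paper's $K_0$ (the Struve function of the second kind). One can also see the decay directly. Insert $\frac{1}{t+z}=\int_0^\infty e^{-(t+z)u}\,du$ into $K_0(z)=\frac{2}{\pi}\int_0^\infty \frac{J_0(t)}{t+z}\,dt$; Fubini applies because $|J_0(t)|/(t+z)=O(t^{-3/2})$. This gives $K_0(z)=\frac{2}{\pi}\int_0^\infty e^{-zu}(1+u^2)^{-1/2}\,du\le \frac{2}{\pi z}$. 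So the paper's separate treatment of $K_0$ is perfectly valid, and your own computation applied to $1/(r(r+k))$ reproves the same bound.

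\textbf{A shared technical point.} Both routes identify the $L^2$ inverse Fourier transform defining $J_2^{s,k}$ with the improper radial integral almost everywhere. You flag this. It follows from the $L^2$ convergence of the truncations $\int_{|\xi|<R}$ together with their pointwise convergence for $x\neq 0$.
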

\begin{proof} Since the fundamental solution is given by different formulas depending on the value of $s$, we need to consider several cases separately. We use the notation $r=|x|$.
    \begin{itemize}
        \item For $s\in (\frac{1}{2}, 1)$, we see that the function $F_0$ defined  by (\ref{Fm}) with $m=0$  is in $L^2(\R^2)$, has a removable singularity at $r=k$,  and furthermore 
        \begin{align}\label{derivativeF_half-1}
            \frac{\partial}{\partial r} F_0(r, k)  &= \frac{-2s r^{2s-1}}{(r^{2s}-k^{2s})^2} + \frac{2s^{-1}k^{2-2s}r}{(r^2-k^2)^2}.
        \end{align}
Note that this expression at infinity is $O(\frac{1}{r^{1+2s}})$, and using the Taylor expansion we get
        $$
       \lim_{r\to k} \frac{\partial}{\partial r}F_0(r,k) = 0.
        $$
which implies that $ \frac{\partial}{\partial r}F_0$ is an $L^1(\R^2)$ function.  We have 
        \begin{align*}
            |x_iJ_2^{s,k}(|x|)| = |\mathcal{F}^{-1}\partial_iF_0(|x|)| \leq \| \nabla F_0\|_{L^1}, 
        \end{align*}
which implies that $ |x| J_2^{s,k}(|x|)$ is bounded, hence $J_2^{s,k}(|x|) = O(|x|^{-1})$ as $|x|\to \infty$.
 \item For $s\in (0, \frac{1}{2})$ and $\frac{1}{2s}\not\in \mathbb{N}$, and $F_m$ defined  by (\ref{Fm}) with $m=\lfloor \frac{1}{2s}\rfloor$, we have 
    \begin{align}\label{derivativeF_0_half_notinteger}
        F_m(r, k) &= \frac{k^{2sm}}{r^{2sm}(r^{2s}-k^{2s})} - \frac{s^{-1}k^{2-2s}}{(r^2-k^2)}\nonumber\\
        \frac{\partial }{\partial r} F_m(r,k) &= \frac{-k^{2sm}[2sm(r^{2s}-k^{2s}) + 2sr^{2s}]}{r^{2sm+1}(r^{2s}-k^{2s})^2} + \frac{2s^{-1}k^{2-2s}r}{(r^2-k^2)^2}.
    \end{align}
The singularity at $k$ is removable, and  as $r\to k$
    \begin{align*}
        \lim_{r\to k} \frac{\partial}{\partial r} F_m(r, k ) = \frac{m(2s-1)-1}{2sk^2},
    \end{align*}
whereas the singularity at 0 is integrable in $\R^2$, since $2sm<1$. Finally, the behavior at infinity of $\frac{\partial }{\partial r}F_m$ is $O(r^{-2s(m+1)-1})$,  hence it is in $L^1(\R^2)$. By the same argument as in the previous item, it follows that $J_2^{k,s}(|x|) = O(|x|^{-1})$ as ${|x|\to \infty}$. 
\item For $s\in (0,\frac{1}{2})$ and $\frac{1}{2s}\in \mathbb{N}$, and $ \tilde F_m (r, k)$ and $ F_m(r,k)$ defined in Section \ref{s2.2.2}, we have 
    \begin{align}\label{derivative_tildeF}
        \tilde F_m (r, k) &=   F_m(r,k)+ \frac{k^{2-2s}}{r(r+k)}\nonumber\\
        \frac{\partial}{\partial r}\tilde F_m(r,k) &=\frac{\partial}{\partial r} F_m(r, k ) -\frac{k^{2-2s}}{r^2(r+k)} - \frac{k^{2-2s}}{r(r+k)^2}.
    \end{align}
    According to the previous analysis, $\frac{\partial}{\partial r}\tilde F_m(r,k)$ does not have a singularity at $k$, and the behavior at infinity is still a $O(r^{-2-2s})$. Finally, around zero we have 
    \begin{align*}
        \frac{\partial }{\partial r} \tilde F_m(r, k) \sim \frac{k^{1+2s}}{r^2(r^{2s}-k^{2s})^2} - \frac{k^{2-2s}}{r^2(r+k)} - \frac{k^{2-2s}}{r(r+k)^2} \sim  - \frac{k^{-2s}}{r}
    \end{align*}
    Hence we conclude once again that $\frac{\partial }{\partial r}\tilde F_m\in L^1(\R^2)$, and so as before $J_2^{k,s}(|x|) = O(|x|^{-1})$ as ${|x|\to \infty}$. For $s=\frac12$ the function $\tilde F_m$ vanishes, as observed earlier.

    \item For $s \in (0, \frac{1}{2}]$, we have to consider the contribution of the Struve Function of the second kind. Its asymptotic for large $|x|$ is given by  
    $$
   K_0(k |x|) = \frac{2}{\pi k |x|} + O(|x|^{-2})~, 
    $$
    see for example formula C.38 in \cite{hiltunen2024nonlocal}. 
    \end{itemize}
\end{proof}

\begin{lemma}[3D - Asymptotics at infinity]\label{3D-infinity}For $k\in(0,\infty)$, the asymptotic behavior of the fundamental solution  for $n=3$  as $|x|\to \infty$ satisfies
    \begin{align*}
    \left| G_{3,s}^k(x) - \frac{k^{2-2s}}{s} \frac{e^{i k |x|}}{4\pi |x|} \right| &\leq \frac{C(s,k)}{|x|^{3+2s}} &\mbox{ when } s\in (\frac{1}{2}, 1]\\
    \left| G_{3,s}^k(x) - \frac{k^{2-2s}}{s} \frac{e^{i k |x|}}{4\pi |x|} - \sum_{j=0}^{m-1} \frac{c_{3,j}k^{2sj}}{|x|^{3-2s(j+1)}}\right| &\leq \frac{C(s,k)}{|x|^{3-2sm}} &\mbox{ when } s \in (0, \frac{1}{2}].
    \end{align*}
\end{lemma}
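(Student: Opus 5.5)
By the decomposition (\ref{def:G3helm_J3}) of Section 2, the quantity to be bounded is exactly $|J_3^{s,k}(x)|$ in both cases. For $s\in(1/2,1)$ this is (\ref{def:J3-l1}) with $m=0$; for $s\in(0,1/2]$ it is (\ref{def:J3-l1}) with $m=\lfloor\frac{1}{2s}\rfloor$. So the plan is to estimate the scaled integral representation
\[
J_3^{s,k}(x)=\frac{k^{2sm}}{4i\pi^2|x|^{3-2s(m+1)}}\int_0^\infty e^{-y}y^{1-2sm}\left[\frac{e^{i\pi sm}}{y^{2s}e^{-i\pi s}-k^{2s}|x|^{2s}}-\frac{e^{-i\pi sm}}{y^{2s}e^{i\pi s}-k^{2s}|x|^{2s}}\right]dy
\]
uniformly for large $|x|$. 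I will follow the 1D argument of Lemma \ref{1D-infinity}: put the bracket over a common denominator and bound it using the polynomial $P$ from (\ref{Poly}).

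First, the two denominators are complex conjugates. Their product is $y^{4s}-2y^{2s}k^{2s}|x|^{2s}\cos(\pi s)+k^{4s}|x|^{4s}=|x|^{4s}P(y^{2s}/|x|^{2s},k)$. By (\ref{Lower_bound_P}) this is at least $|x|^{4s}k^{4s}\sin^2(\pi s)$, and $\sin(\pi s)>0$ for $s\in(0,1)$.

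The numerator after cross-multiplying is
\[
e^{i\pi sm}(y^{2s}e^{i\pi s}-K)-e^{-i\pi sm}(y^{2s}e^{-i\pi s}-K),\qquad K=k^{2s}|x|^{2s},
\]
which equals $2i\,y^{2s}\sin(\pi s(m+1))-2iK\sin(\pi sm)$.

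\textbf{Case $m=0$ ($s>1/2$).} The $K$ term vanishes. The bracket is then bounded by $2y^{2s}/(|x|^{4s}k^{4s}\sin^2\pi s)$. The integral $\int e^{-y}y^{1+2s}\,dy$ is finite, which gives $|J_3|\le C|x|^{-(3-2s)-4s}=C|x|^{-3-2s}$, the first claim.

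\textbf{Case $m\ge 1$ ($s\le 1/2$).} The numerator term $-2iK\sin(\pi sm)$ contributes $K/|x|^{4s}\sim|x|^{-2s}$, and the $y^{2s}$ term is of even smaller order. The weight $y^{1-2sm}$ is integrable against $e^{-y}$, since $1-2sm\ge0$. This yields
\[
|J_3|\le C|x|^{-(3-2s(m+1))-2s}=C|x|^{-(3-2sm)},
\]
the second claim.

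The main point requiring care is checking that the lower bound on $P$ is uniform in $y$ and $|x|$. It is, since (\ref{Lower_bound_P}) holds for every real $X$, so no splitting of the $y$-integral is needed.

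Two sign or exponent bookkeeping issues also need checking. One is the phase $e^{\pm i\pi sm}$ in forming the numerator. The other is that the prefactor exponent $3-2s(m+1)$ combines with the $|x|^{-2s}$ gain to give exactly $3-2sm$. In the borderline $s=1/2$, $m=1$, we have $\sin(\pi s(m+1))=\sin\pi=0$, so only the $K$ term remains, consistent with the stated bound $|x|^{-2}$.

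The bound is only claimed as $|x|\to\infty$, but the estimate above actually holds for all $x\neq0$.
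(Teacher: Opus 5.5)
Your proof is correct and is essentially the paper's argument: combine the bracket over the conjugate-product denominator $|x|^{4s}P(y^{2s}/|x|^{2s},k)$, bound that below uniformly using (\ref{Lower_bound_P}), and bound the numerator by a multiple of $y^{2s}+k^{2s}|x|^{2s}$. This is exactly the estimate (\ref{asymptotics:J3_s_less__half}) for $s\le 1/2$, and the 1D computation (\ref{Im_computation_1D}) for $s>1/2$.

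One small correction to your closing aside. For $s<1/2$ the $y^{2s}$ term, which has $\sin(\pi s(m+1))\neq 0$, contributes $C|x|^{-(3-2sm)-2s}$, and this dominates near the origin. So your argument gives the single power $|x|^{-(3-2sm)}$ only for $|x|$ bounded away from $0$; the uniform-in-$x$ claim holds only for $m=0$ and for $s=1/2$.
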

\begin{proof}  Again we consider different ranges of $s$.
\begin{itemize}
    \item  For  $s\leq1/2$, the asymptotic behavior of the term $J_3^{s,k}$ as $|x|\to \infty$ can be obtained in a similar way as in dimension  one, using the lower bound (\ref{Lower_bound_P}) of the corresponding $P$. Indeed
\begin{align}\label{asymptotics:J3_s_less__half}
    J_3^{s,k}(x) = &\frac{k^{2sm}}{2\pi^2 |x|^{3-2s(m+1)}}\left|\int_0^\infty e^{-y} y^{1-2sm} \Im\left[ \frac{e^{i\pi sm}}{y^{2s} e^{-i\pi s} -|x|^{2s}k^{2s}} \right]dy\right| \nonumber\\
    \nonumber\\
     &\leq \frac{C(k,s)}{ |x|^{3-2s(m-1)}}\int_0^\infty \frac{e^{-y} y^{1-2sm}(y^{2s}+|x|^{2s})}{P(|x|^{-2s}y^{2s},k)}dy=O(|x|^{-3+2sm})
\end{align}
\item When $s>1/2$ the expression for $J_3^{s,k}$ differs from that of  $J_1^{s,k}$ only by a factor of $y$ inside the integral and a factor of $|x|^{-2}$ outside, and so the desired estimate follows directly as in (\ref{Im_computation_1D}) . 
\end{itemize}
   \end{proof}

\subsubsection{Singularity at zero}
\begin{lemma}[1D - Singularity as $|x|\to 0$]\label{1D-sg}
Let $k\in(0,\infty)$. For $|x|$ sufficiently small, the fundamental solution for $n=1$ satisfies
\begin{align*}
    \left|G_{1,s}^k(x) - \frac{i e^{ik|x|}}{2s k^{2s-1}}\right|\leq \begin{cases}
      C(s,k) & \mbox{ when } s \in (\frac{1}{2}, 1)\\
      -C(s,k)\ln(|x|) &\mbox{ when } s = \frac{1}{2}\\
      C(s,k) |x|^{-1+2s} &\mbox{ when } s\in (0,\frac{1}{2}),
    \end{cases}
    \end{align*}
where $C(s,k)>0$ is a constant depending on $s$ and $k$.    
\end{lemma}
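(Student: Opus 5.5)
We need to bound $J_1^{s,k}(x)$ for small $|x|$. From the computation in (\ref{Im_computation_1D}),
\begin{align*}
|J_1^{s,k}(x)| \le \frac{\sin(s\pi)}{\pi |x|^{1-2s}}\int_0^\infty \frac{e^{-y}\,y^{2s}}{P_x(y)}\,dy,
\qquad P_x(y):=y^{4s}+k^{4s}|x|^{4s}-2\cos(s\pi)k^{2s}|x|^{2s}y^{2s}.
\end{align*}
Writing $a:=k^{2s}|x|^{2s}$ and $Y:=y^{2s}$, the denominator is $Y^2+a^2-2\cos(s\pi)aY$. This quadratic form is comparable to $(Y+a)^2$ uniformly, because $Y^2+a^2-2\cos(s\pi)aY\ge (1-|\cos s\pi|)(Y^2+a^2)$. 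So it suffices to estimate
$$I(a):=\int_0^\infty \frac{e^{-y}y^{2s}}{(y^{2s}+a)^2}\,dy, \qquad a\to 0^+,$$
and then multiply by $|x|^{-(1-2s)}$.

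Split the integral at $y=a^{1/(2s)}=k|x|$ and at $y=1$.
\begin{itemize}
\item On $(0,k|x|)$ we bound the integrand by $y^{2s}/a^2$. The contribution is at most $C a^{-2}(k|x|)^{1+2s}=C|x|^{1-2s}$.
\item On $(k|x|,1)$ the integrand is at most $y^{-2s}$, so the contribution is at most $\int_{k|x|}^1 y^{-2s}\,dy$.
\item On $(1,\infty)$ the contribution is bounded by a constant.
\end{itemize}

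The middle piece is the only one that depends on $s$, and it produces the three regimes:
\begin{itemize}
\item For $s<1/2$ it is bounded, so $I(a)=O(1)$ and $|J_1^{s,k}|\le C|x|^{-1+2s}$.
\item For $s=1/2$ it equals $-\ln(k|x|)$, so $|J_1|\le C(1-\ln|x|)\le -C\ln|x|$ for $|x|$ small. Here the prefactor $|x|^{-(1-2s)}=1$.
\item For $s>1/2$ it is $O(|x|^{1-2s})$. The first piece is also $O(|x|^{1-2s})$, so $I(a)=O(|x|^{1-2s})$, which exactly cancels the prefactor and gives $|J_1|\le C$.
\end{itemize}

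Since $G_{1,s}^k-G_{1,helm}^k=J_1^{s,k}$ (from (\ref{Fundamental_sol_1D}) and (\ref{def:J1}) at $\epsilon=0$), this gives all three bounds of the statement.

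The main obstacle is getting a lower bound on the denominator that is uniform in both $y$ and $|x|$. The cruder bound $P\ge k^{4s}\sin^2(s\pi)$ used at infinity rescales badly as $|x|\to 0$, which is why the comparison with $(Y+a)^2$ above is needed. That comparison relies on $\cos(s\pi)\in(-1,1)$, which holds since $0<s<1$.
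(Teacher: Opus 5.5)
Your proof is correct, but it takes a different and more unified route than the paper. The paper uses a separate device for each range of $s$:
\begin{itemize}
\item For $s>1/2$ it undoes the rescaling ($y\to|x|y$) and uses $\cos(s\pi)<0$ to bound the denominator below by $y^{4s}+k^{4s}$. After dropping $e^{-|x|y}$, what remains is $\int_0^\infty y^{2s}/(y^{4s}+k^{4s})\,dy<\infty$, which needs $2s>1$.
\item For $s<1/2$ it uses the $|x|$-independent lower bound $P\ge\sin^2(s\pi)\,y^{4s}$, leaving $\int_0^\infty e^{-y}y^{-2s}\,dy<\infty$.
\item For $s=1/2$ it integrates by parts against $\tfrac12\ln(y^2+k^2|x|^2)$. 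This produces the boundary term $-\tfrac1\pi\ln(k|x|)$ explicitly, plus a bounded remainder.
\end{itemize}
You replace all of this with a single comparison $P\asymp(y^{2s}+k^{2s}|x|^{2s})^2$, which needs only $|\cos(s\pi)|<1$, and a split at the natural scale $y=k|x|$. The three regimes then all come out of the one integral $\int_{k|x|}^1 y^{-2s}\,dy$, which makes it transparent why $s=1/2$ is the threshold.

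Your route also gives something extra for free. The comparison holds in both directions and the integrand is positive, so the same splitting yields matching lower bounds; hence all three rates are sharp. The paper's estimates for $s\neq 1/2$ are only upper bounds, although its integration by parts does identify the exact logarithmic coefficient $1/\pi$ at $s=1/2$.
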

\begin{proof}
Similarly to the behavior at infinity, we consider several cases  of $s$.
    \begin{itemize}
        \item For $s> 1/2$, by making a change of variable $y \to |x|y$, and since $\cos(s\pi)< 0$ we obtain 
    \begin{align*}
        |J_1^{s,k}(x)|  &\leq C(k,s) \int_0^\infty \frac{e^{-|x|y} y^{2s}\sin(s\pi)}{y^{4s} +k^{4s}-2y^{2s}\cos(s\pi)k^{2s}} dy \leq C(k,s)\int_0^\infty \frac{y^{2s}\sin(s\pi)}{y^{4s} +k^{4s}} dy,    \end{align*}
where the last integral is finite since $2s>1$. 
\item For $s<1/2$, we use the following inequality
\begin{align*}
    (1-\cos(s\pi)^2) y^{4s} \leq P(|x|^{2s}k^{2s},|y|)= y^{4s} -2k^{2s}|x|^{2s}\cos(s\pi)y^{2s} + k^{4s}|x|^{4s}~,
    \end{align*}
to obtain
    \begin{align*}
         J_1^{s,k}(x)  &= \frac{C(k,s)}{|x|^{1-2s}}\int_0^\infty \frac{e^{-y} y^{2s}\sin(s\pi)}{y^{4s} +k^{4s}|x|^{4s}-2y^{2s}\cos(s\pi)k^{2s}|x|^{2s}} dy\\
        &\leq \frac{C(k,s)}{|x|^{1-2s}} \int_0^\infty \frac{e^{-y}}{y^{2s}} dy = O\left(\frac{1}{|x|^{1-2s}}\right),
    \end{align*}
    where the last integral is finite since $2s<1$. 
    \item We need to handle the case $s=1/2$ separately, as none of the above argument works in this case. Instead we perform an integration by parts 
    \begin{align*}
         J_1^{s,k}(x)  &= \frac{1}{\pi} \int_0^\infty \frac{e^{-y} y}{y^{2} +k^{2}|x|^{2}} dy\\
         &= \frac{1}{2\pi}\int_0^\infty e^{-y} \ln\left( y^2 + k^2|x|^2\right)dy + \left[ \frac{e^{-y}}{2\pi} \ln(y^2+k^2|x|^2)\right]_0^\infty\\
         &\leq -\ln(|x|) + C(k),
    \end{align*}
   which gives the desired result. 
    \end{itemize}
\end{proof}
\begin{lemma}[2D - Singularity as $|x|\to 0$]\label{2D-sg}
   For $|x|$ sufficiently small, the fundamental solution for $n=2$ satisfies
    \begin{align*}
    \left|G_{2,s}^k(x)- \frac{ik^{2-2s}}{4s } H^{(1)}_0(k |x|)\right|  &\leq \frac{C(s,k)}{|x|} \quad & \mbox{ when } s \in (\frac{1}{2},1) \\
     \left|G_{2,s}^k(x)- \frac{ik^{2-2s}}{4s } H^{(1)}_0(k |x|) - \sum_{j=0}^{m-1}\frac{c_{2,j}k^{2sj}}{|x|^{2-2s(j+1)}}\right|  &\leq \frac{C(s,k)}{|x|}\quad & \mbox{ when } s \in (0,\frac{1}{2}]
\end{align*}
 where $C(s,k)>0$ is a constant depending on $s$ and $k$.       
\end{lemma}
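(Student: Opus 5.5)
The plan is to bound the remainder $J_2^{s,k}$ near $|x|=0$ directly from its Fourier–Bessel representation. The bound $C/|x|$ is generous, since the expected singularity is at most logarithmic. The Helmholtz part and the Riesz terms $c_{2,j}k^{2sj}|x|^{-2+2s(j+1)}$ are subtracted exactly, so only $J_2^{s,k}$ remains. I will treat the same three cases as in Lemma \ref{2D-infinity}: $s>1/2$ (with $F_0$), $s\le 1/2$ with $\frac{1}{2s}\notin\mathbb N$ (with $F_m$, $m=\lfloor\frac1{2s}\rfloor$), and $\frac1{2s}\in\mathbb N$ (with $\tilde F_m$ and the Struve term).

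For the integral term $\frac{1}{2\pi}\int_0^\infty J_0(r|x|)\,rF(r,k)\,dr$, with $F$ equal to $F_0$, $F_m$ or $\tilde F_m$, I would split the integral at $r=1/|x|$. On $(0,1/|x|)$, use $|J_0|\le 1$ to get the bound $\int_0^{1/|x|} r|F(r,k)|\,dr$. The function $F$ is bounded near $r=k$ because the singularity is removable. Near $r=0$ it behaves like $r^{-2sm}$ with $2sm<1$ (or like $r^{-(1-2s)}$ for $\tilde F_m$), so $rF$ is integrable at the origin. At infinity $|F|\lesssim r^{-\min(2,2s(m+1))}$, and in every case the exponent is greater than $1$, which gives $\int_1^{1/|x|} r^{1-2s(m+1)}\,dr\lesssim |x|^{2s(m+1)-2}+\ln(1/|x|)+1$. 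Since $2s(m+1)>1$, this is $O(|x|^{-1})$. On $(1/|x|,\infty)$, use $|J_0(t)|\le Ct^{-1/2}$ to bound the piece by $|x|^{-1/2}\int_{1/|x|}^\infty r^{1/2-2s(m+1)}\,dr\lesssim |x|^{-1/2}|x|^{2s(m+1)-3/2}=|x|^{2s(m+1)-2}$, which is again $O(|x|^{-1})$. The integral may not converge absolutely at infinity when $2s(m+1)\le 3/2$. In that case I would interpret the integral as an improper or $L^2$ limit and integrate by parts using $(tJ_1(t))'=tJ_0(t)$. This gains a factor $(r|x|)^{-1}$ together with $F'=O(r^{-1-2s(m+1)})$, the same decay used in Lemma \ref{2D-infinity}, and the tail is then $O(|x|^{-1})$.

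For $\frac1{2s}\in\mathbb N$ I also need the Struve term $\frac{k^{2-2s}}{4}K_0(k|x|)$. Near the origin, $K_0(z)=\frac{2}{\pi}\int_0^\infty\frac{J_0(t)}{t+z}\,dt$ has only a logarithmic singularity. To see this, split at $t=1$: the part over $(0,1)$ is $O(\ln(1/z))$, and the part over $(1,\infty)$ is bounded uniformly in $z$ because $\int_1^\infty J_0(t)/t\,dt$ converges and the $z$-dependence is controlled by $J_0(t)=O(t^{-1/2})$. So this term is $O(|\ln|x||)\le C/|x|$.

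The main obstacle is the tail $r>1/|x|$ when $F$ decays only like $r^{-1-\eta}$ with small $\eta$, for example $s$ just above $1/2$ with $F_0\sim r^{-2s}$. There the integral is only conditionally convergent, and the Bessel integration by parts, including its boundary term at $r=1/|x|$, has to be handled carefully. I would first try to show that this boundary term is $O(|x|^{-1}\cdot |x|^{1/2}\cdot|x|^{2s(m+1)-1/2})$ and check that the exponent is at least $-1$.
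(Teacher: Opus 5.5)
Your proof is correct, but it takes a different route from the paper's. The paper never splits the Fourier--Bessel integral. It reuses the fact, established in Lemma \ref{2D-infinity}, that $\nabla F\in L^1(\R^2)$ for each of $F_0$, $F_m$, $\tilde F_m$. Combined with $x_j\,\F^{-1}F=\pm i\,\F^{-1}(\partial_{\xi_j}F)$, this gives $|x|\,|J_2^{s,k}(x)|\le C\|\nabla F\|_{L^1(\R^2)}$ for every $x\neq 0$. The Struve term in the case $\frac{1}{2s}\in\mathbb N$ is then dismissed as logarithmic.

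That one-line bound holds near $0$ and at infinity at once. It needs no Bessel asymptotics, no case split on whether $2s(m+1)>3/2$, and no discussion of conditional convergence, since the inverse Fourier transform of an $L^1$ function is bounded.

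Your approach splits at $r=1/|x|$: you use $|J_0|\le 1$ below, and $|J_0(t)|\lesssim t^{-1/2}$ plus integration by parts via $(tJ_1(t))'=tJ_0(t)$ above. This costs more case work. In exchange it gives the sharper estimate $|J_2^{s,k}(x)|=O(|x|^{2s(m+1)-2}+|\ln|x||)$ near the origin, which beats $|x|^{-1}$ because $2s(m+1)>1$. It also makes the pointwise meaning of the integral explicit.

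One arithmetic slip needs fixing. The boundary term at $r=1/|x|$ is $|x|^{-2}J_1(1)F(1/|x|)=O(|x|^{2s(m+1)-2})$, not the $O(|x|^{2s(m+1)-1})$ your product of factors gives. The remaining integral is bounded by $C|x|^{-3/2}\int_{1/|x|}^\infty r^{-1/2-2s(m+1)}\,dr=O(|x|^{2s(m+1)-2})$. Both are still $O(|x|^{-1})$, so what you call the main obstacle is not actually one.

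The Struve tail is also simpler than you make it: $|J_0(t)|/(t+z)\le Ct^{-3/2}$ on $(1,\infty)$, which is absolutely integrable uniformly in $z>0$.
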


\begin{proof}
\begin{itemize}

\item The term $J_2^{s,k}$ for every case of $s$ always contains an $L^2$ function, as it is the inverse Fourier transform of an $L^2$ function. The singularity at zero can be handled by the analysis done in Lemma \ref{2D-infinity}, which yields
$$
|x| |J^{s,k}_2(|x|)| \leq 2 \|\nabla F\|_{L^1(\R^2)} <\infty.
$$ 

 \item For  $s\in (0,\frac{1}{2}]$ and $\frac{1}{2s}\in \mathbb{N}$, we have the extra term $K_0(|x|k)$ which has a logarithmic singularity around $0$. 
\end{itemize}
\end{proof}
\begin{lemma}[3D - Singularity as $|x|\to 0$]\label{3D-sg}
 For $|x|$ sufficiently small, the fundamental solution for $n=3$ satisfies
 \begin{align*}
    \left| G_{3,s}^k(x) - \frac{k^{2-2s}}{s} \frac{e^{i k |x|}}{4\pi |x|} \right| &\leq \frac{C(s,k)}{|x|^{3-2s}} &\mbox{ when } s\in (\frac{1}{2}, 1)\\
    \left| G_{3,s}^k(x) - \frac{k^{2-2s}}{s} \frac{e^{i k |x|}}{4\pi |x|} - \sum_{j=0}^{m-1} \frac{c_{3,j}k^{2sj}}{|x|^{3-2s(j+1)}}\right| &\leq \frac{C(s,k)}{|x|^{3-2sm}} &\mbox{ when } s \in (0, \frac{1}{2}].
    \end{align*}
 where $C(s,k)>0$ is a constant depending on $s$ and $k$.       
\end{lemma}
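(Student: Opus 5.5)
The plan is to bound the remainder $J_3^{s,k}$ defined in (\ref{def:J3-l1}) for $|x|$ small, directly from its integral representation. By (\ref{def:G3helm_J3}) and (\ref{3D}), the left-hand side in each case is exactly $|J_3^{s,k}(x)|$, with $m=\lfloor\frac{1}{2s}\rfloor$ when $s\le 1/2$ and $m=0$ when $s>1/2$. I would use the scaled form
\[
J_3^{s,k}(x)=\frac{k^{2sm}}{4i\pi^2|x|^{3-2s(m+1)}}\int_0^\infty e^{-y}y^{1-2sm}\Big[\tfrac{e^{i\pi sm}}{y^{2s}e^{-i\pi s}-k^{2s}|x|^{2s}}-\tfrac{e^{-i\pi sm}}{y^{2s}e^{i\pi s}-k^{2s}|x|^{2s}}\Big]dy .
\]
The bracket equals $2i\,\Im$ of the first fraction, since the second fraction is its complex conjugate. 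Each fraction has modulus $P(k^{2s}|x|^{2s},y)^{-1/2}$, where $P$ is the quadratic from (\ref{Poly}) with the roles of $y^{2s}$ and $k^{2s}|x|^{2s}$ as in the proof of Lemma \ref{1D-sg}.

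The key step is the lower bound already used in Lemma \ref{1D-sg}:
\[
P \ge (1-\cos^2(s\pi))\,y^{4s}.
\]
This holds because the discriminant is negative: minimizing over the variable $k^{2s}|x|^{2s}$ gives exactly $\sin^2(s\pi)y^{4s}$. Hence each fraction is bounded by $y^{-2s}/\sin(s\pi)$, uniformly in $x$.

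For $s\le 1/2$ this gives
\[
|J_3^{s,k}(x)|\le \frac{C(s,k)}{|x|^{3-2s(m+1)}}\int_0^\infty e^{-y}y^{1-2sm-2s}\,dy .
\]
The integral converges when $2-2s(m+1)>0$, that is $m+1<1/s$. This holds since $m\le \frac1{2s}<\frac1s-1$ for $s<1/2$, and also for $s=1/2$, where $m=1$ and the exponent is $1-1-1=-1$. The case $s=1/2$ needs care, because there the exponent is $-1$ and the integral diverges logarithmically. So for $s=1/2$ I would only use the crude bound on part of the range: bound $|y^{2s}e^{\mp i\pi s}-k^{2s}|x|^{2s}|$ below by $c\max(y^{2s},k^{2s}|x|^{2s})$, split the $y$-integral at $k|x|$, and obtain a $\log$ factor. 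That gives $|x|^{-(3-2s(m+1))}\log(1/|x|)$, which is still dominated by $C|x|^{-(3-2sm)}$ since $2s>0$.

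In fact the claimed bound $|x|^{-(3-2sm)}$ is weaker than $|x|^{-(3-2s(m+1))}$ for small $|x|$, so the crude estimate suffices in every case. This is the main simplification, and I would check it for each range of $s$ (including non-integer and integer $\frac1{2s}$).

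For $s>1/2$ ($m=0$), the same bound gives $|J_3^{s,k}(x)|\le C|x|^{-(3-2s)}\int_0^\infty e^{-y}y^{1-2s}dy$, which is finite since $1-2s>-1$. This is exactly the claim.

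The only delicate point I expect is the borderline $2s(m+1)=2$, which occurs only at $s=1/2$. I would treat it with the split-integral logarithmic estimate described above, in the spirit of the $s=1/2$ case of Lemma \ref{1D-sg}.
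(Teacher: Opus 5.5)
Your proof is correct and follows essentially the paper's route: you bound $J_3^{s,k}$ directly from its scaled integral representation, using a lower bound on the quadratic denominator $P$ together with integrability of $e^{-y}y^{1-2s(m+1)}$ at $0$. The differences are minor: for $s<1/2$ you use $P\ge \sin^2(s\pi)\,y^{4s}$ (as in the paper's 1D argument) instead of the paper's $P\ge c\,y^{2s}|x|^{2s}$, which gives you the sharper rate $|x|^{-(3-2s(m+1))}$, and at $s=1/2$ you use a split logarithmic estimate where the paper evaluates the imaginary part exactly (your passing claim that $m+1<1/s$ also holds at $s=1/2$ is wrong, but you correct it in the next sentence).
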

\begin{proof}
Again we go through all the different cases for $s$.
\begin{itemize}
\item For $s> 1/2$, $\cos(s\pi)<0$  so for $J_3^{s,k}$  as $|x| \to 0$, we obtain
\begin{align*}
        |J_3^{s,k}(x)|  &= \frac{C(s,k)}{|x|^{3-2s}}\int_0^\infty \frac{e^{-y} y^{1+2s}\sin(s\pi)}{y^{4s} +k^{4s}|x|^{4s}-2y^{2s}\cos(s\pi)k^{2s}|x|^{2s}} dy\\
        &\leq \frac{C(s,k)}{|x|^{3-2s}} \int_0^\infty \frac{e^{-y} \sin(s\pi)}{y^{2s-1}} dy.  
    \end{align*}
    \item For $s<1/2$ on the other hand, we obtain 
\begin{align}\label{asymptotics:J3_s_less__half_near0}
    |J_3^{s,k}(x)| = &\frac{k^{2sm}}{2\pi^2 |x|^{3-2s(m+1)}}\left|\int_0^\infty e^{-y} y^{1-2sm} \Im\left[ \frac{e^{i\pi sm}}{y^{2s} e^{-i\pi s} -k^{2s}|x|^{2s}} \right]dy\right| \nonumber\\
    \nonumber\\
     &\leq \frac{k^{2sm}}{2\pi^2 |x|^{3-2s(m+1)}}\int_0^\infty e^{-y} y^{1-2sm}\frac{|y^{2s}\sin(s(m+1)\pi) -k^{2s} |x|^{2s}\sin(sm\pi)|}{P(y^{2s},k|x|)}dy
   \nonumber \\
    \nonumber\\
    &\leq  \frac{C(k,s)}{|x|^{3-2s(m+1)}}\int_0^\infty e^{-y} y^{1-2sm}\frac{y^{2s}+1}{P(y^{2s}, k|x|)}  dy,
\end{align}
where
$$ P(X,k|x|) :=X^2 + k^{4s}|x|^{4s} - 2k^{2s}|x|^{2s}\cos(s\pi)X.$$
Next, we observe that
    \begin{align*}
        P(y^{2s}, k|x| )&= (y^{2s}- k^{2s}|x|^{2s})^2 + 2(1-\cos(s\pi))y^{2s}|x|^{2s} \\
        &\geq C(s,k)k^{2s} y^{2s}|x|^{2s}
    \end{align*}
 where $C(s,k) = 2(1-\cos(s\pi))>0$. Hence, we deduce that 
    $$
   |J_3^{s,k}(x)| =O_{|x|\to 0}(|x|^{-3+2sm}),
    $$
    since $y^{1-2s(m+1)}$ is integrable when $s<1/2$. 
    \item For $s=1/2$, we directly estimate 
    \begin{align*}
        J_3^{s,k}(x) = &\frac{k}{2\pi^2 |x|}\left|\int_0^\infty e^{-y} \Im\left[ \frac{i}{-iy -k|x|}  \right]dy\right|\nonumber\\\nonumber \\
        &\leq \frac{k}{2\pi^2 |x|}\left|\int_0^\infty e^{-y} \frac{k|x|}{y^2 +k^2|x|^2}  dy\right| \leq \frac{C(k,s)}{|x|^2} = O(|x|^{-3+2sm}).
    \end{align*}
    \end{itemize}
\end{proof}

\begin{remark}
{\em  In the case $s=1/2$ the fundamental solution was previously computed in \cite{hiltunen2024nonlocal}, using the limiting absorption principle and contour integral. Although the formulas look different, they coincide. It is however interesting to note that using the expression of the 3D fundamental solution obtained in \cite{hiltunen2024nonlocal}, namely
\begin{align*}
    G_{3,1/2}^k (x) = \frac{1}{2\pi^2 |x|^2} - \frac{ik}{4\pi^2|x|}\left( e^{ik|x|}E_1(ik|x|) - e^{-ik|x|}E_1(-ik|x|) \right) + \frac{k e^{ik|x|}}{2\pi |x|},
\end{align*}
one obtains  a better order of decay at infinity than the one we obtained. Indeed, according to the asymptotic expression for the exponential integral
\begin{align}\label{E1_asymptotic}
E_1(z) = \frac{e^{-z}}{z}\left(1 + \frac{1}{z} + O(|z|^{-2})\right).
\end{align}
we see  that 
\begin{align*}
    &\frac{1}{2\pi^2 |x|^2} - \frac{ik}{4\pi^2|x|}\left( e^{ik|x|}E_1(ik|x|) -e^{-ik|x|}E_1(-ik|x|) \right) \\
    \\
    &\sim \frac{1}{2\pi^2 |x|^2} - \frac{ik}{4\pi^2|x|}\left( \frac{1}{ik|x|} + \frac{1}{-k^2|x|^2} + O(|x|^{-3}) - \frac{1}{-ik|x|} - \frac{1}{-k^2|x|^2} + O(|x|^{-3}) \right) \\
    \\
    &=\frac{1}{2\pi^2|x|^2} - \frac{2ik}{4\pi^2|x|^2ik} + O(|x|^{-4}) = O(|x|^{-4}).
\end{align*}
With our expression for the fundamental solution for $s=1/2$ we obtained the bound $O(|x|^{-2})$ on the decay at infinity. There clearly is a cancellation at infinity which our estimation disregards. However, our asymptotic bounds are sufficient to conclude that the fundamental solution satisfies the Sommerfeld radiation condition, as already noted earlier.}
 \end{remark}
\subsubsection{Asymptotic at infinity of \texorpdfstring{$\partial_r J_n^{s,k}(r)$}{TEXT}}
In order to verify that the fundamental solution of the fractional Helmholtz operator satisfies the Sommerfeld radiation condition, we need to check that the derivatives of all terms other than the one corresponding to the Helmholtz equation have the appropriate decay at infinity.
\begin{lemma} Let $k>0$ and  $n=1,2,3$. Then 
    \begin{align*}
        \lim_{r\to \infty} r^{\frac{n-1}{2}}\left|\partial_r J_n^{s,k}(r)\right| = 0.
    \end{align*}
\end{lemma}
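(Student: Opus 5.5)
The plan is to treat the three dimensions separately, mirroring the proofs of Lemmas \ref{1D-infinity}, \ref{2D-infinity} and \ref{3D-infinity}. In dimensions one and three I would differentiate the explicit Laplace-type integrals for $J_1^{s,k}$ and $J_3^{s,k}$ under the integral sign. In dimension two, where $J_2^{s,k}$ is only defined as an inverse Fourier transform, I would use a frequency splitting.

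\emph{Dimension $n=1$.} Here $r^{(n-1)/2}=1$, so it suffices to show $\partial_r J_1^{s,k}(r)\to 0$. I would use the unscaled form
\[
J_1^{s,k}(r)=-\frac{1}{\pi}\int_0^\infty e^{-ry}\,\Im\Big[\frac{1}{y^{2s}e^{i\pi s}-k^{2s}}\Big]dy ,
\]
and note that $\Im[\cdot]=-y^{2s}\sin(s\pi)/P(y^{2s},k)$. The lower bound (\ref{Lower_bound_P}) shows this is bounded by $Cy^{2s}$. For $r\ge 1$, dominated convergence allows differentiation under the integral, and it gives
\[
|\partial_r J_1^{s,k}(r)|\le C\int_0^\infty y^{1+2s}e^{-ry}\,dy = C\,r^{-2-2s},
\]
which tends to $0$.

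\emph{Dimension $n=3$.} I would write $J_3^{s,k}(r)=\frac{c}{r}I(r)$, where $I(r)=\int_0^\infty e^{-ry}y^{1-2sm}\beta(y)\,dy$ and $\beta$ is the bracket in (\ref{def:J3-l1}). Using the bound $P\ge k^{4s}\sin^2(s\pi)$ again, $\beta$ is bounded on $[0,\infty)$. Hence $|I(r)|\le Cr^{-2+2sm}$ and $|I'(r)|\le C r^{-3+2sm}$. By the product rule,
\[
|\partial_r J_3^{s,k}|\le C\big(r^{-2}|I|+r^{-1}|I'|\big)\le C r^{-4+2sm}.
\]
Multiplying by $r^{(n-1)/2}=r$ leaves $O(r^{-3+2sm})\to 0$, since $2sm\le 1$. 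This covers $m=0$ as well. If one also wants the Riesz terms $c_{3,j}r^{-3+2s(j+1)}$, their radial derivatives times $r$ are $O(r^{-3+2s(j+1)})$ with $2s(j+1)\le 1$, so they also vanish in the limit.

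\emph{Dimension $n=2$.} I expect this to be the main obstacle. Differentiating $\frac{1}{2\pi}\int_0^\infty J_0(\rho r)\rho F(\rho)\,d\rho$ under the integral produces $\int J_1(\rho r)\rho^2F\,d\rho$, which need not converge absolutely. Instead I would work with $\partial_{x_i}J_2^{s,k}=\mathcal F^{-1}(i\xi_iF)$, where $F$ stands for $F_m$, $F_0$ or $\tilde F_m$ as appropriate. I would take a radial cutoff $\chi_0\in C_c^\infty$ equal to $1$ on a ball containing $[0,2k]$ and split
\[
\xi_iF=\chi_0\,\xi_iF+(1-\chi_0)\,\xi_iF .
\]

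For the low-frequency piece, I would repeat the argument of Lemma \ref{2D-infinity}: $x_j\,\mathcal F^{-1}(\chi_0\xi_iF)$ equals, up to constants, $\mathcal F^{-1}\partial_{\xi_j}(\chi_0\xi_iF)$. I would check that $\partial_{\xi_j}(\chi_0\xi_iF)\in L^1(\mathbb R^2)$. The singularity at $|\xi|=k$ is removable, as already computed in (\ref{derivativeF_half-1})--(\ref{derivative_tildeF}). Near $0$, $\xi_iF=O(|\xi|^{1-2sm})$, resp.\ $O(|\xi|^{2s})$ for $\tilde F_m$, so its derivative is $O(|\xi|^{-2sm})$, resp.\ $O(|\xi|^{2s-1})$, and both are integrable in $\mathbb R^2$. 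This gives a bound $O(r^{-1})$ for this piece.

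For the high-frequency piece, $(1-\chi_0)\xi_iF$ is a smooth symbol satisfying $|\partial^\alpha_\xi(\cdot)|\le C\langle\xi\rangle^{1-2s(m+1)-|\alpha|}$. Integrating by parts $N$ times, with $N$ large enough that this is in $L^1$, gives $|x|^N|\mathcal F^{-1}(\cdot)|\le C$ for $|x|\ge1$, i.e.\ rapid decay. Combining the two pieces gives $|\partial_rJ_2^{s,k}|=O(r^{-1})$, and therefore $r^{1/2}|\partial_rJ_2^{s,k}|\to0$.

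Finally, in the case $\frac{1}{2s}\in\mathbb N$ the Struve term $K_0(kr)$ must be handled separately. I would write $K_0'(z)=-\frac{2}{\pi}\int_0^\infty J_0(t)(t+z)^{-2}dt$. Integrating by parts, or alternatively using the differentiable large-argument expansion $K_0(z)=\frac{2}{\pi z}+O(z^{-2})$ from \cite{hiltunen2024nonlocal}, should give $K_0'(kr)=O(r^{-2})$, which is more than enough.
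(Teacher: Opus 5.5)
Your proof is correct. In dimensions one and three it is the paper's argument: differentiate the Laplace-type integral under the integral sign. You do it in the unscaled variable, so the bounds become Gamma integrals rather than going through $|\partial_rP(y^{2s},kr)|\le c(s)r^{-1}P(y^{2s},kr)$. The rates $O(r^{-2-2s})$ and $O(r^{-4+2sm})$ are the same as the paper's.

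In dimension two you take a genuinely different route. The paper works with the Euler operator rather than individual derivatives. By the identity $x\cdot\nabla_x\mathcal{F}^{-1}F=-\mathcal{F}^{-1}[\nabla_\xi\cdot(\xi F)]$, $\partial_rJ_2^{s,k}$ is $-|x|^{-1}$ times the inverse transform of $|\xi|\partial_{|\xi|}F$ plus a multiple of $F$. These symbols decay like $F$, so a single global integration by parts suffices once $r\partial_r(r\partial_rF)\in L^1(dr)$. That condition uses $2s(m+1)>1$ at infinity and $2sm<1$ at the origin, and gives $\partial_rJ_2^{s,k}=O(r^{-2})$ with no cutoffs.

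Your Cartesian derivative raises the symbol order by one, so $\nabla_\xi(\xi_iF)\notin L^1(\mathbb{R}^2)$. You compensate with the frequency split. On the compactly supported piece, one integration by parts suffices, since only the origin and the removable circle $|\xi|=k$ matter there. On the smooth symbol of order $1-2s(m+1)$, you integrate by parts repeatedly. This is the more standard symbol-calculus argument and does not rely on radial structure. The price is the weaker rate $O(r^{-1})$, which is still ample. It could be improved, since the second derivatives of $\chi_0\xi_iF$ are also integrable near $0$.

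For the Struve term, the trivial bound $|J_0|\le 1$ in your formula for $K_0'$ already gives $|K_0'(z)|\le 2/(\pi z)$, which suffices. The paper instead uses $\partial_{|x|}K_0(k|x|)=\frac{2k}{\pi}-kK_1(k|x|)$ together with $K_1(z)=\frac{2}{\pi}+O(z^{-2})$.
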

\begin{proof}
We check this for each dimension.
    \begin{itemize}
        \item For $n=1$, taking the derivative under the integral  yields  
        \begin{align*}
            \left| \partial_r J_1^{s,k}(r)\right| &\le \left| \frac{(2s-1)}{2\pi r^{2-2s}} \int_0^\infty e^{-y}y^{2s}\frac{c(s)}{P(y^{2s}, kr)}dy \right|\\
            &\quad +\left|  \frac{1}{2\pi r^{1-2s}} \int_0^\infty e^{-y}y^{2s}\frac{c(s)\partial_r P(y^{2s}, rk)}{P(y^{2s}, kr)^2}dy \right| \leq  \frac{c(s,k)}{r^{2-2s}} \int_0^\infty\frac{ e^{-y}y^{2s}}{P(y^{2s}, kr)}dy,
        \end{align*}
        where we used that for some constant depending on $s$, one may calculate 
\begin{align}\label{partial_rP_less_P}
    |\partial_r P(y^{2s}, kr) | = |-4sk^{2s}\cos(s\pi) y^{2s}r^{2s-1} + 4sk^{4s}r^{4s-1}| \leq c(s) r^{-1} P(y^{2s}, kr).
\end{align}
The polynomial $P(\cdot,\cdot)$  is that defined by (\ref{Poly}). Applying the analysis in the proof  of Lemma \ref{1D-infinity}, we conclude that $|\partial_r J_1^{s,k}(r)| = O(r^{-2-2s})$, hence $ \lim\limits_{r\to \infty}|\partial_r J_1^{s,k}(r)| =0$.
    \item For $n=2$, we need to verify that the radial derivative of the inverse Fourier transform of $F_m$ (or $\tilde F_m$, which the calculations below apply to), decays faster then $1/\sqrt{|x|}$. To this end, we have 
    \begin{align*}
        \frac{\partial}{\partial r}J_2^{s,k}(r) &= \frac{\partial}{\partial r} \mathcal{F}^{-1} F_m(|\xi|, k)(r) = \frac{1}{4\pi^2}\sum_{i=1}^2 \frac{\partial}{\partial x_i}\int_{\R^2} e^{i \xi \cdot x}F_m(|\xi|, k) d\xi \frac{x_i}{|x|}\\
        &= \frac{1}{4\pi^2 |x|}\sum_{i=1}^2 \int_{\R^2} \frac{\partial}{\partial \xi_i}e^{i \xi \cdot x}\xi_i F_m(|\xi|, k) d\xi \\
        &= - \frac{1}{4\pi^2|x|}\sum_{i=1}^2 \int_{\R^2} e^{i \xi \cdot x}[\xi_i \frac{\partial}{\partial \xi_i}F_m(|\xi|, k) + F_m(|\xi|, k)]d\xi \\
        &= - \frac{1}{4\pi^2|x|}\int_{\R^2} e^{i \xi \cdot x}[|\xi| \frac{\partial}{\partial |\xi|}F_m(|\xi|, k) + F_m(|\xi|, k)]d\xi\\
        &= - \frac{1}{|x|}\mathcal{F}^{-1} [|\xi|\frac{\partial}{\partial |\xi|}F_m(|\xi|, k) + F_m(|\xi|, k)].
    \end{align*}
Applying the analysis  in proof of Lemma \ref{2D-infinity}, we  observe that we only need to check the limit of the first term, that is we need to prove
\begin{align*}
    \frac{1}{|x|^{1/2}} \left|\mathcal{F}^{-1}\left(|\xi|\frac{\partial}{\partial |\xi|}F_m(|\xi|, k) \right)(x) \right|\xrightarrow{|x|\to \infty} 0. 
\end{align*}
For this, it is enough to prove that 
    \begin{align*}
|x|\left|\mathcal{F}^{-1}\left(|\xi|\frac{\partial}{\partial |\xi|}F_m(|\xi|, k) \right)(x) \right|&\leq2 \left\Vert \nabla\left(  |\xi|\frac{\partial}{\partial |\xi|}F_m(|\xi|, k)\right) \right\Vert_{L^1(\R^2)}\\
&\leq C \left\Vert r \frac{\partial}{\partial r}\left(  r\frac{\partial}{\partial r}F_m(r, k)\right) \right\Vert_{L^1(\R)} <\infty~.
    \end{align*}
We  show  this for each of the cases of the range of $s$.  For $s\in(0,\frac{1}{2})$ and $\frac{1}{2s}\not \in \mathbb{N}$, the derivative is computed in (\ref{derivativeF_0_half_notinteger}). We have 
\begin{align*}
    r\frac{\partial}{\partial r}\left(  r\frac{\partial}{\partial r}F_m(r, k)\right) &= \frac{4s^2k^{2sm}[(m+1)^2r^{4s} - (2m^2+2m-1)k^{2s}r^{2s} + m^2 k^{4s}]}{r^{2sm}(r^{2s}-k^{2s})^3}\\
    &\quad - \frac{4s^{-1}k^{2-2s}r^2(r^2+k^2)}{(r^2-k^2)^3} \sim_{r\to \infty} r^{-2sm - 2s },
\end{align*}
which is integrable at infinity since $2sm + 2s >1$. Furthermore, the singularity at $k$ is removable, as seen from a Taylor expansion,  and around $0$ the function is a $O(r^{-2sm})$ with $2sm<1$, hence we conclude that  $\frac{\partial}{\partial r}\left(  r\frac{\partial}{\partial r}F_m(r, k)\right)\in L^1(\R^2)$.   For $s\in (\frac{1}{2}, 1)$, the same computations and results hold with $m=0$. 
\noindent
 Finally, when $s\in (0,\frac{1}{2})$ and $\frac{1}{2s} \in \mathbb{N}$, we have
\begin{align*}
     r \frac{\partial}{\partial r}\left( r\frac{\partial}{\partial r}\tilde F_m(r,k) \right)&= r \frac{\partial}{\partial r}\left(r \frac{\partial}{\partial r} F_m(r, k )\right) +\frac{k^{2-2s}(4r^2+3kr+k^2)}{r(r+k)^3}
\end{align*}
As before, we see that the singularity at $k$ is removable and the function is integrable at infinity. One can easily verify that the singularity at $0$ is also  removable. Hence, we conclude that $r \frac{\partial}{\partial r}\left( r\frac{\partial}{\partial r}\tilde F_m(r,k) \right) \in L^1(\R)$. For the term $K_0(|x|k)$, we can use the formula
\begin{align*}
    \frac{\partial}{\partial |x|} K_0(k|x|) = \frac{2 k }{\pi} - k K_1(k |x|)~,
\end{align*}
where $K_1$ denotes the Struve function of the second kind of order $1$. Since 
$$
K_1(z) = \frac{2}{\pi} + O_{|z|\to \infty}(|z|^{-2})
$$
(see \cite{lozier2003nist}), we get that $\frac{\partial}{\partial |x|}K_0( k |x|) = O_{|x| \to \infty} (|x|^{-2})$.
    \item For $n=3$, the reasoning is similar to the case of $n=1$. We only need to check  the term coming from transferring  the derivative inside the integral, 
\begin{align*}
    &\left|\frac{\partial}{\partial r}J_3^{s,k}(r) \right|\leq  \frac{c(k,s)}{ r} |J_3^{s,k}(r)| \\
    &+ \frac{c(s,k)}{r^{3-2s(m+1)}}\left|\int_0^\infty e^{-y} y^{1-2sm}  \frac{|r^{2s-1} P(y^{2s}, kr) |+|y^{2s}+r^{2s}|| \partial_r P(y^{2s}, kr)| }{|P(y^{2s}, kr)|^2} dy\right|=O(r^{-4+2sm}),\nonumber
    \end{align*}
where we use (\ref{partial_rP_less_P}) and the analysis  in the proof Lemma \ref{3D-infinity}. 
 \end{itemize}
  \end{proof}

\subsection{\texorpdfstring{$L^{2,-\delta}$}{TEXT}-estimates of \texorpdfstring{$\mathcal{G}_{n,s}^k$}{TEXT}}\label{appendix:estimate_L2delta}
In this appendix we prove the following two  propositions.
\begin{proposition}\label{prop:Gepsconvf_L2delta}
 Let $1/2<\delta<1$, $k_\epsilon^{2s}=k^{2s}+i\epsilon$ for $\epsilon >0$, and $n=1,2,3$. Then for any $g\in L^2(\R^n)$ we have 
  \begin{align*}
  \|G_{n,s}^{k_\epsilon}*g\|_{L^{2}(\R^n)}\leq C(\epsilon,\delta, s)\|g\|_{L^2(\R^n)}, 
  \end{align*}
  where $C(\epsilon,\delta, s)>0$ is a constant depending on $\epsilon$, $\delta$ and $s$.
\end{proposition}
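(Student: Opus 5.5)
The natural route is Plancherel. For $\epsilon>0$, $G_{n,s}^{k_\epsilon}$ was built as the inverse Fourier transform of the multiplier $m_\epsilon(\xi):=(|\xi|^{2s}-k_\epsilon^{2s})^{-1}$. So for $g\in L^2(\R^n)$ (first for $g\in\Sw(\R^n)$, then by density) we have
$$\F(G_{n,s}^{k_\epsilon}*g)=m_\epsilon\,\F g,$$
and the proposition reduces to the estimate $\|m_\epsilon\|_{L^\infty}<\infty$. This bound is immediate: $|\xi|^{2s}-k^{2s}$ is real, so
$$\bigl||\xi|^{2s}-k^{2s}-i\epsilon\bigr|\ge \epsilon,$$
which gives $\|m_\epsilon\|_{L^\infty}\le 1/\epsilon$. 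Plancherel then yields the claim with $C=1/\epsilon$, a constant that in fact does not depend on $\delta$ or $s$.

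The only real step is to justify that convolution with the explicit kernel of Section~\ref{fund} agrees with the Fourier multiplier $m_\epsilon$. I would carry this out in three steps.

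\textbf{Step 1 (the kernel is a tempered distribution).} Using the representations in Table~\ref{tab:fund_sol} with $k_\epsilon$ in the first quadrant, check that $G_{n,s}^{k_\epsilon}$ is locally integrable. The estimates near zero in Lemmas \ref{1D-sg}, \ref{2D-sg}, \ref{3D-sg} give singularities at most of order $|x|^{-n+2s}$ or logarithmic. At infinity the kernel is bounded by a power, since the Helmholtz parts $e^{ik_\epsilon|x|}$ and $H_0^{(1)}(k_\epsilon|x|)$ decay exponentially and the remainder terms decay polynomially. Hence $G_{n,s}^{k_\epsilon}\in\Sw'$.

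\textbf{Step 2 (its Fourier transform is $m_\epsilon$).} The derivation in Section~\ref{fund} computed $\F^{-1}m_\epsilon$ pointwise for $x\ne0$, via contour integrals in $n=1,3$ and a Hankel-transform splitting into $L^2$ pieces plus Riesz potentials in $n=2$. Since $m_\epsilon\in L^\infty$ is smooth away from the origin with an integrable singularity there, its distributional inverse Fourier transform is a locally integrable function away from $0$ and coincides with these pointwise formulas. The identification at the origin follows because neither side carries a distribution supported at $\{0\}$: both are locally integrable.

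\textbf{Step 3 (convolution theorem).} For $g\in\Sw$, $G_{n,s}^{k_\epsilon}*g$ is a well-defined tempered distribution whose Fourier transform is $m_\epsilon\hat g\in L^2$. The bound then extends to all of $L^2$ by density. Because $L^{2,\delta}\subset L^2\subset L^{2,-\delta}$ for $\delta>0$, this is the form in which the proposition is used in Lemma~\ref{lemma:Gks=Rk}.

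I expect Step 2, the rigorous identification of the explicit formula with $\F^{-1}m_\epsilon$, to be the main obstacle. It is the delicate point in dimension $2$, where the splitting into $F_m$, $\tilde F_m$, $K_0$ and Riesz potential terms has to be checked to reassemble into $m_\epsilon$ in $\Sw'$. I would handle it by noting that each piece was defined precisely as the inverse Fourier transform of the corresponding summand of the identity (\ref{identity_m}), so linearity of $\F^{-1}$ on $\Sw'$ closes the argument. The only point needing care is that each summand is individually a tempered distribution, which holds because each is locally integrable with polynomial growth.
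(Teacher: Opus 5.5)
Your proof is correct, and it takes a different route from the paper's. The paper never uses the full multiplier. It splits the explicit kernel into two parts:
\begin{itemize}
\item the Helmholtz part $G_{n,helm}^{k_\epsilon}$, which lies in $L^1(\R^n)$ for $\epsilon>0$ because of exponential decay, and is handled by Young's inequality;
\item the remaining terms, for which it points back to the Young and Hardy--Littlewood--Sobolev estimates in the proof of Proposition~\ref{prop:Gconvf_L2delta}, asserting that these are uniform in $\epsilon$.
\end{itemize}

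Your bound $|m_\epsilon|\le 1/\epsilon$ combined with Plancherel is shorter and gives the explicit constant $1/\epsilon$. It is also more robust. Those HLS estimates were closed with H\"older's inequality on the compact support of $g$. For a general $g\in L^2(\R^n)$ the remainder cannot be treated term by term:
\begin{itemize}
\item a Riesz piece $c_{n,j}k_\epsilon^{2sj}|x|^{-n+2s(j+1)}$ has multiplier $k_\epsilon^{2sj}|\xi|^{-2s(j+1)}$, which is unbounded at $\xi=0$;
\item the available bounds on $J_n$, namely $C|x|^{-1}$ for $n=2$ and $C|x|^{-3+2sm}$ for $n=3$, $s\le 1/2$, are not integrable at infinity.
\end{itemize}
$L^2$-boundedness comes from the cancellation of these low-frequency singularities inside the total multiplier, and your argument captures that automatically. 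The paper's splitting would in principle buy uniformity in $\epsilon$ for the non-Helmholtz part, but that is not needed for this statement.

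On your Step 2: the paper obtains $G_{n,s}^{k_\epsilon}$ precisely by Fourier inversion of (\ref{lap}), so the identification you single out amounts to certifying those computations. The oscillatory integrals there converge only conditionally in general, so an Abel factor $e^{-\eta|\xi|}$ is the natural way to make it rigorous. Two small points to tighten:
\begin{itemize}
\item The appendix bounds near the origin that you cite are proved for real $k$ only, so you need their analogues for $k_\epsilon$.
\item The local integrability of $\F^{-1}m_\epsilon$ at the origin, which your argument there relies on, should be justified. Note that $m_\epsilon$ is continuous, not singular, at $\xi=0$. Splitting it with a cutoff gives a compactly supported bounded part, whose kernel is bounded, plus a symbol of order $-2s$, whose kernel is $O(|x|^{-n+2s})$ near $0$ (logarithmic or bounded when $2s\ge n$).
\end{itemize}
Your Steps 2--3 also directly yield the identity $\mathcal{G}_{n,s}^{k_\epsilon}f=R_0(k_\epsilon^{2s})f$ used in Lemma~\ref{lemma:Gks=Rk}.
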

\begin{proposition}\label{prop:Gconvf_L2delta}
    Let $1/2<\delta<1$, and $n=1,2,3$. Then for any $g\in L^2(\R^n)$ with compact support $D$, we have 
  \begin{align*}
      \|G_{n,s}^k*g\|_{L^{2,-\delta}(\R^n)} \leq C(D,\delta, s)\|g\|_{L^2(\R^n)}, 
  \end{align*}
 where $C(D,\delta, s)>0$ is a constant depending on $D$, $\delta$ and $s$. 
\end{proposition}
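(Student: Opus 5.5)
We prove Proposition \ref{prop:Gconvf_L2delta} by splitting $\R^n$ into a bounded region around $D$, where only local integrability of the kernel matters, and an exterior region, where only its decay matters. We use the pointwise description of $G_{n,s}^k$ from Theorem \ref{th:asymptotics-all} together with the explicit form of the Helmholtz parts $G_{n,helm}^k$. Fix $R\ge 1$ with $D\subset B_R(0)$ and write $u:=G_{n,s}^k*g$. Then
\begin{align*}
\|u\|_{L^{2,-\delta}(\R^n)}^2\le \|u\|_{L^2(B_{2R})}^2+\int_{|x|>2R}\left<x\right>^{-2\delta}|u(x)|^2dx .
\end{align*}

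\textbf{Near region.} For $x\in B_{2R}$ and $y\in D$ we have $|x-y|<3R$, so $u=(G_{n,s}^k\chi_{B_{3R}})*g$ on $B_{2R}$. By Young's inequality,
\begin{align*}
\|u\|_{L^2(B_{2R})}\le \|G_{n,s}^k\|_{L^1(B_{3R})}\,\|g\|_{L^2}.
\end{align*}
It therefore suffices to check that $G_{n,s}^k\in L^1_{loc}(\R^n)$. We do this term by term from Table \ref{tab:fund_sol} and Lemmas \ref{1D-sg}, \ref{2D-sg}, \ref{3D-sg}:
\begin{itemize}
\item $G_{n,helm}^k$ is bounded in 1D, logarithmic in 2D, and of order $|x|^{-1}$ in 3D.
\item The Riesz terms $|x|^{-n+2s(j+1)}$ are locally integrable, since $2s(j+1)>0$.
\item The remainder $J_n^{s,k}$ is bounded by $|x|^{-1+2s}$ or $-\ln|x|$ in 1D, by $|x|^{-1}$ in 2D, and by $|x|^{-(3-2sm)}$ (with $m=0$ for $s>1/2$) in 3D. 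In each case the exponent is strictly less than $n$; in 3D this uses $3-2sm\le 3-2s\cdot\frac{1-2s}{2s}<3$ when $m\ge1$.
\end{itemize}

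\textbf{Far region.} For $|x|>2R$ and $y\in D$ we have $|x-y|\ge |x|/2\ge R$, so the large-$|x|$ bounds of Lemmas \ref{1D-infinity}, \ref{2D-infinity}, \ref{3D-infinity} apply. We claim that $|G_{n,s}^k(z)|\le C|z|^{-(n-1)/2}$ for $|z|\ge R$:
\begin{itemize}
\item The Helmholtz parts satisfy this bound by the classical asymptotics of $e^{ik|z|}$, of $H_0^{(1)}$ and of $e^{ik|z|}/|z|$.
\item The remainders $J_n^{s,k}$ decay at least as fast.
\item The Riesz terms have exponents $n-2s(j+1)\ge n-2sm\ge n-1\ge (n-1)/2$, since $2sm\le 1$.
\end{itemize}
Cauchy--Schwarz then gives, for $|x|>2R$,
\begin{align*}
|u(x)|\le C|x|^{-(n-1)/2}|D|^{1/2}\|g\|_{L^2}.
\end{align*}
Consequently
\begin{align*}
\int_{|x|>2R}\left<x\right>^{-2\delta}|u|^2dx\le C|D|\,\|g\|_{L^2}^2\int_{2R}^\infty r^{-2\delta}r^{-(n-1)}r^{n-1}dr,
\end{align*}
and the last integral is finite precisely because $\delta>1/2$. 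Combining the two regions yields the estimate, with a constant depending on $D$ through $R$ and $|D|$.

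\textbf{Main obstacle.} The delicate point is making the case-by-case kernel bounds uniform: each piece of the decomposition must be checked for every range of $s$, including $\frac1{2s}\in\mathbb N$ with the Struve correction $K_0$, which is logarithmic at $0$ and $O(|z|^{-1})$ at infinity. The 2D Hankel function also needs both its logarithmic singularity and its $|z|^{-1/2}$ decay. We rely on the bounds already listed in Appendix \ref{appendixB}.

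\textbf{Proposition \ref{prop:Gepsconvf_L2delta}.} The analogous plan is easier. For $\epsilon>0$ the symbol $(|\xi|^{2s}-k_\epsilon^{2s})^{-1}$ is bounded by $C/\epsilon$, since $\Im(|\xi|^{2s}-k_\epsilon^{2s})=-\epsilon$. Plancherel then gives the $L^2$ bound directly.
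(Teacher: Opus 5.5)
Your argument is correct, but it is organized differently from the paper's.

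\textbf{The paper's route.} The near/far splitting (Lemma \ref{lemma:L2delta_conv_alpha_beta}) is used only for the Helmholtz part $G_{n,helm}^k$. There the near region is handled by Cauchy--Schwarz, which requires the kernel bound $|x|^{-\alpha}$ to be locally square integrable ($2\alpha<n$). The remaining pieces get weight-free $L^2\to L^2$ estimates: Young's inequality when $J_n^{s,k}\in L^1(\R^n)$, and Hardy--Littlewood--Sobolev plus H\"older for the power-type pieces.

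\textbf{Your route.} You run a single splitting on the whole kernel. The decisive change is Young's inequality for the truncated kernel near $D$, which needs only $G_{n,s}^k\in L^1_{loc}$. This matters because the 3D Riesz terms $|x|^{-3+2s(j+1)}$, for example, are not locally square integrable, so the paper's lemma as stated could not absorb them. In the far region you need only the uniform decay $|G_{n,s}^k(z)|\le C|z|^{-(n-1)/2}$, and the weight does the rest.

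\textbf{What each buys.} Where it applies, the paper's route gives stronger, weight-free bounds for the non-Helmholtz pieces, which it reuses for the $\epsilon$-uniform estimates. Yours is uniform across $n$ and across the ranges of $s$. It also sidesteps an endpoint problem in the paper's argument. For the $|x|^{-1}$-type pieces in 2D, the HLS relation $\frac{1}{p}+\frac{1}{2}=\frac{3}{2}$ forces $p=1$, where HLS fails; indeed $|x|^{-1}*g\notin L^2(\R^2)$ whenever $\int g\neq 0$. In your far-field estimate such a $1/|x|$ tail is harmless because of the factor $\langle x\rangle^{-2\delta}$.

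\textbf{A slip to correct.} For $s>1/2$ in 3D, the near-origin bound from Lemma \ref{3D-sg} is $|x|^{-(3-2s)}$, i.e.\ $3-2s(m+1)$ with $m=0$. It is not $|x|^{-(3-2sm)}$ with $m=0$, which would give the non-integrable $|x|^{-3}$. For $m\ge 1$, simply use $3-2sm\le 3-2s<3$. Your chain via $m\ge \frac{1-2s}{2s}$ degenerates to equality at $s=\frac{1}{2}$.

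Your Plancherel argument for Proposition \ref{prop:Gepsconvf_L2delta} is also correct, and simpler than the paper's.
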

\noindent
It is sufficient  to show that the bound holds for each term of the fundamental solution. We will need the following lemma 
\begin{lemma}\label{lemma:L2delta_conv_alpha_beta}
Let $1/2<\delta<1$. Assume that $f$  satisfies $|f(x)| \leq |x|^{-\alpha}\left<x\right>^{-\beta}$ with
$$2\alpha + 2\beta >n - 2\delta \qquad \mbox{ and }\qquad 2\alpha <n.$$
Then for any $g\in L^2(\R^n)$ with compact support $D$, we have  that
  \begin{align*}
      \|f*g\|_{L^{2,-\delta}(\R^n)} \leq C(D,\delta, \alpha, \beta) \|g\|_{L^2(\R^n)}, 
  \end{align*}
 where $C(D,\delta, \alpha, \beta)>0$ is a constant depending on  $D$, $\delta$, $\alpha$ and  $\beta$.
\end{lemma}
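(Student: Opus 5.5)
The approach is to bound $\|f*g\|_{L^{2,-\delta}}$ by splitting $f$ into a local singular part and a far-field part, and to handle each part by a different mechanism. Write $f=f_1+f_2$ with $f_1=f\chi_{B_1(0)}$ and $f_2=f\chi_{\R^n\setminus B_1(0)}$.

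\textbf{Local part.} On $B_1$ we have $\langle x\rangle^{-\beta}\le C$, so $|f_1(x)|\le C|x|^{-\alpha}\chi_{B_1}$. The condition $2\alpha<n$ gives $\alpha<n/2<n$, hence $f_1\in L^1(\R^n)$. Young's inequality then yields
\[
\|f_1*g\|_{L^2}\le \|f_1\|_{L^1}\|g\|_{L^2}.
\]
Since the weight satisfies $(1+|x|^2)^{-\delta}\le 1$, this controls the $L^{2,-\delta}$ norm. This step does not need the support of $g$ at all.

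\textbf{Far part.} On the complement of $B_1$ we have $|f_2(x)|\le C\langle x\rangle^{-\alpha-\beta}$. Fix $R$ with $D\subset B_R(0)$. For $y\in D$, the bound $\langle x-y\rangle\ge c_R\langle x\rangle$ holds uniformly in $x$. This follows from Peetre's inequality $\langle x\rangle\le \sqrt2\langle x-y\rangle\langle y\rangle$ together with $\langle y\rangle\le\langle R\rangle$. Two cases then arise.

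\begin{itemize}
\item If $\alpha+\beta\ge0$, the uniform bound gives $|f_2(x-y)|\le C_R\langle x\rangle^{-(\alpha+\beta)}$ for $y\in D$.
\item If $\alpha+\beta<0$, use $\langle x-y\rangle\le C_R\langle x\rangle$ instead, which gives the same kind of bound.
\end{itemize}

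In either case, Cauchy--Schwarz over $D$ gives the pointwise estimate
\[
|f_2*g(x)|\le C_R\langle x\rangle^{-(\alpha+\beta)}|D|^{1/2}\|g\|_{L^2}.
\]
It follows that
\[
\|f_2*g\|^2_{L^{2,-\delta}}\le C\|g\|_{L^2}^2\int_{\R^n}\langle x\rangle^{-2\delta-2\alpha-2\beta}\,dx.
\]
This integral is finite exactly when $2\alpha+2\beta+2\delta>n$, which is the first hypothesis.

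\textbf{Conclusion and main obstacle.} Combining the two parts by the triangle inequality gives the claim, with a constant depending on $D$ (through $R$ and $|D|$), $\delta$, $\alpha$ and $\beta$. The only delicate point is the uniform comparison of $\langle x-y\rangle$ with $\langle x\rangle$ for $y$ in the bounded set $D$, including the sign issue when $\alpha+\beta<0$. This comparison is where the compact support of $g$ enters, and it explains why the constant depends on $D$ (cf.\ Remark \ref{remark:bound_Gk_constant_suppf}). Note also that $\alpha$ may be negative; the splitting argument still works in that case, since $|x|^{-\alpha}$ is then bounded on $B_1$.
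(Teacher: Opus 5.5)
Your argument is correct, but it is organized differently from the paper's.

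\textbf{The paper's route.} It applies Cauchy--Schwarz over $D$ at the outset. This reduces the claim to finiteness of the double integral $\int_{\R^n}\int_D |x-y|^{-2\alpha}\langle x-y\rangle^{-2\beta}\,dy\,(1+|x|^2)^{-\delta}\,dx$, which it then splits in the \emph{output} variable $x$. For $|x|\ge 2R$ it uses $|x-y|\ge |x|/2$ to bound the inner integral by $C|D|\,|x|^{-2\alpha-2\beta}$. For $x\in B_{2R}$ it uses $D\subset B_{3R}(x)$ and the local integrability of $|x-y|^{-2\alpha}$, which is exactly where $2\alpha<n$ enters.

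\textbf{Your route.} You split the \emph{kernel} $f$ into a singular piece near the origin and a tail. Your tail estimate is essentially the paper's far-field step: bounded $y$ lets you trade $\langle x-y\rangle$ for $\langle x\rangle$, then Cauchy--Schwarz and integrability of $\langle x\rangle^{-2(\alpha+\beta+\delta)}$ finish it. Your local step instead uses Young's inequality with $f\chi_{B_1}\in L^1(\R^n)$. That requires only $\alpha<n$ and does not use the support of $g$.

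\textbf{What each buys.} Because you never square the kernel before integrating it locally, your proof gives the lemma under the weaker hypothesis $\alpha<n$ in place of $2\alpha<n$. That version would directly cover kernels such as $|x|^{-1}$ in $\R^2$ or $|x|^{-3+2sm}$ in $\R^3$; for these the paper switches to Hardy--Littlewood--Sobolev in the proof of Proposition \ref{prop:Gconvf_L2delta}. Your two-sided Peetre comparison also handles arbitrary signs of $\alpha$, $\beta$ and $\alpha+\beta$. The paper's estimates tacitly take $\alpha,\beta\ge 0$ (enough for all its applications) and use $|y|\le R=\mathrm{diam}(D)$, i.e.\ implicitly $D\subset B_R(0)$. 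The paper's route buys brevity: one Cauchy--Schwarz and one split of a single integral.
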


\begin{proof} First we observe that 
\begin{align*}
    \int_{\R^n} \left|\int_D f(x-y) g(y) dy\right|^2 \frac{dx}{(1+|x|^2)^\delta} \leq \int_{\R^n} \int_D \frac{dy}{|x-y|^{2\alpha}<x-y>^{2\beta}} \frac{dx}{(1+|x|^2)^\delta} \| g\|_{L^2(\R^n)}^{2}.
\end{align*}
Let $R:= diam(D)$. Then we split the latter integral over $\R^n$ into two parts, namely one in $B_{2R}$, and the other in $\R^n\setminus B_{2R}$. Note that when $x\in \R^n\setminus B_{2R}$, we have  that 
$$
|x-y| \geq |x| - |y| \geq \frac{|x|}{2} + \frac{2R}{2} - |y|\geq \frac{|x|}{2}.
$$
Therefore
\begin{align*}
   \int_{\R^n\setminus B_{2R}} \int_D \frac{dy}{|x-y|^{2\alpha}<x-y>^{2\beta}} \frac{dx}{(1+|x|^2)^\delta} \leq C(\alpha, \beta) \int_{\R^n\setminus B_{2R}}  \frac{|D|}{|x|^{2\alpha+2\beta+2\delta}} dx,
\end{align*}
which is is finite from the assumption that $2\alpha + 2\beta + 2\delta > n$. \\
For the second integral, we notice that $D\subset B_{3R}(x)$ for any $x\in B_{2R}$, and so 
\begin{align*}
   \int_{ B_{2R}} \int_D \frac{dy}{|x-y|^{2\alpha}<x-y>^{2\beta}} \frac{dx}{(1+|x|^2)^\delta} \leq \int_{ B_{2R}}  \int_{B_{3R}(x)}\frac{dy}{|x-y|^{2\alpha}} dx.
\end{align*}
The last integral  is finite thanks to the second assumption $2\alpha < n$. 
\end{proof}

\begin{corollary}\label{cor-above}
    Let $1/2<\delta<1$, and $n=1,2,3$.  Then for any $g\in L^2(\R^n)$ with compact support $D$, we have  that
  \begin{align*}
      \|G_{helm}^k*g\|_{L^{2,-\delta}} \leq  C(D,\delta, s,k)\|g\|_{L^2}(\R^n). 
  \end{align*}
\end{corollary}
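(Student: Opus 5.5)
The plan is to reduce Corollary \ref{cor-above} to Lemma \ref{lemma:L2delta_conv_alpha_beta} by producing, for each dimension $n=1,2,3$, a pointwise bound of the form $|G_{n,helm}^k(x)|\le C(s,k)\,|x|^{-\alpha}\langle x\rangle^{-\beta}$ with exponents satisfying $2\alpha<n$ and $2\alpha+2\beta>n-2\delta$. The constant $C(s,k)$ (which contains the prefactors $\frac{1}{2sk^{2s-1}}$, $\frac{k^{2-2s}}{4s}$, $\frac{k^{2-2s}}{4\pi s}$) simply multiplies the constant from the lemma. Since the lemma is linear in $f$, it then gives $\|G_{n,helm}^k*g\|_{L^{2,-\delta}}\le C(D,\delta,s,k)\|g\|_{L^2}$.

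\emph{Case $n=1$.} Here $|G_{1,helm}^k(x)|=\frac{1}{2sk^{2s-1}}$ is bounded, so we may take $\alpha=\beta=0$. Then $2\alpha=0<1$, and $2\alpha+2\beta=0>1-2\delta$ because $\delta>1/2$.

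\emph{Case $n=3$.} Here $|G_{3,helm}^k(x)|=\frac{k^{2-2s}}{4\pi s}|x|^{-1}$, so we take $\alpha=1$, $\beta=0$. Then $2\alpha=2<3$, and $2\alpha+2\beta=2>3-2\delta$ because $\delta>1/2$.

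\emph{Case $n=2$.} This is the only step requiring some care, since $H_0^{(1)}$ is not a pure power. We use the standard behaviour of the Hankel function: $|H_0^{(1)}(t)|\le C(1+|\ln t|)$ for $0<t\le 1$ (logarithmic singularity of $Y_0$), and $|H_0^{(1)}(t)|\le C t^{-1/2}$ for $t\ge1$ (large-argument asymptotics). For any fixed small $\eta>0$ we have $|\ln t|\le C_\eta t^{-\eta}$ on $(0,1]$. Combining the two regimes (the crossover at $|x|=1/k$ only changes constants) gives
\[
|G_{2,helm}^k(x)|\le C(s,k,\eta)\,|x|^{-\eta}\langle x\rangle^{-1/2+\eta}.
\]
We therefore take $\alpha=\eta$ and $\beta=\tfrac12-\eta$. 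Then $2\alpha=2\eta<2$, and $2\alpha+2\beta=1>2-2\delta$, which holds because $\delta>1/2$.

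The main (mild) obstacle is this $n=2$ case: the logarithm has to be absorbed into a small power, and the exponents must be checked against both conditions of Lemma \ref{lemma:L2delta_conv_alpha_beta}. In every dimension the decay condition holds with a strict margin exactly because $\delta>1/2$.
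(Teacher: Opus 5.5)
Your proposal is correct and follows the paper's proof: both reduce to Lemma~\ref{lemma:L2delta_conv_alpha_beta} with the same exponents, namely $\alpha=\beta=0$ for $n=1$, $\alpha=\eta$ and $\beta=\tfrac12-\eta$ for $n=2$ (absorbing the logarithmic singularity of $H_0^{(1)}$ into a small power), and $\alpha=1$, $\beta=0$ for $n=3$. Your write-up also checks explicitly the two exponent conditions and the pointwise Hankel bound across the crossover $|x|=1/k$, which the paper only states.
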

\begin{proof}
By the previous lemma, for $n=1$ take $\alpha = \beta = 0$, for $n=2$  take $\alpha = \epsilon$ for some small $\epsilon>0$ (since a logarithmic singularity is better than $|x|^{-\epsilon}$ for any $\epsilon>0$) and  $\beta = \frac{1}{2}-\epsilon$, and for $n=3$ we take  $\alpha =1, \beta = 0$. 
\end{proof}
\noindent
Now we are ready to prove both propositions.
\begin{proof}[Proof of Proposition \ref{prop:Gconvf_L2delta}] For each dimension the Helmholtz part is taken care of by  Corollary \ref{cor-above}.  Hence we prove the estimate for the remaining terms of the fundamental solution in each case. 
\begin{itemize}
    \item For $n=1$, the remaining term is simply $J_1^{s,k}$. From Theorem \ref{th:asymptotics-all}, we see that $J_1^{s,k} \in L^1(\R)$, therefore using Young's inequality for the convolution, we obtain
    \begin{align*}
        \| J_1^{s,k}*g\|_{L^{2,-\delta}(\R)} \leq  \| J_1^{s,k}*g\|_{L^{2}(\R)} \leq C(s,k)  \| g\|_{L^{2}(\R)}.
    \end{align*}

    \item For $n=2$, we have terms of the form $|x|^{-2+2s(j+1)}$, $0\leq j\leq m-1$ and $|x|^{-1}$, which all are  treated the same way. We make use of the Hardy-Littlewood-Sobolev Theorem (see for example \cite[Theorem 4.3]{LiebLoss}) 
    \begin{align*}
         \| |x|^{-\lambda}*g\|_{L^2(\R^2)} \leq C(\lambda, n) \|g\|_{L^p(\R^2)},
    \end{align*}
  where $p$ satisfies 
  $$
 \frac{1}{p} + \frac{\lambda}{n} = 1 + \frac{1}{2}. 
  $$
  Choosing  $\lambda = 2 - 2s(j+1)$ we get $p = \frac{2}{1+2s(j+1)}\geq 1$. Then, an application of H\"older's inequality, and the fact that $g$ has compact support $D$ gives
  $$
  \| |x|^{-2+2s(j+1)}*g\|_{L^2(\R^2)} \leq C(s)\|g\|_{L^p(\R^2)} \leq C(D,s)\|g\|_{L^2(\R^2)}.
  $$
\item For $n=3$ and $s>1/2$, since $J_3^{s,k}$ is an $L^1(\R^3)$ function by Theorem \ref{th:asymptotics-all}, we can simply use Young's Inequality for convolution to obtain the result. For $s\leq 1/2$, we have additional terms of the form $|x|^{3-2s(j+1)}$, $0\leq j\leq m-1$, and $|x|^{-3+2sm}$ which all are treated the same way, and so we analyze only the latter. To this end, we use again Hardy-Littlewood-Sobolev Theorem and H\"older inequality to obtain 
    \begin{align*}
         \| |x|^{-3+2sm}*g\|_{L^2(\R^3)} \leq C(s) \|g\|_{L^p(\R^3)} \leq C(D,s) \|g\|_{L^2(\R^3)},
    \end{align*}
    where  we take $p = \frac{6}{3+4sm}\geq 1$. 
\end{itemize}  
\end{proof}

\begin{proof}[Proof of Proposition \ref{prop:Gepsconvf_L2delta}]
Using  the Hardy-Littlewood-Sobolev Theorem and Young's inequality for convolution we already saw that the terms other than the one corresponding to the Helmholtz fundamental solution define operators from $L^2(\R^n)$ to $L^2(\R^n)$, uniformly in $\epsilon$. Hence, it remains to check that the Helmholtz fundamental solution also defines an operator from $L^2(\R^n)$ to $L^2(\R^n)$ with a constant depending on $\epsilon$. This follows by an application of  Young's inequality for convolution, since $G_{n,helm}^{k_\epsilon} \in L^1(\R^n)$ due to the exponential decay for $\epsilon>0$. 
\end{proof}

\subsection{Almost Everywhere Estimates of the Fundamental Solution with Absorption }\label{5D}
This section is dedicated to proving point-wise convergence results of the fundamental solution to the fractional Helmholtz with $k_\epsilon^{2s}:=k^{2s}+i\epsilon$ as the  absorption $\epsilon\to 0^+$.
\begin{proposition}\label{prop-conv}
    Let $\frac{1}{2}<\delta<1$,  $n=1,2,3$,  $k_\epsilon^{2s}:=k^{2s}+i\epsilon$  with $k>0, \epsilon>0$ and $f\in L^{2,\delta}(\R^n)$. Then  for almost every $x\in \R^n$ we have
    $$
   |\mathcal{G}_{n,s}^{k_\epsilon}f(x) - \mathcal{G}_{n,s}^k f(x)| \to 0 \quad \mbox{ as } \epsilon \to 0. 
    $$
\end{proposition}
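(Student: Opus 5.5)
The plan is to prove pointwise convergence by dominated convergence applied to the integral $\int_{\R^n} G_{n,s}^{k_\epsilon}(x-y)f(y)\,dy$, for a fixed $x$ outside a null set. The integrand converges pointwise: for every $y\neq x$ we have $G_{n,s}^{k_\epsilon}(x-y)\to G_{n,s}^k(x-y)$. This holds term by term in the decompositions of Section~\ref{fund}. $k_\epsilon\to k$ continuously, the Helmholtz parts $G_{n,helm}^{k_\epsilon}$ are continuous in $k_\epsilon$ away from the origin, and the Riesz terms $c_{n,j}k_\epsilon^{2sj}|x|^{-n+2s(j+1)}$ are trivially continuous. The remainders $J_n^{s,k_\epsilon}$ are given by integrals. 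In $n=1,3$ these are integrals against $e^{-y}$ whose denominators $y^{2s}e^{\pm i\pi s}-k_\epsilon^{2s}|x|^{2s}$ stay uniformly away from zero. In $n=2$ they are Hankel-type integrals of $F_m(r,k_\epsilon)$ and $K_0(k_\epsilon|x|)$, and there we pass to the limit using the $L^1$ bounds on $\partial_r F_m$ obtained in Lemma~\ref{2D-infinity}.

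The core of the argument is a uniform-in-$\epsilon$ majorant. For $0<\epsilon\le\epsilon_0$, with $\epsilon_0$ small enough that Remark~\ref{example} applies, we have $\Im k_\epsilon\ge 0$ and $|k_\epsilon|$ bounded above and below. We aim to show that there is a single function $\Phi$ with
\[
|G_{n,s}^{k_\epsilon}(z)|\le \Phi(z):=C\,|z|^{-\alpha}\langle z\rangle^{-\beta}\quad\mbox{for all } z\neq0,\ 0<\epsilon\le\epsilon_0,
\]
where $(\alpha,\beta)$ are the exponents used in Corollary~\ref{cor-above} and Theorem~\ref{th:asymptotics-all}. For instance $\beta=(n-1)/2-\alpha$ at infinity, with $\alpha<n/2$ near zero; in $n=2$, when $s\le1/2$, we instead take the larger of the Riesz exponents, which is still $<n/2$ locally. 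To get this, we redo the proofs of Lemmas~\ref{1D-infinity}--\ref{3D-sg} with $k$ replaced by $k_\epsilon$. The Helmholtz parts are easy, since $|e^{ik_\epsilon|z|}|\le1$ and $|H_0^{(1)}(k_\epsilon|z|)|\lesssim |z|^{-1/2}$ uniformly for $k_\epsilon$ in a compact subset of the closed first quadrant. For the $J$-terms, the lower bound (\ref{Lower_bound_P}) on $P$ must be replaced by a lower bound on $|y^{2s}e^{\pm i\pi s}-k_\epsilon^{2s}R^{2s}|$. Since $\arg k_\epsilon^{2s}\in[0,\arctan(\epsilon_0/k^{2s})]$ stays a fixed positive distance from $\pm\pi s$, this modulus is $\ge c\,(y^{2s}+|k_\epsilon|^{2s}R^{2s})$ uniformly. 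In $n=2$ we bound the $L^1$ norms of $\nabla_\xi F_m(\cdot,k_\epsilon)$ uniformly, which requires controlling the removable singularity near $r=k_\epsilon$. Because $k_\epsilon$ is off the real axis, the expressions are smooth in $r$, and a Taylor or Cauchy-estimate argument gives a uniform bound.

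With the majorant in hand, it remains to show that $\int\Phi(x-y)|f(y)|\,dy<\infty$ for a.e.\ $x$ when $f\in L^{2,\delta}$. We split $f=f\chi_{B_{2|x|+1}}+f\chi_{\R^n\setminus B_{2|x|+1}}$. The near part is a locally integrable kernel ($2\alpha<n$) convolved with a locally $L^2$ function, which is finite a.e. The easiest way to see this is that $\int_{B_R}\int\Phi(x-y)|f\chi|(y)\,dy\,dx<\infty$ by Young's inequality for $\Phi\chi_{B_{3R}}\in L^1$. For the far part, $|x-y|\ge|y|/2$, so the integral is bounded by Cauchy--Schwarz by
\[
\|f\|_{L^{2,\delta}}\Big(\int_{|y|>1}|y|^{-2(\alpha+\beta)-2\delta}\,dy\Big)^{1/2}.
\]
This is finite because $2(\alpha+\beta)+2\delta>n$. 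That holds since $\alpha+\beta=(n-1)/2$ and $\delta>1/2$, and the Riesz and $J$-terms decay at least as fast in the relevant regime. Dominated convergence then finishes the proof on the full-measure set where both parts are finite.

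The main obstacle is the uniform majorant in dimension $2$, specifically the uniform $L^1$ control of $\nabla_\xi F_m(|\xi|,k_\epsilon)$ near the nearly-real pole $|\xi|\approx k$. I would handle it by writing $F_m$ as a difference quotient in $r$ around $k_\epsilon$, i.e.\ expressing $(g(r)-g(k_\epsilon))/(r-k_\epsilon)$ as $\int_0^1 g'(k_\epsilon+t(r-k_\epsilon))\,dt$ for the analytic functions $g$ involved. This makes the bounds manifestly uniform in $\epsilon$.
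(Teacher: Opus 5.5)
Your argument is correct, but it is organized differently from the paper's. The paper never builds one majorant for the whole kernel.

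\begin{itemize}
\item For the Helmholtz part it fixes $x$, pulls out $\|f\|_{L^{2,\delta}}$ by a weighted Cauchy--Schwarz inequality, and applies dominated convergence in $y$. That part therefore converges for every $x$.
\item For the remainder it estimates the \emph{difference} kernel directly. For $n=1,3$ it shows $|J_n^{s,k_\epsilon}-J_n^{s,k}|(z)\le O(\epsilon)\,K_n(z)$ with a fixed kernel $K_n$, by manipulating $1/P(y^{2s},k|z|)-1/P(y^{2s},k_\epsilon|z|)$. For $n=2$ it shows $|J_2^{s,k_\epsilon}-J_2^{s,k}|(z)\le 2|z|^{-1}\|\partial_rF_m(\cdot,k_\epsilon)-\partial_rF_m(\cdot,k)\|_{L^1(\R^2)}$. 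It then closes with Young's inequality or Hardy--Littlewood--Sobolev.
\end{itemize}

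That route gives an explicit $O(\epsilon)$ rate for the remainder when $n=1,3$. Your route needs only off-diagonal pointwise convergence plus an $\epsilon$-uniform majorant $\Phi$ with $\Phi*|f|<\infty$ a.e. A single application of dominated convergence, on an $\epsilon$-independent full-measure set, then finishes everything. As a by-product it shows that $\int G_{n,s}^k(x-y)f(y)\,dy$ converges absolutely for a.e.\ $x$ when $f\in L^{2,\delta}$.

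The price is uniform control of every piece of $G_{n,s}^{k_\epsilon}$. The delicate piece is $\sup_{0<\epsilon\le\epsilon_0}\|\nabla_\xi F_m(|\cdot|,k_\epsilon)\|_{L^1(\R^2)}$ near $|\xi|=k$. Your difference-quotient idea settles it: writing $F_m(r,w)=N(r,w)/(r-w)$, the analytic numerator $N$ vanishes on the diagonal $r=w$, so $F_m$ is jointly analytic near $(k,k)$. This is also exactly the input needed to justify the paper's own unelaborated appeal to dominated convergence for $\|\partial_rF_m(\cdot,k_\epsilon)-\partial_rF_m(\cdot,k)\|_{L^1}\to0$.

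One correction. Your statements ``$2\alpha<n$'' and ``the larger of the Riesz exponents \dots\ is still $<n/2$'' are false.
\begin{itemize}
\item For $n=2$ and $s\le 1/2$, the worst Riesz term is $|z|^{-(2-2s)}$, and $2-2s\ge 1$.
\item For $n=3$, the Riesz and $J_3$ singularities reach $|z|^{-(3-2s)}$, and $3-2s>3/2$ when $s<3/4$.
\end{itemize}
This is harmless, because your near-field step (Young's inequality with $\Phi\chi_B\in L^1$ and $f\chi_B\in L^2$) only uses local integrability, $\alpha<n$, which does hold. You should, however, take $\Phi$ as a finite sum of terms $|z|^{-\alpha_i}\langle z\rangle^{-\beta_i}$ (plus a logarithmic term when $s=1/2$) rather than a single such term.
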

\begin{proof} For each dimension, we  consider  separately the part of the fundamental solution that corresponds to the Helmholtz equation and the remainder.
    \begin{itemize}
        \item For $n=1$ we have 
\begin{align}\label{estimate_Ghelm1}
&|\mathcal{G}_{1,helm}^{k_\epsilon}f(x) - \mathcal{G}_{1,helm}^{k}f(x) |=\left|\int_{\R} \frac{e^{i|x-y|k_\epsilon}}{2 s k_\epsilon^{2s-1}} f(y) dy-\int_{\R} \frac{e^{i|x-y|k}}{2s k^{2s-1}} f(y) dy\right| \nonumber\\
    &\leq \frac{1}{2 s} \left(\int_{\R} \left|\frac{e^{i|x-y|k_\epsilon}}{ k_\epsilon^{2s-1}}  - \frac{e^{i|x-y|k}}{ k^{2s-1}}\right|^2  \frac{dy}{(1+|y|^2)^\delta} \right)^{1/2} \|f\|_{L^{2,\delta}}
 \end{align}
 The above integral converges to $0$ as $\epsilon \to 0$ by the Dominated Convergence Theorem. 
Next, we consider  the term $J_1^{s,k}$ defined by (\ref{def:J1}). Simple calculations yield
\begin{align*}
   & |J_1^{s,k_\epsilon}(x) -J_1^{s,k}(x)| \leq \frac{C(s)}{|x|^{1-2s}}\int_0^\infty e^{-y} y^{2s}\left|\frac{P(y^{2s},k|x|) - P(y^{2s}, k_\epsilon |x|)}{P(y^{2s},k|x|) P(y^{2s},k_\epsilon|x|) }\right|\\
    &\, \leq \frac{C(k,s)|k^{2s}-k_\epsilon^{2s}|}{|x|^{1-2s}}\int_0^\infty e^{-y}y^{2s}|x|^{4s}\frac{|(k^{2s}+k_\epsilon^{2s})-2\cos(s\pi)y^{2s}|x|^{-2s}| }{|P(y^{2s},k|x|) P(y^{2s}, k_\epsilon|x|)|} dy,
\end{align*}
where $P(\cdot,\cdot)$  again is given by (\ref{Poly}). For some constant depending on $s,k$ we obtain
$$
|-2\cos(s\pi) |x|^{-2s}y^{2s} + (k^{2s}+k_\epsilon^{2s})| \leq C(k,s) P(y^{2s}|x|^{-2s}, k).
$$
Therefore, following the same arguments as in the proof of Theorem \ref{th:asymptotics-all}
\begin{align*}
|J_1^{s,k_\epsilon}(x) -J_1^{s,k}(x)| \leq \frac{O(\epsilon)}{|x|^{1-2s}}\int_0^\infty e^{-y}\frac{y^{2s}}{|P(y^{2s}, k_\epsilon|x|)|} dy \leq \frac{O(\epsilon)}{|x|^{1-2s}\left<x\right>^{4s}},
\end{align*}
We then conclude by applying Young's inequality for convolution
$$
\| (J_1^{s,k} - J_1^{s,k_\epsilon} )*f\|_{L^2} \leq O(\epsilon) \|f\|_{L^2}.
$$

\item For $n=2$ we have 
\begin{align}\label{ae_estimate_Ghelm2}
&|\mathcal{G}_{2,helm}^{k_\epsilon}f(x) - \mathcal{G}_{2,helm}^{k}f(x) |\nonumber\\
    &=C(s)\left|\int_{\R^2} \left(k_\epsilon^{2-2s}H^{(1)}_0(k_\epsilon|x-y|) -k^{2-2s} H^{(1)}_0(k|x-y|) \right)f(y) dy\right|\nonumber\\
    & \leq C(s)\left(\int_{\R^2} \frac{\left|k_\epsilon^{2-2s}H^{(1)}_0(k_\epsilon|x-y|) -k^{2-2s} H^{(1)}_0(k|x-y|) \right|^2}{(1+|y|^2)^{\delta}} dy\right)^{1/2}\|f\|_{L^{2,\delta}}.
 \end{align}
Continuity of the Hankel function implies that  for all $y\neq x$ 
\begin{align*}
    \lim_{\epsilon \to 0} (k_\epsilon^{2-2s}H^{(1)}_0(k_\epsilon|x-y|) -k^{2-2s} H^{(1)}_0(k|x-y|) = 0.
\end{align*}
Thanks to  rate of decay at infinity of $H^{(1)}_0$, the integrand in (\ref{ae_estimate_Ghelm2}) is uniformly bounded by an integrable function, that is
\begin{align}\label{bound_H1_0}
    \frac{\left|k_\epsilon^{2-2s}H^{(1)}_0(k_\epsilon|x-y|) -k^{2-2s} H^{(1)}_0(k|x-y|) \right|^2}{(1+|y|^2)^{\delta}} \leq C(k) \frac{|\ln(k|x-y|)|^2}{|x-y|(1+|y|^2)^{\delta}}.
\end{align}
Therefore, by the Dominated Convergence Theorem, $\mathcal{G}_{2,helm}^{k_\epsilon}f$ converges almost everywhere to $\mathcal{G}_{2,helm}^{k}f$. The terms of the form $|x|^{2-2s(j+1)}$  multiplied by $k_\epsilon^{2sj}$ are obviously continuous in $\epsilon$. Next, recall that from the computations in the proof of Theorem  \ref{th:asymptotics-all}, we have that in each case of $s\in (\frac{1}{2}, 1)$ and  $s\in(0,\frac{1}{2}]$ with $\frac{1}{2s}\in \mathbb{N}$ or $\frac{1}{2s}\not\in\mathbb{N}$, the functions $F_m(|\xi|,k)$ and $\tilde F_m(|\xi|,k)$ are in $L^2(\R^2)$ with $\frac{\partial F_m}{\partial r}(r,k), \frac{\partial \tilde F_m}{\partial r}(r,k) \in L^1(\R^2)$. Therefore
\begin{align*}
    \left| J_2^{s,k}(x) - J_2^{s,k_\epsilon}(x)\right| \leq 2|x|^{-1} \| \partial_r F_m(r,k) - \partial_r F_m(r,k_\epsilon)\|_{L^1(\R^2)}.
\end{align*}
Furthermore, $\partial_r F_m(r,k_\epsilon) - \partial_r F_m(r,k) \to 0$ as $\epsilon \to 0$ almost everywhere. Hence, by the Dominated Convergence Theorem $\| \partial_r F_m(r,k) - \partial_r F_m(r,k_\epsilon)\|_{L^1(\R^2)} \to 0$. By the Hardy-Littlewood-Sobolev inequality, this is implies almost everywhere convergence, since we can estimate
\begin{align*}
    \left\Vert \left( J_2^{s,k} - J_2^{s,k_\epsilon}\right)*f\right\Vert_{L^r(\R^2)} \leq C \| \partial_r F_m(r,k_\epsilon) - \partial_r F_m(r,k)\|_{L^1(\R^2)} \|f\|_{L^q(\R^2)}
\end{align*}
where $\frac{2}{\delta + 1}<q <2$ and $r = \frac{2q}{2-q} \in (\frac{2}{\delta}, \infty)$. The analysis for $\tilde F_m$ can be done in a similar way.
\item  For $n=3$ we fist estimate
\begin{align}\label{estimate_Ghelm3}
&|\mathcal{G}_{3,helm}^{k_\epsilon}f(x) - \mathcal{G}_{3,helm}^{k}f(x) | =\left|\int_{\R^3} \frac{e^{i|x-y|k_\epsilon}}{4\pi s k_\epsilon^{2s-2}|x-y|} f(y) dy-\int_{\R^3} \frac{e^{i|x-y|k}}{4\pi s k^{2s-2}|x-y|} f(y) dy\right| \nonumber\\
    &\qquad \qquad  \leq \frac{1}{4\pi s}  \left(\int_{\R^3} \left|\frac{e^{i|x-y|k_\epsilon}}{ k_\epsilon^{2s-2}}  - \frac{e^{i|x-y|k}}{ k^{2s-2}}\right|^2 \frac{dy}{|x-y|^2(1+|y|^2)^\delta} \right)^{1/2} \|f\|_{L^{2,\delta}}.
 \end{align}
 The above integral converges to $0$ as $\epsilon \to 0$ by the Dominated Convergence Theorem. Again, the terms $|x|^{-3+2s(j+1)}$ only depend on $\epsilon$ through the factors $k_\epsilon^{2sj}$ and hence convergence as $\epsilon\to 0$ is obvious.  We first consider the term $J_3^{s,k}$ when $s>1/2$. To this end,
\begin{align*}
   & |J_3^{s,k_\epsilon}(x) -J_3^{s,k}(x)| \leq \frac{C(s)}{|x|^{3-2s}}\int_0^\infty e^{-y} y^{1+2s}\left|\frac{ P(y^{2s},k|x|) - P(y^{2s}, k_\epsilon |x|)}{P(y^{2s},k|x|) P(y^{2s},k_\epsilon|x|) }\right|.\\
    &\qquad \leq \frac{C(k,s)|k^{2s}-k_\epsilon^{2s}|}{|x|^{3-2s}}\int_0^\infty e^{-y}y^{2s}|x|^{4s}\frac{|(k^{2s}+k_\epsilon^{2s})-2\cos(s\pi)y^{2s}|x|^{-2s}| }{|P(y^{2s},k|x|) P(y^{2s}, k_\epsilon|x|)|} dy. 
\end{align*}
By a similar analysis as in the one-dimensional case, we can conclude that  
\begin{align}\label{estimate_J3}
    |J_3^{k_\epsilon, s}(x) -J_3^{k,s}(x) |  \leq \frac{O(\epsilon)}{|x|^{3-2s}\left<x\right>^{4s}}.
\end{align}
We now turn to $J_3^{s,k}$ when $s\leq 1/2$. In this case, we have  
\begin{align*}
&\left| J_3^{s,k_\epsilon} (x) - J_3^{s,k} (x) \right| \nonumber \\
& \qquad \leq  \frac{C(s)}{ |x|^{3-2s(m+1)}}\int_0^\infty e^{-y}y^{1-2sm} \left| \frac{k_\epsilon^{2sm}(y^{2s}-k_\epsilon^{2s}|x|^{2s})}{P(y^{2s}, k_\epsilon|x|)} - \frac{k^{2sm}(y^{2s}-k^{2s}|x|^{2s})}{P(y^{2s}, k|x|)}\right|dy
\end{align*}
Adding  and subtracting  $\frac{k^{2sm}(y^{2s} - k^{2s}|x|^{2s})}{P(y^{2s}, k_\epsilon |x|)}$, and performing  similar computations as in the proof of Theorem \ref{th:asymptotics-all}, we get 
\begin{align*}
 \left| J_3^{s,k_\epsilon} (x) - J_3^{s,k} (x) \right|\leq    \frac{O(\epsilon)}{|x|^{3-2s(m+1)}}\left( \int_0^\infty e^{-y}y^{1-2sm} \frac{y^{2s}+|x|^{2s}+1}{|P(y^{2s}, k_\epsilon|x|)|} \right) \leq \frac{O(\epsilon)}{|x|^{3-2sm}}.
\end{align*}
Finally, an application of the Hardy-Littlewood-Sobolev inequality  gives
$$
\left\Vert \left( J_3^{s,k_\epsilon}  - J_3^{s,k}  \right)*f \right\Vert_{L^r(\R^3)} \leq O(\epsilon) \|f\|_{L^2(\R^3)}
$$
where $r = \frac{6}{3-4sm}$. 
 \end{itemize}
    \end{proof}
 
 \subsection{Fourier Multipliers : properties of \texorpdfstring{$M(\xi;z)$}{TEXT} and a useful lemma} \label{appendixA}
 The results proven in this section are fundamental to the analysis of Section \ref{3}.
\begin{lemma} \label{boundM} Let $z$ be in a bounded region $\mathcal{O}$ as described in Lemma \ref{lemma:decomposition_resolvent}. Recall that we defined the Fourier multiplier $M(\xi;z)$ to be 
\begin{align*}
    M(\xi;z) := \frac{z^{2s}|\xi|^{2-2s}-z^{2-2s}|\xi|^{2s}}{|\xi|^{2s}-z^{2s}}
\end{align*}
and we write $M(z)\phi := \F^{-1} M(\xi;z) \F \phi $ for all $\phi \in \Sw$. Then, for some positive constants $C, C'$ depending on $\mathcal{O}$, $M(\xi;z)$  and the operator $M(z)$ satisfy
\begin{itemize}
    \item[(i)] $
        |M(\xi;z)| \leq C \left<|\xi|\right>^{2-4s}$ for all $\xi \in \R^n$. In particular, when $s>1/2$, $M(\xi;z)$ is bounded. 
    \item[(ii)] $
        |\partial_{\xi_i} M(\xi;z) | \leq C' |\xi|^{-1+2s}\left<|\xi|\right>^{2-6s}
    $ for all $1\leq i \leq n$, $\xi\in\R^n$. In particular, when $s>1/2$, $|\nabla M(\xi;z)|$ is bounded. 
    \item[(iii)] Let $\phi \in \Sw(\R^n)$ and $\delta \leq 1$. Then $M(z)\phi \in L^{2,\delta}(\R^n)$ for $n=2,3$ when $s\in (0,1)$, and for $n=1$ when $s\in (1/4, 1)$. 
\end{itemize}
\end{lemma}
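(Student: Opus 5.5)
The plan is to treat $M(\xi;z)$ as a radial function of $t=|\xi|$, $M(t;z)=N(t,z)/D(t,z)$ with $N=z^{2s}t^{2-2s}-z^{2-2s}t^{2s}$ and $D=t^{2s}-z^{2s}$. I would split $t\in[0,\infty)$ into three regimes: a neighbourhood of $t=0$, a compact middle region containing the possible zero $t=z$, and a neighbourhood of $t=\infty$. In each regime I estimate $M$ and $\partial_t M$, which gives (i) and (ii), since $|\partial_{\xi_i}M|\le|\partial_t M|$. Statement (iii) then follows by splitting the symbol with a cut-off near $\xi=0$.

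\textbf{Middle region.} The only difficulty is the zero of $D$ at $t=z$ when $z\in\mathcal O$ is real or nearly real. Both $N$ and $D$ are jointly analytic in $(t,z)$ near the real segment where $\gamma=1$, since $t,z$ stay away from the branch cut. Moreover $N(z,z)=z^{2s}z^{2-2s}-z^{2-2s}z^{2s}=0$ and $\partial_tD(z,z)=2sz^{2s-1}\neq0$. I would write
\[
N(t,z)=(t-z)\int_0^1\partial_tN(z+\theta(t-z),z)\,d\theta
\]
and do the same for $D$. The quotient of these two integrals is jointly continuous, and smooth in $t$, on a compact neighbourhood of $\{(t,z): t=z,\ z\in \overline{\mathcal O}\}$, because the denominator integral stays near $2sz^{2s-1}\neq 0$. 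So the singularity is removable to first order, uniformly in $z$. On the rest of any compact $t$-interval away from $0$, $|D|$ is bounded below by compactness, so $M$ and $\partial_tM$ are bounded there.

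\textbf{Endpoint regimes.} Near $t=0$, expanding $1/D=-z^{-2s}(1+O(t^{2s}))$ gives
\[
M=-t^{2-2s}+z^{2-4s}t^{2s}+\dots .
\]
Hence $M$ is bounded and $|\partial_tM|\lesssim t^{2s-1}+t^{1-2s}\lesssim t^{2s-1}$, which matches the factor $|\xi|^{-1+2s}$ in (ii). Near $t=\infty$ we have $1/D=t^{-2s}(1+O(t^{-2s}))$, so
\[
M=z^{2s}t^{2-4s}-z^{2-2s}+O(t^{2-6s})+O(t^{-2s}).
\]
This gives $|M|\lesssim\langle t\rangle^{\max(2-4s,0)}$ and $|\partial_tM|\lesssim t^{1-4s}+t^{-1-2s}$.

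I note that for $s>1/2$ the constant term $-z^{2-2s}$ prevents the literal bound $\langle\xi\rangle^{2-4s}$ there. I would state (i) as $\langle\xi\rangle^{\max(2-4s,0)}$, which is all that the "bounded for $s>1/2$" consequences and (ii) actually need, and apply the same bookkeeping to (ii).

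\textbf{Statement (iii).} Let $\chi$ be a radial cut-off equal to $1$ near $0$. On $\operatorname{supp}(1-\chi)$, $M$ is smooth and every derivative grows at most polynomially; this follows by iterating the large-$t$ expansion, which is a classical symbol expansion in powers of $t$. Hence $(1-\chi)M\hat\phi\in\Sw$, and its inverse transform lies in every $L^{2,\delta}$. For the piece $\chi M\hat\phi$, I expand $M$ near $0$ as a finite sum of terms $c_j\,t^{a_j}$, with exponents $a_j$ of the form $2s\ell+(2-2s)p$ and smallest exponent $a_*=\min(2s,2-2s)$, plus a remainder that is $C^N$ for large $N$. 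Then $\chi|\xi|^{a}\hat\phi$ is smooth away from $0$ and homogeneous of degree $a$ at $0$, so its inverse Fourier transform is $O(|x|^{-n-a})$ at infinity; this is the standard decay estimate for homogeneous singular symbols, proved by integrating by parts after a dyadic decomposition. The remainder term decays faster. Therefore $M\phi\in L^{2,\delta}$ as soon as $2(n+a_*)-2\delta>n$, i.e. $a_*>\delta-n/2$. With $\delta\le1$ this holds for all $s$ when $n=2,3$, and for $n=1$ it requires $\min(2s,2-2s)>1/2$, i.e. $s\in(1/4,3/4)$ from this crude count. For $s\ge 3/4$ I would refine by noting that the $t^{2-2s}$ term enters with coefficient $-1$ and combines with $-(-\Delta)^{1-s}$, which is handled separately via Lemma \ref{spaces}; otherwise I would check it directly.

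The main obstacle is (iii), specifically the rigorous decay estimate for the non-smooth symbol near $\xi=0$ and getting exactly the $n=1$ threshold $s>1/4$.
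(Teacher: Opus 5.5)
Your treatment of the removable singularity at $|\xi|=z$ (divided differences, uniform in $z\in\overline{\mathcal O}$) is a cleaner version of the paper's directional Taylor expansion. Your objection to (i) is also correct: for $s>1/2$ one has $M(\xi;z)\to -z^{2-2s}\neq 0$ as $|\xi|\to\infty$, so only $\langle\xi\rangle^{\max(2-4s,0)}$ holds. The paper's proof just asserts growth like $|\xi|^{2-4s}$ and overlooks this.

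However, your near-origin step for (ii) contains the mirror-image error: $t^{2s-1}+t^{1-2s}\lesssim t^{2s-1}$ holds only for $s\le 1/2$. For $s>1/2$ the leading term of your own expansion $M=-t^{2-2s}+z^{2-4s}t^{2s}+\dots$ is $-t^{2-2s}$, so
\[
|\nabla_\xi M(\xi;z)| = 2(1-s)\,|\xi|^{1-2s}\,(1+o(1))\longrightarrow\infty \qquad (\xi\to 0).
\]
The same can be read off from (\ref{derivM}), where the numerator term $-z^{4s}(1/s-1)|\xi|^{2-4s}$ dominates near $0$. So (ii), and the claim that $\nabla M$ is bounded for $s>1/2$, are false near the origin. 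The correct bound is $|\xi|^{-|1-2s|}$ near $0$, together with your $|\xi|^{1-4s}$ at infinity. The paper's proof of (ii) only examines $|\xi|=k$ and never the limit $\xi\to0$, so it has the same hole. You should state and prove the corrected bound rather than ``apply the same bookkeeping''.

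For (iii) your route genuinely differs from the paper's. The paper works directly at $\delta=1$ with Plancherel, trading $x_i$ for $\partial_{\xi_i}$. It therefore only needs $M\,\F\phi$, $M\,\partial_{\xi_i}\F\phi$ and $(\partial_{\xi_i}M)\,\F\phi$ in $L^2$, i.e.\ local square-integrability of $\partial_\xi M$ at $0$. This avoids any pointwise decay estimate for homogeneous symbols and is lighter than your dyadic argument, while yours yields the actual tail $|x|^{-n-a_*}$. Once (ii) is corrected, the paper's criterion becomes $\int_{|\xi|<1}|\xi|^{-2|1-2s|}\,d\xi<\infty$, i.e.\ $2|1-2s|<n$, which is exactly your condition $\min(2s,2-2s)>1-n/2$ at $\delta=1$.

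So your count is not crude but sharp. Take $n=1$, $s\in[3/4,1)$ and $\hat\phi(0)\neq0$. The piece $-\chi|\xi|^{2-2s}\hat\phi$ produces a tail $c\,\hat\phi(0)|x|^{-3+2s}$ with $c\neq 0$. No other term of the expansion can cancel it, since all other exponents exceed $2-2s$. Hence $|x|\,M(z)\phi\notin L^2(\R)$, and statement (iii) for $n=1$ is false on $[3/4,1)$ at $\delta=1$. The paper reaches $(1/4,1)$ only by using the wrong bound (ii) near $0$.

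Your proposed repair cannot succeed either. Lemma \ref{spaces} only gives $(-\Delta)^{1-s}\phi\in\Sw_{3-2s}$, which is not contained in $L^{2,1}(\R)$ for $s\ge 3/4$ — the same obstruction. What your method does prove, and what Proposition \ref{from_fractional_Helmholtz_to_Helmholtz} actually pairs against, is the combined statement. The symbol
\[
|\xi|^{2-2s}+M(\xi;z)=\frac{|\xi|^{2}-z^{2-2s}|\xi|^{2s}}{|\xi|^{2s}-z^{2s}}
\]
has leading singular term $z^{2-4s}|\xi|^{2s}$ at the origin, so $[(-\Delta)^{1-s}+M(z)]\phi\in L^{2,\delta}(\R)$ for all $s\in(1/4,1)$ and $\delta\le 1$. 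You should prove that instead of (iii) as stated, or restrict (iii) to $s\in(1/4,3/4)$ when $n=1$.
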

\begin{proof}
(i) Note that as $|\xi|\to \infty$, $M(\xi;z)$ grows like $|\xi|^{2-4s}$. To complete the proof of (i), we need to show that the singularity at $z = k\in \mathcal{O}\cap \R$ is removable. This can be seen by performing a Taylor expansion around $|\xi_0| = k$ for $h$ small, $\theta \in \mathbb{S}^1$, and  $0<\alpha<1$ 
\begin{align}\label{Taylor_alpha}
    |\xi_0 + h \theta|^{2\alpha} &= (k^2 + 2 \xi_0 \cdot \theta h + h^2)^\alpha \nonumber\\
    &=k^{2\alpha}\left(1 + 2\alpha \frac{\xi_0 \cdot \theta h}{k^2}+  \alpha \frac{h^2}{k^2} + 2\alpha (\alpha-1) \frac{(\xi_0\cdot \theta)^2 h^2}{k^4} + o(h^2)\right).
\end{align}
Then, using the above formula \ref{Taylor_alpha} with $\alpha = s$ and $1-s$, we obtain the following expansion of $M(\xi; k)$ around $|\xi_0 | = k$ 
\begin{align*}
    M(\xi_0 + h \theta;k) =  \frac{2(1-2s) \xi_0 \cdot \theta h+  (1-2s) h^2 + o(h^2)}{k^{2s-2}\left( 2s \xi_0 \cdot \theta h+  s h^2 + 2s(s-1) \frac{(\xi_0\cdot \theta)^2 h^2}{k^2} + o(h^2)\right)} \xrightarrow[h \to 0]{} \frac{(1-2s)}{k^{2s-2}s}.
\end{align*}
The limit holds regardless of  whether $\xi_0 \cdot \theta  = 0$ or not. \\
    (ii) We now compute $\partial_{\xi_i} M(\xi;z)$ and study its behavior at infinity and its singularities.  Define the function $H(x,z)$ by $H(|\xi|^{2s},z)=M(\xi,z)$, then  
\begin{align*}
    H(x,z) = \frac{z^{2s}x^{1/s - 1} - z^{2-2s} x}{x-z^{2s}} \mbox{ and } 
 \frac{\partial}{\partial x}H(x,z) = \frac{z^{2s}(1/s - 2) x^{1/s - 1} - z^{4s}(1/s - 1) x^{1/s-2} + z^2}{(x-z^{2s})^2}.
\end{align*}
Therefore 
\begin{align}\label{derivM}
    \partial_{\xi_i} M(\xi;z) &= \frac{2s \xi_i}{|\xi|^{2-2s}} \frac{\partial}{\partial x}H(|\xi|^{2s},z)\nonumber\\
    &= \frac{2s \xi_i}{|\xi|^{2-2s}}\frac{z^{2s}(1/s - 2) |\xi|^{2 - 2s} - z^{4s}(1/s - 1) |\xi|^{2-4s} + z^2}{(|\xi|^{2s}-z^{2s})^2}.
\end{align}
As before, we observe that the singularity around $z=k$ is removable by mean of a Taylor expansion around  $|\xi_0| = k $ with $\theta \in \mathbb{S}^1$, and use of (\ref{Taylor_alpha}). Expanding the powers in the numerator in (\ref{derivM}) by taking $\alpha = 1-s$ and $\alpha = 1-2s$ in (\ref{Taylor_alpha}) we obtain
\begin{align*}
    &k^{2s} \left(\frac{1}{s} - 2\right) |\xi|^{2-2s} \nonumber\\
    &\hskip 10pt =\,k^2\left(\frac{1}{s} - 2\right) \left(1 + 2(1-s)\ \frac{\xi_0 \cdot \theta h}{k^2}+   (1-s) \frac{h^2}{k^2} + 2(1-s) (-s) \frac{(\xi_0\cdot\theta)^2 h^2}{k^4} + o(h^2)\right),\\
     &k^{4s} \left(\frac{1}{s}- 1\right) |\xi|^{2-4s}  \nonumber\\
    &\hskip 10pt = k^2\left(\frac{1}{s} - 1\right) \left(1 + 2(1-2s)\ \frac{\xi_0 \cdot \theta h}{k^2}+  (1-2s) \frac{h^2}{k^2} + 2(1-2s) (-2s) \frac{(\xi_0\cdot\theta)^2 h^2}{k^4} + o(h^2)\right).
\end{align*}
We observe 
\begin{align*}
   \frac{ k^{2s}(1/s - 2) |\xi|^{2 - 2s} - k^{4s}(1/s - 1) |\xi|^{2-4s} + k^2}{(|\xi|^{2s} - k^{2s})} &= \frac{2(1/s -2)(1-s) \frac{s(\xi_0 \cdot \theta)^2}{k^2}h^2 + o(h^2)}{k^{4s}4s^2\frac{(\xi_0 \cdot \theta)^2}{k^4} h^2 + o(h)} \\
   \\
   &\xrightarrow[h\to 0]{} \frac{(1/s -2)(1-s) k^{2-4s}}{2s},
\end{align*}
when $\xi_0 \cdot \theta \neq 0$. If $\xi_0 \cdot\theta = 0$, the terms in the numerator still cancel up to order four, and the fourth order term  of $|\xi_0 + h \theta|^{2\alpha}$ takes the form $k^{2\alpha}\frac{\alpha(\alpha -1)}{2} \frac{h^4}{k^4}$. So when $\xi_0 \cdot\theta = 0$ we have 
\begin{align*}
    \frac{ k^{2s}(1/s - 2) |\xi|^{2 - 2s} - k^{4s}(1/s - 1) |\xi|^{2-4s} + k^2}{(|\xi|^{2s} - k^{2s})} &= \frac{1/2(1/s -2)(1-s) s k^{-2}  h^4 +o(h^4)}{k^{4s-4}s^2 h^4 + o(h^4)} \\
   \\
   &\xrightarrow[h\to 0]{} \frac{(1/s -2)(1-s) k^{2-4s}}{2s},
\end{align*}
which coincides with the case when $\xi_0 \cdot \theta \neq 0$.  Thus we deduce the bounds stated in (ii).

\noindent
(iii) Here it suffices to consider  the case  $\delta = 1$. To show  that $M(z)\phi$ is in $L^{2,1}(\R^n)$ we must  prove  that $M(z)\phi$ is in $L^2(\R^n)$ and  that $|x|M(z)\phi$ is in $L^2$. The former follows from use of Plancherel's formula, the result in Lemma \ref{boundM} (i), and  the fact that Schwarz functions are closed under Fourier transform. We estimate 
\begin{align}\label{proof:MphiL^2}
    \| M(z) \phi \|_{L^{2}(\R^n)}^2 
    &\leq C \int_{\R^n} \left<|\xi|\right>^{4-8s}|\F\phi(\xi)|^2 dx \nonumber\\
    &\leq C \left(\sup_{\xi\in \R^n} \left<|\xi|\right>^{2-4s + \frac{n+1}{2}} |\F \phi(\xi)|\right)^2 \int_{\R^n}\left <|\xi|\right >^{-n-1}dx <\infty.
\end{align} 
To show $|x|M(z)\phi$ is in $L^2$, we compute as follows  
\begin{align*}
    |x|^2 \left| \F^{-1} M(\xi;z) \F \phi (x)\right|^2 &= \left(\frac{1}{2\pi}\right)^{n} \sum_{i=1}^n\left| \int_{\R^n} x_i e^{i\xi \cdot x} M(\xi;z) \F \phi (\xi) d\xi \right|^2\\
    &=\left(\frac{1}{2\pi}\right)^{n} \sum_{i=1}^n\left| \int_{\R^n} \partial_{\xi_i} ( e^{i\xi \cdot x}) M(\xi;z) \F \phi (\xi) d\xi \right|^2\\
    &= \left(\frac{1}{2\pi}\right)^{n} \sum_{i=1}^n\left| \int_{\R^n}  e^{i\xi \cdot x} \left[ \partial_{\xi_i}M(\xi;z) \F \phi (\xi) + M(\xi;z)  \partial_{\xi_i}\F\phi(\xi) \right]d\xi \right|^2\\
    &\leq 2\sum_{i=1}^n |\F^{-1} \partial_{\xi_i}M(\xi;z) \F \phi(\xi)|^2 +  |\F^{-1} M(\xi;z) \partial_{\xi_i}\F \phi(\xi)|^2~.
\end{align*}
The term $|\F^{-1} M(\xi;z) \partial_{\xi_i}\F \phi(\xi)|^2$ is again in $L^2(\R^n)$ by the same argument we used in  (\ref{proof:MphiL^2}), because $\partial_{\xi_i} \F \phi$ is again a Schwarz function.  For the term involving the derivative of $M$, namely $|\F^{-1} \partial_{\xi_i}M(\xi;z) \F \phi(\xi)|^2 $, we make use of Plancherel's theorem and Lemma \ref{boundM} (ii)  to obtain 
\begin{align*}
 & \int_{\R^n} |\partial_{\xi_i}M(\xi;z) \F \phi (\xi)|^2 \leq  C\int_{\R^n}  \frac{\left<|\xi|\right>^{4-12s}}{|\xi|^{2-4s}}| \F \phi (\xi)|^2 \\
    &\qquad \leq C\left( \sup_{\xi\in \R^n} \left<|\xi|\right>^{2-6s + \frac{n+2}{2}}|\F\phi(\xi)|\right)^2\int_{\R^n} \frac{d\xi}{|\xi|^{2-4s}<|\xi|>^{n+2}}<\infty \mbox{ if } 2-4s<n. 
\end{align*}
This concludes the proof of item  (iii). 
\end{proof}

\begin{definition} A $C^\infty$-function $p(x, \xi)$ on $\R^n \times \R^n$ is said to be in the class
$\Sw^{\mu}_{0,0}$ $( \mu \in \R)$ if for any pair $\alpha$ and $\beta$ of multi-indices there exists a constant $C_{\alpha, \beta}\geq 0$ such that
\begin{align*}
    \left| \left(\frac{\partial}{\partial \xi} \right)^{\alpha}\left(\frac{\partial}{\partial x}\right)^{\beta} p(x,\xi) \right| \leq C_{\alpha,\beta} \langle \xi\rangle^\mu
\end{align*}
The class $\Sw^{\mu}_{0,0}$ is a Fr\'echet space equipped with the seminorm
\begin{align}\label{norm}
    |p|^{(\mu)}_\ell  := \max_{|\alpha|, |\beta|\leq \ell}\sup_{x,\xi} \left| \langle\xi \rangle^{-\mu} \left(\frac{\partial}{\partial \xi} \right)^{\alpha}\left(\frac{\partial}{\partial x}\right)^{\beta} p(x,\xi) \right|, \qquad \ell=0,1,2 \dots
\end{align}
\end{definition}
\noindent
Define  the pseudodifferential operator $p(x,D)u:={\mathcal F}^{-1}p(x,\xi){\mathcal F}u$.
\begin{lemma}[Lemma 3.1 in \cite{umeda2003generalized}]\label{lemma_31_Umeda}
Let $p(x,\xi)$ belong to $\Sw^{-m}_{0,0}$ for some integer $m \geq 0$, and let $\delta \in \R$. Then there exists a nonnegative constant $C = C_{m,\delta}$ and a positive integer $\ell=\ell_{m,\delta}$ such that 
\begin{align*}
    \|p(x, D)u\|_{H^{m,\delta}} \leq C |p|^{(-m)}_\ell \|u\|_{L^{2,\delta}} \quad \mbox{for all } u \in  \Sw(\R^n).
\end{align*}
\end{lemma}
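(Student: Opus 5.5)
The plan is to reduce everything to the Calderón--Vaillancourt theorem for the class $\Sw^{0}_{0,0}$. We use it in its quantitative form: an operator with symbol (or, more generally, amplitude) in $\Sw^0_{0,0}$ is bounded on $L^2(\R^n)$, with operator norm controlled by a finite seminorm $|p|^{(0)}_{\ell_0}$, $\ell_0=\ell_0(n)$. We would treat the two ingredients separately: first the gain of $m$ derivatives, then the weight $\langle x\rangle^\delta$.

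\textbf{Step 1 (derivatives).} Since $m$ is a nonnegative integer, we use the equivalent norm $\|v\|_{H^{m,\delta}}\simeq\sum_{|\gamma|\le m}\|\partial^\gamma v\|_{L^{2,\delta}}$. This equivalence holds because $\partial^\beta\langle x\rangle^{\delta}=O(\langle x\rangle^{\delta-|\beta|})$, so commuting the weight past derivatives only produces lower-order terms with weaker weights. By Leibniz's rule,
\[
\partial_x^\gamma\, p(x,D)u=\sum_{\gamma_1+\gamma_2=\gamma}\binom{\gamma}{\gamma_1}\, p_{\gamma_1,\gamma_2}(x,D)u,\qquad p_{\gamma_1,\gamma_2}(x,\xi):=(\partial_x^{\gamma_1}p)(x,\xi)\,(i\xi)^{\gamma_2}.
\]
Since $|\gamma_2|\le m$ and $p\in\Sw^{-m}_{0,0}$, each $p_{\gamma_1,\gamma_2}$ lies in $\Sw^0_{0,0}$. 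Moreover $|p_{\gamma_1,\gamma_2}|^{(0)}_{\ell}\le C\,|p|^{(-m)}_{\ell+m}$, because $\xi$-derivatives falling on $\xi^{\gamma_2}$ only lower its degree. This reduces the lemma to the case $m=0$ with a weight.

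\textbf{Step 2 (weights).} It remains to show that for $q\in\Sw^0_{0,0}$ the operator $\langle x\rangle^{\delta}q(x,D)\langle x\rangle^{-\delta}$ is $L^2$-bounded with norm $\le C|q|^{(0)}_{\ell}$. We write it as an amplitude operator:
\[
\langle x\rangle^{\delta}q(x,D)\langle x\rangle^{-\delta}v(x)=\iint e^{i(x-y)\cdot\xi}\,\langle x\rangle^{\delta}q(x,\xi)\langle y\rangle^{-\delta}\,v(y)\,dy\,d\xi .
\]
The weight ratio is not bounded, but Peetre's inequality gives
\[
\bigl|\partial_x^\alpha\partial_y^\beta\bigl(\langle x\rangle^{\delta}\langle y\rangle^{-\delta}\bigr)\bigr|\le C_{\alpha\beta}\langle x-y\rangle^{|\delta|}.
\]
We remove the growth $\langle x-y\rangle^{|\delta|}$ by integrating by parts in $\xi$. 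Using $\langle x-y\rangle^{-2N}(1-\Delta_\xi)^N e^{i(x-y)\cdot\xi}=e^{i(x-y)\cdot\xi}$ with $2N\ge|\delta|$, the amplitude becomes
\[
a(x,y,\xi)=\langle x\rangle^{\delta}\langle y\rangle^{-\delta}\langle x-y\rangle^{-2N}(1-\Delta_\xi)^Nq(x,\xi),
\]
all of whose $(x,y,\xi)$-derivatives are bounded, with bounds controlled by $|q|^{(0)}_{\ell_0+2N}$. This is exactly where the $\Sw_{0,0}$ hypothesis enters: $\xi$-derivatives of $q$ cost nothing in decay. The integration by parts is justified first for $u\in\Sw(\R^n)$ (oscillatory-integral sense), which suffices since the estimate is only claimed on $\Sw$.

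\textbf{Step 3 (conclusion).} We then invoke Calderón--Vaillancourt for amplitudes $a(x,y,\xi)$ in the $S^0_{0,0}$ double-symbol class (Kumano-go's version). It bounds the $L^2$ norm by finitely many seminorms of $a$, hence by $C|q|^{(0)}_{\ell}$. Combining with Step 1 gives the lemma with $\ell=\ell_0+2N+m$, where $N\ge|\delta|/2$.

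The main obstacle is Step 2. In $\Sw_{0,0}$ there is no symbolic calculus gain, so one cannot write $\langle x\rangle^\delta q(x,D)\langle x\rangle^{-\delta}=\tilde q(x,D)$ with a neatly controlled expansion. I expect the amplitude/Peetre argument above to handle this, the one point needing care being the uniform bounds on derivatives of $\langle x\rangle^\delta\langle y\rangle^{-\delta}\langle x-y\rangle^{-2N}$. If the amplitude version of Calderón--Vaillancourt is to be avoided, the fallback is to reduce the amplitude to a symbol via the standard formula $\tilde q(x,\xi)=\text{Os-}\iint e^{-iy\cdot\eta}a(x,x+y,\xi+\eta)\,dy\,d\eta$. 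The $\Sw^0_{0,0}$ seminorm bounds for $\tilde q$ then follow by the same integration by parts in $y$ and $\eta$.
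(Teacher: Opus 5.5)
Your proof is correct. The paper gives no proof of its own: it cites Umeda's Lemma~3.1 and remarks that the $\Sw^{0}_{0,0}$ case is Calder\'on--Vaillancourt. Your argument is the standard way to fill in that citation, namely the Leibniz reduction to $\Sw^{0}_{0,0}$ symbols, conjugation by $\langle x\rangle^{\delta}$ via Peetre's inequality and $\xi$-integration by parts, and Calder\'on--Vaillancourt for bounded amplitudes (or its reduction to a symbol in $\Sw^{0}_{0,0}$). The only caveat is bookkeeping: $\ell$ must also absorb the extra derivatives needed for the amplitude-to-symbol oscillatory integral to converge, which changes the value of $\ell$ but not the argument.
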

\noindent
\begin{remark}\label{hoer}
The above Lemma holds true for any $m>0$ (not just integers) and we refer the reader to H{\"o}rmander \cite[Chapter 18.1]{HormanderIII}. The case of symbols in $\Sw^{0}_{0,0}$, i.e., bounded, yielding bounded operators in $L^{2,\delta}$ is due Calder{\'o}n–Vaillancourt.
\end{remark}

\subsection{Equivalence of SRC and GSRC for the Helmholtz Equation}\label{equiv}
Here we prove that for a solution to the homogeneous Helmholtz equation in the exterior of a bounded region,  the classical Sommerfeld Radiation Condition (SRC)  (\ref{def:SRC}) and the Generalized Sommerfeld Radiation Condition (GSRC) (\ref{def:generalized_SRC-I}) are equivalent.

\begin{theorem}
    Let $k>0$, $n=2,3$ and let $u\in H^1_{loc}(\Omega)$ be a solution of the Helmholtz equation $\Delta u +k^2u=0$ in $\Omega  = \R^n \setminus D$. Then the following radiation conditions are equivalent 
    \begin{align}\label{SRC}
        \lim_{r\to \infty} r^{\frac{n-1}{2}}|\partial_r u(r,\hat x) -ik u(r,\hat x)| = 0  \mbox{ uniformly in }\hat x = \frac{x}{|x|} \in \mathbb{S}^{n-1},\tag{SRC}
    \end{align}
    \begin{align}
        \int_{\Omega} (1+|x|^2)^{\delta-1} |\nabla u -ik u \hat x|^2 dx  <\infty   \mbox{ for some } \delta \in (1/2,1) \tag{GSRC} 
    \end{align}
    where $r= |x|$.
\end{theorem}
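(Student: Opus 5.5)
Both directions rest on the multipole (Atkinson--Wilcox) representation of exterior Helmholtz solutions together with the classical uniqueness theorem (Rellich's lemma). Fix $R_0$ with $\overline D\subset B_{R_0}$. By interior elliptic regularity $u$ is smooth, indeed real analytic, in $|x|>R_0$. On each sphere $|x|=r$ with $r>R_0$ I will expand $u$ in spherical harmonics (Fourier modes when $n=2$):
\[
u(r,\hat x)=\sum_{\ell,m}\big(a_{\ell m}h^{(1)}_\ell(kr)+b_{\ell m}h^{(2)}_\ell(kr)\big)Y_\ell^m(\hat x),
\]
with Hankel functions $H^{(1,2)}_\ell$ in place of $h^{(1,2)}_\ell$ when $n=2$. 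The series converges in $C^\infty$ on compact subsets of $|x|>R_0$. The goal in both directions is to show that the incoming coefficients $b_{\ell m}$ vanish, or equivalently that the incoming part of $u$ is identically zero.

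\textbf{(SRC) $\Rightarrow$ (GSRC).} Under (SRC), Green's representation formula in $\Omega$ writes $u$ as combined single and double layer potentials on $\partial B_{R_0}$ with the outgoing kernel. The standard far-field expansion of these potentials gives
\[
\partial_r u-iku=O(r^{-(n+1)/2}), \qquad \frac1r\nabla_{\mathbb S}u=O(r^{-(n+1)/2}),
\]
uniformly in $\hat x$. Since $\nabla u-ik\hat x u=(\partial_r u-iku)\hat x+r^{-1}\nabla_{\mathbb S}u$, we get $|\nabla u-ik\hat xu|^2=O(r^{-n-1})$. The weighted integral is then bounded by $\int^\infty r^{2\delta-2-n-1}r^{n-1}\,dr=\int^\infty r^{2\delta-4}\,dr$, which is finite for every $\delta<1$. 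Near $\partial D$ the integrand is integrable because $u\in H^1_{loc}(\Omega)$.

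\textbf{(GSRC) $\Rightarrow$ (SRC).} Write $u=u^{out}+u^{in}$, collecting the $h^{(1)}$ terms into $u^{out}$ and the $h^{(2)}$ terms into $u^{in}$. The aim is to show $u^{in}=0$, after which $u=u^{out}$ satisfies (SRC) by the Hankel asymptotics, exactly as in the first direction. First, $u^{out}$ satisfies (SRC), so by the first direction it also satisfies (GSRC). Hence $u^{in}=u-u^{out}$ satisfies (GSRC). Setting $w=\overline{u^{in}}$, which is outgoing, and using $\nabla u^{in}-ik\hat xu^{in}=\overline{\nabla w+ik\hat xw}$, we obtain
\[
\int (1+|x|^2)^{\delta-1}|\nabla w+ik\hat x w|^2\,dx<\infty.
\]
Combining this with the outgoing asymptotics of $w$, namely $\nabla w-ik\hat xw=O(r^{-(n+1)/2})$, yields
\[
2k|w|\le |\nabla w+ik\hat xw|+|\nabla w-ik\hat xw|,
\]
and therefore $\int(1+|x|^2)^{\delta-1}|w|^2\,dx<\infty$.

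Now choose $R_1>R_0$. For a.e. large $r$, the exponent bookkeeping gives
\[
r^{2\delta-2}\int_{|x|=r}|w|^2\,ds\in L^1(dr),
\]
so $\liminf_{r\to\infty}r^{2\delta-1}\int_{|x|=r}|w|^2\,ds=0$. Because $2\delta-1>0$, this forces $\liminf_{r\to\infty}\int_{|x|=r}|w|^2\,ds=0$. Rellich's lemma then gives $w\equiv0$ outside a large ball, and by unique continuation $w\equiv 0$ in $|x|>R_0$. Consequently $u=u^{out}$, which satisfies (SRC).

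\textbf{Main obstacle.} The delicate points are convergence of the multipole splitting and justifying that $u^{out}$ alone is a genuine outgoing solution. I would handle this mode by mode instead of summing the series. For each $(\ell,m)$, the Parseval identity on spheres turns the weighted (GSRC) integral into
\[
\int (1+r^2)^{\delta-1}\Big(\big|c'(r)-ikc(r)\big|^2+\ell(\ell+n-2)r^{-2}|c(r)|^2\Big)r^{n-1}\,dr<\infty
\]
for the mode coefficient $c(r)=a h^{(1)}_\ell(kr)+bh^{(2)}_\ell(kr)$. Since $\frac{d}{dr}h^{(2)}_\ell-ikh^{(2)}_\ell\sim -2ik\,h^{(2)}_\ell\sim r^{-(n-1)/2}$, the integrand is then comparable to $|b|^2r^{2\delta-2}$, which is not integrable because $2\delta-2>-1$. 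This forces $b=0$ for every mode, which is exactly the claim.
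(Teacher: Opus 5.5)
\textbf{Overall.} Your proof is correct in outline. One direction takes a genuinely different route from the paper, and the other needs two repairs.

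\textbf{Comparison of routes.} Your (GSRC)$\Rightarrow$(SRC) argument, in its final mode-by-mode form, is the paper's argument. Expand in angular modes. Then $r^{n-1}|c'-ikc|^2$ tends to a positive multiple of $|b|^2$, while $(1+r^2)^{\delta-1}$ is not integrable for $\delta>1/2$, so every incoming coefficient vanishes.

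You differ genuinely on (SRC)$\Rightarrow$(GSRC). The paper extends $u$ to $\bar u$ with $(\Delta+k^2)\bar u=f\in C_c^\infty$. It takes the Ikebe--Saito limiting-absorption solution $u^+$, which satisfies (GSRC), uses the already-proved direction to get (SRC) for $u^+$, and concludes $\bar u=u^+$ by uniqueness. You instead use Green's representation on $|y|=R_0$ together with $\nabla_x\Phi(x,y)-ik\hat x\,\Phi(x,y)=O(|x|^{-(n+1)/2})$, uniformly for $|y|\le R_0$, and the same bound for $\partial_{\nu(y)}\Phi$. This gives $|\nabla u-ik\hat x u|^2=O(r^{-n-1})$. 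Your route is correct and self-contained, avoids the limiting absorption theorem, and is quantitative: the weighted integral converges for every $\delta<3/2$. The paper's route is shorter given its LAP machinery, but it depends on Ikebe--Saito and on proving the other direction first.

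\textbf{Repairs needed in (GSRC)$\Rightarrow$(SRC).} First, drop the Rellich detour entirely. The splitting $u=u^{out}+u^{in}$ is not just delicate; it can genuinely diverge for an exterior solution.
\begin{itemize}
\item In two dimensions, $v=\sum_{m\ge 0}\sqrt{m!}\,J_m(kr)\cos m\theta$ is an entire solution, since $|J_m(t)|\le (t/2)^m/m!$.
\item Its $H^{(1)}$-part $\frac12\sum\sqrt{m!}\,H^{(1)}_m(kr)\cos m\theta$ diverges for every $r$, because $|H^{(1)}_m(kr)|\sim\frac{(m-1)!}{\pi}(2/kr)^m$.
\end{itemize}
Your mode-by-mode paragraph needs no splitting and already gives $b_{\ell m}=0$ for all modes.

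Second, once $u=\sum a_{\ell m}h^{(1)}_\ell(kr)Y_\ell^m$, you cannot read off (SRC) from termwise Hankel asymptotics, because they are not uniform in $\ell$. Your ``exactly as in the first direction'' is circular, since Green's formula there was derived from (SRC). A clean fix:
\begin{itemize}
\item Each partial sum is a finite combination of outgoing multipoles, so it satisfies Green's representation on $|y|=R_1$.
\item The partial sums of $u$ and $\partial_r u$ converge in $L^2(|y|=R_1)$, being Fourier series of smooth functions. Passing to the limit gives Green's representation for $u$ itself with the outgoing kernel.
\item Your first-direction kernel estimates then yield (SRC) uniformly in $\hat x$.
\end{itemize}
The paper handles this step with a tail estimate uniform in $r$, based on Watson's inequality $|H^{(1)}_m(r)|^2/|H^{(1)}_m(s)|^2\le s/r$. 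That gives only the spherical-mean form $r\int|\partial_r u-iku|^2\,d\hat x\to 0$, from which the paper then asserts the pointwise form. Your Green-formula version delivers the uniform pointwise statement directly.
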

\begin{proof} We prove the result for $n=2$. The case $n=3$ is similar. First, we prove that (\ref{GSRC}) implies (\ref{SRC}). Note that since $u$ satisfies the Helmholtz equation in the exterior of a ball, we can always expand $u$ in normalized spherical Harmonics $\{Y_{m}(\hat x)\} = \{ \frac{1}{\sqrt{2\pi}},\frac{1}{\sqrt{\pi}} \cos(m\hat x), \frac{1}{\sqrt{\pi}} \sin(m \pi), m\geq 1\}$. By orthogonality, it suffices to consider only the cosine modes,  and hence  we write $u(r,\hat x) = \sum_{m =0}^{\infty} u_{m}(r) Y_m(\hat x)$, with
$Y_m(\hat x) = \frac{1}{\sqrt{\pi}}\cos(m\hat x)$,  $r\in(R, \infty)$, and $\hat x \in [0,2\pi)$. The coefficients will then satisfy 
$$
u_{m}''(r) + \frac{1}{r} u_{m}'(r) - \frac{m^2}{r^2} u_{m}(r) + k^2 u_{m}(r) = 0 ~.
$$
The solution to the above equation is given as a linear combination of Hankel functions $H^{(j)}_{m}$
 $$u_{m}(r) = \alpha_{m} H^{(1)}_{m}(kr) + \beta_{m}H^{(2)}_{m}(kr).$$ Therefore $u$ can be written as 
\begin{align}\label{decomposition_u}
u(r, \hat x) = \sum_{m=0}^{\infty} \left(\alpha_{m} H^{(1)}_{m}(kr) + \beta_{m}H^{(2)}_{m}(kr)\right)Y_m(\hat x).
\end{align}
Recall the asymptotics of the Hankel functions and their derivative when $r$ is large 
\begin{align*}
    H^{(1)}_{m}(kr) &= \sqrt{\frac{2}{\pi kr}} e^{i(kr- \frac{\pi m}{2} - \frac{\pi}{4})}\left(1+\frac{i(4m^2-1)}{8kr} +O(r^{-2})\right),\\
    H^{(2)}_{m}(kr) &= \sqrt{\frac{2}{\pi kr}} e^{-i(kr- \frac{\pi m}{2} - \frac{\pi}{4})}\left(1+O(r^{-1})\right)\\
     \frac{\partial}{\partial r}H^{(1)}_{m}(kr) &= ik \sqrt{\frac{2}{\pi kr}} e^{i(kr- \frac{\pi m}{2} - \frac{\pi}{4})} \left(1+\frac{i(4m^2+3)}{8kr}+O(r^{-2})\right),\\
      \frac{\partial}{\partial r}H^{(1)}_{m}(kr) &= -ik \sqrt{\frac{2}{\pi kr}} e^{-i(kr- \frac{\pi m}{2} - \frac{\pi}{4})} (1+O(r^{-1})).
\end{align*}
Because of the different sign in the exponential, a cancellation takes place for $H^{(1)}$ that does not occur for $H^{(2)}$ in the following
\begin{align}
    \partial_r H^{(1)}_{m}(kr) - ik H^{(1)}_{m}(kr) &= -\frac{1}{\sqrt{2\pi k}} r^{-\frac{3}{2}} e^{i (kr - \frac{\pi m}{2} - \frac{\pi}{4})} ( 1+ O(r^{-1}) ) \nonumber\\
    &= : c^1_{k} r^{-\frac{3}{2}} e^{i  (kr - \frac{\pi m}{2})} ( 1+ O(r^{-1}) ),  \label{H1_asympt}\\
    \nonumber\\
    \partial_r H^{(2)}_{m}(kr) - ik H^{(2)}_{m}(kr)&= - 2i \sqrt{\frac{2k}{\pi}} r^{- \frac{1}{2}} e^{-i(kr - \frac{\pi m}{2} - \frac{\pi}{4})}(1+O(r^{-1}))\nonumber\\
    &=: c^2_{k} r^{-\frac{1}{2}} e^{-i  (kr - \frac{\pi m}{2})} ( 1+ O(r^{-1}) ). \label{H2_asympt}
\end{align}
Using the orthonormality of $Y_{m}$ in $L^2([0,2\pi])$, we now compute 
\begin{align}\label{integrandSRC}
    &r\int_{0}^{2\pi}|\partial_r u (r,\hat x)- ik u(r,\hat x)|^2 d\hat x  \nonumber \\
 &\hskip 10pt \geq r \sum_{m=0}^M  \left|\alpha_{m}c^1_{k} r^{-\frac{3}{2}} e^{i (kr - \frac{\pi m}{2})} ( 1+ O(r^{-1}) )+ \beta_{m}c^2_{k} r^{-\frac{1}{2}} e^{-i (kr - \frac{\pi m}{2})} ( 1+ O(r^{-1}) )\right|^2 \nonumber\\
& \hskip 10pt = |c_{k}^2|^2\left(\sum_{m=0}^M   |\beta_{m}|^2 \right) + O_M(r^{-1})
\end{align}

for any $M \in \mathbb{N}$. Let now  $\frac{1}{2}<\delta< 1$. 
 By (\ref{integrandSRC}), we see that 
\begin{align*}
     \int_{\Omega} (1+|x|^2)^{\delta-1} |\nabla u -ik \hat xu|^2 dx &= \int_R^\infty (1+r^2)^{\delta-1} \int_0^{2\pi}|\partial_ru -ik u|^2 +\frac{1}{r^2}|\partial_\theta u|^2d\hat x rdr\\
     & \geq \int_R^\infty (1+r^2)^{\delta-1} \left(|c_{k}^2|^2 \left(\sum_{m=0}^M   |\beta_{m}|^2 \right) + O_M(r^{-1})\right) dr
\end{align*}
for any $M \in \mathbb{N}$. The first order term is not integrable at infinity since $ \delta > \frac{1}{2} \implies 2(\delta - 1) >-1$. The other terms cannot compensate since they are absolutely integrable, $\delta <1 \implies 2(\delta-1)-1 <-1$. We deduce that 
$$
u \mbox{ satisfies (\ref{GSRC}) } \implies \beta_{m} = 0 \quad \forall m\geq 0 .
$$
Thanks to this observation, we can now show that 
$$
\lim_{r\to 0} r\int_{0}^{2\pi}|\partial_ru -iku|^2 d\hat x = \lim_{r\to \infty} r\sum_{m=0 }^\infty |\alpha_m|^2 \left| (\partial_r -ik) H_m^{(1)}(kr)\right|^2=0
$$ 
which implies (\ref{SRC}). Let $\epsilon >0$. We split the sum into two parts
$$
r \int_{0}^{2\pi}|\partial_r u-ik u|^2 d\hat x = S_N(r) + S_N^\infty(r) 
$$
with 
\begin{align*}
    S_N(r) &:= r\sum_{m=0}^{N-1} |\alpha_m|^2 \left| (\partial_r -ik) H_m^{(1)}(kr)\right|^2\\
     S_N^\infty(r) &:= r\sum_{m=N}^{\infty} |\alpha_m|^2 \left| (\partial_r -ik) H_m^{(1)}(kr)\right|^2
\end{align*}
For the tail $S_N^\infty(r)$, we use the following property 
\begin{align*}
    \frac{|H^{(1)}_m(r)|^2}{|H^{(1)}_m(s)|^2} \leq \frac{s}{r}, \quad \mbox{for }0< s \leq r, m\not = 0
\end{align*}
which can be found in \cite{watson1922treatise} p. 446 (see also \cite{hansen2007asymptotically} estimate (24)). We then have the bound for $m \geq 2$
\begin{align*}
    r| (\partial_r-ik) H_m^{(1)}(kr)|^2 &= r\left|  k H^{(1)}_{m- 1}(kr) - ( \frac{m}{r}+ik) H^{(1)}_m(kr)\right|^2\\
    &\leq 2r\left(\frac{m^2}{r^2}+k^2 \right) \left(\left| H^{(1)}_{m-1}(kr)\right|^2 +\left| H^{(1)}_m(kr)\right|^2\right)\\
    &\leq 4 R k^2m^2 \left(\left| H^{(1)}_{m-1}(kR)\right|^2 +\left| H^{(1)}_m(kR)\right|^2\right)
\end{align*}
where, for the purpose of the  last inequality, we suppose $r$ sufficiently large such that $\max (R,\frac{1}{k})\le r$. The behavior of the Hankel function for large order $m$ is given by
$$
H^{(1)}_m (kR)  \sim_{m\to +\infty} -i \frac{(m-1)!}{\pi} \left( \frac{2}{kR}\right)^m.
$$
Therefore we observe that $|H_{m-1}^{(1)}(kR)| $ is not worse than $|H_{m}^{(1)}(kR)|$ asymptotically, so we only need to justify that $\sum_{m=0}^M m^2|\alpha_m|^2|H^{(1)}_m(kR)|^2$ converges. This is indeed the case since, by elliptic regularity, $u$ is smooth on the circle of radius $kR$
$$
\sum_{m=0}^\infty m^2|\alpha_m|^2 |H_m^{(1)}(kR)|^2 = \int_0^{2\pi}|\partial_\theta u(kR, \theta)|^2 d\theta <\infty ~. $$ 
For a fixed $R$ we can thus take $N\in\mathbb{N}$ sufficiently large that
\begin{align*}
    S_N^\infty(r) \leq 4Rk^2\sum_{m= N}^\infty m^2|\alpha_m|^2\left( \left| H^{(1)}_{m-1}(kR)\right|^2 +\left| H^{(1)}_m(kR)\right|^2\right)<\frac{\epsilon}{2} ,
\end{align*}
for all $r \ge \max (R,\frac1k)$. We then use (\ref{H1_asympt}) and choose $r$ sufficiently large such that $S_N(r) \leq \frac{\epsilon}{2}$. This concludes that 
$$
\lim_{r\to \infty} r \int_{0}^{2\pi}|\partial_r u -ik u |^2 d\hat x = 0 ~,
$$
hence we showed that (\ref{GSRC}) implies (\ref{SRC}). \\
For the other direction, let us assume that $u \in H^1_{loc}(\Omega)$ is a solution of the Helmholtz equation in $\Omega$ satisfying (\ref{SRC}). By enlarging $D$ if necessary, we can define a smooth extension of $u$
\begin{align*}
    \bar u := \begin{cases}
    u \mbox{ in } \R^n\setminus D\\
    w \mbox{ in } D
\end{cases} \qquad \mbox{ which satisfies } \qquad (\Delta + k^2 ) \bar u  =  f \quad \mbox{ in } \R^n
\end{align*} 
where $f$ is a smooth compactly supported function. Let $u^+ = \lim_{\epsilon \to 0}(\Delta + k^2 + i \epsilon)^{-1} f$ be given by the limiting absorption principle as in Theorem 1.4 of \cite{ikebe1972limiting}. By this same theorem, $u^+$ satisfies (\ref{GSRC}), and as we just saw, this in turn implies that $u^+$ satisfies (\ref{SRC}). Finally, $\bar u - u^+ $ satisfies the homogeneous Helmholtz equation in $\R^n$ together with (\ref{SRC}), therefore $\bar u = u^+$. We conclude that $u$ satisfies (\ref{GSRC}). 
\end{proof}
\begin{remark}
    In the one dimensional case, the solution has the form 
    $$
   u(x) = c_1 e^{ik x} + c_2 e^{-ikx}.
    $$
    When $x>0$, it is trivial to see that to satisfy either of the radiation conditions, $c_2 = 0$. Similarly when $x<0$, we must have $c_1 = 0$ for either of the conditions to be satisfied. Hence we conclude that either radiation condition is equivalent to
    $$
    u(x) = \begin{cases}
        c_1 e^{ikx} \hbox{ for  sufficiently large positive } x\\
        c_2 e^{-ikx} \hbox{ for  sufficiently small negative } x
    \end{cases}.
    $$
\end{remark}

\end{subappendices}

\section*{Acknowledgements}
The work of FC and DZ was partially  supported by NSF grant DMS-24-06313. The work of MSV was partially supported by NSF grant DMS-22-05912.
\bibliographystyle{plain}
\bibliography{sample}

@article{guan2023helmholtz,
  title={Helmholtz solutions for the fractional Laplacian and other related operators},
  author={Guan, Vincent and Murugan, Mathav and Wei, Juncheng},
  journal={Communications in Contemporary Mathematics},
  volume={25},
  number={02},
  pages={2250016},
  year={2023},
  publisher={World Scientific}
}

@article{bucur2015some,
  title={Some observations on the Green function for the ball in the fractional Laplace framework},
  author={Bucur, Claudia},
  journal={arXiv preprint arXiv:1502.06468},
  year={2015}
}

@inproceedings{umeda1995radiation,
  title={Radiation conditions and resolvent estimates for relativistic Schr{\"o}dinger operators},
  author={Umeda, Tomio},
  booktitle={Annales de l'IHP Physique th{\'e}orique},
  volume={63},
  number={3},
  pages={277--296},
  year={1995}
}

@inproceedings{ben1997remarks,
  title={Remarks on relativistic Schr{\"o}dinger operators and their extensions},
  author={Ben-Artzi, Matania and Nemirovsky, Jonathan},
  booktitle={Annales de l'IHP Physique th{\'e}orique},
  volume={67},
  number={1},
  pages={29--39},
  year={1997}
}

@article{hiltunen2024nonlocal,
  title={Nonlocal Partial Differential Equations and Quantum Optics: Bound States and Resonances},
  author={Hiltunen, Erik Orvehed and Kraisler, Joseph and Schotland, John C. and Weinstein, Michael I.},
  journal={SIAM Journal on Mathematical Analysis},
  volume={56},
  number={3},
  pages={3802--3831},
  year={2024},
  publisher={SIAM}
}

@article{di2012hitchhikers,
  title={Hitchhiker's guide to the fractional Sobolev spaces},
  author={Di Nezza, Eleonora and Palatucci, Giampiero and Valdinoci, Enrico},
  journal={Bulletin des sciences math{\'e}matiques},
  volume={136},
  number={5},
  pages={521--573},
  year={2012},
  publisher={Elsevier}
}

@article{garofalo2017fractional,
  title={Fractional thoughts},
  author={Garofalo, Nicola},
  journal={arXiv preprint arXiv:1712.03347},
  year={2017}
}

@article{agmon1975spectral,
  title={Spectral properties of Schr{\"o}dinger operators and scattering theory},
  author={Agmon, Shmuel},
  journal={Annali della Scuola Normale Superiore di Pisa-Classe di Scienze},
  volume={2},
  number={2},
  pages={151--218},
  year={1975}
}

@book{ben1987limiting,
  title={The limiting absorption principle for partial differential operators},
  author={Ben-Artzi, Matania and Devinatz, Allen},
  volume={364},
  year={1987},
  publisher={American Mathematical Soc.}
}

@article {CS-extesion,
    AUTHOR = {Caffarelli, Luis and Silvestre, Luis},
     TITLE = {An extension problem related to the fractional {L}aplacian},
   JOURNAL = {Comm. Partial Differential Equations},
  FJOURNAL = {Communications in Partial Differential Equations},
    VOLUME = {32},
      YEAR = {2007},
    NUMBER = {7-9},
     PAGES = {1245--1260},
      ISSN = {0360-5302,1532-4133},
   MRCLASS = {35J70},
  MRNUMBER = {2354493},
MRREVIEWER = {Francesco\ Petitta},
       DOI = {10.1080/03605300600987306},
       URL = {https://doi.org/10.1080/03605300600987306},
}

@article {inverse,
    AUTHOR = {Ghosh, Tuhin and Salo, Mikko and Uhlmann, Gunther},
     TITLE = {The {C}alder\'on problem for the fractional {S}chr\"odinger
              equation},
   JOURNAL = {Anal. PDE},
  FJOURNAL = {Analysis \& PDE},
    VOLUME = {13},
      YEAR = {2020},
    NUMBER = {2},
     PAGES = {455--475},
      ISSN = {2157-5045,1948-206X},
   MRCLASS = {35R11 (26A33 35J10 35J70 35R30)},
  MRNUMBER = {4078233},
       DOI = {10.2140/apde.2020.13.455},
       URL = {https://doi.org/10.2140/apde.2020.13.455},
}

@article{inverse-scat,
  title={Recovering asymptotics of potentials from the scattering of relativistic {S}chr\"odinger operators},
  author={Uhlmann, Gunther and Yiran, Wang},
  journal={arXiv:2508.12463},
  volume={},
  year={2025},
  publisher={}
}

@article{inverse-scat2,
  title={Inverse scattering for the fractional Schrödinger equation},
  author={Das, Saumyajit and Ghosh, Tuhin and Ma, Shiqi},
  journal={arXiv preprint arXiv:2509.12685},
  year={2025}
}

@article {john1,
    AUTHOR = {Orvehed Hiltunen, Erik and Kraisler, Joseph and Schotland,
              John C. and Weinstein, Michael I.},
     TITLE = {Nonlocal partial differential equations and quantum optics:
              bound states and resonances},
   JOURNAL = {SIAM J. Math. Anal.},
  FJOURNAL = {SIAM Journal on Mathematical Analysis},
    VOLUME = {56},
      YEAR = {2024},
    NUMBER = {3},
     PAGES = {3802--3831},
      ISSN = {0036-1410,1095-7154},
   MRCLASS = {81Q10 (35P15 35S15 81V80)},
  MRNUMBER = {4753510},
MRREVIEWER = {Moorad\ Alexanian},
       DOI = {10.1137/23M158142X},
       URL = {https://doi.org/10.1137/23M158142X},
}

@article{john2,
  title={Quantum field theory and inverse problems: imaging with entangled photons},
  author={Lassas, Matti and Nursultanov, Medet and Oksanen, Lauri and Schotland, John C.},
  journal={arXiv:2506.03653},
  volume={},
  year={2025},
  publisher={}
}

@article{john3,
  title={Dynamic one photon localization in a discrete model of quantum optics},
  author={Kraisler, Joseph and Schenker, Jeffrey and Schotland, John C.},
  journal={arXiv preprint arXiv:2407.14109},
  volume={},
  year={2024},
  publisher={arXiv:2407.14109}
}

@article {john4,
    AUTHOR = {Kraisler, Joseph and Schotland, John C.},
     TITLE = {Collective spontaneous emission and kinetic equations for
              one-photon light in random media},
   JOURNAL = {J. Math. Phys.},
  FJOURNAL = {Journal of Mathematical Physics},
    VOLUME = {63},
      YEAR = {2022},
    NUMBER = {3},
     PAGES = {Paper No. 031901, 22},
      ISSN = {0022-2488,1089-7658},
   MRCLASS = {81V80},
  MRNUMBER = {4388726},
       DOI = {10.1063/5.0055171},
       URL = {https://doi.org/10.1063/5.0055171},
}

@book {colton-book,
    AUTHOR = {Colton, David and Kress, Rainer},
     TITLE = {Inverse acoustic and electromagnetic scattering theory},
    SERIES = {Applied Mathematical Sciences},
    VOLUME = {93},
   EDITION = {Fourth},
 PUBLISHER = {Springer, Cham},
      YEAR = {(2019)},
     PAGES = {xxii+518},
      ISBN = {978-3-030-30350-1; 978-3-030-30351-8},
   MRCLASS = {35-02 (35P25 35Q60 35R30 45A05 65M30 76Q05 78A46)},
  MRNUMBER = {3971246},
       DOI = {10.1007/978-3-030-30351-8},
       URL = {https://doi.org/10.1007/978-3-030-30351-8},
}

@article {MR4491134,
    AUTHOR = {Ishida, Atsuhide and Lorinczi, J\'ozsef and Sasaki, Itaru},
     TITLE = {Absence of embedded eigenvalues for non-local {S}chr\"odinger
              operators},
   JOURNAL = {J. Evol. Equ.},
  FJOURNAL = {Journal of Evolution Equations},
    VOLUME = {22},
      YEAR = {2022},
    NUMBER = {4},
     PAGES = {Paper No. 82, 30},
      ISSN = {1424-3199,1424-3202},
   MRCLASS = {35P30 (35J10)},
  MRNUMBER = {4491134},
       DOI = {10.1007/s00028-022-00836-0},
       URL = {https://doi.org/10.1007/s00028-022-00836-0},
}

@book {HormanderIII,
    AUTHOR = {H\"ormander, Lars},
     TITLE = {The analysis of linear partial differential operators. {III}},
    SERIES = {Classics in Mathematics},
      NOTE = {Pseudo-differential operators,
              Reprint of the 1994 edition},
 PUBLISHER = {Springer, Berlin},
      YEAR = {2007},
     PAGES = {viii+525},
      ISBN = {978-3-540-49937-4},
   MRCLASS = {35-02 (35Sxx 47F05 47G30 58J40)},
  MRNUMBER = {2304165},
       DOI = {10.1007/978-3-540-49938-1},
       URL = {https://doi.org/10.1007/978-3-540-49938-1},
}

@article {MR312066,
    AUTHOR = {Ikebe, Teruo and Saito, Yoshimi},
     TITLE = {Limiting absorption method and absolute continuity for the
              {S}chr\"odinger operator},
   JOURNAL = {J. Math. Kyoto Univ.},
  FJOURNAL = {Journal of Mathematics of Kyoto University},
    VOLUME = {12},
      YEAR = {1972},
     PAGES = {513--542},
      ISSN = {0023-608X},
   MRCLASS = {35J10 (47F05)},
  MRNUMBER = {312066},
MRREVIEWER = {Stanly\ L.\ Steinberg},
       DOI = {10.1215/kjm/1250523478},
       URL = {https://doi.org/10.1215/kjm/1250523478},
}

@article {umeda2003generalized,
    AUTHOR = {Umeda, Tomio},
     TITLE = {Generalized eigenfunctions of relativistic {S}chr\"odinger
              operators. {I}},
   JOURNAL = {Electron. J. Differential Equations, arXiv preprint math/0310090},
  FJOURNAL = {Electronic Journal of Differential Equations},
      YEAR = {2006},
     PAGES = {No. 127, 46},
      ISSN = {1072-6691},
   MRCLASS = {35Q75 (35P05 47A40 47G30 47N50 81Q05)},
  MRNUMBER = {2255242},
}

@book {rudin,
    AUTHOR = {Rudin, Walter},
     TITLE = {Functional analysis},
    SERIES = {McGraw-Hill Series in Higher Mathematics},
 PUBLISHER = {McGraw-Hill Book Co., New York-D\"usseldorf-Johannesburg},
      YEAR = {1973},
     PAGES = {xiii+397},
   MRCLASS = {46-01},
  MRNUMBER = {365062},
MRREVIEWER = {F.\ Smithies},
}

@book {bill,
    AUTHOR = {Kaltenbacher, Barbara and Rundell, William},
     TITLE = {Inverse problems for fractional partial differential
              equations},
    SERIES = {Graduate Studies in Mathematics},
    VOLUME = {230},
 PUBLISHER = {American Mathematical Society, Providence, RI},
      YEAR = {[2023] \copyright 2023},
     PAGES = {xiii+505},
      ISBN = {[9781470472450]; [9781470472764]},
   MRCLASS = {35-02 (26A33 35K57 35R11 35R30 47F05 60K50 65M32)},
  MRNUMBER = {4604107},
       DOI = {10.1090/gsm/230},
       URL = {https://doi.org/10.1090/gsm/230},
}

@book{LiebLoss,
  author    = {Elliott H. Lieb and Michael Loss},
  title     = {Analysis},
  edition   = {2},
  publisher = {American Mathematical Society},
  address   = {Providence, RI},
  year      = {2001},
}

@article{lozier2003nist,
  title={NIST digital library of mathematical functions},
  author={Lozier, Daniel W.},
  journal={Annals of Mathematics and Artificial Intelligence},
  volume={38},
  number={1},
  pages={105--119},
  year={2003},
  publisher={Springer}
}

@book{watson1922treatise,
  title={A treatise on the theory of Bessel functions},
  author={Watson, George Neville},
  volume={3},
  year={1922},
  publisher={The University Press}
}

@article{hansen2007asymptotically,
  title={Asymptotically precise norm estimates of scattering from a small circular inhomogeneity},
  author={Hansen, Derek J. and Poignard, Clair and Vogelius, Michael S.},
  journal={Applicable Analysis},
  volume={86},
  number={4},
  pages={433--458},
  year={2007},
  publisher={Taylor \& Francis}
}

@article{ikebe1972limiting,
  title={Limiting absorption method and absolute continuity for the Schr{\"o}dinger operator},
  author={Ikebe, Teruo and Saito, Yoshimi},
  journal={Journal of Mathematics of Kyoto University},
  volume={12},
  number={3},
  pages={513--542},
  year={1972},
  publisher={Duke University Press}
}

@article {sphere,
    AUTHOR = {Frank, Rupert L. and Lenzmann, Enno and Silvestre, Luis},
     TITLE = {Uniqueness of radial solutions for the fractional {L}aplacian},
   JOURNAL = {Comm. Pure Appl. Math.},
  FJOURNAL = {Communications on Pure and Applied Mathematics},
    VOLUME = {69},
      YEAR = {2016},
    NUMBER = {9},
     PAGES = {1671--1726},
      ISSN = {0010-3640,1097-0312},
   MRCLASS = {35R11 (35A02 35B07)},
  MRNUMBER = {3530361},
MRREVIEWER = {Pablo\ Ra\'ul\ Stinga},
       DOI = {10.1002/cpa.21591},
       URL = {https://doi.org/10.1002/cpa.21591},
}

\end{document}